\newcommand{\hidden}[1]{
%
}
\newcommand{\Z}{\mathbb{Z}}
\newcommand{\z}{\mathbf{z}}
\newcommand{\Q}{\mathbb{Q}}
\newcommand{\R}{\mathbb{R}}
\newcommand{\F}{\mathbb{F}}
\newcommand{\C}{\mathbb{C}}
\newcommand{\T}{\mathbb{T}}
\newcommand{\kk}{\Bbbk}
\newcommand{\qu}{\mathfrak{q}}
\newcommand{\OO}{\mathcal{O}}
\newcommand{\D}{\mathcal{D}}
\newcommand{\MM}{\mathcal{M}}
\newcommand{\FF}{\mathcal{F}}
\newcommand{\GG}{{\mathcal{G}}}
\newcommand{\rr}{\mathfrak{r}}
\newcommand{\II}{\mathcal{I}}
\newcommand{\JJ}{\mathcal{J}}
\newcommand{\au}{W}
\newcommand{\FUN}{\mathcal{F}}
\newcommand{\m}{\mathfrak{m}}
\newcommand{\LL}{\mathcal{L}}
\newcommand{\AAA}{\mathcal{A}}
\newcommand{\dual}{\vee}
\newcommand{\N}{\mathcal{N}}
\newcommand{\dd}{{d}}
\newcommand{\ddd}{\ttilde{d}}
\newcommand{\rd}{h}
\newcommand{\vsim}{{\rotatebox{90}{$\sim$}}}
\newcommand{\ttilde}{\widetilde}
\newcommand{\overl}{\overline}
\newcommand{\HH}{\mathcal{H}}
\newcommand{\K}{\mathcal{K}}
\newcommand{\RR}{\mathcal{R}}
\newcommand{\ZZ}{\mathcal{Z}}
\newcommand{\CC}{\mathcal{C}}
\newcommand{\PP}{\mathcal{P}}
\newcommand{\DD}{\mathcal{D}}
\newcommand{\SF}[1]{\mathcal{F}^{\langle #1 \rangle}}
\newcommand{\varSF}[1]{\mathcal{F}^{[#1]}}
\newcommand{\step}[2]{{\bf Step #1.} #2}
\newcommand{\pd}{{\rm pd}}
\newcommand{\aalpha}{s}
\newcommand{\bbeta}{t}
\newcommand{\X}{T}
\newcommand{\Rnt}{R_0^{\rm{nt}}}
\newcommand{\RRnt}{\RR^{\rm nt}}
\DeclareMathOperator{\Gal}{Gal}
\DeclareMathOperator{\Tam}{Tam}
\DeclareMathOperator{\GL}{GL}
\DeclareMathOperator{\SL}{SL}
\DeclareMathOperator{\Ker}{Ker}
\DeclareMathOperator{\Image}{Im}
\DeclareMathOperator{\Cok}{Cok}
\DeclareMathOperator{\Ext}{Ext}
\DeclareMathOperator{\Aut}{Aut}
\DeclareMathOperator{\rank}{rank}
\DeclareMathOperator{\corank}{corank}
\DeclareMathOperator{\ord}{ord}
\DeclareMathOperator{\ur}{ur}
\DeclareMathOperator{\CRK}{CRK}
\DeclareMathOperator{\Tr}{Tr}
\DeclareMathOperator{\prim}{prime}
\DeclareMathOperator{\Cole}{Col}
\DeclareMathOperator{\Sel}{Sel}
\DeclareMathOperator{\loc}{loc}
\DeclareMathOperator{\sign}{sign}
\DeclareMathOperator{\Log}{Log}
\DeclareMathOperator{\fin}{fin}
\DeclareMathOperator{\ev}{ev}
\DeclareMathOperator{\LEO}{LEO}
\DeclareMathOperator{\RFX}{RFX}
\DeclareMathOperator{\LOC}{LOC}
\DeclareMathOperator{\can}{can}
\DeclareMathOperator{\per}{per}
\DeclareMathOperator{\tame}{tame}
\DeclareMathOperator{\Fitt}{Fitt}
\DeclareMathOperator{\Hom}{Hom}
\DeclareMathOperator{\id}{id}
\DeclareMathOperator{\ES}{ES}
\DeclareMathOperator{\KS}{KS}
\DeclareMathOperator{\SSS}{SS}
\DeclareMathOperator{\Reg}{Reg}
\DeclareMathOperator{\primitive}{prim}
\DeclareMathOperator{\imp}{imp}
\let\oldenumerate\enumerate
\renewcommand{\enumerate}{
   \oldenumerate
   \setlength{\itemsep}{1pt}
   \setlength{\parskip}{0pt}
   \setlength{\parsep}{0pt}
}
\let\olditemize\itemize
\renewcommand{\itemize}{
   \olditemize
   \setlength{\itemsep}{1pt}
   \setlength{\parskip}{0pt}
   \setlength{\parsep}{0pt}
}
\theoremstyle{plain}
\newtheorem{thm}{Theorem}[section]
\newtheorem{lem}[thm]{Lemma}
\newtheorem{conj}[thm]{Conjecture}
\newtheorem{prop}[thm]{Proposition}
\newtheorem{cor}[thm]{Corollary}
\newtheorem{ass}[thm]{Assumption}
\theoremstyle{definition}
\newtheorem{defn}[thm]{Definition}
\newtheorem{rem}[thm]{Remark}
\title[Equivariant Iwasawa theory for elliptic curves]{Equivariant Iwasawa theory for elliptic curves}
\author[T. Kataoka]{Takenori Kataoka}
\address{Faculty of Science and Technology, Keio University.
3-14-1 Hiyoshi, Kohoku-ku, Yokohama, Kanagawa 223-8522, Japan}
\email{tkataoka@math.keio.ac.jp}
\keywords{Selmer groups, $p$-adic $L$-functions, main conjecture, Mazur-Tate conjecture}
\subjclass[2010]{11R23}
\begin{document}

\begin{abstract}

We discuss abelian equivariant Iwasawa theory for elliptic curves over $\Q$ at good supersingular primes and non-anomalous good ordinary primes.
Using Kobayashi's method, we construct equivariant Coleman maps, which send the Beilinson-Kato element to the equivariant $p$-adic $L$-functions.
Then we propose equivariant main conjectures and, under certain assumptions, prove one divisibility via Euler system machinery.
As an application, we prove a case of a conjecture of Mazur-Tate.

\end{abstract}

\maketitle


\section{Introduction}\label{sec:01}

Iwasawa theory began with the study of the behavior of the ideal class groups along $\Z_p$-extensions.
On the one hand, a variety of equivariant refinements of the study of ideal class groups were developed; for example, see Ritter-Weiss \cite{RW02} and Greither-Popescu \cite{GP15}.
On the other hand, Iwasawa theory for elliptic curves was also developed; see Greenberg \cite{Gree99} as a basic reference.
The purpose of this paper is to propose an equivariant refinement of Iwasawa theory for elliptic curves.

We fix notations as follows.
Let $E$ be an elliptic curve over the rational number field $\Q$ with good reduction at a fixed odd prime number $p$.
Put $a_p = (1+p)-\sharp \ttilde{E}(\F_p)$, where $\ttilde{E}$ denotes the reduction of $E$ modulo $p$.
Let $K$ be a finite abelian extension of $\Q$ where $p$ is unramified.
Put $K_n = K(\mu_{p^{n+1}})$ for integers $n \geq -1$ and put $K_{\infty} = K(\mu_{p^{\infty}})$.
Note that $K_{\infty}$ is the cyclotomic $\Z_p$-extension of $K_0$.
Put $\RR = \Z_p[[\Gal(K_{\infty}/\Q)]]$ and $\Lambda = \Z_p[[\Gal(K_{\infty}/K_0)]]$, the Iwasawa algebras.

This paper concerns the analysis of $E$ along the extension $K_{\infty}/\Q$.
Roughly speaking, previous works usually study the case where $K = \Q$.
In particular, in that case, the works by Kato \cite{Kat04}, Kobayashi \cite{Kob03}, and Sprung \cite{Spr12} establish the one divisibility of the main conjecture in the case where $p \nmid a_p, a_p = 0$, and $p \mid a_p$, respectively.
Moreover, as an application of that divisibility, C.-H.~Kim-Kurihara \cite{KK} proves a certain special case of a conjecture of Mazur-Tate \cite{MT87} (the weak main conjecture).
In this paper, we will generalize these works equivariantly, so that specializing $K = \Q$ recovers them.

Here we recall the general motivation for the equivariant theory.
We have a canonical identification $\RR = \Lambda[G_0]$, where $G_0 = \Gal(K_0/\Q)$.
In the ``non-equivariant'' theory, we decompose modules over $\RR$ into $\chi$-parts, where $\chi$ runs through characters of $G_0$.
This decomposition is harmless if $p \nmid [K:\Q]$, since then $\RR$ is the direct product of the $\chi$-parts of $\RR$.
However, if $p \mid [K:\Q]$, this decomposition loses certain information about the original modules.
Such information should not be lost when we aim at precise results at finite layers, such as the Mazur-Tate conjecture.
Therefore we should work with the original $\RR$-modules directly not taking $\chi$-parts.
This is the idea of the equivariant theory.

In this paper, we are particularly interested in the supersingular case.
One reason is that several results in the ordinary case are already established in previous works.
In fact, the works by Kurihara \cite{Kur03}, \cite{Kur14} also study equivariant refinements of Iwasawa theory for elliptic curves in the ordinary case.
Those works have certain overlaps with this paper.
On the other hand, the supersingular case has more difficulties, and the way to resolve them is particularly innovating in this paper.
Nonetheless, in this paper we treat both the ordinary case and the supersingular case simultaneously.
In fact, this paper certainly contains new results even in the ordinary case.
Moreover, the ordinary case sometimes works as a toy model for the supersingular case.

\subsection*{Algebraic Side}

Let $S$ be a finite set of prime numbers $\neq p$ such that $S$ contains all prime numbers which are ramified in $K/\Q$.
A fundamental algebraic object of study in our context is the $S$-imprimitive ($p$-primary) Selmer group $\Sel_S(E/K_{\infty})$, which is naturally a cofinitely generated $\RR$-module.
See Subsection \ref{subsec:402} for the precise definitions and properties of various Selmer groups.
In the ordinary case (namely $p \nmid a_p$), $\Sel_S(E/K_{\infty})$ is known to be $\Lambda$-cotorsion.
On the other hand, in the supersingular case (namely $p \mid a_p$), $\Sel_S(E/K_{\infty})$ is not $\Lambda$-cotorsion.
In order to resolve this issue, we need to modify the local conditions of Selmer groups at $p$, as follows.

When $a_p=0$, such a modification can be done via an idea of Kobayashi \cite{Kob03}, further developed by Iovita-Pollack \cite{IP06}, Kitajima-Otsuki \cite{KO18}, and others.
The idea is to introduce canonical subgroups $E^{\pm}(K_{\infty} \otimes \Q_p)$ of $E(K_{\infty} \otimes \Q_p)$ and define the corresponding $\pm$-Selmer groups $\Sel_S^{\pm}(E/K_{\infty})$.
They are submodules of $\Sel_S(E/K_{\infty})$ and known to be $\Lambda$-cotorsion.

Consider general $p \mid a_p$.
When $K = \Q$, Sprung \cite{Spr12} developed such a modification and gave rise to $\sharp/\flat$-Selmer groups.
Though the construction is much more complicated than in the $a_p = 0$ case, in this paper we extend the argument by Sprung to general $K$.
As a result, we define subgroups $E_{\infty}^{\sharp/\flat}$ of $E(K_{\infty} \otimes \Q_p) \otimes (\Q_p/\Z_p)$ and the $\sharp/\flat$-Selmer groups $\Sel_S^{\sharp/\flat}(E/K_{\infty})$.
Then it seems reasonable to conjecture that they are $\Lambda$-cotorsion, which we assume throughout this section.
We mention here that $p \geq 5$ and $p \mid a_p$ imply $a_p = 0$ by the Hasse bound, so that $\sharp/\flat$ is concerned essentially only when $p = 3$.

To treat the various Selmer groups simultaneously, we use the notation $\Sel_S^{\bullet}(E/K_{\infty})$ for $\bullet \in \{\emptyset, +, -, \sharp, \flat \}$ ($\emptyset$ means ``no sign'').
As a promise, when $p \nmid a_p$ (resp. $a_p = 0$, resp. $p \mid a_p$), we must have $\bullet = \emptyset$ (resp. $\bullet \in \{+, -\}$, resp. $\bullet \in \{+, -, \sharp, \flat\}$).
Moreover, when $a_p = 0$, we always keep the convention that $(\sharp, \flat) = (-, +)$. 
For example, when $a_p = 0$, we will have $E_{\infty}^{\sharp} = E^-(K_{\infty} \otimes \Q_p) \otimes (\Q_p/\Z_p)$ and $E_{\infty}^{\flat} = E^+(K_{\infty} \otimes \Q_p) \otimes (\Q_p/\Z_p)$ (Corollary \ref{cor:870}).

In equivariant Iwasawa theory, in addition to being $\Lambda$-cotorsion, the finiteness of the projective dimension over $\RR$ (denoted by $\pd_{\RR}$) of the Pontryagin dual is important.
This corresponds to \cite[Theorem 4.6]{GP15}, \cite[Theorem 1]{RW02} in the study of ideal class groups.
If $p \nmid [K: \Q]$, then the finiteness of $\pd_{\RR}$ is trivial since $\RR$ is then a product of regular local rings.
Hence we are mainly concerned in the case where $p \mid [K:\Q]$.
In this paper, we prove the following theorem, under the {\it non-anomalous} condition (Assumption \ref{ass:04}) in the ordinary case.
The non-anomalous condition is necessary whenever we treat the ordinary case in this paper, and we assume it throughout this section.

We denote the Pontryagin dual by the superscript $(-)^{\dual}$.
Recall that $\Sel_S^{\bullet}(E/K_{\infty})^{\dual}$ is known to be (resp. assumed to be) $\Lambda$-torsion for $\bullet \in \{\emptyset, +, -\}$ (resp. $\bullet \in \{ \sharp, \flat\}$).

\begin{thm}[Theorem \ref{thm:978}]\label{thm:76}
For $\bullet \in \{\emptyset, +, -, \sharp, \flat \}$,
we have $\pd_{\RR}(\Sel_S^{\bullet}(E/K_{\infty})^{\dual}) \leq 1$.
\end{thm}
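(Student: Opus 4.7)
My plan is to realize $\Sel_S^{\bullet}(E/K_{\infty})^{\dual}$ as the cokernel of an injective homomorphism between two finitely generated projective $\RR$-modules, which forces $\pd_{\RR} \le 1$ automatically. The natural framework for this is that of Nekov\'a\v{r}'s Selmer complexes.

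The first step is to construct, for each $\bullet \in \{\emptyset, +, -, \sharp, \flat\}$, a Selmer complex $\ttilde{C}^{\bullet}$ over $\RR$ attached to the Galois representation $T := T_pE$ with coefficients in $\RR^{\iota}$. The local conditions at places in $\Sigma := S \cup \{p, \infty\}$ are the trivial/unramified one for $v \ne p$, and at $v = p$ the Greenberg ordinary subspace (when $p \nmid a_p$), the Kobayashi $\pm$-condition (when $a_p = 0$), or the equivariant extension of Sprung's $\sharp/\flat$-condition developed in the body of the paper. By Iwasawa-theoretic Poitou--Tate duality, the second cohomology of this complex computes $\Sel_S^{\bullet}(E/K_{\infty})^{\dual}$ up to controlled correction terms in lower degrees.

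The second step is to verify that $\ttilde{C}^{\bullet}$ is a perfect complex of $\RR$-modules concentrated in degrees $[1,2]$, with $H^1$ being $\RR$-torsion-free. Perfectness of the global complex $R\Gamma(G_{\Q,\Sigma}, T \otimes \RR^{\iota})$ and of the local cohomology factors is standard. Perfectness of the local-condition complex at $p$ is the essential new point, and I would check it case by case: in the ordinary case it reduces, via the non-anomalous assumption (Assumption \ref{ass:04}), to the perfectness of a free $\RR$-module; in the $\pm$-case it follows from Kobayashi's construction of the $\pm$-norm groups; and in the $\sharp/\flat$-case it is provided by the equivariant Sprung matrix factorization. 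The vanishing of $H^0(\ttilde{C}^{\bullet})$ follows from $(T_pE)^{G_\Q} = 0$; that of $H^3(\ttilde{C}^{\bullet})$ follows by Cartier-dual considerations. Torsion-freeness of $H^1(\ttilde{C}^{\bullet})$ follows by comparing it with $H^1_{\Iw}(G_{\Q,\Sigma}, T)$, itself $\RR$-torsion-free essentially by Kato's work extended equivariantly.

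With these ingredients in place, standard homological algebra lets me replace $\ttilde{C}^{\bullet}$ by a two-term complex $[P^1 \xrightarrow{d} P^2]$ of finitely generated projective $\RR$-modules with $d$ injective, so that $\Sel_S^{\bullet}(E/K_{\infty})^{\dual} \cong \Cok(d)$ and hence $\pd_{\RR} \le 1$. The principal obstacle in this program is the $\sharp/\flat$-case: one must verify that Sprung's matrix description of the local condition at $p$, originally formulated over the cyclotomic algebra of $\Q$, admits a well-behaved equivariant extension to the coefficient ring $\Z_p[[\Gal(K_{\infty,p}/\Q_p)]]$ that produces a perfect, torsion-free local-condition complex compatible with the equivariant Coleman map constructed earlier in the paper. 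This is the technical heart of the theorem in the equivariant supersingular setting, and is where the generalization beyond the $K = \Q$ case of Sprung genuinely requires new input.
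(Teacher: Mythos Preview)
Your Selmer-complex strategy is in principle workable and differs from the paper's argument, but there is a genuine gap.

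\textbf{The gap.} The step ``$H^1(\ttilde{C}^{\bullet})$ torsion-free $\Rightarrow$ one may represent the complex by $[P^1 \xrightarrow{d} P^2]$ with $d$ injective'' fails. If $\ttilde{C}^{\bullet} \simeq [P^1 \to P^2]$ with the $P^i$ finitely generated projective, then $H^1 = \Ker(d)$ is \emph{automatically} torsion-free as a submodule of a projective module, so the hypothesis carries no force. To conclude $\pd_{\RR}(H^2) \le 1$ you need $\Image(d)$ projective, equivalently $H^1$ projective; over $\RR = \Z_p[G_0][[T]]$ with $p \mid \sharp G_0$, torsion-free is strictly weaker than projective. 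The repair is available: with the correct local conditions (and after enlarging your $\Sigma$ to include $\prim(N)$, since at $l \mid N$, $l \notin S$ the condition on $\T$ is the full local cohomology, not ``trivial/unramified''), $H^1(\ttilde{C}^{\bullet})$ is in fact \emph{zero}, because it identifies with the kernel of $\loc_{/f}^{\bullet}\colon H^1(\Q,\T) \to H^1_{/f}(\Q_p,\T)^{\bullet}$, which is injective precisely under the $\Lambda$-cotorsion hypothesis (Proposition~\ref{prop:945}).

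\textbf{The paper's route.} The paper does not use Selmer complexes. It works directly with the short exact sequence~\eqref{eq:03},
\[
0 \to H^1_f(\Q_p,\T)^{\bullet} \oplus \bigoplus_{l \in \Sigma \setminus (S \cup \{p\})} H^1(\Q_l,\T) \to H^1(\Q_{\Sigma}/K_{\infty},E[p^{\infty}])^{\dual} \to \Sel_S^{\bullet}(E/K_{\infty})^{\dual} \to 0,
\]
combines the projectivity of $H^1_f(\Q_p,\T)^{\bullet}$ (Lemma~\ref{lem:962}, deduced from Theorem~\ref{thm:93}) with the bound $\pd_{\RR}(H^1(\Q_{\Sigma}/K_{\infty},E[p^{\infty}])^{\dual}) \le 1$ and a separate bound on $H^1(\Q_l,\T)$ for $l \notin S \cup \{p\}$ (Proposition~\ref{prop:106}) to obtain $\pd_{\RR} \le 2$, and then upgrades to $\pd_{\RR} \le 1$ via the absence of nontrivial finite submodules (Proposition~\ref{prop:233}, invoking Greenberg's general criterion). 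Both approaches rest on the same essential local input, namely $\pd_{\RR}(H^1_{/f}(\Q_p,\T)^{\bullet}) \le 1$, which is the content of Theorem~\ref{thm:93}; your identification of this as ``the technical heart'' is correct.
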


The proof is given in Section \ref{sec:220}.
See Remark \ref{rem:871} for several related works.
In fact, in the ordinary case, the assertion of Theorem \ref{thm:76} follows from a work of Greenberg \cite{Gre10}.
However, our proof, especially the construction of the Coleman map below, will play an important role in the rest of this paper.

Put $R_{n} = \Z_p[\Gal(K_n/\Q)]$ for $n \geq -1$.
Put $\Delta = \Gal(\Q(\mu_p)/\Q)$ and let $N_{\Delta}$ denote the norm element of $\Delta$, namely $N_{\Delta} = \sum_{\delta \in \Delta} \delta \in \Z_p[\Delta]$.
It is an important observation that, since $\sharp \Delta = p-1$, any $\Z_p[\Delta]$-module can be decomposed into $\eta$-parts for characters $\eta$ of $\Delta$.
For example, we can decompose $R_0 = R_0^{\Delta} \times \Rnt$ as a ring, where $R_0^{\Delta} = N_{\Delta}R_0  \simeq R_{-1}$ is the trivial character part and $\Rnt$ denotes the sum of the non-trivial character parts. 

The key ingredient in the proof of Theorem \ref{thm:76} is Theorem \ref{thm:93} below.
It claims the precise $\RR$-module structures of the local conditions which define the concerned Selmer groups.
They can be regarded as refinements of several previous works as in Remark \ref{rem:691}.

\begin{thm}[(1) Theorem \ref{thm:973}, (2) Definition \ref{defn:231}, (3) Theorem \ref{thm:34}]\label{thm:93}
As $\RR$-modules, the following hold.

(1) If $p \nmid a_p$, we have an isomorphism 
\[
(E(K_{\infty} \otimes \Q_p) \otimes (\Q_p/\Z_p))^{\dual} \overset{\sim}{\to} \RR.
\]

(2) If $p \mid a_p$, we have exact sequences
\begin{equation}\label{eq:201}
0 \to (E^{\flat}_{\infty})^{\dual} \to \RR \oplus R_{-1} \to R_{-1} \to 0
\end{equation}
and
\begin{equation}\label{eq:202}
0 \to (E^{\sharp}_{\infty})^{\dual} \to \RR \to \Rnt/(a_p) \to 0.
\end{equation}
(3) If $a_p = 0$, we have exact sequences
\[
0 \to (E^{+}(K_{\infty} \otimes \Q_p) \otimes (\Q_p/\Z_p))^{\dual} \to \RR \oplus R_{-1} \to R_{-1} \to 0,
\]
and 
\[
0 \to (E^{-}(K_{\infty} \otimes \Q_p) \otimes (\Q_p/\Z_p))^{\dual} \to \RR \to \Rnt \to 0.
\]
\end{thm}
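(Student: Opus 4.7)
The plan is to reduce each of (1)--(3) to a local analysis at the primes above $p$, construct appropriate Coleman-type maps equivariantly, and then compute their images and cokernels. Since $p$ is unramified in $K/\Q$, all primes of $K_{\infty}$ above $p$ are $\Gal(K_{\infty}/\Q)$-conjugate, so $E(K_{\infty} \otimes \Q_p) \otimes (\Q_p/\Z_p)$ decomposes as an induced $\RR$-module from the formal group of $E$ over a single completion $K_{\infty, \pe}$. Every assertion of the theorem then reduces to understanding the formal group $\hat E(K_{\infty, \pe})$ as a module over the local Iwasawa algebra $\Z_p[[\Gal(K_{\infty, \pe}/\Q_p)]]$, which one then induces back up to $\RR$.

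For (1), the formal group of $E$ has height one, and the equivariant Perrin--Riou/Coleman logarithm produces an isomorphism of the Pontryagin dual of the formal group cohomology with $\RR$. The non-anomalous hypothesis (Assumption \ref{ass:04}) ensures that $\ttilde E$ has no nontrivial $p$-torsion over the residue field of each prime of $K$ above $p$; this gives a clean identification of local Mordell--Weil points with formal group points and guarantees $\RR$-freeness of rank one.

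For (2) and (3), I would extend Sprung's sharp/flat construction to the equivariant setting. Sprung's logarithmic matrix, obtained by factoring the Perrin--Riou big logarithm on the crystalline side into a matrix-valued part and a vector of power series, should produce two equivariant Coleman maps $\Cole^{\sharp}, \Cole^{\flat}$; one then defines $(E^{\sharp}_{\infty})^{\dual}$ and $(E^{\flat}_{\infty})^{\dual}$ as the kernels of these maps. For the specialization $a_p = 0$ in (3), one verifies that Sprung's definition recovers Kobayashi's via trace-compatibility under the convention $(\sharp, \flat) = (-, +)$, so that (3) follows from (2).

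The main obstacle will be the precise computation of the cokernels $\Rnt/(a_p)$ and $R_{-1}$, as well as of establishing the shape of the middle term $\RR \oplus R_{-1}$ for $\Cole^{\flat}$. These reflect delicate behavior at the trivial character of $\Delta$, where the logarithmic matrix degenerates and the factor $R_{-1}$ (coming from the ``base'' layer $K_{-1} = K$) emerges. Handling the interplay between the ring decomposition $R_0 = R_0^{\Delta} \times \Rnt$ and the occurrence of $a_p \cdot N_{\Delta}$ in the logarithmic matrix, in a way that survives the full $\RR$-equivariance and not merely $\Lambda$-equivariance, will be the most technical step.
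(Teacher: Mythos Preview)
Your proposal sketches a genuinely different strategy from the paper, but it is a plan rather than a proof, and it leaves unresolved exactly the step where previous equivariant attempts have failed.

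\textbf{Difference in approach.} The paper does not use the Perrin--Riou big logarithm or a crystalline factorization at all. Instead, it constructs, via Honda theory, an explicit system of local points $d_n \in \hat E(\m_n)$ satisfying concrete norm relations (Theorems~\ref{thm:116} and~\ref{thm:121}): in the supersingular case $\Tr^n_{n-1}(d_n) = a_p d_{n-1} - d_{n-2}$ for $n \geq 1$, with a modified relation at $n=0$ involving $\varphi + \varphi^{-1}$, together with the generation property that $d_n, d_{n-1}$ generate $\hat E(\m_n)$ over $R_n$. The Coleman maps are then defined directly as (limits of) maps $\hat E(\m_n)^* \to R_n^{\oplus 2}$ by pairing against $d_n, d_{n-1}$, and the precise cokernels fall out of an explicit $3 \times 2$ presentation matrix (see \eqref{eq:689}) built from these norm relations. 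The computation of the cokernels as $R_{-1}$ and $\Rnt/(a_p)$ is then elementary linear algebra over $R_0 = R_0^{\Delta} \times \Rnt$ (Corollary~\ref{cor:90}). For part~(2) there is in fact nothing further to prove: the exact sequences \emph{define} $E_\infty^{\sharp/\flat}$.

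\textbf{The gap in your plan.} You correctly identify the cokernel computation as the main obstacle, but you do not indicate how to resolve it, and your proposed definition of $(E_\infty^{\flat})^{\dual}$ as the kernel of $\Cole^{\flat}$ alone is not what the paper does: the first map in \eqref{eq:201} is $(\Cole^{\flat}, \Psi_{d_{-1}})$, where $\Psi_{d_{-1}}$ is pairing against the bottom-layer point $d_{-1}$. This extra $R_{-1}$ component is essential. As the paper notes (Remark~\ref{rem:879}), the map $\Cole^+$ by itself fails to be an isomorphism onto its expected target whenever the residue degree of $p$ in $K/\Q$ is divisible by~$4$; this is precisely the obstruction that M.~Kim's thesis overlooked. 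Without an explicit handle on the base layer --- which in the paper comes from the point $d_{-1}$ and the relation $\Tr^0_{-1}(d_0) = (a_p - \varphi - \varphi^{-1})d_{-1}$ --- it is unclear how your crystalline approach would produce the $\RR \oplus R_{-1}$ middle term rather than just $\RR$. Your last paragraph gestures at this difficulty but does not resolve it; the paper's method of explicit local points is designed exactly to make this step transparent.
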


The proof is given in Section \ref{sec:32} (in fact, in our treatment, the assertion (2) is almost the definition of $E_{\infty}^{\sharp/\flat}$).
The basic method relies on Kitajima-Otsuki \cite{KO18} and Sprung \cite{Spr12}, which are in turn based on the preceeding works by Kobayashi \cite{Kob03}, Iovita-Pollack \cite{IP06}, etc.
More precisely, we construct a system of local points, namely elements of $E(K_n \otimes \Q_p)$, satisfying certain norm compatibilities.
The construction is done in Section \ref{sec:02} via Honda theory on formal groups.
However, we need more careful construction than in the previous works (see Remark \ref{rem:967}).

In the proof of Theorem \ref{thm:93}, we construct the Coleman map
\[
\Cole^{\bullet}: (E(K_{\infty} \otimes \Q_p) \otimes (\Q_p/\Z_p))^{\dual} \to \RR
\]
for $\bullet \in \{\emptyset, +, -, \sharp, \flat\}$ (Definitions \ref{defn:877}, \ref{defn:400}, and \ref{defn:878}).
When $a_p = 0$, we have $(\Cole^{\sharp}, \Cole^{\flat}) = (\Cole^{-}, \Cole^{+})$, which is consistent with our convention $(\sharp, \flat) = (-, +)$.
Moreover, by composing with a canonical map, $\Cole^{\bullet}$ can be regarded as a map from the local Iwasawa cohomology group,
\[
\Cole^{\bullet}: \varprojlim_n H^1(K_n \otimes \Q_p, T_pE) \to \RR.
\]
Here $T_pE$ denotes the $p$-adic Tate module of $E$.
These Coleman maps generalize the previous works on the case where $K = \Q$.
In that case, such Coleman maps were constructed by Perrin-Riou \cite{Per94} (see \cite[Theorem 16.4]{Kat04}), \cite[Definition 8.22]{Kob03}, and \cite[Definition 5.9]{Spr12}.

\subsection*{Analytic Side}

Let $\alpha \in \overline{\Q_p}$ be a root of $t^2-a_pt+p = 0$ with $\ord_p(\alpha) <1$ (often called an {\it allowable root}).
Here $\ord_p$ denotes the additive $p$-adic valuation map normalized as $\ord_p(p) = 1$.
Then by Amice-Velu \cite{AV75}, Vi{\v{s}}ik \cite{Vis76}, or Mazur-Tate-Teitelbaum \cite{MTT86}, there is a $p$-adic $L$-function $\LL_S(E/K_{\infty}, \alpha)$ which interpolates the $L$-values $L_S(E, \psi, 1)$ for Dirichlet characters $\psi$.
See Subsection \ref{subsec:403} for the precise definitions and properties of various $p$-adic $L$-functions.
Note that our interpolation property does not seem to appear explicitly in the literature; see Remarks \ref{rem:798} and \ref{rem:812} for this issue.
The interpolation property is determined for Theorem \ref{thm:77} below to hold.

In the ordinary case, we have $\LL_S(E/K_{\infty}, \alpha) \in \RR \otimes \Q_p$ for the only one allowable root $\alpha$ (namely, the unit root).
In that case, simply put $\LL_S(E/K_{\infty}) = \LL_S(E/K_{\infty}, \alpha) \in \RR \otimes \Q_p$.
On the other hand, in the supersingular case, $\LL_S(E/K_{\infty}, \alpha)$ is not contained in $\RR \otimes \Q_p(\alpha)$.
This trouble can be regarded as the analytic counterpart to the failure of the torsionness on the algebraic side.
Such a trouble can be resolved by the idea of Pollack \cite{Pol03} ($a_p = 0$ case) and Sprung \cite{Spr12} (general case).
As a result, we obtain $p$-adic L-functions $\LL_S^{\pm}(E/K_{\infty}) \in \RR \otimes \Q_p$ if $a_p = 0$ and $\LL_S^{\sharp/\flat}(E/K_{\infty}) \in \RR \otimes \Q_p$ if $p \mid a_p$.

We will show that these $p$-adic $L$-functions can be obtained by applying our Coleman maps to the Beilinson-Kato element, as follows.
This assertion is a generalization of the results on $K = \Q, S = \emptyset$ by \cite[Theorem 16.6]{Kat04}, \cite[Theorem 6.3]{Kob03}, and \cite[Theorem 6.12]{Spr12}.
Let $\z$ be the Beilinson-Kato element, introduced in Theorem \ref{thm:941}.
This element $\z$ lives in the global Iwasawa cohomology group, and we denote by $\loc(\z)$ its image to the local Iwasawa cohomology group.
We denote by the superscript $\iota$ the involution of a (completed) group ring induced by inverting each of the elements of the group.

\begin{thm}[Theorems \ref{thm:75} and \ref{thm:71}] \label{thm:77}
For $\bullet \in \{\emptyset, +, -, \sharp, \flat \}$, we have
\[
\LL_S^{\bullet}(E/K_{\infty})^{\iota} = \Cole^{\bullet}(\loc(\z))
\]
in $\RR \otimes \Q_p$.
\end{thm}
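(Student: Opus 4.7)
The plan is to deduce the identity by comparing both sides at finite-order characters $\psi$ of $\Gal(K_{\infty}/\Q)$. The central input is Kato's explicit reciprocity law, which identifies the image of $\loc(\z)$ under Bloch-Kato dual-exponential-type maps with the complex $L$-values $L_S(E, \psi^{-1}, 1)$. I would treat the ordinary case first, and then derive the supersingular cases by a linear algebra argument using both allowable roots.

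In the ordinary case $\bullet = \emptyset$, the Coleman map $\Cole^{\emptyset}$ constructed alongside Theorem \ref{thm:93}(1) is the equivariant incarnation of Perrin-Riou's big logarithm map. Combining its defining interpolation property with Kato's explicit reciprocity \cite[Theorem 16.6]{Kat04}, the value $\psi(\Cole^{\emptyset}(\loc(\z)))$ equals $L_S(E, \psi^{-1}, 1)$ divided by the appropriate complex period and multiplied by the standard Euler-type factor at $p$ (involving the unit root $\alpha$ and the conductor of $\psi$). This matches $\psi(\LL_S(E/K_{\infty})^{\iota})$ by the interpolation property of the $p$-adic $L$-function, with $\iota$ responsible for the swap $\psi \leftrightarrow \psi^{-1}$. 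Since finite-order characters are Zariski dense in $\Spec(\RR \otimes \Q_p)$, the identity follows.

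For the supersingular case, one first proves, for each allowable root $\alpha$ of $t^2 - a_p t + p$, an analogous identity $\Cole_{\alpha}(\loc(\z)) = \LL_S(E/K_{\infty}, \alpha)^{\iota}$ in a suitable distribution algebra $\HH_{\alpha}$ attached to $\alpha$, by precisely the same reciprocity argument as in the ordinary case. The Sprung-Pollack idea is then to write the pair of $\alpha$- and $\beta$-Coleman maps (with $\beta$ the other root) as a $2 \times 2$ logarithm matrix $M$ with entries in $\RR \otimes \Q_p$ applied to the pair $(\Cole^{\sharp}, \Cole^{\flat})$. By the very definition of the Sprung $\sharp/\flat$ $p$-adic $L$-functions, the same matrix $M$ decomposes the pair $(\LL_S(E/K_{\infty}, \alpha)^{\iota}, \LL_S(E/K_{\infty}, \beta)^{\iota})$ into $(\LL_S^{\sharp}(E/K_{\infty})^{\iota}, \LL_S^{\flat}(E/K_{\infty})^{\iota})$. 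Matching components in these two identities yields the claimed equalities. The $a_p = 0$ case is recovered as the special case in which $M$ has a particularly simple diagonal-type structure, consistent with the convention $(\sharp, \flat) = (-, +)$.

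The hard part will be establishing the precise characterwise interpolation formula for the equivariant $\Cole^{\bullet}$. The construction from Section \ref{sec:02}, based on the norm-compatible system of local points and the structural description of Theorem \ref{thm:93}, must be reconciled with the dual exponential map on each $\psi$-component, and the matrix $M$ must be shown to have the advertised entries in $\RR \otimes \Q_p$ (rather than in a larger distribution algebra). Special care is needed at characters of $\Delta = \Gal(\Q(\mu_p)/\Q)$, since the ring decomposition $R_0 = R_0^{\Delta} \times \Rnt$ propagates through the local theory and is responsible for the exceptional $R_{-1}$ and $\Rnt/(a_p)$ quotients appearing in Theorem \ref{thm:93}; these govern precisely where $\Cole^{\bullet}$ detects the full $\RR$-structure. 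Once the interpolation is established, density of characters together with the interpolation-based definition of $\LL_S^{\bullet}$ completes the proof.
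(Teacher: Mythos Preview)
Your outline is essentially the paper's approach: evaluate both sides at finite-order characters $\psi$, combine Kato's explicit reciprocity for $\z$ (Theorem \ref{thm:941}, equation \eqref{eq:14}) with the character-by-character interpolation of the Coleman map, and use density. The crucial technical input you call ``the precise characterwise interpolation formula for $\Cole^{\bullet}$'' is exactly property (3) of Theorems \ref{thm:116} and \ref{thm:121}: the explicit Gauss-sum formula for $\sum_{\sigma} \sigma(\log_{\hat{E}}(d_{n_\psi}))\psi(\sigma)$, combined with the pairing identity \eqref{eq:15}. That is the heart of the argument and you have correctly located it.

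Two points where your sketch diverges from the paper and where you should be careful. First, in the supersingular case the paper does \emph{not} construct distribution-valued maps $\Cole_{\alpha}$ and then invert a matrix; rather it computes the integral, $\RR^{\oplus 2}$-valued map $\Cole'$ directly character-by-character (Theorem \ref{thm:72}), and only afterwards multiplies by the log matrix $\Log'_{\alpha,\beta}$ to compare with $\LL_S(E/K_{\infty},\alpha)^{\iota}$ (Theorem \ref{thm:71}). This avoids ever working in the distribution algebra on the Coleman side. Second, your claim that the logarithm matrix $M$ has entries in $\RR \otimes \Q_p$ is wrong: the entries of $\Log_{\alpha,\beta}$ live in $\HH_{1,\Q_p(\alpha)}(\Gamma)$ and are genuinely transcendental (they involve infinite products of cyclotomic polynomials, cf.\ Definition \ref{defn:92}). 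What lies in $\RR \otimes \Q_p$ are the $\sharp/\flat$ objects on both sides, not the matrix itself; the point is that applying the transcendental matrix to the integral pair $(\Cole^{\sharp},\Cole^{\flat})(\loc(\z))$ recovers the transcendental pair $(\LL_S(\alpha)^{\iota}, \LL_S(\beta)^{\iota})$, and one checks this by character evaluation.
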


The proof is given in Section \ref{sec:60}.
 The appearance of $\iota$ in the left hand side is explained in Remark \ref{rem:812}.
 
 \subsection*{Equivariant Main Conjecture}
 
  By Theorem \ref{thm:76}, the (initial) Fitting ideal $\Fitt_{\RR}(\Sel_S^{\bullet}(E/K_{\infty})^{\dual})$ of the Pontryagin dual of each Selmer group is a principal ideal.
Thus it is reasonable to formulate an equivariant main conjecture as a connection between $\Fitt_{\RR}(\Sel_S^{\bullet}(E/K_{\infty})^{\dual})$ and $(\LL_S^{\bullet}(E/K_{\infty}))$.

When $K = \Q$ and $S = \emptyset$, the formulation of $(-)$-main conjecture in \cite[\S 4]{Kob03} requires an explicit auxiliary factor.
Motivated by that, we define auxiliary ideals $\au^{\bullet}$ of $\RR$ by
\[
\au^{\bullet} =
	\begin{cases}
		\RR & (\bullet \in \{ \emptyset, +, \flat\})\\
		\RR & (p \mid a_p \neq 0, \bullet = \sharp)\\
		(N_{\Delta}, \gamma - 1) & (a_p = 0, \bullet \in \{-, \sharp\}).
	\end{cases}
\]
Here, $\gamma$ denotes a fixed generator of $\Gamma$.
Recall that we can decompose $\RR = \RR^{\Delta} \times \RRnt$, where $\RRnt$ denotes the sum of non-trivial character parts with respect to the action of $\Delta$.
Then, in the $a_p=0$ case, we have $\au^- = \au^{\sharp} = \RR^{\Delta} \times (\gamma-1)\RRnt$, which shows that these are in fact principal ideals.

Hopefully the equivariant main conjecture in our context may be formulated as
\begin{equation}\label{eq:241}
\au^{\bullet} \Fitt_{\RR}(\Sel_S^{\bullet}(E/K_{\infty})^{\dual}) = (\LL_S^{\bullet}(E/K_{\infty}))
\end{equation}
for $\bullet \in \{\emptyset, +, -, \sharp, \flat\}$.
When $K = \Q$ and $S = \emptyset$, the assertion \eqref{eq:241} is equivalent to the previous formulations of main conjectures (\cite[Conjecture 17.6]{Kat04}, \cite[\S 4]{Kob03}, and \cite[Conjecture 7.21]{Spr12}).
However, the author is not convinced of the formula \eqref{eq:241} in full generality, so we do not propose it as a conjecture.
Nonetheless, let us refer \eqref{eq:241} as {\it the equivariant main conjecture} in this paper.
 In Proposition \ref{prop:615}, we will show that \eqref{eq:241} is independent from the choice of $S$, similarly as in the work by Greenberg-Vatsal \cite[Theorem 1.5]{GV00}.

\begin{rem}\label{rem:797}
Consider the situation of Kurihara \cite[Theorem 6]{Kur14}.
In particular, $E$ is ordinary at $p$ and $K/\Q$ is a $p$-extension.
As claimed later in Remark \ref{rem:798}, the element $\xi_{K_{\infty}, S}$ in \cite{Kur14} coincides with our $\LL_S(E/K_{\infty})$ up to a unit.
Then \cite[Theorem 6(2)]{Kur14} shows that the classical main conjecture for $E$ over the cyclotomic $\Z_p$-extension of $\Q$ is equivalent to the equivariant main conjecture \eqref{eq:241}.
This provides evidence of our formulation \eqref{eq:241}.
\end{rem}

When $K = \Q$ and $S = \emptyset$, under certain hypotheses, one divisibility of the (non-equivariant) main conjecture is proved in \cite[Theorem 17.4]{Kat04}, \cite[Theorem 4.1]{Kob03}, and \cite[Theorem 7.16]{Spr12} via the Euler system machinery.
In this paper, as a generalization of them, we prove one divisibility of the equivariant main conjecture as follows.
Let $\Sel^0(E/K_{\infty})$ be the fine Selmer group.

\begin{thm}\label{thm:95}
Suppose the following conditions hold.

(a) $H^0(K_{\infty} \otimes \Q_l, E[p]) = 0$ for any prime number $l$ which is ramified in $K/\Q$.

(b) $H^0(K_{\infty} \otimes \Q_l, E[p^{\infty}])$ is a divisible $\Z_p$-module for any prime number $l$.

(c) The Galois representation $\Gal(\overline{\Q}/\Q) \to \Aut(E[p^{\infty}]) \simeq \GL_2(\Z_p)$ is surjective.

(d) $p \geq 5$.

(e) Either $p \nmid [K:\Q]$ or the $\mu$-invariant of $\Sel^0(E/K_{\infty})^{\dual}$ as a $\Lambda$-module is $0$.

(f) $E$ has good reduction at any prime number $l$ which is ramified in $K/\Q$.

Then we have
\[
\au^{\bullet} \Fitt_{\RR}(\Sel_S^{\bullet}(E/K_{\infty})^{\dual}) \supset (\LL_S^{\bullet}(E/K_{\infty}))
\]
for $\bullet \in \{\emptyset, +, -\}$.
\end{thm}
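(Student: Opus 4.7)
The overall plan is to run Kato's Euler-system argument equivariantly over $\RR$ and then translate the resulting divisibility through the Coleman map. By Theorem \ref{thm:77} one has $\Cole^\bullet(\loc(\z)) = \LL_S^\bullet(E/K_\infty)^\iota$, so it would suffice to prove
\[
\au^\bullet \Fitt_\RR(\Sel_S^\bullet(E/K_\infty)^\dual) \supset (\Cole^\bullet(\loc(\z))).
\]
I would produce this by combining three ingredients: an equivariant Poitou--Tate four-term sequence in Iwasawa cohomology, the local structure coming from Theorem \ref{thm:93} (which is what produces the auxiliary ideal $\au^\bullet$), and an equivariant Kolyvagin-derivative bound coming from the Beilinson--Kato element $\z$.

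First I would assemble the Poitou--Tate nine-term sequence for $T_pE$ with ramification restricted to $S \cup \{p\}$ in the tower $K_\infty/\Q$, and extract from it a four-term exact sequence tying $\Sel_S^\bullet(E/K_\infty)^\dual$, the global Iwasawa cohomology $H^1_\Iw(\Z[1/S], T_pE)$, the local quotient $H^1_\Iw(K_\infty \otimes \Q_p, T_pE)/L^\bullet$ (where $L^\bullet$ is the local condition cutting out $\Sel^\bullet$), and $H^2_\Iw(\Z[1/S], T_pE)$. Theorem \ref{thm:93} presents $(L^\bullet)^\dual$ (equivalently, via local Tate duality, the quotient of the local Iwasawa cohomology by $L^\bullet$) as an explicit $\RR$-module; after multiplication by $\au^\bullet$ it maps into $\RR$ via $\Cole^\bullet$. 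Hypotheses (a), (b), (f) are precisely what make the remaining local terms at $\ell \in S \setminus \{p\}$ finite and harmless in the Fitting-ideal bookkeeping.

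Second, I would apply Kato's Euler-system machinery to $\z$ in an equivariant form. Hypothesis (c) supplies the big-image input needed for Kolyvagin derivatives, (d) ensures $p \geq 5$ so that the required auxiliary characters exist, and (e) plays the role of the standard $\mu = 0$ hypothesis. The output is an $\RR$-Fitting-ideal divisibility of the shape
\[
\Fitt_\RR H^2_\Iw(\Z[1/S], T_pE) \supset \Fitt_\RR\!\bigl(H^1_\Iw(\Z[1/S], T_pE)/\RR \cdot \z\bigr).
\]
Feeding this into the four-term sequence from the first step, pushing the right-hand side through $\Cole^\bullet \circ \loc_p$, and invoking Theorem \ref{thm:77} then yields the claimed divisibility for $\bullet \in \{\emptyset, +, -\}$. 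The restriction to these three decorations is natural: the $\sharp/\flat$-case would require either a control of the extra $a_p$-quotient appearing in Theorem \ref{thm:93}(2) or a genuinely two-variable Euler-system argument, which is not pursued here.

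The main obstacle is the second step. Classically Kato's argument yields only a \emph{characteristic}-ideal divisibility, which is well-behaved in the regular ring $\Lambda$ after decomposing by the tame characters of $\Delta$; but when $p \mid [K:\Q]$ the ring $\RR$ is no longer regular, and Fitting ideals can be strictly smaller than characteristic ideals. This is exactly where Theorem \ref{thm:76} becomes essential: the bound $\pd_\RR(\Sel_S^\bullet(E/K_\infty)^\dual) \leq 1$ forces the relevant Fitting ideal to be principal and to coincide with the determinant of a square presentation matrix, so a characteristic-ideal divisibility at every height-one prime of $\RR$ can be promoted to a Fitting-ideal divisibility. Hypothesis (e) is what eliminates the troublesome height-one prime at $p$ of $\RR$, where the upgrade would otherwise fail; without it the Euler-system bound would pick up an unwanted $\mu$-correction on the algebraic side.
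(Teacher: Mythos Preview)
Your overall architecture is right, and it matches the paper's: Poitou--Tate control (Proposition \ref{prop:945}), the Coleman map plus Theorem \ref{thm:77}, and an equivariant Euler-system bound. But there are two genuine gaps.

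First, the $\iota$. Theorem \ref{thm:77} gives $\Cole^\bullet(\loc(\z)) = \LL_S^\bullet(E/K_\infty)^\iota$, so your ``it would suffice'' reduction actually yields the inclusion with $(\LL_S^\bullet(E/K_\infty)^\iota)$ on the right, not $(\LL_S^\bullet(E/K_\infty))$. The paper closes this via a functional-equation computation (Proposition \ref{prop:641} and Corollary \ref{cor:913}), and this is precisely where conditions (a) and (f) enter: with good reduction at the ramified primes and the vanishing of $H^0(K_\infty\otimes\Q_l,E[p])$ there, one shows the two ideals coincide. You attribute (f) to making local terms ``harmless in the Fitting-ideal bookkeeping,'' but that is not its role.

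Second, the Euler-system step is underspecified in a way that hides the real difficulty. Running Kato's argument character-by-character and then reassembling gives only characteristic-ideal information, and promoting that to a Fitting-ideal statement over the non-regular ring $\RR$ with only the \emph{fine}-Selmer $\mu=0$ of condition (e) does not go through; the standard route needs the stronger hypothesis that $\Sel_S^\bullet(E/K_\infty)^\dual$ itself has $\mu=0$ (see Remark \ref{rem:a11}). The paper instead invokes the Stark-system machinery of Burns--Sakamoto--Sano directly over $\RR/J_n$ (Theorem \ref{thm:964}), obtaining an \emph{equality} of Fitting ideals at each finite level, and then passes to the limit (Theorem \ref{thm:674}). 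Condition (b) is used there to verify the cartesian hypothesis for the Selmer structure (Lemma \ref{lem:910}), and (e) enters only at the very end, via Lemma \ref{lem:109}, to cancel the ideal $\FF(\Sel^0(E/K_\infty)^\dual)$ from both sides. Your invocation of Theorem \ref{thm:76} is relevant but does not by itself bridge the gap between a height-one-prime-by-height-one-prime bound and a genuine $\RR$-Fitting divisibility.
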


The proof is given in Section \ref{sec:84}.

\begin{rem}\label{rem:a11}
The conditions (a)--(d) are needed for Euler system argument (Theorem \ref{thm:964}).
The condition (b) is equivalent to $p \nmid \Tam(E/K_0)$, the Tamagawa factor of $E$ over $K_0$ (see \cite[p. 74]{Gree99}).
By the open image theorem of Serre \cite{Ser72}, if $E$ does not admit complex multiplication, the condition (c) holds for all but finitely many $p$.
The proof of Theorem \ref{thm:95} first show the inclusion where the right hand side is replaced by $(\LL_S^{\bullet}(E/K_{\infty})^{\iota})$, and the condition (f) (together with (a)) is used to show $(\LL_S^{\bullet}(E/K_{\infty})^{\iota}) = (\LL_S^{\bullet}(E/K_{\infty}))$.

The condition (e) on $\mu = 0$ is conjectured to be true in general by Coates-Sujatha \cite[p.822, Conjecture A]{CS05}.
By \cite[Theorem 3.4]{CS05}, the condition (e) is equivalent to the vanishing of the $\mu$-invariant of the unramified Iwasawa module for $K_{\infty}(E[p])$, which is a very standard conjecture.
However, by our condition (c), $K_{\infty}(E[p])$ is never abelian over $\Q$ and thus the standard conjecture is still unachievable.

It should be remarkable that, if we adopt a ``standard'' method (\cite{BG03}, \cite{GP15}, \cite{Kur03}, etc.) to verify the equivariant main conjecture, we should suppose the stronger hypothesis that $\Sel_S^{\bullet}(E/K_{\infty})^{\dual}$ satisfies $\mu = 0$.
In particular, the work of Kurihara \cite{Kur03}, which treats the ordinary case, is very close to ours, but he assumes the stronger hypothesis.
Roughly speaking, under the stronger hypothesis, the equivariant main conjecture can be deduced from the character-wise main conjecture.
Our method in this paper to prove one divisibility does not rely on the character-wise one.
Instead, we use an equivariant Euler system argument, which is developed recently by Burns, Sano, and Sakamoto (\cite{BSS}, \cite{BS}, \cite{Sak}; we do not need the higher rank theory, though).
The condition (e) is, however, necessary to connect the result from the equivariant Euler system argument and our formulation of the equivariant main conjecture.

In fact, we can remove the condition (e) from Theorem \ref{thm:95}, by developing a certain refinement of the works by Burns, Sano, and Sakamoto.
However, the proof requires detailed discussion on the theory of Euler systems, which is outside the scope of the present paper, so the proof will be given in a forthcoming paper \cite{Kata_11}.
\end{rem}

\subsection*{Application to a Conjecture of Mazur-Tate} 
 
Mazur-Tate \cite{MT87} formulated a conjecture which predicts a relation between Selmer groups and $L$-values.
Let $M$ be a positive integer.
The algebraic object is the (primitive) Selmer group $\Sel(E/\Q(\mu_M))$, whose definition is given in Subsection \ref{subsec:402}.
The analytic object is the Mazur-Tate element $\theta_M \in \Q[\Gal(\Q(\mu_M)/\Q)]$ associated to $E$, whose definition is given in Section \ref{sec:07}.
We often have $\theta_M \in \Z_p[\Gal(\Q(\mu_M)/\Q)]$, that is, the coefficients of $\theta_M$ are $p$-adic integers; see Remark \ref{rem:a03} for a sufficient condition.
 Then a conjecture of Mazur-Tate \cite[Conjecture 3 (weak main conjecture)]{MT87} claims the following (strictly speaking, \cite{MT87} only considers the real part and also assumes the finiteness of the Tate-Shafarevich group).
 
 \begin{conj}\label{conj:202}
 For any positive integer $M$, we have 
 \[
 \theta_M \in \Fitt_{\Z_p[\Gal(\Q(\mu_M)/\Q)]} (\Sel(E/\Q(\mu_M))^{\dual}),
 \]
 as long as $\theta_M \in \Z_p[\Gal(\Q(\mu_M)/\Q)]$.
 \end{conj}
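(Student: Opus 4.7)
The plan is to deduce Conjecture \ref{conj:202}, in cases where the hypotheses of Theorem \ref{thm:95} apply, from the one divisibility of the equivariant main conjecture by descent to a finite layer. Write $M = M_0 p^{n+1}$ with $(M_0,p) = 1$ and $n \geq -1$; take $K = \Q(\mu_{M_0})$ so that $p$ is unramified in $K$ and $K_n = \Q(\mu_M)$; and let $\pi : \RR \twoheadrightarrow \Z_p[G]$ denote the canonical projection, where $G = \Gal(\Q(\mu_M)/\Q)$ and $\RR = \Z_p[[\Gal(K_\infty/\Q)]]$.

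Since Fitting ideals are compatible with surjective base change, applying $\pi$ to Theorem \ref{thm:95} gives, for $\bullet \in \{\emptyset, +, -\}$,
\[
\pi(\au^\bullet) \cdot \Fitt_{\Z_p[G]}\!\bigl(\Sel_S^\bullet(E/K_\infty)^\dual \otimes_\RR \Z_p[G]\bigr) \supseteq \bigl(\pi(\LL_S^\bullet(E/K_\infty))\bigr).
\]
Three matchings must then be carried out. First, identify $\pi(\LL_S^\bullet(E/K_\infty))$ with $\theta_M$ up to Euler factors at primes in $S$: this should come from Theorem \ref{thm:77} together with the explicit interpolation property of the $p$-adic $L$-function, and the $S$-to-$\emptyset$ discrepancy is controlled as in Proposition \ref{prop:615}. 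Second, a control theorem along $K_\infty/\Q(\mu_M)$ must show that $\Sel_S^\bullet(E/K_\infty)^\dual \otimes_\RR \Z_p[G]$ surjects onto (or otherwise controls) $\Sel(E/\Q(\mu_M))^\dual$; the local conditions at $p$ are governed by Theorem \ref{thm:93}, which ensures the $\pm$- (or $\sharp/\flat$-) subgroups descend compatibly to the usual local points at $\Q(\mu_M)$, while the local conditions at primes in $S$ are handled by the standard Greenberg-type control argument under hypotheses (a), (b), (f). Third, the auxiliary factor $\pi(\au^\bullet)$ must be absorbed by (or matched against) the control discrepancy.

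The main obstacle is the second step in the supersingular case. The auxiliary ideal $\au^\bullet = (N_\Delta, \gamma-1)$ that appears when $a_p = 0$ is not a unit after specialization, and its role is precisely to cancel the discrepancy between the $\pm$-subgroups $E^\pm(K_\infty \otimes \Q_p)$ and the usual local points at the finite level. Generalizing the Kobayashi-type descent used by C.-H.~Kim-Kurihara \cite{KK} in the $K = \Q$ case to the equivariant setting, and feeding in the explicit structure provided by Theorem \ref{thm:93} (so that the Coleman map is, up to a bounded and explicit cokernel, an isomorphism), should show that this cancellation occurs exactly and yields the unadorned containment $\theta_M \in \Fitt_{\Z_p[G]}(\Sel(E/\Q(\mu_M))^\dual)$. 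The ordinary case ($\bullet = \emptyset$, $\au^\emptyset = \RR$) is expected to be more direct, since no auxiliary factor intervenes and one can imitate the ordinary part of \cite{KK} after verifying the analogous control theorem equivariantly.
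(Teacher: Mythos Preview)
Your broad strategy matches the paper's (Section~\ref{sec:07}): descend the EMC divisibility, compare the analytic element with $\theta_M$, and handle the local discrepancy at $p$ to absorb $\au^\bullet$. But two of your three ``matchings'' underestimate the actual work, and your plan only reaches the weaker Corollary~\ref{cor:204}, not Theorem~\ref{thm:203}.

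\textbf{Analytic side.} The claim ``$\pi(\LL_S^\bullet)=\theta_M$ up to Euler factors at primes in $S$'' is too naive, and Proposition~\ref{prop:615} is the wrong tool: it only asserts $S$-independence of the EMC, not any comparison of analytic elements. The Mazur-Tate elements and the projected $p$-adic $L$-functions interpolate the same values at characters of \emph{exact} conductor $M$, but at imprimitive characters they obey \emph{different} trace relations in $m$: a three-term Hecke-type recursion for $\theta$ versus a single Euler-factor relation for $\LL$ (compare \eqref{eq:905} and \eqref{eq:906}). The paper reconciles them via $\alpha$-stabilization and the computations of Propositions~\ref{prop:513}--\ref{prop:516}, ending with the inclusion $(\theta_{m,n},\nu^{m,n}_{m,n-1}(\theta_{m,n-1})) \subset \prod_{l\mid m}(1,\nu_{m,(l)}P_l^{-1})\cdot(\ttilde\omega_n^\mp\LL^{\pm}_{m,n})$.

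\textbf{Algebraic side.} The paper does not use a control theorem in the sense you describe. It first passes from $\Sel^\bullet_{\prim(m)}$ to the primitive $\Sel^\bullet$ \emph{at the $\infty$-level}, via the exact sequence~\eqref{eq:04} together with the computation of the shifted Fitting invariant $\SF{-1}(H^1(\Q_l,\T))$ for each $l\mid m$ (Proposition~\ref{prop:510}, Corollary~\ref{cor:518}); condition~$(\star)$ enters precisely here and produces the \emph{same} factor $\prod_{l\mid m}(1,\nu_{m,(l)}P_l^{-1})$ as on the analytic side. Descent to the finite layer then uses only the injectivity of restriction $\Sel^\bullet(E/K_{m,n})\hookrightarrow\Sel^\bullet(E/K_{m,\infty})$, plus Lemma~\ref{lem:623}, which identifies the Fitting ideal of the local quotient at $p$ as $(\au^\pm\ttilde\omega_n^\mp)$ and thereby absorbs $\au^\bullet$ exactly as you predicted.

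Since you impose hypotheses (a)--(f), condition~(a) forces $H^1(\Q_l,\T)=0$ for every $l\mid m$, so the matching factor $\prod_{l\mid m}(1,\nu_{m,(l)}P_l^{-1})$ collapses to the unit ideal on both sides and your outline does go through; the paper explicitly remarks this after Corollary~\ref{cor:204}. But to reach Theorem~\ref{thm:203} under only $(\star)$, both the shifted-Fitting computation and the full analytic comparison are essential, and neither is visible in your sketch.
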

 
When $M$ is a power of $p$, using the validity of non-equivariant ($\pm$-)main conjecture, C.-H. Kim-Kurihara \cite[Theorem 1.14]{KK} proved certain cases of this conjecture (more precisely, they treated only the number fields contained in the $\Z_p$-extension of $\Q$, namely, the $\Delta$-invariant parts).
In this paper, we extend that work to general $M$.
Let $N$ be the conductor of $E$.
For $l \nmid pN$, let $\ttilde{E}_l$ denote the reduction of $E$ modulo $l$.

\begin{thm}\label{thm:203}
Suppose that $p \nmid a_p$ or $a_p = 0$ holds.
Let $m$ be a positive integer relatively prime to $pN$.
Suppose the following holds.
\begin{quote}
$(\star)$ Let $l$ be a prime divisor of $m$ with $l \equiv 1 \mod p$.
If $a_l \equiv 2 \mod p$, then $\sharp \ttilde{E}_l(\F_l)[p] \neq p^2$.
If $a_l \equiv -2 \mod p$ and the residue degree of $l$ in $\Q(\mu_m)/\Q$ is even, then $\sharp \ttilde{E}_l(\F_{l^2})[p] \neq p^2$.
\end{quote}
Then, if 
\[
\au^{\bullet} \Fitt_{\RR}(\Sel_S^{\bullet}(E/K_{\infty})^{\dual}) \supset (\LL_S^{\bullet}(E/K_{\infty}))
\]
holds for $K = \Q(\mu_m)$ and any possible $\bullet \in \{\emptyset, +, -\}$, then Conjecture \ref{conj:202} holds for $M = mp^{n+1}$ with any $n \geq -1$.
\end{thm}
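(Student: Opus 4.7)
The strategy is a specialization (``descent'') from the Iwasawa tower to the finite layer $K_n = \Q(\mu_{mp^{n+1}}) = \Q(\mu_M)$, following the pattern of Kim--Kurihara \cite{KK} but with additional care at the primes dividing $m$. I would take $K = \Q(\mu_m)$ and $S = \{\, l \text{ prime} : l \mid m\,\}$, the minimal admissible set (legitimate by the $S$-independence of the equivariant main conjecture, Proposition \ref{prop:615}). Applying the assumed divisibility for the $\bullet$-variant dictated by $a_p$ and pushing forward along the canonical surjection $\pi_n : \RR \twoheadrightarrow R_n$, the functoriality of Fitting ideals under base change yields
\[
\pi_n(\au^\bullet) \cdot \Fitt_{R_n}\!\bigl(\Sel_S^\bullet(E/K_\infty)^\dual \otimes_\RR R_n\bigr) \supset \bigl(\pi_n(\LL_S^\bullet(E/K_\infty))\bigr).
\]
The task is now to reinterpret each side at finite level.

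On the analytic side, I would combine the interpolation property of $\LL_S^\bullet(E/K_\infty)$ (Subsection \ref{subsec:403}) with the Pollack--Sprung relations expressing the two-variable $\LL_S(E/K_\infty,\alpha)$ in terms of the $\LL_S^\bullet$. Evaluating at level $n$ and tracking the involution $\iota$, one shows that $(\pi_n(\LL_S^\bullet(E/K_\infty)))$ agrees as an ideal of $R_n$ with $\pi_n(\au^\bullet)\cdot(\theta_M)$: this is precisely what the auxiliary ideals $\au^\bullet$ were designed to produce when translating $\bullet$-decompositions into Mazur--Tate values.

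On the algebraic side, a control theorem for the $\bullet$-Selmer group provides a surjection
\[
\Sel_S^\bullet(E/K_\infty)^\dual \otimes_\RR R_n \twoheadrightarrow \Sel_S^\bullet(E/K_n)^\dual,
\]
and one then compares $\Sel_S^\bullet(E/K_n)$ with the primitive Selmer group $\Sel(E/\Q(\mu_M))$. This comparison has two stages: a local one at $p$ that absorbs the factor $\pi_n(\au^\bullet)$ (so that the $\bullet$-Selmer condition with its $\au^\bullet$-twist matches the standard local condition at $p$ at finite level), and a local one at each $l \in S = \{\, l \mid m\,\}$ that converts $S$-imprimitive into primitive. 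The second stage is where condition $(\star)$ enters decisively: for $l \equiv 1 \pmod p$, the local cohomology $H^0(\Q(\mu_M)\otimes\Q_l, E[p^\infty])$ can obstruct Fitting-ideal inclusions precisely when $a_l \equiv \pm 2 \pmod p$, and $(\star)$ is the exact hypothesis forcing this $H^0$ to have the right structure so that the local Euler factor at $l$ is not a zero-divisor in $R_n$. Chaining the Fitting-ideal inclusions then yields $\theta_M \in \Fitt_{R_n}(\Sel(E/\Q(\mu_M))^\dual)$, which is Conjecture \ref{conj:202} for $M = mp^{n+1}$.

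The hardest step will be the final one: executing, equivariantly and integrally, the passage from $S$-imprimitive to primitive Selmer groups at ramified primes $l \mid m$ with $l \equiv 1 \pmod p$. Fitting ideals over the non-regular group ring $R_n$ need not behave well under taking cokernels of local restriction maps when the governing Euler factors are zero-divisors, and one must verify that $(\star)$ is strong enough to prevent this pathology uniformly in $n$, while matching up correctly with the analytic side. A secondary subtlety is the cancellation of $\pi_n(\au^\bullet)$ from the displayed inclusion, which requires careful bookkeeping of the generators of $\au^\bullet$ via the idempotent decomposition of $R_n$ with respect to $\Delta = \Gal(\Q(\mu_p)/\Q)$.
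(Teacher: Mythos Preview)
Your outline has the right shape, but two of its load-bearing claims are wrong, and the structural bridge that actually makes the argument work is missing.

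First, the analytic assertion that $(\pi_n(\LL_S^\bullet))$ equals $\pi_n(\au^\bullet)\cdot(\theta_M)$ as ideals of $R_n$ is false. The Mazur--Tate elements $\theta_{m,n}$ and the projections $\LL_{m,n}^\bullet$ obey \emph{different} norm compatibilities in the variable $m$ (compare \eqref{eq:905} with \eqref{eq:906}), so no single multiplier relates them. What the paper actually proves (Proposition~\ref{prop:516}, via the $\alpha$-stabilized elements $\vartheta_{m,n}^\alpha$ and a sum over all divisors of $m$ as in Proposition~\ref{prop:513} and Corollary~\ref{cor:514}) is an inclusion
\[
(\theta_{m,n},\ \nu^{m,n}_{m,n-1}(\theta_{m,n-1}))\ \subset\ \Bigl(\prod_{l\mid m}\bigl(1,\ \nu_{m,(l)}P_l^{-1}\bigr)\Bigr)\cdot(\ttilde{\omega}_n^{\mp}\LL_{m,n}^\pm)
\]
(ordinary case: omit $\ttilde{\omega}_n^{\mp}$). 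The non-unit factor $\prod_{l\mid m}(1,\nu_{m,(l)}P_l^{-1})$ is the whole point and must be matched on the algebraic side; the ideal $\au^\bullet$ plays no such role---it is absorbed instead by the Fitting ideal of the local quotient at $p$ via Lemma~\ref{lem:623}.

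Second, your reading of $(\star)$ is wrong: it does not make the Euler factor $P_l$ a non-zero-divisor in $R_n$ (it typically is a zero-divisor regardless of $(\star)$). Rather, $(\star)$ enters through the \emph{shifted} Fitting invariant $\SF{-1}$ of \cite{Kata}, applied via Theorem~\ref{thm:a01} to the exact sequence \eqref{eq:04} at the $\RR_m$-level to pass from $\Sel_{\prim(m)}^\bullet$ to the primitive $\Sel^\bullet$. Proposition~\ref{prop:510} computes
\[
\SF{-1}\bigl(H^1(\Q_l,\T_m)\bigr)\ =\ \bigl(1,\ \nu_{m,(l)}P_l^{-1}(\sigma_l-x_1,\, x_2,\, x_3,\, \sigma_l-x_4,\, l-1)\bigr),
\]
and $(\star)$ is precisely the condition that the inner ideal $(\sigma_l-x_1, x_2, x_3, \sigma_l-x_4, l-1)$ is all of $\RR_{m,(l)}$, collapsing this to $(1,\nu_{m,(l)}P_l^{-1})$ and thereby producing exactly the same factor as on the analytic side (Corollary~\ref{cor:518}, Proposition~\ref{prop:511}). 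The mechanism you propose---ruling out zero-divisors---would in fact require the stronger hypothesis~(a) of Theorem~\ref{thm:95}; the paper explicitly remarks that this shortcut suffices for Corollary~\ref{cor:204} but not for Theorem~\ref{thm:203} itself.
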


The proof is given in Section \ref{sec:07}.
The very basic idea follows \cite{KK}, but significant difficulties appear.
On the algebraic side, we will have to compare the Fitting ideals of $\Sel_{\prim(m)}^{\bullet}(E/\Q(\mu_{mp^{\infty}}))^{\dual}$ and $\Sel^{\bullet}(E/\Q(\mu_{mp^{\infty}}))^{\dual}$.
We shall do the task by applying the author's work \cite{Kata} on the Fitting ideals.
The condition $(\star)$ is necessary in that computation.
On the analytic side, we will have to compare $\LL_{\prim(m)}(E/\Q(\mu_{mp^{\infty}}), \alpha)$ with the $\alpha$-stabilized Mazur-Tate element $\vartheta_{m, n}^{\alpha}$ (defined in \eqref{eq:150}).
That task is not easy because they have different compatibilities \eqref{eq:905} and \eqref{eq:906} in varying $m$.

The condition (a) in Theorem \ref{thm:95} implies $(\star)$ in Theorem \ref{thm:203} (as long as $(m, pN) = 1$).
Therefore the following is an immediate corollary of Theorems \ref{thm:95} and \ref{thm:203}.

\begin{cor}\label{cor:204}
Let $m$ be a positive integer.
Suppose that the conditions (a)--(f) in Theorem \ref{thm:95} hold for $K = \Q(\mu_m)$.
Then Conjecture \ref{conj:202} holds for $M = mp^{n+1}$ with any $n \geq -1$.
\end{cor}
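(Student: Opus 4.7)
The plan is to derive Corollary~\ref{cor:204} as the composition of Theorems~\ref{thm:95} and~\ref{thm:203} applied to $K = \Q(\mu_m)$, so the work reduces to verifying that the hypotheses of both theorems line up correctly.

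First I would confirm the structural preconditions of Theorem~\ref{thm:203}. The dichotomy ``$p \nmid a_p$ or $a_p = 0$'' is forced by hypothesis~(d): the Hasse bound gives $|a_p| \leq 2\sqrt{p} < p$ as soon as $p \geq 5$, so $p \mid a_p$ implies $a_p = 0$. The coprimality $(m, pN) = 1$ assumed in Theorem~\ref{thm:203} is also automatic here: the standing setup of the paper requires $p$ to be unramified in $K/\Q$, giving $p \nmid m$, and hypothesis~(f) demands good reduction at every prime ramified in $\Q(\mu_m)/\Q$, which forces $(m, N) = 1$. The technical condition $(\star)$ then follows from hypothesis~(a): as already noted in the paragraph preceding the corollary, the vanishing $H^0(K_\infty \otimes \Q_l, E[p]) = 0$ at each ramified prime $l$ rules out both of the exceptional possibilities $\sharp \ttilde{E}_l(\F_l)[p] = p^2$ and $\sharp \ttilde{E}_l(\F_{l^2})[p] = p^2$ featured in $(\star)$.

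Next, with the ambient hypotheses of Theorem~\ref{thm:203} secured, I would apply Theorem~\ref{thm:95} with $K = \Q(\mu_m)$ and $\bullet \in \{\emptyset, +, -\}$ to supply the divisibility
\[
\au^{\bullet} \Fitt_{\RR}(\Sel_S^{\bullet}(E/K_{\infty})^{\dual}) \supset (\LL_S^{\bullet}(E/K_{\infty})),
\]
which is precisely the main input hypothesis of Theorem~\ref{thm:203}. The convention that $\sharp, \flat$ arise only when $p \mid a_p$ while $\pm$ suffice when $a_p = 0$ ensures, under hypothesis~(d) and the Hasse-bound dichotomy above, that $\{\emptyset, +, -\}$ already exhausts every relevant sign. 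Feeding this divisibility into Theorem~\ref{thm:203} then yields Mazur--Tate's Conjecture~\ref{conj:202} for $M = mp^{n+1}$ for every $n \geq -1$, as desired.

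Because the corollary is designed as a clean juxtaposition of two previously-proved results, no genuinely new obstacle arises in its proof. The only non-cosmetic piece of bookkeeping is the implication (a)~$\Rightarrow$~$(\star)$, which the paper grants in the remark immediately preceding the statement; the remaining verifications amount to unpacking the standing setup and matching notation.
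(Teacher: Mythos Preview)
Your proposal is correct and follows essentially the same approach as the paper, which simply notes that condition~(a) implies~$(\star)$ (given $(m, pN) = 1$) and then invokes Theorems~\ref{thm:95} and~\ref{thm:203} in succession. You have merely made explicit the bookkeeping steps (the Hasse-bound dichotomy from~(d), and the derivation of $(m, pN) = 1$ from the standing setup together with~(f)) that the paper leaves implicit.
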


If we are only concerned with Corollary \ref{cor:204}, by the condition (a), we can avoid the difficulties in the proof of Theorem \ref{thm:203} explained above.
Nonetheless, Theorem \ref{thm:203} itself is valuable.
 
\begin{rem}
Mazur-Tate proposed another conjecture \cite[Conjecture 1]{MT87} (weak vanishing conjecture), concerning the vanishing order of the Mazur-Tate element and the Mordell-Weil rank.
They also show \cite[p. 720, Proposition 3]{MT87} that the weak vanishing conjecture is implied by Conjecture \ref{conj:202} (weak main conjecture).
Hence Corollary \ref{cor:204} proves the weak vanishing conjecture in that case.
On the other hand, Ota \cite{Ota18} proved the weak vanishing conjecture for the trivial character under certain hypotheses.
The nature of his hypotheses is very close to ours, but he does not require the condition (a) or (e).
\end{rem}

 \subsection*{Outline of this Paper}
\begin{itemize}
\item In Section \ref{sec:401}, we recall the definitions of various Selmer groups and $p$-adic $L$-functions.
\item In Section \ref{sec:02}, we construct a system of local points which will be used in the next section.
\item In Section \ref{sec:32}, we construct Coleman maps and prove Theorem \ref{thm:93} (the structures of local conditions).
\item In Section \ref{sec:220}, we prove Theorem \ref{thm:76} (algebraic side: the finiteness of the projective dimension).
\item In Section \ref{sec:60}, we prove Theorem \ref{thm:77} (analytic side: our Coleman maps send the Beilinson-Kato element to the $p$-adic $L$-functions).
\item In Section \ref{sec:84}, we prove Theorem \ref{thm:95} (one divisibility of the equivariant main conjecture).
\item In Section \ref{sec:07}, we prove Theorem \ref{thm:203} (the equivariant main conjecture implies the Mazur-Tate conjecture).
\item Section \ref{sec:108} is devoted to developing auxiliary propositions which are used in Sections \ref{sec:84} and \ref{sec:07}.
\end{itemize}

\subsection*{Notations}\label{subsec:59}

We fix some notations which are used throughout this paper.
Some of them are already introduced, but we restate them here.

Fix an odd prime number $p$.
Let $\overline{\Q}$ and $\overline{\Q_p}$ be fixed algebraic closures of $\Q$ and $\Q_p$, respectively.
We fix embeddings of $\overline{\Q}$ into $\overline{\Q_p}$ and into $\C$, the field of complex numbers.
For a positive integer $M$, we denote by $\mu_M$ the group of $M$-th roots of unity in $\overline{\Q}$.
Fix a system $(\zeta_{M})_{M}$ in $\overline{\Q}$, indexed by positive integers $M$, such that $\zeta_{M}$ is a primitive $M$-th root of unity and $(\zeta_{M})^{M/M'} = \zeta_{M'}$ for $M' \mid M$.
We denote by $\prim(M)$ the set of prime divisors of $M$.

Let $E$ be an elliptic curve over $\Q$ with good reduction at $p$.
Let $N$ be its conductor.
Put $a_p = (1+p)-\sharp \ttilde{E}(\F_p)$, where $\ttilde{E}$ denotes the reduction of $E$ modulo $p$.
Similarly, for a prime number $l \nmid pN$, put $a_l = (1+l) - \sharp \ttilde{E}_l(\F_l)$, where $\ttilde{E}_l$ denotes the reduction of $E$ modulo $l$.
Moreover, if $l \mid N$, put $a_l = +1, -1$, and $0$ when the reduction type at $l$ is split multiplicative, non-split multiplicative, and additive, respectively.

Let $K$ be a finite abelian extension of $\Q$ where $p$ is unramified.
Let $S$ be a finite set of prime numbers $\neq p$ which contains all prime numbers which are ramified in $K/\Q$.
For $n \geq -1$ or $n = \infty$, put $K_n = K(\mu_{p^{n+1}})$ and $G_n = \Gal(K_n/\Q)$.
Put $\Delta = \Gal(K_0/K_{-1}) \simeq \Gal(\Q(\mu_p)/\Q)$ and $\Gamma = \Gal(K_{\infty}/K_0) \simeq \Gal(\Q(\mu_{p^{\infty}})/\Q(\mu_p))$, which are independent of $K$.
Put $\RR = \Z_p[[G_{\infty}]]$ and $\Lambda = \Z_p[[\Gamma]]$.
Fix a topological generator $\gamma$ of $\Gamma$, by which we have an isomorphism $\Lambda \simeq \Z_p[[\X]]$, sending $\gamma$ to $1+\X$.
Note that we have natural identifications $G_{\infty} = G_0 \times \Gamma$, $G_0 = G_{-1} \times \Delta$, and thus $\RR = \Lambda[G_0] = \Lambda[G_{-1}][\Delta]$.

Put $k_n = K_n \otimes \Q_p$, which is a product of local fields.
Let $\OO_n$ be the ring of integers of $k_n$, namely the integral closure of $\Z_p$ in $k_{n}$.
Let $\m_{n}$ be the Jacobson radical of $\OO_{n}$.

For a compact or discrete $\Z_p$-module $X$, we denote by $X^{\dual} = \Hom_{\Z_p}(X, \Q_p/\Z_p)$ the Pontryagin dual.
When $X$ is finitely generated over $\Z_p$, we also define the $\Z_p$-linear dual by $X^* = \Hom_{\Z_p}(X,\Z_p)$.
If $X$ has a left action of a group $G$, then $X^{\dual}$ (resp. $X^*$) also has a left action of $G$ defined by $(gf)(x) = f(g^{-1}x)$ for $g \in G, x \in X$, and $f \in X^{\dual}$ (resp. $f \in X^*$).

For a Galois extension $F'/F$ of fields and a continuous $\Gal(F'/F)$-module $X$, we denote by $H^i(F'/F, X)$ the continuous Galois cohomology $H^i(\Gal(F'/F), X)$.
In particular, if $F' = \overline{F}$ is a separable closure of $F$, then we simply put $H^i(F, X) = H^i(\overline{F}/F, X)$.

Suppose $F$ is an algebraic extension of $\Q$ and $l$ is a prime number which splits into finitely many places in $F$.
Note that we have a natural identification $F \otimes \Q_l =\prod_{v \mid l} F_v$, where $v$ runs over the places of $F$ above $l$.
Here and henceforth, if $F/\Q$ is an infinite extension, $F_v$ denotes the union of the completions of number fields contained in $F$ at the places below $v$.
We put $E(F \otimes \Q_l) = \bigoplus_{v \mid l} E(F_v)$.
Similarly, for a continuous $\Gal(\overline{\Q}/F)$-module $X$, we put
$H^i(F \otimes \Q_l, X) = \bigoplus_{v \mid l} H^i(F_v, X)$.

\section{Definitions of Selmer groups and $p$-adic $L$-functions}\label{sec:401}

In this section, we review the definitions and properties of the various Selmer groups and $p$-adic $L$-functions.
In the $a_p = 0$ case, the basic ideas are due to Kobayashi \cite{Kob03} and Pollack \cite{Pol03}, respectively.
The definitions of the $\sharp/\flat$-objects generalizing the work by Sprung \cite{Spr12} are postponed to Definitions \ref{defn:930} and \ref{defn:73}, because those are comparatively complicated.

\subsection{Selmer groups}\label{subsec:402}

For an integer $n \geq -1$ or $n = \infty$, we define the $S$-imprimitive Selmer group of $E$ over $K_n$ by
\begin{equation}\label{eq:107}
\Sel_S(E/K_n) = 
\Ker \left(H^1(K_n, E[p^{\infty}]) 
\to \prod_{l \not\in S} \frac{H^1(K_n \otimes \Q_l, E[p^{\infty}])}{E(K_n \otimes \Q_l) \otimes (\Q_p/\Z_p)}  \right),
\end{equation}
where $l$ runs over all prime numbers not contained in $S$.
Here $E(K_n \otimes \Q_l) \otimes (\Q_p/\Z_p)$ is embedded in $H^1(K_n \otimes \Q_l, E[p^{\infty}])$ via the Kummer map.
Note that $E(K_{n} \otimes \Q_l) \otimes (\Q_p/\Z_p) = 0$ unless $l = p$.
We define the primitive Selmer group $\Sel(E/K_n)$ by the similar formula allowing $l$ runs over all prime numbers (namely, replacing $S$ by $\emptyset$).
We also define the fine Selmer group of $E$ over $K_n$ by
\begin{equation}
\Sel^0(E/K_n) = 
\Ker \left(H^1(K_n, E[p^{\infty}]) 
\to \prod_{l} H^1(K_n \otimes \Q_l, E[p^{\infty}]) \right),
\end{equation}
where $l$ runs over all prime numbers.

\begin{rem}
Though $\Sel(E/K_{\infty})$ is more fundamental than $\Sel_S(E/K_{\infty})$, the former does not behave well in our equivariant consideration.
In particular, Theorem \ref{thm:76} does not hold in general if we omit the subscript $S$ (though we may still make $S$ smaller as in Remark \ref{rem:871}).
We will compare them in Subsection \ref{subsec:821}.
\end{rem}

Recall that we defined $k_n = K_n \otimes \Q_p$ in Notations in Section \ref{sec:01}.
For integers $n \geq n' \geq -1$, we denote by $\Tr^n_{n'}$ the map $k_n \to k_{n'}$ induced by the trace map $K_n \to K_{n'}$.
By abuse of notation, $\Tr^n_{n'}$ will also denote various trace maps, such as $E(k_{n}) \to E(k_{n'})$.

\begin{defn}\label{defn:898}
When $a_p=0$, the $\pm$-Selmer groups are defined as follows (\cite[Definition 2.1]{Kob03}; more precisely, \cite[Definition 2.1]{KO18} contains our situation).
For $n \geq -1$, define
\begin{equation}\label{eq:105}
E^{\pm}(k_n) = \{ x \in E(k_n) \mid \Tr_{n'+1}^{n}(x) \in E(k_{n'}), -1 \leq \forall n' < n, (-1)^{n'} = \pm 1\}.
\end{equation}
Put $E^{\pm}(k_{\infty}) = \bigcup_n E^{\pm}(k_n)$.
Then, allowing $n = \infty$, we define the $\pm$-Selmer groups by
\begin{equation}\label{eq:109}
\Sel_S^{\pm}(E/K_n) = 
\Ker \left(\Sel_S(E/K_{n}) \to \frac{E(k_{n}) \otimes (\Q_p/\Z_p)}{E^{\pm}(k_{n}) \otimes (\Q_p/\Z_p)}  \right).
\end{equation}
Note that the same way as in \cite[Lemma 8.17]{Kob03} shows that the map $E^{\pm}(k_{n}) \otimes (\Q_p/\Z_p) \to E(k_{n}) \otimes (\Q_p/\Z_p)$ is injective.
We define $\Sel^{\pm}(E/K_n)$ similarly.
\end{defn}

Recall our convention that $\bullet = \emptyset$ (resp. $\bullet \in \{+, -\}$) if $p \nmid a_p$ (resp. $a_p = 0$).

\begin{prop}\label{prop:851}
For $\bullet \in \{\emptyset, +, -\}$, the Selmer group $\Sel_S^{\bullet}(E/K_{\infty})$ is $\Lambda$-cotorsion.
\end{prop}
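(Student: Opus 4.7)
The plan is to reduce the proposition to the corresponding primitive statement, which is already in the literature. Comparing \eqref{eq:107} and \eqref{eq:109} with the analogous formulas in which $l$ ranges over all prime numbers, one obtains a natural short exact sequence
\[
0 \to \Sel^{\bullet}(E/K_{\infty}) \to \Sel_S^{\bullet}(E/K_{\infty}) \to C \to 0,
\]
where $C$ embeds into $\bigoplus_{l \in S} H^1(K_{\infty} \otimes \Q_l, E[p^{\infty}])$. Here we use the vanishing $E(K_n \otimes \Q_l) \otimes (\Q_p/\Z_p) = 0$ for $l \neq p$, and the fact that the $\bullet$-modification only alters the local condition at $p \notin S$, so it does not affect the quotient $C$.

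Next I would verify that each summand $H^1(K_{\infty} \otimes \Q_l, E[p^{\infty}])$ with $l \in S$ is $\Lambda$-cotorsion. For a place $v$ of $K_{\infty}$ above $l \neq p$, the decomposition subgroup $\Gamma_v \subseteq \Gamma$ is closed, and since Frobenius at $l$ acts with infinite order on $\mu_{p^{\infty}}$, $\Gamma_v$ is in fact open in $\Gamma \cong \Z_p$. Thus $K_{\infty, v}/K_{0, v}$ is (contained in) the unramified $\Z_p$-extension of the local field $K_{0, v}$, and the standard local Iwasawa theory at primes away from $p$---local Tate duality together with the vanishing local Euler characteristic---gives the desired $\Lambda$-cotorsion.

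Finally I would invoke the known primitive results. For $\bullet = \emptyset$ in the ordinary case $p \nmid a_p$, the $\Lambda$-cotorsion of $\Sel(E/K_{\infty})$ is due to Kato \cite[Theorem 17.4]{Kat04}, applied to the abelian extension $K_0/\Q$ and its cyclotomic $\Z_p$-extension $K_{\infty}/K_0$. For $\bullet \in \{+, -\}$ in the supersingular case $a_p = 0$, the $\Lambda$-cotorsion of $\Sel^{\pm}(E/K_{\infty})$ is a theorem of Kitajima-Otsuki \cite{KO18}, which adapts Kobayashi's original argument \cite{Kob03} from $K = \Q$ to arbitrary abelian $K$. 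Combined with the local cotorsion above, this proves the proposition. The main obstacle in the plan is really only the bookkeeping for the primitive-to-imprimitive reduction; all substantive difficulty is packaged inside the cited deep theorems.
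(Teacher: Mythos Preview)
Your proof is correct and follows essentially the same ``external'' route the paper sketches: reduce the $S$-imprimitive Selmer group to the primitive one via a quotient controlled by $\bigoplus_{l\in S} H^1(K_{\infty}\otimes\Q_l, E[p^{\infty}])$, observe that these local terms are $\Lambda$-cotorsion, and then cite the literature (Kato/Greenberg in the ordinary case, Kobayashi/Kitajima--Otsuki when $a_p=0$) for the primitive statement. The paper's proof is terser---it simply points to \cite[Theorem 1.5]{Gree99} and \cite[Remark 1.4(5)]{KO18} for general $K$, leaving the imprimitive reduction implicit---but the substance is the same.

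One difference worth noting: the paper also records an \emph{internal} proof, remarking that the proposition follows from Proposition~\ref{prop:945}, Theorem~\ref{thm:77}, and Proposition~\ref{prop:101}. Concretely, Proposition~\ref{prop:945} shows $\Sel_S^{\bullet}(E/K_{\infty})$ is $\Lambda$-cotorsion if and only if the composite $H^1(\Q,\T)\to\RR$ of \eqref{eq:19} is injective; Theorem~\ref{thm:77} identifies the image of the Beilinson--Kato element under this map with $\LL_S^{\bullet}(E/K_{\infty})^{\iota}$; and Proposition~\ref{prop:101} (via Rohrlich) asserts this $p$-adic $L$-function is a non-zero-divisor. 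Since $H^1(\Q,\T)$ is $\Lambda$-free, injectivity follows. This alternative route is self-contained within the paper and avoids appealing to the structural theorems of Greenberg or Kitajima--Otsuki, at the cost of forward-referencing the Coleman map machinery. Your external argument is cleaner for the reader at this early point in the paper; the internal argument is what makes the paper logically independent of those references.
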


\begin{proof}
When $K = \Q$ and $S = \emptyset$, the assertion is proved by Kato \cite[Theorem 17.4(1)]{Kat04} ($p \nmid a_p$ case) and Kobayashi \cite[Theorem 2.2]{Kob03} ($a_p = 0$ case).
The general case is also widely known to experts (e.g. \cite[Theorem 1.5]{Gree99} if $p \nmid a_p$, \cite[Remark 1.4(5)]{KO18} if $a_p = 0$).
In fact,  it follows from Proposition \ref{prop:945}, Theorem \ref{thm:77}, and Proposition \ref{prop:101} in this paper.
\end{proof}

\subsection{$p$-adic $L$-functions}\label{subsec:403}

We first recall the definition of the complex $L$-function associated to the elliptic curve $E$ \cite[C.16]{Sil09}.
For a prime number $l$, let $a_l$ be the integer for $E$ defined as usual (see Notations in Section \ref{sec:01}).
Recall that $N$ is the conductor of $E$.
We denote by $\mathbf{1}_N$ the trivial Dirichlet character modulo $N$, namely, for a prime number $l$ we have $\mathbf{1}_N(l) = 1$ if $l \nmid N$ and $\mathbf{1}_N(l) = 0$ otherwise.

Let $\Sigma$ be any finite set of prime numbers and $\psi$ a Dirichlet character.
As a convention, if $l$ does not divide the conductor of $\psi$, we have $\psi(l) = \psi(\sigma_l)$ for the $l$-th power Frobenius $\sigma_l$.
Then the $\Sigma$-imprimitive $L$-function is defined by
\begin{equation}\label{eq:122}
L_{\Sigma}(E, \psi, s) = \prod_{l \not\in \Sigma} (1-a_l \psi(l)l^{-s} + \mathbf{1}_N(l) \psi(l)^2 l^{1-2s})^{-1}
\end{equation}
for complex variable $s$.
This product converges for $s$ with real part greater than $3/2$ and, thanks to the modularity theorem, is known to possess an analytic continuation to the entire complex plane.
We put $L(E, \psi, s) = L_{\emptyset}(E, \psi, s)$.

Let $\Omega^+ \in \R_{>0}$ and $\Omega^- \in \sqrt{-1}\R_{>0}$ denote the real and imaginary N{\'{e}}ron periods of $E$ so that $\Z \Omega^+ \oplus \Z \Omega^-$ contains the N\'{e}ron lattice with index $1$ or $2$ (see e.g. \cite[(1.1)]{MT87}).
Then it is known that the complex number
\[
 \frac{L_{\Sigma}(E,\psi,1)}{\Omega^{\sign(\psi)}}
 \]
is an algebraic number, where $\sign(\psi) \in \{+, -\}$ denotes the sign of $\psi(-1)$.
Therefore we can regard this as an element of $\overline{\Q_p}$, using the fixed embeddings of $\overline{\Q}$ into $\overline{\Q_p}$ and $\C$.

We shall introduce the Gauss sum of a Dirichlet character and its imprimitive variants.
The imprimitive variants will enable us to simplify the interpolation properties (\eqref{eq:71}, \eqref{eq:126}, etc.) of the $p$-adic $L$-functions.
They were unnecessary in the previous works where $S = \emptyset$.

\begin{defn}\label{defn:897}
For a Dirichlet character $\psi$, let $p \nmid m_{\psi}$ and $n_{\psi} \geq -1$ be the integers such that $m_{\psi} p^{n_{\psi}+1}$ is the conductor of $\psi$.
\end{defn}

\begin{defn}\label{defn:119}
For a Dirichlet character $\psi$ of conductor $M = m_{\psi} p^{n_{\psi}+1}$, define the (primitive) Gauss sum of $\psi$ by
\[
\tau(\psi) = \sum_{\sigma \in \Gal(\Q(\mu_M)/\Q)} \psi(\sigma)\zeta_{M}^{\sigma}.
\]
Suppose the set $S$ of prime numbers $\neq p$ satisfies $\prim(m_{\psi}) \subset S$.
Take the minimum integer $m'$ such that $m_{\psi} \mid m'$ and $\prim(m') = S$, and put $M' = m'p^{n_{\psi}+1}$.
Then define the $S$-imprimitive Gauss sum of $\psi$ by
\[
\tau_S(\psi) = \sum_{\sigma \in \Gal(\Q(\mu_{M'})/\Q)} \psi(\sigma)\zeta_{M'}^{\sigma}.
\]
For example, we have $\tau_{S}(\psi) = \tau(\psi)$ if $S = \prim(m_{\psi})$.
\end{defn}

It is well-known and easily verified that 
\begin{equation}\label{eq:91}
\tau_{S'}(\psi) = \prod_{l \in S' \setminus S}(-\psi(\sigma_l)) \tau_S(\psi)
\end{equation}
for another finite set $S'$ of prime numbers $\neq p$ such that $S \subset S'$.

Next we introduce the spaces of functions where the classical $p$-adic $L$-functions live in.
Recall that we fixed a topological generator $\gamma$ of $\Gamma$ and can identify $\RR$ with $\Z_p[[\X]][G_0]$ by sending $\gamma$ to $1+\X$.
For a finite extension $F$ of $\Q_p$ and a real number $h > 0$, we put
\[
\HH_{h, F}(\Gamma) = \Set{ \sum_{n \geq 0} a_{n} \X^n \in F[[\X]] | a_{n} \in F,  \lim_{n \to \infty} \frac{| a_{n}|_p}{n^h} = 0 }.
\]
Here $|-|_p$ denotes the $p$-adic absolute value normalized as $|p|_p = 1/p$.
Note that each power series in $\HH_{h, F}(\Gamma)$ is convergent on the open unit disk $\{x \in \overline{\Q_p} \mid |x|_p < 1\}$.
Put $\HH_{h,F}(G_{\infty}) = \HH_{h,F}(\Gamma) \otimes_{F} F[G_0]$, which contains $\RR \otimes_{\Z_p} F$.
For each character $\psi$ of $G_{\infty}$ of finite order, by abuse of notation, we also denote by $\psi$ the induced map 
\[
\psi: \HH_{h, F}(G_{\infty}) \to \overline{\Q_p}
\]
 defined by $\sum_{n \geq 0} a_{n} \X^n \mapsto \sum_{n \geq 0} a_{n} (\psi(\gamma)-1)^n$ and $\sigma \mapsto \psi(\sigma)$ for $\sigma \in G_0$.
This map is an extension of the homomorphism $\RR \otimes_{\Z_p} F \to \overline{\Q_p}$ induced by $\psi$.

By the works of Amice-Velu \cite{AV75}, Vi{\v{s}}ik \cite{Vis76} or Mazur-Tate-Teitelbaum \cite{MTT86}, we have the classical $p$-adic $L$-functions as follows.

\begin{prop}
Let $\alpha \in \overline{\Q_p}$ be an allowable root of $t^2-a_pt+p = 0$, meaning $\ord_p(\alpha) <1$.
Then there is an element $\LL_S(E/K_{\infty}, \alpha) \in \HH_{1, \Q_p(\alpha)}(G_{\infty})$
 satisfying the following interpolation property:
For any character $\psi$ of $G_{\infty}$ of finite order, we have
\begin{equation}\label{eq:71}
\psi(\LL_S(E/K_{\infty}, \alpha)) = e_p(\alpha, \psi) \tau_S(\psi^{-1}) \frac{L_{S}(E,\psi,1)}{\Omega^{\sign(\psi)}},
\end{equation}
where we put
\[
e_p(\alpha, \psi)
= \begin{cases}
	\alpha^{-(1+n_{\psi})}  & (n_{\psi} \geq 0) \\
	(1-\alpha^{-1}\psi(p))(1-\alpha^{-1}\psi(p)^{-1}) & (n_{\psi} = -1).
\end{cases}
\]
\end{prop}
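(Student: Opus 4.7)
The plan is to deduce this from the classical constructions of Amice--Velu \cite{AV75}, Vi{\v{s}}ik \cite{Vis76}, and Mazur--Tate--Teitelbaum \cite{MTT86} via two auxiliary steps: a reduction to a cyclotomic base field using Kronecker--Weber, and a careful matching of the $S$-imprimitive normalizations on both sides of the interpolation formula. As the author notes in Remarks \ref{rem:798} and \ref{rem:812}, the latter bookkeeping does not appear in the literature in this exact form and constitutes the bulk of the substantive content.

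First I would reduce to the case $K_\infty = \Q(\mu_{mp^\infty})$ for some positive integer $m$ with $\prim(m) = S$. Since $p$ is unramified in $K$, by Kronecker--Weber there exists $m$ coprime to $p$ with $K \subset \Q(\mu_m)$, and by enlarging $m$ we may assume $\prim(m) = S$. Then $G_\infty$ is a quotient of $\Gal(\Q(\mu_{mp^\infty})/\Q)$ and pushforward along the quotient preserves $\HH_{1, \Q_p(\alpha)}$ and the claimed interpolation on characters factoring through $G_\infty$. On this bigger group I would apply the classical construction to the newform $f_E$ attached to $E$: form the $\alpha$-stabilized modular symbols of $f_E$ on $\Gamma_1(mp^n)$ for varying $n$, and use the allowable bound $\ord_p(\alpha) < 1$ to assemble them into an $h$-admissible distribution on $\Gal(\Q(\mu_{mp^\infty})/\Q)$ with $h = \ord_p(\alpha) < 1$. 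The standard identification of $h$-admissible measures with power series of logarithmic growth of order $\leq h$ then yields the element of $\HH_{1, \Q_p(\alpha)}(G_\infty)$ denoted by $\LL_S(E/K_\infty, \alpha)$.

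It remains to verify the interpolation formula \eqref{eq:71}. For a finite-order character $\psi$ of conductor $m_\psi p^{n_\psi+1}$ with $m_\psi \mid m$, the classical Birch-type formula computes $\psi(\LL_S(E/K_\infty,\alpha))$ as $e_p(\alpha,\psi)$ times a period-normalized $L$-value in which the $L$-function has Euler factors at every prime dividing $mp$ removed and in which the Gauss sum is the primitive $\tau(\psi^{-1})$ multiplied by an explicit twist factor arising from the tame-level-raising from $m_\psi$ to $m$. Using \eqref{eq:91} to repair the Gauss sum into $\tau_S(\psi^{-1})$ and the Euler product \eqref{eq:122} to identify the missing/reinstated Euler factors at primes $l \in S \setminus \prim(m_\psi)$ with the value of $L_S(E,\psi,1)$, one rearranges this into the claimed form $e_p(\alpha,\psi)\tau_S(\psi^{-1})L_S(E,\psi,1)/\Omega^{\sign(\psi)}$. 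The main obstacle is precisely this matching at each prime $l \in S \setminus \prim(m_\psi)$: the tame-level-raising correction on the modular-symbol side must be identified exactly with the combination of the Gauss-sum ratio from \eqref{eq:91} (contributing $-\psi^{-1}(l)$) and the Euler-factor ratio on the $L$-function side (contributing $1 - a_l\psi(l)l^{-1} + \mathbf{1}_N(l)\psi(l)^2 l^{-1}$), and this requires a case analysis according to whether $l$ divides $N$ and whether $l \mid m_\psi$, handled via the action of the $l$-th Hecke operator on the modular symbols.
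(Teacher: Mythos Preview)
Your outline is correct and matches the paper's treatment: the paper does not prove this proposition but simply attributes it to \cite{AV75}, \cite{Vis76}, \cite{MTT86}, noting in Remark~\ref{rem:798} that once the interpolation property \eqref{eq:71} is written down the construction ``follows easily'' from those works. Your sketch---reduce to $K=\Q(\mu_m)$ with $\prim(m)=S$ via Kronecker--Weber and pushforward, build the admissible distribution from $\alpha$-stabilized modular symbols, and match the $S$-imprimitive Gauss sum and Euler factors via \eqref{eq:91} and \eqref{eq:122}---is precisely the bookkeeping the paper leaves implicit, and the finite-level analogue of that matching is carried out explicitly later in Section~\ref{sec:07} (see \eqref{eq:55}, \eqref{eq:56}, and Proposition~\ref{prop:513}).
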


\begin{rem}\label{rem:798}
As soon as the property \eqref{eq:71} is formulated, the construction of the equivariant $p$-adic $L$-function follows easily from the works \cite{AV75}, \cite{Vis76}, \cite{MTT86}.
However, this interpolation property does not seem to appear explicitly in the literature.
It was determined so that Theorem \ref{thm:77} holds true.

After writing out the first version of this paper, the author was informed that our $p$-adic $L$-function may be related to the element $\xi_{K_{\infty}, S}$ in Kurihara \cite[p. 336]{Kur14}, where the ordinary case is treated.
The author thanks Masato Kurihara for giving this information.
Although the interpolation property of $\xi_{K_{\infty}, S}$ is not written down therein, it can be confirmed that they actually coincide up to a unit, in the situation of \cite{Kur14}.
\end{rem}

\begin{rem}\label{rem:812}
In many works, the $p$-adic $L$-functions of elliptic curves are characterized by the interpolation of $L_S(E, \psi^{-1}, 1)$ rather than $L_S(E, \psi, 1)$ (for example, \cite[Chap I, \S 14]{MTT86}, \cite[Theorem 16.2]{Kat04}, \cite[Theorem 3.1]{Kob03}).
Such a variance is not an essential problem, thanks to the functional equation (see Proposition \ref{prop:641}).
Our convention in this paper has the advantage of being suitable for varying $S$ in the main conjecture as in Proposition \ref{prop:615}.
We are consistent with Greenberg-Vatsal \cite[p. 55]{GV00} (and Kurihara \cite{Kur14} as in Remark \ref{rem:798}), which also concern varying $S$.
\end{rem}

As in Section \ref{sec:01}, if $p \nmid a_p$, we put $\LL_S(E/K_{\infty}) = \LL_S(E/K_{\infty}, \alpha) \in \RR \otimes \Q_p$, where $\alpha$ is the unit root of $t^2-a_pt+p$.
Note that $\alpha \in \Z_p^{\times}$ by the Hensel's lemma.

If $a_p = 0$, we have two $p$-adic $L$-functions $\LL_S(E/K_{\infty}, \alpha), \LL_S(E/K_{\infty}, - \alpha)$, which are not contained in $\RR \otimes \Q_p(\alpha)$.
The idea of Pollack \cite[\S 4.1]{Pol03} is to introduce the $\pm$-logarithm
\[
\log^{\pm} = \frac{1}{p} \prod_{n' \geq 1, (-1)^{n'} = \pm 1} \frac{\Phi_{n'}(1+\X)}{p} \in \HH_{1, \Q_p}(\Gamma),
\]
where
\begin{equation}\label{eq:100}
\Phi_n(1 + \X) = \frac{(1 + \X)^{p^n}-1}{(1 + \X)^{p^{n-1}}-1}
\end{equation}
is the $p^n$-th cyclotomic polynomial.
Moreover, we define elements of $\Lambda = \Z_p[[T]]$ as in \cite[(8.24)]{Kob03} by
\[
\ttilde{\omega}_n^{\pm} = \prod_{1 \leq n' \leq n, (-1)^{n'} = \pm1} \Phi_{n'}(1+\X), \quad \omega_n^{\pm} = T \ttilde{\omega}_n^{\pm}, \quad
\omega_n = (1+T)^{p^n}-1.
\]
Note that we have $\ttilde{\omega}_n^{\pm} \omega_n^{\mp} = \omega_n$.
Our sign convention will follow \cite{Kob03}, which is opposite to \cite{Pol03}.

\begin{prop}
If $a_p = 0$, there are $\LL_S^{\pm}(E/K_{\infty}) \in \RR \otimes \Q_p$ such that
\begin{equation}\label{eq:125}
\LL_S(E/K_{\infty}, \alpha) = \alpha \log^- \LL_S^+(E/K_{\infty}) + \log^+ \LL_S^-(E/K_{\infty})
\end{equation}
holds for any root $\alpha$ of $t^2+p$.
Moreover, they satisfy the interpolation property
\begin{equation}\label{eq:126}
\psi(\LL_S^{\pm}(E/K_{\infty})) =
 	\begin{cases}
		\psi(\ttilde{\omega}_{n_{\psi}}^{\mp})^{-1} (-1)^{[(n_{\psi}+2)/2]} \tau_S(\psi^{-1}) \frac{L_{S}(E,\psi,1)}{\Omega^{\sign(\psi)}} & (n_{\psi} \geq 0, (-1)^{n_{\psi}} = \pm 1)\\
		0 & (n_{\psi} = 0, \pm = -)\\
		(\psi(p)+\psi(p)^{-1}) \tau_S(\psi^{-1}) \frac{L_{S}(E,\psi,1)}{\Omega^{\sign(\psi)}} & (n_{\psi} = -1, \pm = +) \\
		(p-1) \tau_S(\psi^{-1}) \frac{L_{S}(E,\psi,1)}{\Omega^{\sign(\psi)}} & (n_{\psi} = -1, \pm = -) \\
	\end{cases}
\end{equation}
for any character $\psi$ of $G_{ \infty}$ of finite order.
\end{prop}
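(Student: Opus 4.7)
The plan is to follow the strategy of Pollack \cite{Pol03} (where $K=\Q$, $S=\emptyset$), extended equivariantly. Let $\alpha$ be a fixed root of $t^2+p$, so that the two allowable roots are $\pm\alpha$. The required decomposition \eqref{eq:125} written for both $\pm\alpha$ is a linear system in $\LL_S^{\pm}$, and solving it forces
\begin{align}
\LL_S^{+}(E/K_{\infty}) &:= \frac{\LL_S(E/K_{\infty},\alpha)-\LL_S(E/K_{\infty},-\alpha)}{2\alpha\log^{-}},\\
\LL_S^{-}(E/K_{\infty}) &:= \frac{\LL_S(E/K_{\infty},\alpha)+\LL_S(E/K_{\infty},-\alpha)}{2\log^{+}}.
\end{align}
First I will define these as elements in the fraction field of $\HH_{h,\Q_p(\alpha)}(G_{\infty})$ for a suitable $h$, and reduce the proposition to showing (i) no denominators appear, (ii) the results lie in $\RR\otimes\Q_p$, and (iii) the interpolation formula \eqref{eq:126} holds.

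For step (i), I would use that $\log^{\pm}$ is a product of the cyclotomic factors $\Phi_{n'}(1+\X)/p$ with $(-1)^{n'}=\pm 1$, each of which is a distinguished polynomial (up to the factor $1/p$) with simple zeros precisely at the primitive $p^{n'}$-th roots of unity, corresponding to the characters $\psi$ of $\Gamma$ with $n_{\psi}=n'$. Using the interpolation \eqref{eq:71} at such characters, the contributions from $\alpha$ and $-\alpha$ either agree or cancel in the correct way: for $n_{\psi}$ with $(-1)^{n_{\psi}}=+1$ one checks that $\LL_S(E/K_{\infty},\alpha)-\LL_S(E/K_{\infty},-\alpha)$ vanishes under any $\psi\eta$ with $\eta$ a character of $G_0$, and symmetrically for $\log^{+}$. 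Since these are all the zeros of $\log^{\pm}$ and they are simple, the formal quotient is holomorphic; after working character-by-character on $G_0$, this gives a well-defined element of $\HH_{h,\Q_p(\alpha)}(G_{\infty})$. Galois invariance under $\alpha\mapsto-\alpha$ is immediate from the form of the numerators, so the quotient lies in $\HH_{h,\Q_p}(G_{\infty})$.

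The main obstacle is step (ii): boundedness, i.e., that $\LL_S^{\pm}$ actually lies in $\RR\otimes\Q_p$ rather than merely in $\HH_{h,\Q_p}(G_{\infty})$. Here I would decompose $\RR\otimes\Q_p=\bigoplus_{\chi}\Lambda\otimes_{\Z_p}\Q_p(\chi)$ over characters $\chi$ of the finite group $G_0$, and analyze each $\chi$-component separately. For a fixed $\chi$, the $\chi$-part of $\LL_S(E/K_{\infty},\alpha)$ is an element of $\HH_{1,\Q_p(\alpha,\chi)}(\Gamma)$ whose values at characters of $\Gamma$ are controlled by the interpolation formula \eqref{eq:71}. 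Applying Pollack's growth analysis in \cite[\S 4.1]{Pol03} verbatim to this $\chi$-part (the sole required input is the growth rate $h=1$ and the explicit shape of the interpolation at $n_{\psi}\geq 0$), one sees that the $\chi$-components of $\LL_S^{\pm}(E/K_{\infty})$ have bounded denominators, hence lie in $\Lambda\otimes\Q_p(\chi)$. Recollecting over $\chi$ gives $\LL_S^{\pm}(E/K_{\infty})\in\RR\otimes\Q_p$; note that the construction is manifestly equivariant because it is performed on the common elements $\LL_S(E/K_{\infty},\pm\alpha)$ before projecting to $\chi$-parts.

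Finally, for step (iii), I would verify \eqref{eq:126} by directly evaluating the characters $\psi$ on the explicit expressions for $\LL_S^{\pm}(E/K_{\infty})$. Using $\psi(\log^{\pm})=\psi(\ttilde{\omega}_{n_{\psi}}^{\pm})/p$ together with the identity $\psi(\ttilde{\omega}_{n_{\psi}}^{+})\psi(\ttilde{\omega}_{n_{\psi}}^{-})=\psi(\omega_{n_{\psi}})/(\psi(\gamma)-1)$, and substituting \eqref{eq:71} for both $\alpha$ and $-\alpha$, the arithmetic of $e_p(\pm\alpha,\psi)$ yields exactly the four cases claimed: the cancellation $(-\alpha)^{-(1+n_{\psi})}=\mp\alpha^{-(1+n_{\psi})}$ (according to parity of $n_{\psi}$) accounts for the cases with $n_{\psi}\geq 0$, including the required vanishing of $\LL_S^{-}$ at characters with $n_{\psi}=0$, while the tame case $n_{\psi}=-1$ reduces to the identity $(1-\alpha^{-1}x)(1-\alpha^{-1}x^{-1})\pm(1+\alpha^{-1}x)(1+\alpha^{-1}x^{-1})$ with $x=\psi(p)$ and $\alpha^2=-p$, which produces the factors $\psi(p)+\psi(p)^{-1}$ and $p-1$ respectively. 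This completes the proof.
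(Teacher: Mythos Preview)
Your approach is essentially the same as the paper's, which simply cites \cite[Theorem 5.1]{Pol03} for the existence of $\LL_S^{\pm}(E/K_{\infty})\in\RR\otimes\Q_p$ satisfying \eqref{eq:125} and then says \eqref{eq:126} follows by applying $\psi$ to both sides of \eqref{eq:125} via a case-by-case computation. Your proposal spells out more of Pollack's argument (the character-by-character reduction and growth analysis) and is correct in structure.

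One computational slip to fix when you fill in step (iii): the formula $\psi(\log^{\pm})=\psi(\ttilde{\omega}_{n_{\psi}}^{\pm})/p$ is not right; the correct identities (visible elsewhere in the paper, Lemma \ref{lem:976}) are $\log^{+}\equiv p^{-[(n+2)/2]}\ttilde{\omega}_n^{+}\bmod\omega_n$ and $\log^{-}\equiv p^{-[(n+3)/2]}\ttilde{\omega}_n^{-}\bmod\omega_n$. These extra powers of $p$ are exactly what combine with $\alpha^{-(1+n_{\psi})}$ and $\alpha^2=-p$ to produce the sign $(-1)^{[(n_{\psi}+2)/2]}$ in \eqref{eq:126}, so once corrected your arithmetic will go through.
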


\begin{proof}
The same proof as in \cite[Theorem 5.1]{Pol03} shows the existences of $\LL_S^{\pm}(E/K_{\infty}) \in \RR \otimes \Q_p$ such that \eqref{eq:125} holds.
Then \eqref{eq:126} follows from applying $\psi$ to both sides of \eqref{eq:125}, together with an easy (but a bit lengthy) case-by-case computation.
When $K = \Q$ and $S = \emptyset$, these formulas are given in \cite[p. 7]{Kob03} (with a variance of the involution $\iota$ as in Remark \ref{rem:812}).
\end{proof}

\section{Construction of Local Points}\label{sec:02}

In this section, we construct a system of local points of $E$, namely points in $E(k_n)$.
The construction is motivated by the works of Kobayashi \cite{Kob03}, Sprung \cite{Spr12}, and Kitajima-Otsuki \cite{KO18}.
However, much more delicate argument is necessary than those works.

Let $\hat{E}$ be the formal group law over $\Z_p$ associated to a minimal Weierstrass model of $E$ and $\log_{\hat{E}}$ its logarithm.
As in Notations in Section \ref{sec:01}, let $\OO_n$ be the integer ring of $k_n = K_n \otimes \Q_p$, and let $\m_n$ be its Jacobson radical.
Then we have an exact sequence
\begin{equation}\label{eq:87}
0 \to \hat{E}(\m_n) \to E(k_n) \to \ttilde{E}(\OO_n/\m_n) \to 0
\end{equation}
by \cite[Propositions 2.1 and 2.2]{Sil09}.

Let $\varphi$ denote the $p$-th power Frobenius.
Therefore we have $\psi(\varphi) = \psi(p)$ for a Dirichlet character $\psi$ with $n_{\psi} = -1$.
Recall that we denote by $m_{\psi}p^{n_{\psi}+1}$ the conductor of $\psi$ (Definition \ref{defn:897}).

Our goal in this section is to prove the following two results.

\begin{thm}\label{thm:116}
Suppose $p \mid a_p$ holds.
Then there exists a unique system of elements $d_n \in \hat{E}(\m_n)$ ($n \geq -1$) satisfying the following.

(1) For $n \geq 0$, we have
\[
\Tr^{n}_{n-1}(\dd_{n}) = 
	\begin{cases}
		a_p \dd_{n-1} - \dd_{n-2} & (n \geq 1) \\
		(a_p - \varphi - \varphi^{-1}) \dd_{-1} & (n = 0).
	\end{cases}
\]

(2) For $n \geq 0$, $\hat{E}(\m_{n})$ is generated by $\dd_{n}, \dd_{n-1}$ as an $R_{n}$-module.
Moreover, $\hat{E}(\m_{-1})$ is generated by $\dd_{-1}$ as an $R_{-1}$-module.

(3) For a character $\psi$ of $G_{\infty}$ of finite order, we have
\[
\sum_{\sigma \in G_{n_{\psi}}} \sigma(\log_{\hat{E}}(\dd_{n_{\psi}})) \psi(\sigma)=
 	\begin{cases}
		\tau_S(\psi) & (n_{\psi} \geq 0)\\
		(1-p^{-1}a_p\psi(p)^{-1}+p^{-1}\psi(p)^{-2})^{-1} \tau_S(\psi) & (n_{\psi} = -1).
	\end{cases}
\]
\end{thm}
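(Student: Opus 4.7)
The plan is to construct the $d_n$ via explicit formal-group formulas built from cyclotomic elements, then verify (1)--(3) directly using Honda theory for $\hat{E}$. The construction adapts Kobayashi's approach \cite{Kob03}, its supersingular extension by Sprung \cite{Spr12}, and the generalization to cyclotomic extensions by Kitajima-Otsuki \cite{KO18}; the new point is that both generalizations must be carried out simultaneously.

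First I would set up Honda theory. Since $p \mid a_p$, the formal group $\hat{E}/\Z_p$ has height $2$, so its logarithm satisfies a Honda-type functional equation controlled by $P(T) = T^2 - a_p T + p$: the operator $P(\varphi)$ annihilates $\log_{\hat{E}}$ modulo $p$, where $\varphi$ is the Frobenius acting on power series. I would then fix a generator $\xi$ of $\OO_{-1}$ as a free cyclic $\Z_p[G_{-1}]$-module. A crucial point: $\xi$ must be calibrated so that, for any character $\eta$ of $G_{-1}$, the character sum $\sum_{\sigma \in G_{-1}} \eta(\sigma) \sigma(\xi)$ recovers the $S$-imprimitive Gauss sum $\tau_S(\eta)$, not the primitive one $\tau(\eta)$. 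This can be achieved by choosing $\xi$ related to $\zeta_{m'}$ (for the integer $m'$ attached to $S$ in Definition \ref{defn:119}) and using \eqref{eq:91} to track the extra factors $\prod_{l \in S \setminus \prim(m_\eta)}(-\eta(\sigma_l))$.

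With $\xi$ in hand, define $d_n \in \hat{E}(\m_n)$ inductively so that $\log_{\hat{E}}(d_n)$, paired against characters of $G_n$ of conductor dividing $m' p^{n+1}$, yields $\tau_S(\psi)$ up to the expected correction. The trace relation (1) for $n \geq 1$ then follows from the cyclotomic distribution $\Tr_{\Q(\mu_{p^{n+1}})/\Q(\mu_{p^n})}(\zeta_{p^{n+1}}) = \zeta_{p^n}$, combined with the Honda equation, which is precisely what produces the $a_p$-term. At $n=0$ the anomalous factor $(a_p - \varphi - \varphi^{-1})d_{-1}$ arises from two sources: the distribution identity $\Tr(\zeta_p) = -1$ rather than $\zeta_{p^0}$, and the fact that the formal ``$d_{-2}$'' must be rewritten in terms of $d_{-1}$ using Frobenius on the unramified layer. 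Here $d_{-1}$ itself is forced (by (3) at $n_\psi = -1$) to be the preimage under $\log_{\hat{E}}$ of the inverse Honda operator $(1 - p^{-1} a_p \varphi^{-1} + p^{-1}\varphi^{-2})^{-1}$ applied to $\xi$.

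Property (3) then holds by construction at $n_\psi \geq 0$, and at $n_\psi = -1$ unfolds the inverse Honda operator built into $d_{-1}$, giving exactly the Euler factor $(1-p^{-1}a_p\psi(p)^{-1} + p^{-1}\psi(p)^{-2})^{-1}$. For (2), I would apply Nakayama's lemma after reduction modulo the maximal ideal of $R_n$: it suffices to show that $d_n$ and $d_{n-1}$ generate $\hat{E}(\m_n)/(p, \m_n \hat{E}(\m_n))$, which can be verified from the leading orders of $\log_{\hat{E}}(d_n)$ using $\zeta_{p^{n+1}} - 1$ as uniformizer. Uniqueness follows from the recurrence (1) together with the pinning-down of $d_{-1}$ by (3). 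The principal obstacle is the precise calibration required at the base case $n=-1$ and the resulting anomalous $n=0$ relation, particularly because the Honda operator must be inverted on the unramified layer $K_{-1}/\Q$ where Frobenius acts nontrivially. This inversion, and its compatibility with the cyclotomic recursion, is the main technical novelty flagged in Remark \ref{rem:967}; the earlier works \cite{Kob03}, \cite{Spr12}, \cite{KO18} did not encounter it simultaneously with the general supersingular setting.
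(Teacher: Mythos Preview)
Your outline has the right ingredients, but the construction step contains a genuine gap. You propose to fix a single generator $\xi$ of $\OO_{-1}$ calibrated so that $\sum_\sigma \eta(\sigma)\sigma(\xi) = \tau_S(\eta)$, and then build $d_n$ from $\xi$ via Honda theory. The difficulty is that such a $\xi$ is not a root of unity but a $\Z_p$-linear combination of them; in the paper's reduction to $K = \Q(\mu_m)$ and $S = \prim(m)$ (Lemma~\ref{lem:946}) it is essentially $\sum_{m' \in \AAA(m)}\frac{m'}{m}\zeta_{m'}$. The Honda-theoretic construction of a formal group with logarithm of type $t^2 - a_p t + p$ (Proposition~\ref{prop:10}) relies on the multiplicative identity $(\pi_{m',n} + \zeta_{m'}^{\varphi^{-(n+1)}})^{p^j} - (\zeta_{m'}^{\varphi^{-(n+1)}})^{p^j} = \pi_{m',n-j}$, which holds for a single $\zeta_{m'}$ but fails for a nontrivial linear combination. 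The paper's resolution (Remark~\ref{rem:852} and equation~\eqref{eq:94}) is a two-step construction: first build auxiliary points $\ttilde{d}_{m',n}$ for each $m' \in \AAA(m)$ separately via Honda theory, then take the weighted sum $d_{m,n} = \sum_{m'}\frac{m'}{m}\ttilde{d}_{m',n}$. Your ``define $d_n$ inductively so that $\log_{\hat{E}}(d_n)$ pairs correctly with characters'' is a specification, not a construction; existence is precisely the hard part.

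Your argument for (2) has a related gap: $R_n = \Z_p[G_n]$ is not local, so there is no single maximal ideal to reduce modulo, and a leading-order argument with one uniformizer does not control all components of $k_n$. The paper instead filters by $\hat{E}(\m_{m,n})/\hat{E}(\m_{m,n-1})$ and uses Lemma~\ref{lem:19}, which shows that $\m_{m,n}/\m_{m,n-1}$ genuinely requires the whole set $\{\pi_{m',n} : m' \in \AAA(m)\}$ as $R_{m,n}$-generators whenever $m$ is not square-free. Generation by the single element $d_{m,n}$ (together with $d_{m,n-1}$) is then deduced from trace relations in the $m$-direction (Propositions~\ref{prop:25}(2) and~\ref{prop:26}): one recovers each $\ttilde{d}_{m',n}$ from $d_{m,n}$ by successively tracing down in $m$. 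The one-element approach of \cite{KO18} you are implicitly following does give (1) and (2), as Remark~\ref{rem:967} notes, but only with a local root of unity that does not carry the Gauss-sum calibration needed for (3); reconciling the two is exactly what forces the two-step construction.
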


\begin{thm}\label{thm:121}
Suppose $p \nmid a_p$ and Assumption \ref{ass:04} below hold.
Let $\alpha, \beta \in \Z_p$ be the roots of $t^2-a_pt+p = 0$ such that $p \nmid \alpha, p \mid \beta$. 
Then there exists a unique system of elements $d_n \in \hat{E}(\m_n)$ $(n \geq -1)$ satisfying the following.

(1) For $n \geq 0$, we have
\[
\Tr^{n}_{n-1}(\dd_{n}) = 
	\begin{cases}
		\alpha \dd_{n-1} & (n \geq 1) \\
		(\alpha - \varphi^{-1}) \dd_{-1} & (n = 0).
	\end{cases}
\]

(2) For $n \geq -1$, $\hat{E}(\m_{n})$ is generated by $\dd_{n}$ as an $R_{n}$-module.

(3) For a character $\psi$ of $G_{\infty}$ of finite order, we have
\[
\sum_{\sigma \in G_{n_{\psi}}} \sigma(\log_{\hat{E}}(\dd_{n_{\psi}})) \psi(\sigma)=
 	\begin{cases}
		\tau_S(\psi) & (n_{\psi} \geq 0)\\
		(1-\beta^{-1}\psi(p)^{-1})^{-1} \tau_S(\psi) & (n_{\psi} = -1).
	\end{cases}
\]
\end{thm}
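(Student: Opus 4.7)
Plan. The proof would mirror that of Theorem~\ref{thm:116}, but is technically simpler because in the ordinary case $t^2 - a_p t + p$ factors over $\Z_p$ as $(t-\alpha)(t-\beta)$ with $\alpha \in \Z_p^{\times}$; Honda theory then splits $\log_{\hat{E}}$ into an $\alpha$-part and a $\beta$-part. Only the $\alpha$-part is relevant to the one-step trace recursion of property~(1), which explains why a single generator $d_n$ (rather than the pair $d_n, d_{n-1}$ of the supersingular case) will suffice for property~(2).

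I would construct $d_n$ by prescribing its logarithm. Writing $m'$ and $M' = m' p^{n+1}$ as in Definition~\ref{defn:119} with $\prim(m') = S$, the proposal at level $n \geq -1$ is
\[
\log_{\hat{E}}(d_n) \;=\; \alpha^{-(n+1)} \xi_n,
\]
where $\xi_n \in k_n$ is the image (via the fixed embedding $\overline{\Q} \hookrightarrow \overline{\Q_p}$) of a Galois-equivariant combination of $\zeta_{M'}$ and its translates, chosen so that the norm compatibility of cyclotomic elements combined with the Honda functional equation produces property~(1), and so that $\xi_n$ is a normal basis element of $k_n$ over $k_{-1}$ after extending scalars to $\Q_p$. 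The Frobenius correction $\alpha - \varphi^{-1}$ appearing in the bottom trace of property~(1) forces the Euler factor $(1-\beta^{-1}\psi(p)^{-1})^{-1}$ occurring in property~(3) at $n_\psi = -1$; with the definition in place, properties~(1) and~(3) then reduce to direct Gauss-sum computations.

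The main obstacle — the one singled out in Remark~\ref{rem:967} as requiring more care than the previous treatments — is integrality: the prescribed element $\alpha^{-(n+1)} \xi_n \in k_n$ must actually lie in the image of $\log_{\hat{E}}\colon \hat{E}(\m_n) \to k_n$. This is precisely where Assumption~\ref{ass:04} (non-anomalous at $p$) enters, by controlling the reduction sequence~\eqref{eq:87} so that there is no obstruction to lifting the prescribed logarithm into $\hat{E}(\m_n)$. Once integrality is secured, the generation statement~(2) follows in two steps: the normal basis theorem applied in $k_n$ yields $R_n \otimes \Q_p$-generation of $\log_{\hat{E}}(d_n)$, and a Nakayama argument over $R_n$ (using that $\hat{E}(\m_n)$ is a finitely generated $\Z_p$-module with no $p$-torsion) upgrades this to $R_n$-generation of $\hat{E}(\m_n)$.

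Finally, uniqueness is immediate from property~(3): specifying $\psi(\log_{\hat{E}}(d_n))$ for every finite-order character $\psi$ of $G_\infty$ with $n_\psi \leq n$ pins down $\log_{\hat{E}}(d_n)$ in $k_n \otimes \overline{\Q_p}$, hence determines $d_n$ itself via the injectivity of $\log_{\hat{E}}$ on $\hat{E}(\m_n)$.
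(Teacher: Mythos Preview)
Your claim that the ordinary case is ``technically simpler'' is precisely backwards: the remark immediately after Theorems~\ref{thm:116} and~\ref{thm:121} warns that ``a new subtle difficulty will appear in the proof of Proposition~\ref{prop:44}.'' The difficulty is this. In the supersingular construction (Proposition~\ref{prop:10}) one forms $f_m(T) = \sum_{j \geq 0} c_j\bigl((T+\zeta_m)^{p^j} - \zeta_m^{p^j}\bigr)$, which converges in $\FUN_m$ because $p^j c_j \to 0$. The naive ordinary analogue replaces $c_j$ by $\beta^{-j}$, but then $p^j\beta^{-j} = \alpha^j$ is a $p$-adic unit, so the coefficient of $T$ diverges. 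The paper's fix (Proposition~\ref{prop:44}) is to subtract off the terms $p^j\beta^{-j}\zeta_m^{p^j}\log(1+\zeta_m^{-1}T)$ and reinsert a correction $u_m\log(1+\zeta_m^{-1}T)$, where $u_m \in \OO_{m,-1}^{\times}$ is the unique solution of $u_m^{\varphi} - \alpha^{-1}u_m = -\alpha^{-1}\zeta_m$; the existence of $u_m$ uses Lemma~\ref{lem:100}, which is where Assumption~\ref{ass:04} actually enters. Only after this modification does one obtain a well-defined $f_m \in \FUN_m$ of Honda type $t-\beta$, and then the argument proceeds parallel to the supersingular case. Your proposed ``$\alpha$-part/$\beta$-part'' splitting does not confront this convergence obstruction, and your identification of the main obstacle as ``integrality'' (together with a misreading of Remark~\ref{rem:967}, which is about the necessity of property~(3), not about integrality) misses the point entirely.

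Separately, your route to property~(2) via the normal basis theorem plus Nakayama is not the paper's argument and would need care: $\log_{\hat{E}}$ does not surject onto $\m_n$, so a normal-basis statement in $k_n$ does not transfer directly. The paper instead controls the quotients $\hat{E}(\m_n)/\hat{E}(\m_{n-1})$ via Lemmas~\ref{lem:19} and~\ref{lem:20}, then uses the trace relation~(1) together with Lemma~\ref{lem:100} to collapse the generating set $\{d_n, d_{n-1}, \dots, d_{-1}\}$ to the single element $d_n$ (Proposition~\ref{prop:47}).
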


\begin{rem}
When $K = \Q$, $S = \emptyset$, and $a_p = 0$, the assertion of Theorem \ref{thm:116} is shown by Kobayashi \cite[Definition 8.8, Lemma 8.9, Proposition 8.11, Proposition 8.26]{Kob03}.
There are several further works where $p \mid a_p$, such as \cite{IP06}, \cite{Spr12}, and \cite{KO18}, but the proof of Theorem \ref{thm:116} still needs new ideas.
On the other hand, the author does not know a similar work on the ordinary case as in Theorem \ref{thm:121}.
The proof of Theorem \ref{thm:121} is quite similar to that of Theorem \ref{thm:116}, but a new subtle difficulty will appear in the proof of Proposition \ref{prop:44}.
\end{rem}

\begin{rem}\label{rem:967}
We will see (in Sections \ref{sec:32} and \ref{sec:220}) that we only need the properties (1)(2) in Theorems \ref{thm:116} and \ref{thm:121} in order to prove Theorems \ref{thm:76} and \ref{thm:93}.
This kind of remark is already given in \cite[Remark 3.12]{KO18}.
However, we need to impose the property (3) to establish the connection with the analytic side, stated in Theorem \ref{thm:77}.
In fact, in the $a_p = 0$ case, \cite{KO18} actually constructs a system $(d_n)_n$ satisfying the properties (1)(2).
But their construction is not canonical and, in particular, the property (3) is not necessarily satisfied.
\end{rem}

First we reduce the proofs of Theorems \ref{thm:116} and \ref{thm:121} to a special case.

For $p \nmid m$, we introduce the following notations (meaning that ``subscript $m$ gives corresponding objects for $K = \Q(\mu_m)$'').
For each integer $n \geq -1$ or $n = \infty$, put $K_{m, n} = \Q(\mu_{mp^{n+1}})$ and $G_{m, n} = \Gal(K_{m,n}/\Q)$.
Put $k_{m, n} = K_{m,n} \otimes_{\Q} \Q_p$, let $\OO_{m,n}$ be the integer ring of $k_{m,n}$, and let $\m_{m,n}$ be its Jacobson radical. 
Put $R_{m,n} = \Z_p[G_{m,n}]$ for each integer $n \geq -1$, and put $\RR_m = \Z_p[[G_{m,\infty}]]$.

\begin{lem}\label{lem:946}
Let $m$ be the conductor of $K$.
Then Theorems \ref{thm:116} and \ref{thm:121} follow from the assertions for the case where $K = \Q(\mu_m)$ and $S = \prim(m)$.
\end{lem}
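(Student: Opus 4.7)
The plan is to build the system for general $(K, S)$ by twisting and then tracing the given system $(d_n^{(m)})$ for $(\Q(\mu_m), \prim(m))$. By Kronecker-Weber and the minimality of the conductor, $K \subseteq \Q(\mu_m)$, so $K_n \subseteq K_{m,n}$. Since $p \nmid m$, the extension $K_{m,n}/K_n$ is unramified at $p$ of degree $[\Q(\mu_m):K]$, which is coprime to $p$; hence the formal-group trace $\Tr^{K_{m,n}}_{K_n}\colon \hat{E}(\m_{m,n}) \twoheadrightarrow \hat{E}(\m_n)$ is surjective. To absorb the $S$-dependence, I form the unit $a := \prod_{l \in S \setminus \prim(m)} (-\sigma_l^{-1}) \in \RR_m$, where $\sigma_l$ denotes the Frobenius of $l$ (unramified in $K_{m,\infty}/\Q$), and then set
\[
d_n := \Tr^{K_{m,n}}_{K_n}\bigl(a \cdot d_n^{(m)}\bigr) \in \hat{E}(\m_n).
\]

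Verifying (1) and (2) is a direct trace calculation. Since Galois is abelian, $a$ commutes with all trace maps; applying transitivity $\Tr^{K_{m,n}}_{K_{n-1}} = \Tr^{K_{m,n-1}}_{K_{n-1}} \circ \Tr^{K_{m,n}}_{K_{m,n-1}}$ and (1) for $(d_n^{(m)})$ yields the formula for $\Tr^n_{n-1}(d_n)$. The only subtlety is that the term involving $d_{n-2}^{(m)}$ already lives a level below; here I need the identity $\Tr^{K_{m,n-1}}_{K_{n-1}}(y) = \Tr^{K_{m,n-2}}_{K_{n-2}}(y)$ for $y \in \hat{E}(\m_{m,n-2})$, which follows from the compositum relation $K_{m,n-1} = K_{m,n-2} \cdot K_{n-1}$ with intersection $K_{n-2}$ (using $\Q(\mu_m) \cap \Q(\mu_{p^\infty}) = \Q$). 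The case $n=0$ is handled analogously, with the Frobenius $\varphi$ commuting through the trace. For (2), surjectivity of the trace and invertibility of $a$ give $R_n d_n + R_n d_{n-1} = \Tr^{K_{m,n}}_{K_n}(R_{m,n} d_n^{(m)} + R_{m,n} d_{n-1}^{(m)}) = \hat{E}(\m_n)$, by (2) for $(d_n^{(m)})$.

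For (3), the key input is Fourier inversion. Set $S_\psi(x) := \sum_{\tilde\sigma \in G_{m,n_\psi}} \psi(\tilde\sigma)\tilde\sigma(x)$. Since $\psi$ pulls back from $G_\infty$, it is trivial on $H := \Gal(K_{m,n_\psi}/K_{n_\psi})$, so $S_\psi(y) = |H|\sum_{\sigma \in G_{n_\psi}} \psi(\sigma)\sigma(y)$ whenever $y \in k_{n_\psi}$. Using additivity of $\log_{\hat{E}}$ we have $\log_{\hat{E}} d_{n_\psi} = \Tr^{K_{m,n_\psi}}_{K_{n_\psi}}(a \log_{\hat{E}} d_{n_\psi}^{(m)})$, and combined with the twisted equivariance $S_\psi(b \cdot x) = \psi(b^\iota) S_\psi(x)$ for $b \in \RR_m$ this yields
\[
\sum_{\sigma \in G_{n_\psi}} \psi(\sigma)\sigma\bigl(\log_{\hat{E}} d_{n_\psi}\bigr) = \psi(a^\iota) \cdot \sum_{\tilde\sigma \in G_{m,n_\psi}} \psi(\tilde\sigma)\tilde\sigma\bigl(\log_{\hat{E}} d_{n_\psi}^{(m)}\bigr).
\]
Applying (3) for $(d_n^{(m)})$ gives the right-hand side for $S = \prim(m)$, multiplied by $\psi(a^\iota) = \prod_{l \in S \setminus \prim(m)}(-\psi(\sigma_l))$, which by \eqref{eq:91} transforms $\tau_{\prim(m)}(\psi)$ into $\tau_S(\psi)$. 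Uniqueness of $(d_n)$ follows from (1) together with (3) by induction on $n$ and Fourier orthogonality on each $G_n$ (where (1) pins down the part of $\log_{\hat{E}} d_n$ that descends to lower layers and (3) pins down the $n_\psi = n$ components). The ordinary case (Theorem \ref{thm:121}) is entirely parallel. The main technical delicacy is getting the direction of the involution right, which forces the specific choice $a = \prod_l(-\sigma_l^{-1})$ rather than $\prod_l(-\sigma_l)$, so that the Fourier calculation matches \eqref{eq:91} exactly.
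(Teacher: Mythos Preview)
Your approach matches the paper's: twist by a unit and trace down from $(\Q(\mu_m), \prim(m))$. One small correction: the degree $[\Q(\mu_m):K]$ need not be coprime to $p$ (take $K$ of index $p$ in $\Q(\mu_l)$ for a prime $l \equiv 1 \pmod{p}$), so your stated reason for trace surjectivity is invalid; the surjectivity of $\Tr\colon \hat E(\m_{m,n}) \to \hat E(\m_n)$ nonetheless holds because $K_{m,n}/K_n$ is unramified at $p$ --- filter $\hat E$ by powers of the maximal ideal and use surjectivity of the additive trace on each graded piece. Your handling of the involution, forcing $a = \prod_l(-\sigma_l^{-1})$, is in fact sharper than the paper's own proof, which writes $\prod_l(-\sigma_l)$: your identity $\sum_\sigma \psi(\sigma)\sigma(a\cdot x) = \psi(a^\iota)\sum_\sigma \psi(\sigma)\sigma(x)$ shows that $a^\iota = \prod_l(-\sigma_l)$ is what is required to match \eqref{eq:91}, so the paper's formula appears to carry a harmless typo.
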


\begin{proof}
First note that, in the $p \nmid a_p$ case, Assumption \ref{ass:04} is unchanged if we change $K$ to $\Q(\mu_m)$.
Since $\log_{\hat{E}}$ is injective by Proposition \ref{prop:98} and Assumption \ref{ass:04} below, the property (3) assures the uniqueness assertion.
To show the existence, let $d_{m,n} \in \hat{E}(\m_{m,n})$ be the claimed system of points for $K = \Q(\mu_m), S = \prim(m)$.
Define 
\[
d_n = \left(\prod_{l \in S, l \nmid m} (-\sigma_l) \right)\Tr_{K_{m,n}/K_n}(d_{m,n}) \in \hat{E}(\m_n),
\]
where $\Tr_{K_{m,n}/K_n}$ denotes the map $\hat{E}(\m_{m,n}) \to \hat{E}(\m_n)$ induced by the trace map $K_{m,n} \to K_n$.
Then the properties (1) -- (3) are preserved.
In fact, (1) is clear; (2) follows from the surjectivity of $\Tr_{K_{m,n}/K_n}: \hat{E}(\m_{m,n}) \to \hat{E}(\m_n)$; (3) follows from \eqref{eq:91}.
\end{proof}

Thus it is enough to prove Theorems \ref{thm:116} and \ref{thm:121} for $K = \Q(\mu_m), S = \prim(m)$.
Those will be done in Subsections \ref{subsec:78} and \ref{subsec:40}, respectively.

\subsection{Non-Anomalous Condition}

In this subsection, we discuss the non-anomalous condition in the ordinary case.
Before that, we recall the following important observation in the supersingular case.

\begin{prop}\label{prop:98}
Suppose $p \mid a_p$ holds.
Then $E(k_{\infty})$ is $p$-torsion free.
Equivalently, $\hat{E}(\m_{\infty})$ is $p$-torsion free.
\end{prop}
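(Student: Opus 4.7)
First, I would establish the equivalence. Passing to the colimit $n \to \infty$ in the exact sequence \eqref{eq:87} yields
\[
0 \to \hat{E}(\m_\infty) \to E(k_\infty) \to \ttilde{E}(\OO_\infty/\m_\infty) \to 0.
\]
Since $p \mid a_p$ is precisely the condition that $E$ has supersingular reduction at $p$, we have $\ttilde{E}(\overline{\F_p})[p] = 0$; because the residue field at any place of $K_\infty$ above $p$ embeds into $\overline{\F_p}$, this gives $\ttilde{E}(\OO_\infty/\m_\infty)[p] = 0$. Hence any $p$-torsion in $E(k_\infty)$ already lies in $\hat{E}(\m_\infty)$, and the two assertions are equivalent.

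For the main claim that $\hat{E}(\m_\infty)[p] = 0$, I would argue by contradiction, comparing ramification indices. Because $K$ is unramified at $p$, every completion of $K_\infty$ above $p$ has the form $F \cdot \Q_p(\mu_{p^\infty})$ for some finite unramified extension $F/\Q_p$. A hypothetical nonzero $p$-torsion point $x \in \hat{E}(\m_\infty)$ would then lie in $\hat{E}(\m_L)$ for some $L = F \cdot \Q_p(\mu_{p^{n+1}})$, and such $L/\Q_p$ has ramification index $p^n(p-1)$.

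The essential input from the hypothesis $p \mid a_p$ is that $\hat{E}$ has height $2$. A standard Newton polygon argument applied to the $p$-series $[p](X)$, which starts as $pX + \cdots$ and whose reduction modulo $p$ has leading term $X^{p^2}$ (up to a unit), then shows that every nonzero $x \in \hat{E}[p](\overline{\Q_p})$ has $\Q_p$-normalized valuation $1/(p^2-1)$. Consequently the ramification index of $\Q_p(x)/\Q_p$ must be divisible by $p^2-1 = (p-1)(p+1)$. Combined with $L/\Q_p$ having ramification index $p^n(p-1)$, this forces $(p+1) \mid p^n$, contradicting $\gcd(p+1, p) = 1$ for the odd prime $p$.

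The main obstacle is the valuation computation via the height-$2$ Newton polygon, which is exactly where the supersingular hypothesis enters essentially: in the ordinary case $\hat{E}$ has height $1$, nonzero $p$-torsion has valuation $1/(p-1)$, and such points are already realized inside $\Q_p(\mu_p) \subset k_\infty$, so the conclusion genuinely fails there. Once the valuation of nonzero $p$-torsion in $\hat{E}$ is pinned down, the ramification comparison and the equivalence step are both immediate.
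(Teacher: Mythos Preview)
Your argument is correct and is precisely the standard proof: the paper itself gives no details but cites \cite[Proposition 8.7]{Kob03} and \cite[Proposition 3.1]{KO18}, whose proofs proceed by exactly this height-$2$ Newton polygon computation showing that nonzero points of $\hat{E}[p]$ have valuation $1/(p^2-1)$, together with the ramification comparison against $e(k_n/\Q_p)=p^n(p-1)$. Your equivalence step via the exact sequence \eqref{eq:87} and the vanishing of $\ttilde{E}(\overline{\F_p})[p]$ in the supersingular case is also the standard reduction.
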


\begin{proof}
See \cite[Proposition 3.1]{KO18} and \cite[Proposition 8.7]{Kob03}.
\end{proof}

In the ordinary case, $E(k_{\infty})$ can contain a $p$-torsion in general (we call such a situation {\it anomalous}, due to Mazur).
Since almost all of the method in this paper cannot be applied to the anomalous case, we assume the following.

\begin{ass}\label{ass:04}
When $p \nmid a_p$, for the conductor $m$ of $K/\Q$, $E(k_{m, \infty})$ is $p$-torsion free.
\end{ass}

We mention that Assumption \ref{ass:04} is stronger than $E(k_{\infty})$ being $p$-torsion free, but is necessary since our method uses the corresponding result for $\Q(\mu_m)$; see the proof of Lemma \ref{lem:946}.
An equivalent condition to Assumption \ref{ass:04} will be given in Proposition \ref{prop:05}.
Note that, as shown in Lemma \ref{lem:58}, Assumption \ref{ass:04} is equivalent to that $\hat{E}(\m_{m,\infty})$ is $p$-torsion free.

\begin{lem}\label{lem:58}
Suppose $p \nmid a_p$ holds.
Let $F$ be a finite extension of $\Q_p(\mu_p)$ and $\kappa(F)$ its residue field.
Then we have $E(F)[p]=0$ if and only if $\ttilde{E}(\kappa(F))[p]=0$.
\end{lem}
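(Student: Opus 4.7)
My plan is to combine the reduction exact sequence with a Galois-theoretic analysis of $E[p]$. Applying \eqref{eq:87} over $F$ and taking $p$-torsion yields the left-exact sequence
\[
0 \to \hat{E}(\m_F)[p] \to E(F)[p] \to \ttilde{E}(\kappa(F))[p],
\]
so the lemma will reduce to proving the standalone equivalence
\[
\hat{E}(\m_F)[p] = 0 \iff \ttilde{E}(\kappa(F))[p] = 0. \qquad (\ast)
\]
From $(\ast)$ one deduces the forward direction of the lemma by first passing to $\hat{E}(\m_F)[p] = 0$ via the injection $\hat{E}(\m_F)[p] \hookrightarrow E(F)[p]$, and the reverse direction from exactness of the three-term sequence once both flanking groups vanish.

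To prove $(\ast)$, I would identify the two characters appearing in the $p$-torsion filtration on $E[p]$. Since $p \nmid a_p$, the reduction $\ttilde{E}$ is ordinary, so $\ttilde{E}[p](\overline{\F_p})$ is one-dimensional over $\F_p$ and $\Gal(\overline{\Q_p}/\Q_p)$ acts on it through an unramified character $\eta \colon \Gal(\overline{\Q_p}/\Q_p) \to \F_p^{\times}$ satisfying $\eta(\varphi) \equiv \alpha \pmod{p}$, where $\alpha \in \Z_p^{\times}$ is the unit root of $t^2 - a_p t + p$. The reduction sequence on $p$-torsion,
\[
0 \to \hat{E}[p] \to E[p] \to \ttilde{E}[p] \to 0,
\]
is $\Gal(\overline{\Q_p}/\Q_p)$-equivariant, and its determinant is the mod-$p$ cyclotomic character $\omega$ by the Weil pairing $\bigwedge^{2} E[p] \simeq \mu_p$. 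Hence $\hat{E}[p]$ carries the character $\omega \eta^{-1}$.

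The hypothesis $F \supseteq \Q_p(\mu_p)$ is then used decisively: $\omega$ restricts to the trivial character on $\Gal(\overline{\Q_p}/F)$, so this subgroup acts on $\hat{E}[p]$ through $\eta^{-1}$ and on $\ttilde{E}[p]$ through $\eta$. Because $\F_p^{\times}$ is abelian, these two restrictions are trivial simultaneously. Combined with the standard identifications $\hat{E}(\m_F)[p] = \hat{E}[p]^{\Gal(\overline{\Q_p}/F)}$ (which holds because every nonzero $p$-torsion point of a formal group lies in the maximal ideal) and $\ttilde{E}(\kappa(F))[p] = \ttilde{E}[p]^{\Gal(\overline{\Q_p}/F)}$, this gives $(\ast)$.

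The only technical point of substance is the Weil-pairing calculation identifying the determinant of $E[p]$, and hence the character on $\hat{E}[p]$; the remainder of the argument is essentially formal, so I anticipate no serious obstacle.
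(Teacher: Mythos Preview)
Your proof is correct and takes essentially the same approach as the paper: both identify the kernel $\hat{E}[p]$ (the paper's $\rho''$) and quotient $\ttilde{E}[p]$ (the paper's $\rho'$) of $E[p]$, use the Weil pairing together with $\mu_p \subset F$ to conclude that the two characters are mutually inverse over $F$, and hence trivial simultaneously. The only cosmetic difference is that the paper phrases the final step as ``$\rho$ has a trivial subrepresentation iff both $\rho', \rho''$ are trivial,'' whereas you route through the left-exact sequence on $F$-points; these are the same argument.
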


\begin{proof}
Let $\rho$ (resp. $\rho'$) be the $2$-dimensional (resp. $1$-dimensional) representation of $\Gal(\overline{\Q_p}/F)$ over $\F_p$ defined by the action on $E[p]$ (resp. $\ttilde{E}[p]$).
We denote by $\rho''$ the kernel of the surjective homomorphism $\rho \to \rho'$.
By the Weil pairing, the representation $\rho' \otimes \rho'' \simeq \det(\rho)$ is trivial since $F$ contains $\mu_p$.

Therefore $\ttilde{E}(\kappa(F))[p] \neq 0$ is equivalent to that both $\rho'$ and $\rho''$ are trivial.
On the other hand, $E(F)[p] \neq 0$ is equivalent to the existence of trivial representation contained in $\rho$.
Now it is easy to see that those are equivalent.
\end{proof}

\begin{prop}\label{prop:05}
When $p \nmid a_p$, Assumption \ref{ass:04} holds if and only if $a_p^{\rd_m} \not\equiv 1 \mod p$, where $\rd_m$ is the residue degree of $K_{m,-1} = \Q(\mu_m)$ over $\Q$ at $p$.
\end{prop}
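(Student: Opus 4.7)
The plan is to reduce Assumption \ref{ass:04} to a statement about $\ttilde{E}$ over the appropriate residue field via Lemma \ref{lem:58}, and then evaluate the order of $\ttilde{E}$ on that residue field modulo $p$ using the Frobenius eigenvalues.

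First, I would unwind the notation: $k_{m,\infty} = K_{m,\infty} \otimes \Q_p$ decomposes as a product of completions $F_v$ indexed by the places $v$ of $K_{m,\infty}$ above $p$. Because $K_{m,\infty}/\Q$ is abelian, $G_{m,\infty}$ acts transitively on places above $p$, so the condition $E(k_{m,\infty})[p] = 0$ is equivalent to $E(F)[p] = 0$ for any one such completion $F$. Since $F$ contains $\Q_p(\mu_p)$, Lemma \ref{lem:58} applies and rephrases this as $\ttilde{E}(\kappa(F))[p] = 0$, where $\kappa(F)$ denotes the residue field of $F$.

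Next, I would identify $\kappa(F)$. The extension $F/\Q_p$ is obtained by adjoining $\mu_m$ (with $p \nmid m$) and $\mu_{p^{\infty}}$. The first step gives the unramified extension of $\Q_p$ of degree equal to the multiplicative order of $p$ modulo $m$, which is exactly $\rd_m$; the second step is totally ramified over $\Q_p$ and so contributes nothing at the residue level. Hence $\kappa(F) = \F_{p^{\rd_m}}$. I would then use the standard formula
\[
\sharp \ttilde{E}(\F_{p^{\rd_m}}) = 1 + p^{\rd_m} - \alpha^{\rd_m} - \beta^{\rd_m},
\]
where $\alpha, \beta \in \Z_p$ are the roots of $t^2 - a_p t + p$. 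Under the ordinary hypothesis $p \nmid a_p$, the unit root $\alpha$ satisfies $\alpha \equiv a_p \pmod{p}$ and $\beta \equiv 0 \pmod{p}$, so the right-hand side reduces to $1 - a_p^{\rd_m} \pmod{p}$. Since $E$ is ordinary at $p$, the group $\ttilde{E}(\overline{\F_p})[p]$ is cyclic of order $p$, so $\ttilde{E}(\F_{p^{\rd_m}})[p] = 0$ precisely when $p \nmid \sharp \ttilde{E}(\F_{p^{\rd_m}})$, which is equivalent to $a_p^{\rd_m} \not\equiv 1 \pmod{p}$. This gives the stated equivalence.

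I expect no serious obstacle; the argument is essentially a direct application of Lemma \ref{lem:58} combined with the Weil-type point count. The one subtle point to verify carefully is the identification $\kappa(F) = \F_{p^{\rd_m}}$, which relies on the fact that the cyclotomic tower $\Q_p(\mu_{p^{\infty}})/\Q_p$ is totally ramified and hence invisible at the residue level.
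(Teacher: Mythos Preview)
Your proposal is correct and follows essentially the same route as the paper: reduce via Lemma~\ref{lem:58} to $\ttilde{E}(\F_{p^{\rd_m}})[p]=0$, then compute $\sharp\ttilde{E}(\F_{p^{\rd_m}}) \equiv 1 - a_p^{\rd_m} \pmod p$ using the Frobenius eigenvalues. The only difference is that the paper identifies the residue field more tersely as $\OO_{m,-1}/(p) \simeq \F_{p^{\rd_m}}$ and omits your (correct but unnecessary) remark on cyclicity of $\ttilde{E}(\overline{\F_p})[p]$, since for any finite abelian group $A$ one has $A[p]=0$ if and only if $p\nmid\sharp A$.
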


\begin{proof}
By Lemma \ref{lem:58}, Assumption \ref{ass:04} is equivalent to $\ttilde{E}(\OO_{m,-1}/(p))[p] = 0$, namely $\ttilde{E}(\F_{p^{\rd_m}})[p]=0$.
Let $\alpha, \beta$ be the roots of $t^2-a_pt+p$ such that $p \nmid \alpha$ and $p \mid \beta$.
Then $a_p = \alpha + \beta \equiv \alpha \mod p$ and \cite[Theorem 2.3.1]{Sil09} implies
\[
\sharp \ttilde{E}(\F_{p^{\rd_m}}) = (1+p^{\rd_m}) - (\alpha^{\rd_m} + \beta^{\rd_m}) \equiv 1 - \alpha^{\rd_m} \mod p.
\]
This completes the proof.
\end{proof}

In general, if $X$ is a free $\Z_p$-module of finite rank, we have a natural isomorphism
\begin{equation}\label{eq:95}
X^* \simeq (X \otimes (\Q_p/\Z_p))^{\dual}
\end{equation}
sending $f \in X^*$ to $x \otimes p^{-a} \mapsto p^{-a}f(x)$ (recall Notations in Section \ref{sec:01}).
If a group $G$ acts on $X$, then this isomorphism is $G$-equivariant.

\begin{prop}\label{prop:895}
Suppose Assumption \ref{ass:04} holds when $p \nmid a_p$.
Then for $n \geq -1$, $\hat{E}(\m_n)$ is isomorphic to the $p$-part of $E(k_n)$ under the injective map in \eqref{eq:87}.
In particular, by \eqref{eq:95}, we obtain isomorphisms
\begin{equation}
\hat{E}(\m_n)^* 
\simeq (\hat{E}(\m_n) \otimes (\Q_p/\Z_p))^{\dual} 
\simeq (E(k_n) \otimes (\Q_p/\Z_p))^{\dual}
\end{equation}
and, by taking the limit, 
\begin{equation}
\varprojlim_n \hat{E}(\m_n)^*
\simeq (\hat{E}(\m_{\infty}) \otimes (\Q_p/\Z_p))^{\dual} \simeq (E(k_{\infty}) \otimes (\Q_p/\Z_p))^{\dual}.
\end{equation}
\end{prop}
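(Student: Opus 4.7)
The plan is to reduce the proposition to one key input: that under the hypothesis, $\ttilde{E}(\OO_n/\m_n)$ has trivial $p$-primary part. Once this is established, the identification of $\hat{E}(\m_n)$ with the $p$-part of $E(k_n)$ is immediate from \eqref{eq:87}, and the displayed isomorphisms follow by standard manipulations.

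I would establish the vanishing $\ttilde{E}(\OO_n/\m_n)[p]=0$ by separating cases. When $p \mid a_p$, the reduced curve $\ttilde{E}$ is supersingular, so the group scheme $\ttilde{E}[p]$ is local-local and $\ttilde{E}[p](\kappa) = 0$ for every field $\kappa$ of characteristic $p$; in particular the assertion holds. When $p \nmid a_p$, I invoke Assumption~\ref{ass:04}: since $K \subseteq \Q(\mu_m)$ we have $k_n \subseteq k_{m,\infty}$, so $E(k_n)[p] \subseteq E(k_{m,\infty})[p] = 0$. For $n \geq 0$, the local field $k_n$ contains $\mu_p$, and Lemma~\ref{lem:58} applied componentwise delivers the vanishing. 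For $n=-1$, since $p$ is unramified in $K=K_{-1}$ while $\Q_p(\mu_p)/\Q_p$ is totally ramified, $K_0/K_{-1}$ is totally ramified at every place above $p$; hence the residue fields at $p$ at levels $-1$ and $0$ coincide, and the $n=0$ case yields the $n=-1$ case.

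With $\ttilde{E}(\OO_n/\m_n)[p] = 0$ in hand, the sequence \eqref{eq:87} has pro-$p$ kernel and finite prime-to-$p$ cokernel; multiplication by $|\ttilde{E}(\OO_n/\m_n)|$ is an automorphism of the $\Z_p$-module $\hat{E}(\m_n)$, which provides a canonical splitting $E(k_n) \simeq \hat{E}(\m_n) \oplus \ttilde{E}(\OO_n/\m_n)$ realizing $\hat{E}(\m_n)$ as the maximal pro-$p$ subgroup of $E(k_n)$, i.e., its \emph{$p$-part}. Since $\hat{E}(\m_n)$ is $p$-torsion free (by Proposition~\ref{prop:98} or Assumption~\ref{ass:04} via the inclusion into $\hat{E}(\m_\infty)$ or $\hat{E}(\m_{m,\infty})$), it is a free $\Z_p$-module of finite rank, so \eqref{eq:95} yields $\hat{E}(\m_n)^* \simeq (\hat{E}(\m_n)\otimes(\Q_p/\Z_p))^{\dual}$; tensoring the decomposition with $\Q_p/\Z_p$ annihilates the prime-to-$p$ summand, giving the second finite-level isomorphism. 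The limit version follows by passing to $\varprojlim_n$, using that Pontryagin duality exchanges $\varinjlim$ and $\varprojlim$. The main, and essentially only substantive, obstacle is establishing $\ttilde{E}(\OO_n/\m_n)[p] = 0$ in the ordinary case, where the global input from Assumption~\ref{ass:04} must be carefully localized; the case $n=-1$ requires the mild observation that residue fields at $p$ are unchanged upon adjoining $\mu_p$.
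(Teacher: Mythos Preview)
Your proof is correct and follows the same approach as the paper: reduce to showing $\ttilde{E}(\OO_n/\m_n)[p]=0$, dispatch the supersingular case trivially, and in the ordinary case combine Assumption~\ref{ass:04} with Lemma~\ref{lem:58}. You are in fact more careful than the paper, which simply writes ``this follows from Lemma~\ref{lem:58}'' without explicitly noting that the $n=-1$ case requires the observation that the residue fields at levels $-1$ and $0$ coincide (since $K_0/K_{-1}$ is totally ramified above $p$).
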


\begin{proof}
It is enough to show $\ttilde{E}(\OO_n/\m_n)[p] = 0$.
When $p \mid a_p$, this is clear.
When $p \nmid a_p$, this follows from Lemma \ref{lem:58}.
\end{proof}

\subsection{Preliminary Computations}\label{subsec:83}

This subsection is a preliminary to the proofs of Theorems \ref{thm:116} and \ref{thm:121}.

\begin{defn}\label{defn:06}
For $p \nmid m$ and $n \geq 0$, we put 
\[
\pi_{m,n} = \zeta_m^{\varphi^{-(n+1)}} (\zeta_{p^{n+1}} ^{\sigma_m^{-1}} - 1) \in \m_{m,n}.
\]
Here $\sigma_m$ denotes the $m$-th power Frobenius map and $\varphi$ the $p$-th power Frobenius map.
If $n \leq -1$, then we put $\pi_{m,n} = 0 \in \m_{m,-1}$.
\end{defn}

\begin{rem}\label{rem:852}
Our $\pi_{m,n}$ corresponds to $\zeta_{p^{n+1}}-1$ in \cite[Definition 8.8]{Kob03} and is motivated especially by the element $\pi_n = \zeta^{\varphi^{-(n+1)}}(\zeta_{p^{n+1}}-1)$ in \cite[p. 9]{KO18}.
Here, $\zeta$ is a generator of the group of roots of unity in a local field.
One of the novel ideas in this paper is to utilize the (global) elements $\zeta_m$ with various $m$ instead of the single (local) element $\zeta$.
That is because the Gauss sums in the right hand sides of Theorems \ref{thm:116} and \ref{thm:121} are defined using various $\zeta_{mp^{n+1}}$.

In order for this idea to work, we will have to modify several assertions in \cite{KO18} by using various $\zeta_m$.
Roughly speaking, assertions in \cite{KO18} on the single element $\zeta$ will be replaced by assertions on $\{\zeta_{m'} \mid m' \in \AAA(m)\}$.
Here $\AAA(m)$ is defined in Definition \ref{defn:18} below.
A typical example is Lemma \ref{lem:19} below, which corresponds to \cite[Lemma 3.9]{KO18}.
\end{rem}

For $m' \mid m$ with $p \nmid m$ and $-1 \leq n' \leq n$, we denote by $\Tr^{m,n}_{m',n'}$ the various maps induced by the trace map $K_{m,n} \to K_{m',n'}$. 
We study the behavior of $\pi_{m,n}$ under the trace maps.

\begin{lem}\label{lem:14}
(1) For $p \nmid m$ and $n \geq 0$, we have
\[
\Tr^{m,n}_{m,n-1}(\zeta_{p^{n+1}}-1) = -p
\]
and
\[
\Tr^{m,n}_{m,n-1}(\pi_{m,n}) = -p \zeta_m^{\varphi^{-(n+1)}}.
\]

(2) Let  $p \nmid m$ and let $l$ be a prime divisor of $m$. 
For $n \geq -1$ we have 
\[
\Tr^{m,n}_{m/l,n} (\zeta_m) = 
	\begin{cases}
		0 & (l^2 \mid m) \\
		(-\sigma_l^{-1}) \zeta_{m/l} & (l^2 \nmid m)
	\end{cases}
\]
and
\[
\Tr^{m,n}_{m/l,n} (\pi_{m,n}) = 
	\begin{cases}
		0 & (l^2 \mid m) \\
		(-\sigma_l^{-1}) \pi_{m/l,n} & (l^2 \nmid m).
	\end{cases}
\]
\end{lem}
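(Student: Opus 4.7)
The plan is to carry out direct Galois-theoretic trace computations in the cyclotomic tower $K_{m,n}=\Q(\mu_{mp^{n+1}})$. The key generality I will use throughout is that $\Gal(K_{m,\infty}/\Q)$ is abelian, so every automorphism commutes with any partial trace map $\Tr^{m,n}_{m',n'}$.

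For part (1), first compute $\Tr^{m,n}_{m,n-1}(\zeta_{p^{n+1}})$. When $n\ge 1$ the Galois conjugates of $\zeta_{p^{n+1}}$ over $K_{m,n-1}$ are exactly the elements $\zeta_{p^{n+1}}\zeta$ with $\zeta\in\mu_p$, so their sum equals $\zeta_{p^{n+1}}\sum_{\zeta\in\mu_p}\zeta=0$; when $n=0$ the minimal polynomial of $\zeta_p$ over $K_{m,-1}$ is $\Phi_p(X)$, giving $\Tr(\zeta_p)=-1$. Combining these with $\Tr^{m,n}_{m,n-1}(1)=[K_{m,n}:K_{m,n-1}]$ (equal to $p$ for $n\ge1$ and $p-1$ for $n=0$) yields $-p$ in both cases, proving the first identity. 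For the second identity, observe that $\zeta_m^{\varphi^{-(n+1)}}\in K_{m,-1}\subset K_{m,n-1}$ and so factors out of the trace, while $\sigma_m^{-1}$ commutes with the trace; hence
\[
\Tr^{m,n}_{m,n-1}(\pi_{m,n})=\zeta_m^{\varphi^{-(n+1)}}\cdot \sigma_m^{-1}\bigl(\Tr^{m,n}_{m,n-1}(\zeta_{p^{n+1}}-1)\bigr)=-p\,\zeta_m^{\varphi^{-(n+1)}}.
\]

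For part (2), analyze $G:=\Gal(K_{m,n}/K_{m/l,n})$, which by restriction is identified with $\Gal(\Q(\mu_m)/\Q(\mu_{m/l}))$, i.e.\ the kernel of $(\Z/m)^{\times}\to(\Z/(m/l))^{\times}$. If $l^2\mid m$ then $|G|=l$ and a generator of $G$ acts as $\zeta_m\mapsto\zeta_m\zeta_l^k$ for $k$ ranging over $\Z/l$ (using $\zeta_m^{m/l}=\zeta_l$), so $\Tr(\zeta_m)=\zeta_m\sum_{k=0}^{l-1}\zeta_l^k=0$. If $l\|m$, I decompose $\zeta_m=\zeta_l^u\zeta_{m/l}^v$ for integers $u,v$ with $u(m/l)+vl=1$ (CRT); $G$ fixes $\zeta_{m/l}^v$ and permutes $\zeta_l^u$ through the nontrivial $l$-th roots of unity, giving $\Tr(\zeta_m)=-\zeta_{m/l}^v$, and the congruence $vl\equiv 1\pmod{m/l}$ identifies $\zeta_{m/l}^v$ with $\sigma_l^{-1}(\zeta_{m/l})$. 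For $\pi_{m,n}$ in the $l\|m$ case, $\zeta_{p^{n+1}}^{\sigma_m^{-1}}$ lies in $K_{m/l,n}$ and so is $G$-fixed (factoring out of the trace), $\varphi^{-(n+1)}$ commutes with the trace, and $\sigma_m=\sigma_l\sigma_{m/l}$ on $\mu_{p^{n+1}}$ gives $\zeta_{p^{n+1}}^{\sigma_m^{-1}}=\sigma_l^{-1}(\zeta_{p^{n+1}}^{\sigma_{m/l}^{-1}})$; these two occurrences of $\sigma_l^{-1}$ combine into one $\sigma_l^{-1}$ applied to $\pi_{m/l,n}$. The $l^2\mid m$ case for $\pi_{m,n}$ is immediate from $\Tr(\zeta_m^{\varphi^{-(n+1)}})=\varphi^{-(n+1)}(\Tr(\zeta_m))=0$.

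There is no serious obstacle; the only bookkeeping to watch carefully is the $l\|m$ subcase of (2), where one must check that the $\sigma_l^{-1}$ arising from the trace of the $\mu_m$-part correctly matches the $\sigma_l^{-1}$ coming from the decomposition $\sigma_m^{-1}=\sigma_{m/l}^{-1}\sigma_l^{-1}$ on $\mu_{p^{n+1}}$, so that the result assembles precisely into $-\sigma_l^{-1}\pi_{m/l,n}$.
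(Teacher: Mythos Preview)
Your proof is correct and follows essentially the same approach as the paper: both compute the traces directly from the explicit Galois action on cyclotomic generators. The paper phrases this as reading off the relevant coefficient of the minimal polynomial of $\zeta_{p^{n+1}}-1$ (resp.\ $\zeta_m$) over the base field, while you sum the Galois conjugates explicitly and then handle $\pi_{m,n}$ by factoring out the fixed part and commuting the global automorphisms $\varphi^{-(n+1)}$, $\sigma_m^{-1}$, $\sigma_l^{-1}$ through the trace; these are two presentations of the same computation.
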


\begin{proof}
(1) The minimal polynomial of $\zeta_{p^{n+1}}-1 \in K_{m,n}$ over $K_{m,n-1}$ is $(1+x)^p-\zeta_{p^n}$ (resp. $((1+x)^p - 1)/x$) if $n \geq 1$ (resp. $n = 0$).
The coefficient of $x^{p-1}$ (resp. $x^{p-2}$) is $p$, which implies the first equation.
The second equation follows immediately.

(2) Similarly, this assertion follows from the fact that the minimal polynomial of $\zeta_m \in K_{m,n}$ over $K_{m/l,n}$ is $x^l-\zeta_{m/l}$ (resp. $(x^l-\zeta_{m/l}) / (x-\zeta_{m/l}^{\sigma_l^{-1}})$) if $l^2 \mid m$ (resp. $l^2 \nmid m$).
\end{proof}

\begin{defn}\label{defn:18}
For $p \nmid m$, let $\AAA(m)$ be the set of divisors $m'$ of $m$ such that $\prim(m') = \prim(m)$.
For example, $\AAA(m) = \{m\}$ if and only if $m$ is square-free.
\end{defn}

\begin{lem}\label{lem:19}
Let  $p \nmid m$.
For $n \geq 0$, $\m_{m,n}/\m_{m,n-1}$ is generated by $\{ \pi_{m',n} \mid m' \in \AAA(m)\}$ as an $R_{m,n}$-module.
Moreover, $\m_{m,-1}$ is generated by $\{ p\zeta_{m'} \mid m' \in \AAA(m)\}$ as an $R_{m,-1}$-module.
\end{lem}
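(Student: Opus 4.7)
The proof naturally divides into the cases $n = -1$ and $n \geq 0$, which I would handle separately.

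\textbf{Case $n = -1$.} Since $p \nmid m$, the extension $\Q(\mu_m)/\Q$ is unramified at $p$, hence $\OO_{m,-1} = \Z_p[\zeta_m]$ and $\m_{m,-1} = p\OO_{m,-1}$. The claim reduces to showing that $\OO_{m,-1}$ is generated over $R_{m,-1}$ by $\{\zeta_{m'} : m' \in \AAA(m)\}$. By the Chinese Remainder Theorem applied to $m = \prod_l l^{e_l}$, there is a $G_{m,-1}$-equivariant isomorphism $\Z[\zeta_m] \cong \bigotimes_l \Z[\zeta_{l^{e_l}}]$ matching $R_{m,-1} \cong \bigotimes_l R_{l^{e_l},-1}$ and sending $\prod_l \zeta_{l^{f_l}}$ to $\zeta_{m'}$ with $m' = \prod_l l^{f_l}$, so it suffices to treat the prime-power case $m = l^e$. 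In that case $\AAA(l^e) = \{l, l^2, \ldots, l^e\}$, and I would induct on $e$: the base case $e = 1$ is immediate since the conjugates $\zeta_l, \ldots, \zeta_l^{l-1}$ of $\zeta_l$, together with the relation $1 + \zeta_l + \cdots + \zeta_l^{l-1} = 0$, span $\Z_p[\zeta_l]$; the inductive step from $e-1$ to $e$ rests on a $\Z_p$-rank count in the quotient $\Z_p[\zeta_{l^e}]/\Z_p[\zeta_{l^{e-1}}]$, combined with the observation that the only nontrivial linear relation among the primitive $l^e$-th roots of unity is $\sum_{(a,l)=1}\zeta_{l^e}^a = \mu(l^e) = 0$.

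\textbf{Case $n \geq 0$.} Let $x = \zeta_{p^{n+1}} - 1$, which is a uniformizer of every local component of $\OO_{m,n}$, so $\m_{m,n} = x\,\OO_{m,n}$ as an ideal. Each $\pi_{m',n}$ is the product of the unit $\zeta_{m'}^{\varphi^{-(n+1)}}$ and the Galois conjugate $x^{\sigma_{m'}^{-1}}$ of $x$, hence itself a uniformizer. My plan is to construct an $R_{m,n}$-equivariant surjection
\begin{equation*}
\OO_{m,-1} \twoheadrightarrow \m_{m,n}/\m_{m,n-1}, \qquad \zeta_{m'} \longmapsto \pi_{m',n} \pmod{\m_{m,n-1}},
\end{equation*}
where $R_{m,n}$ acts on the source via the natural surjection $R_{m,n} \twoheadrightarrow R_{m,-1}$. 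Given such a map, the required generation of $\m_{m,n}/\m_{m,n-1}$ follows at once from the $n = -1$ case. The Frobenius twist $\varphi^{-(n+1)}$ inserted in the definition of $\pi_{m',n}$ is designed precisely to enforce this equivariance: it compensates for the Galois action on the uniformizer conjugates $x^{\sigma_{m'}^{-1}}$ so that, modulo $\m_{m,n-1}$, the $G_{m,n}$-orbit of $\pi_{m',n}$ corresponds under the surjection to the $G_{m,-1}$-orbit of $\zeta_{m'}$.

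\textbf{Main obstacle.} The central technical difficulty is verifying the $R_{m,n}$-equivariance of the comparison map in Case $n \geq 0$, since the naive multiplication by $x$ is only $\Z_p$-linear (not $G_{m,n}$-equivariant) because $x$ itself has nontrivial Galois conjugates. One must carefully couple the action on the unramified component (encoded by $\OO_{m,-1}$) with the action on the totally ramified cyclotomic uniformizers, and check that the twists $\varphi^{-(n+1)}$ and $\sigma_{m'}^{-1}$ precisely cancel the obstruction. The explicit trace formulas of Lemma \ref{lem:14} are the key computational tool for controlling this interplay between adjacent levels and between the different prime power factors of $m$.
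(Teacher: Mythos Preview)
Your plan for $n \geq 0$ has a genuine gap: the map $\OO_{m,-1} \twoheadrightarrow \m_{m,n}/\m_{m,n-1}$ you propose cannot exist. You want it to be $R_{m,n}$-equivariant with the action on the source factoring through $R_{m,-1}$, but the subgroup $\Gal(K_{m,n}/K_{m,-1}) \simeq \Gal(\Q(\mu_{p^{n+1}})/\Q)$ acts \emph{nontrivially} on the target --- it moves the cyclotomic factor $\zeta_{p^{n+1}}^{\sigma_{m'}^{-1}}-1$ inside $\pi_{m',n}$. More bluntly, there is a rank obstruction: $\rank_{\Z_p}\OO_{m,-1} = \varphi(m)$, whereas $\rank_{\Z_p}(\m_{m,n}/\m_{m,n-1}) = \varphi(m)\cdot\rank_{\Z_p}(\m_{\Q_p(\mu_{p^{n+1}})}/\m_{\Q_p(\mu_{p^n})})$, and the second factor equals $p-2$ for $n=0$ and $p^{n-1}(p-1)^2$ for $n\geq 1$, hence exceeds $1$ as soon as $(p,n)\neq(3,0)$. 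So no surjection exists, equivariant or otherwise. The twists $\varphi^{-(n+1)}$ and $\sigma_{m'}^{-1}$ you point to only align the action of the unramified part $\Gal(\Q(\mu_m)/\Q)$; they do nothing to collapse the $p$-cyclotomic Galois action, which is where the extra rank lives.

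The paper's argument for $n \geq 0$ avoids this by separating the two directions. First it treats $m=1$ on its own, showing that $\zeta_{p^{n+1}}-1$ generates $\m_{\Q_p(\mu_{p^{n+1}})}/\m_{\Q_p(\mu_{p^n})}$ as a $\Z_p[\Gal(\Q(\mu_{p^{n+1}})/\Q)]$-module. Then it invokes the tensor decomposition $\m_{m,n} \simeq \Z[\mu_m] \otimes_{\Z} \m_{\Q_p(\mu_{p^{n+1}})}$, under which $\pi_{m',n}$ corresponds to $\zeta_{m'}^{\varphi^{-(n+1)}} \otimes (\zeta_{p^{n+1}}^{\sigma_{m'}^{-1}}-1)$, a Galois conjugate of $\zeta_{m'}\otimes(\zeta_{p^{n+1}}-1)$. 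Since the $R_{m,n}$-action factors as $R_{m,n}\simeq \Z[\Gal(\Q(\mu_m)/\Q)]\otimes\Z_p[\Gal(\Q(\mu_{p^{n+1}})/\Q)]$, the $n=-1$ case handles the first tensor factor and the $m=1$ case handles the second.

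For $n=-1$ your CRT-plus-induction route is different from the paper's --- the paper instead uses the trace relations of Lemma~\ref{lem:14}(2) to show directly that each $\zeta_{m''}$ with $m''\mid m$ is, up to a unit, the trace of some $\zeta_{m'}$ with $m'\in\AAA(m)$ --- but your approach is sound once the inductive step is filled in carefully.
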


\begin{proof}
As mentioned in Remark \ref{rem:852}, this lemma corresponds to \cite[Lemma 3.9]{KO18}, where the module $\m_n/\m_{n-1}$ is generated by the single element $\pi_n$ in their notations.
The proof of our lemma is more delicate since we have to utilize various $m'$.

Firstly we show the second statement.
Since $\m_{m,-1} =p( \Z[\mu_m] \otimes_{\Z} \Z_p)$, it is enough to show that $\Z[\mu_m]$ is generated by $\{ \zeta_{m'} \mid m' \in \AAA(m)\}$ as a $\Z[\Gal(\Q(\mu_m)/\Q)]$-module.
Take any divisor $m''$ of $m$.
Let $m' \in \AAA(m)$ be the smallest element divisible by $m''$.
Then by Lemma \ref{lem:14}(2), we have
\[
\Tr^{m',n}_{m'',n}(\zeta_{m'}) = \left( \prod_{l \mid m', l \nmid m''} (-\sigma_{l}^{-1}) \right) \zeta_{m''}.
\]
This shows that $\zeta_{m''}$ is contained in the module generated by $\zeta_{m'}$.
This proves the claim.

Secondly we show the first statement for $m = 1$, namely, that $\m_{\Q_p(\mu_{p^{n+1}})}/\m_{\Q_p(\mu_{p^{n}})}$ is generated by $\zeta_{p^{n+1}}-1$ as a $\Z_p[\Gal(\Q(\mu_{p^{n+1}})/\Q)]$-module.
Here, $\m_{\Q_p(\mu_{p^{n+1}})}$ denotes the valuation ideal of $\Q_p(\mu_{p^{n+1}})$.
It is clear that $\m_{\Q_p(\mu_{p^{n+1}})}$ is generated by $\{\zeta_{p^{n+1}}^i-1 \mid 1 \leq i < p^{n+1}\}$ over $\Z_p$.
If $p \mid i$ then $\zeta_{p^{n+1}}^i-1 \in \m_{\Q_p(\mu_{p^n})}$ and if $p \nmid i$ then $\zeta_{p^{n+1}}^i-1$ is a Galois conjugate of $\zeta_{p^{n+1}}-1$.
This proves the claim.

Finally we show the first statement in general.
Observe that $\m_{m,n} \simeq \Z[\mu_m] \otimes_{\Z} \m_{\Q_p(\mu_{p^{n+1}})}$ under which $\pi_{m',n}$ corresponds to $\zeta_{m'}^{\varphi^{-(n+1)}} \otimes (\zeta_{p^{n+1}}^{\sigma_{m'}^{-1}}-1)$.
Moreover, $R_{m,n}$ acts on the right hand side through the decomposition $R_{m,n} = \Z[\Gal(\Q(\mu_m)/\Q)] \otimes_{\Z} \Z_p[\Gal(\Q(\mu_{p^{n+1}})/\Q)]$.
Hence the assertion follows from the first and the second claims above.
This completes the proof.
\end{proof}

We shall introduce a set of power series and a topology on it.
The topology is necessary because we will consider infinite sums in the proofs of Propositions \ref{prop:10} and \ref{prop:44}.
In the previous works such as \cite[\S 8.2]{Kob03}, the topology is often neglected, but we give a verification for completeness.

\begin{defn}\label{defn:698}
For $p \nmid m$, we put 
\[
\FUN_{m} = \left\{ f(\X) \in k_{m,-1}[[\X]] \mid f(0) = 0, f'(\X) \in \OO_{m,-1}[[\X]]\right\}.
\]
We have a bijection $\FUN_m \overset{\sim}{\to} \prod_{j \geq 1} \frac{1}{j}\OO_{m,-1}$ defined by $\sum_{j \geq 1} a_j T^j \mapsto (a_j)_j$.
We equip $\FUN_m$ with the topology such that this bijection is homeomorphic, where the target has the product topology of the $p$-adic topology.
\end{defn}

\begin{lem}\label{lem:699}
Let $p \nmid m$ and $n \geq 0$.
Suppose a sequence $\{f_{\nu}(T)\}_{\nu =1}^{\infty}$ in $\FUN_m$ converges to $f(T)$.
Then for $x \in \m_{m,n}$, the sequence $\{f_{\nu}(x)\}_{\nu}$ in $k_{m,n}$ converges to $f(x)$.
\end{lem}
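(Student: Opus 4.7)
The plan is a standard two-scale control: bound the tail of the series uniformly in $\nu$ via the built-in growth condition on coefficients in $\FUN_m$, and bound each finite head via the coordinate-wise convergence in the product topology.

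First, I would fix $x \in \m_{m,n}$ and note that every local factor of $k_{m,n}$ has finite absolute ramification index (since $n$ is finite), so there exists $e \geq 1$ with $\ord_p(x) \geq 1/e$, interpreting $\ord_p$ componentwise. For any $f = \sum_{j \geq 1} a_j T^j \in \FUN_m$, the condition $f'(T) \in \OO_{m,-1}[[T]]$ forces $a_j \in \frac{1}{j}\OO_{m,-1}$, hence $\ord_p(a_j) \geq -\ord_p(j)$. Therefore
\[
\ord_p(a_j x^j) \geq \frac{j}{e} - \ord_p(j) \longrightarrow \infty \quad (j \to \infty),
\]
so $f(x) = \sum_{j \geq 1} a_j x^j$ converges in $k_{m,n}$; crucially, this estimate depends only on $e$ and is applied uniformly to every $f_\nu$ and to $f$.

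Next, to prove $f_\nu(x) \to f(x)$, I would write $f_\nu - f = \sum_{j \geq 1} b_j^{(\nu)} T^j$ with $b_j^{(\nu)} = a_j^{(\nu)} - a_j \in \frac{1}{j}\OO_{m,-1}$. Given any $N > 0$, choose $J$ large enough that $j/e - \ord_p(j) \geq N$ for all $j > J$; then $\ord_p(b_j^{(\nu)} x^j) \geq N$ holds for every $\nu$ and every $j > J$. The remaining part is a finite sum over $1 \leq j \leq J$, and the product topology on $\FUN_m \simeq \prod_{j \geq 1} \frac{1}{j}\OO_{m,-1}$ tells us $b_j^{(\nu)} \to 0$ in $\frac{1}{j}\OO_{m,-1}$ for each fixed $j$; applied to $j = 1,\dots,J$ this yields $\ord_p(b_j^{(\nu)} x^j) \geq N$ once $\nu$ is sufficiently large. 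Combining the two regimes gives $\ord_p(f_\nu(x) - f(x)) \geq N$ for $\nu$ large, which is the claimed convergence.

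The only conceptual point worth noting is that the topology on $\FUN_m$ is only strong enough to furnish coefficient-wise convergence, not uniform convergence of the coefficient sequence, so the tail of $f_\nu(x) - f(x)$ must be controlled without any recourse to $\nu$. This is precisely what the defining growth condition $a_j \in \frac{1}{j}\OO_{m,-1}$ supplies. Beyond this observation the argument is a routine non-archimedean estimate, so I do not expect a genuine obstacle.
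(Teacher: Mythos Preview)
Your proof is correct and follows essentially the same approach as the paper: split $f_\nu(x)-f(x)$ into a tail controlled uniformly in $\nu$ by the estimate $\ord_p(a_j x^j)\ge j/e - \ord_p(j)\to\infty$ (the paper phrases this as $x^j/j \in p^A\OO_{m,n}$ for $j$ large), and a finite head controlled by coefficient-wise convergence from the product topology. Your write-up is slightly more explicit about the ramification index, but the two arguments are the same in substance.
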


\begin{proof}
Take an arbitrary positive integer $A$.
Since $x \in \m_{m,n}$, we have a positive integer $C$ such that $x^{j}/{j} \in p^A \OO_{m,n}$ holds for any integer $j > C$.
We denote the coefficients of $f_{\nu}$ and $f$ by $f_{\nu}(T) = \sum_{j=1}^{\infty} a_j^{(\nu)}T^j$ and $f(T) = \sum_{j=1}^{\infty} a_j T^j$.
Since $\lim_{\nu \to \infty} a_j^{(\nu)} = a_j$ for each $j$, for sufficiently large $\nu$, we have $\left (a_j^{(\nu)} - a_j \right)x^j \in p^A \OO_{m,-1}$ for any $1 \leq j \leq C$.
Therefore, for sufficiently large $\nu$, we can compute
\[
f_{\nu}(x) - f(x) = \sum_{j=1}^{\infty} \left(a_j^{(\nu)} - a_j \right)x^j \equiv \sum_{j=1}^{C} \left( a_j^{(\nu)} - a_j\right) x^j \equiv 0 \bmod p^A\OO_{m, -1}.
\]
Since $A$ was taken arbitrarily, this shows $\lim_{\nu \to \infty} f_{\nu}(x) = f(x)$.
\end{proof}

The following is a generalization of \cite[Proposition 8.11]{Kob03}.
Though that suffices for the supersingular case, Lemma \ref{lem:20} is necessary for the ordinary case.

\begin{lem}\label{lem:20}
Let $p \nmid m$ and take $f(\X) \in \FUN_m$.
For $n\geq 0$ and $x \in \m_{m,n}$, we have $f(x) \in \m_{m,n} + k_{m,n-1}$.
\end{lem}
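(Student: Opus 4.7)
The plan is to reduce to a concrete algebraic claim via two standard steps. First, by the continuity of evaluation established in Lemma~\ref{lem:699} together with the fact that $\m_{m,n}+k_{m,n-1}$ is a closed additive subgroup of $k_{m,n}$, it suffices to treat polynomial $f \in \FUN_m$. Then, since the evaluation $f \mapsto f(x)$ is $\OO_{m,-1}$-linear and $\m_{m,n}+k_{m,n-1}$ is stable under multiplication by $\OO_{m,-1}$, we further reduce to $f(T) = T^j/j$ for a single $j \geq 1$. If $p \nmid j$ the claim is immediate, so writing $j = p^e j'$ with $p \nmid j'$ and setting $y = x^{j'} \in \m_{m,n}$, the lemma boils down to the following core claim: for all $y \in \m_{m,n}$ and $e \geq 1$, $y^{p^e}/p^e \in \m_{m,n}+k_{m,n-1}$.

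I would prove the core claim by a double induction, outer on $n$ and inner on $e$. The base case $n=0$ follows from the valuation estimate $v(y^{p^e}/p^e) \geq p^e - e(p-1) \geq 1$, which already places $y^{p^e}/p^e$ in $\m_{m,0}$.

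For $n \geq 1$, I would handle $e=1$ first, since this input is needed throughout the rest. Consider the subset $Z = \{\, y \in \m_{m,n} : y^p \in p\OO_{m,n}+\m_{m,n-1}\,\}$; the identity $(y+z)^p \equiv y^p+z^p \pmod{p\m_{m,n}}$ shows that $Z$ is closed under addition, and it is clearly stable under the $G_{m,n}$-action and under $\Z_p$-scalars. It contains $\m_{m,n-1}$ trivially, and contains every $\pi_{m',n}$ ($m' \in \AAA(m)$) by a direct expansion of $(\pi_{m',n})^p$ using $\zeta_{p^{n+1}}^{p\sigma_{m'}^{-1}} = \zeta_{p^n}^{\sigma_{m'}^{-1}}$: the dominant part lands in $\m_{m,n-1}$ and the correction in $p\OO_{m,n}$. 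Lemma~\ref{lem:19} then forces $Z = \m_{m,n}$, and using $\OO_{m,n} = \OO_{m,n-1}+\m_{m,n}$ we conclude $y^p/p \in \m_{m,n}+k_{m,n-1}$.

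For $e \geq 2$, I would write
\[
\frac{y^{p^e}}{p^e} = \frac{(1+y)^{p^e}-1}{p^e} - \sum_{k=1}^{p^e-1} \frac{\binom{p^e}{k}}{p^e}\, y^k.
\]
For each term in the sum, writing $k = p^s k''$ with $s < e$ and $p \nmid k''$, the coefficient $\binom{p^e}{k}/p^e = \binom{p^e-1}{k-1}/k$ is a $\Z_p$-unit divided by $p^s$, so the term is a $\Z_p$-multiple of $(y^{k''})^{p^s}/p^s$ and lies in $\m_{m,n}+k_{m,n-1}$ by the inner inductive hypothesis. For the remaining term, the $e=1$ case lets us factor $(1+y)^p = u_1(1+pC)$ with $u_1 = 1+B$, $B \in \m_{m,n-1}$, and $C \in \OO_{m,n}$; the standard $v_p$-estimate on $\binom{p^{e-1}}{k}p^k$ then gives $(1+pC)^{p^{e-1}} = 1 + p^e D$ with $D \in \OO_{m,n}$. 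Consequently
\[
\frac{(1+y)^{p^e}-1}{p^e} = \frac{u_1^{p^{e-1}}-1}{p^e} + D\, u_1^{p^{e-1}},
\]
whose second summand lies in $\OO_{m,n} \subset k_{m,n-1}+\m_{m,n}$. The first summand equals $g(B)/p$ where $g(T) := [(1+T)^{p^{e-1}}-1]/p^{e-1}$; since $k\cdot\binom{p^{e-1}}{k}/p^{e-1} = \binom{p^{e-1}-1}{k-1} \in \Z$, the function $g$ itself belongs to $\FUN_m$, so the outer inductive hypothesis applied to $g$ at $B \in \m_{m,n-1}$ gives $g(B) \in \m_{m,n-1}+k_{m,n-2} \subset k_{m,n-1}$, and division by $p$ remains in $k_{m,n-1}$. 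The main obstacle I foresee is precisely arranging this nested induction so the outer hypothesis is invoked at the correct auxiliary element of $\FUN_m$; the closure of $\FUN_m$ under $f \mapsto [(1+T)^{p^{e-1}}-1]/p^{e-1}$ is what allows the induction to close.
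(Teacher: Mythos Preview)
Your argument is correct, but it is considerably more elaborate than the paper's. Both proofs make the same initial reduction to $f(T)=T^j/j$. From there the paper strengthens the target to the sharper inclusion $x^{p^{v}}\in p^{v}\m_{m,n}+\m_{m,n-1}$ and proves it by a single induction on $v$: once one knows that $y\in p^{v-1}\m_{m,n}+\m_{m,n-1}$ implies $y^{p}\in p^{v}\m_{m,n}+\m_{m,n-1}$, iterating from $y=x$ gives the result, and for $v\ge 2$ this step is automatic since $(p^{v-1}\m_{m,n})^{p}\subset p^{v}\m_{m,n}$. Thus only the case $v=1$ needs work, and that case is verified exactly as in your $e=1$ step (the paper uses the explicit generators $\zeta_{p^{n+1}}^{i}-1$ of $\m_{m,n}$ over $\OO_{m,-1}$ rather than invoking Lemma~\ref{lem:19}, but the computation is the same). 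Your double induction on $(n,e)$, the decomposition via $(1+y)^{p^{e}}$, and the recursive appeal to the auxiliary $g(T)=[(1+T)^{p^{e-1}}-1]/p^{e-1}\in\FUN_{m}$ are all valid, and the self-referential use of $\FUN_{m}$ is a nice touch; but tracking the factor of $p^{v}$ as the paper does collapses all of that to one line. The payoff of your route is perhaps that it never needs the sharper integral statement, only the $k_{m,n-1}$-version, whereas the paper's route buys brevity by proving more.
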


\begin{proof}
Without loss of generality, we may suppose $f(\X) = \X^j/j$ for some $j \geq 1$.
Namely, it is enough to show that $x^j \in j\m_{m,n} + k_{m,n-1}$ for $x \in \m_{m,n}$.
By letting $v = \ord_p(j)$, it is enough to show that $x^{p^{v}} \in p^{v}\m_{m,n} + k_{m,n-1}$.
By induction, it is enough to show that $y \in p^{v-1}\m_{m,n} + \m_{m,n-1}$ implies $y^p \in p^v \m_{m,n} + \m_{m,n-1}$ for $v \geq 1$, which we shall show.

It is easy to see that we may assume $y \in p^{v-1}\m_{m,n}$.
Then the assertion is trivial if $v \geq 2$, so we assume $v = 1$.
Write $y = \sum_{i=1}^{p^{n+1}-1} a_i(\zeta_{p^{n+1}}^i-1)$ with $a_i \in \Z[\mu_m] \otimes \Z_p$.
Then 
\[
y^p \in \sum_i a_i^p(\zeta_{p^{n+1}}^i-1)^p + p\m_{m,n}.
\]
Moreover,
\begin{align}
\zeta_{p^n}^i - 1 
&= ((\zeta_{p^{n+1}}^i-1)+1)^p - 1\\
&= (\zeta_{p^{n+1}}^i-1)^p + p\sum_{s = 1}^{p-1} \frac{1}{p}\begin{pmatrix} p \\ s \end{pmatrix} (\zeta_{p^{n+1}}^i-1)^{s}
\end{align}
shows $(\zeta_{p^{n+1}}^i-1)^p \in p\m_{m,n} + \m_{m,n-1}$.
This completes the proof.
\end{proof}

\subsection{Supersingular Case}\label{subsec:78}

Suppose $p \mid a_p$ holds in this subsection.
For each $p \nmid m$, we shall prove Theorem \ref{thm:116} for $K = \Q(\mu_m)$ and $S = \prim(m)$.
The proof can be divided into two steps.
As the first step, by an appropriate use of Honda theory, we construct auxiliary elements $\ddd_{m,n} \in \hat{E}(\m_{m,n})$.
Second we modify $\ddd_{m,n}$ to define
\begin{equation}\label{eq:94}
\dd_{m,n} = \sum_{m' \in \AAA(m)} \frac{m'}{m} \ddd_{m',n}
\end{equation}
and check that the properties (1) -- (3) holds.
Note that the second step particularly accords with the idea explained in Remark \ref{rem:852}.

\subsubsection{First Step}

Fix $p \nmid m$.
We shall construct elements $\ddd_{m,n} \in \hat{E}(\m_{m,n})$ in Proposition \ref{prop:10}.
They give an extension of the elements $c_n$ in \cite[Definition 8.8]{Kob03}, $c_n$ in \cite[Theorem 2.2]{Spr12}, and $d_n$ in \cite[Definition 3.3]{KO18}.
More precisely, \cite{Kob03} (resp. \cite{Spr12}) deals with the case where $m = 1, a_p = 0$ (resp. $m =1, p \mid a_p$).
On the other hand, the argument in \cite{KO18} can deal with $a_p = 0$ and general $m$, though it is not written explicitly (\cite{KO18} only treats $\zeta$ instead of $\zeta_m$ as in Remark \ref{rem:852}).

\begin{defn}\label{defn:08}
For $n \geq -1$, since $\varphi^2-a_p \varphi + p$ acts on $\OO_{m,-1}$ isomorphically by Nakayama's lemma, 
let $\eta_{m,n} \in \m_{m,-1}$ be the unique element such that
\begin{equation}\label{eq:93}
\eta_{m,n}^{\varphi^2} - a_p \eta_{m,n}^{\varphi} + p \eta_{m,n} = p\zeta_m^{\varphi^{-(n+1)}}.
\end{equation}
Note that $\eta_{m,n} = \eta_{m,-1}^{\varphi^{-(n+1)}}$.
Our $\eta_{m,n}$ corresponds to $p / (p+1)$ in \cite{Kob03}, $p / (p+1-a_p)$ in \cite{Spr12}, and $\varepsilon_n$ in \cite{KO18}.
We do not have a direct expression of $\eta_{m,n}$ as in the previous works, but the above characterization suffices for computations.
\end{defn}

\begin{lem}\label{lem:114}
Let $l$ be a prime divisor of $m$. 
For $n \geq -1$, we have 
\[
\Tr^{m,n}_{m/l,n} (\eta_{m,n}) = 
	\begin{cases}
		0 & (l^2 \mid m) \\
		(-\sigma_l^{-1}) \eta_{m/l, n} & (l^2 \nmid m).
	\end{cases}
\]
\end{lem}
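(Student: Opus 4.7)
The plan is to apply the trace map $\Tr^{m,n}_{m/l,n}$ to both sides of the defining equation \eqref{eq:93} for $\eta_{m,n}$ and then invoke uniqueness. Since $\eta_{m,n}$ actually lies in $\m_{m,-1}$, and since $K_{m,n} = K_{m/l,n} \cdot K_{m,-1}$ with $K_{m/l,n} \cap K_{m,-1} = K_{m/l,-1}$, the restriction of $\Tr^{m,n}_{m/l,n}$ to $k_{m,-1}$ coincides with $\Tr^{m,-1}_{m/l,-1}$ and lands in $\m_{m/l,-1}$. So the whole argument effectively takes place at level $-1$.

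Next, I would note that the Frobenius $\varphi$ (acting on $\mu_m$ by $\zeta \mapsto \zeta^p$, which is well-defined since $p \nmid m$) is a Galois element and therefore commutes with the trace; the same holds for $\sigma_l^{-1}$ because the Galois group in question is abelian. Applying the trace to \eqref{eq:93} therefore yields
\[
\Tr(\eta_{m,n})^{\varphi^2} - a_p \Tr(\eta_{m,n})^{\varphi} + p \Tr(\eta_{m,n}) = p \Tr^{m,-1}_{m/l,-1}(\zeta_m)^{\varphi^{-(n+1)}},
\]
where $\Tr$ abbreviates $\Tr^{m,n}_{m/l,n}$. Plugging in Lemma \ref{lem:14}(2) makes the right-hand side equal to $0$ (if $l^2 \mid m$) or $(-\sigma_l^{-1}) p\zeta_{m/l}^{\varphi^{-(n+1)}}$ (if $l^2 \nmid m$).

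Finally I would invoke uniqueness. The operator $\varphi^2 - a_p \varphi + p$ acts invertibly on $\OO_{m/l,-1}$ by the same Nakayama argument used in Definition \ref{defn:08}, so in the case $l^2 \mid m$ we immediately get $\Tr(\eta_{m,n}) = 0$. In the case $l^2 \nmid m$, the element $(-\sigma_l^{-1}) \eta_{m/l,n}$ satisfies exactly the defining equation of the traced identity (using that $\sigma_l^{-1}$ commutes with $\varphi$), so uniqueness forces $\Tr(\eta_{m,n}) = (-\sigma_l^{-1})\eta_{m/l,n}$.

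I do not anticipate a substantive obstacle: the only things to be a bit careful about are (i) the identification of the restricted trace with $\Tr^{m,-1}_{m/l,-1}$ so that everything stays inside $\m_{\bullet,-1}$ where the invertibility of $\varphi^2 - a_p\varphi + p$ has been established, and (ii) the commutation of $\varphi$ and $\sigma_l^{-1}$ with the trace, which is immediate from the abelianness of the Galois group.
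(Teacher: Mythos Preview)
Your proof is correct and is precisely the argument the paper has in mind: the paper's proof is the single line ``This follows from Lemma~\ref{lem:14}(2),'' and you have spelled out exactly those details (apply the trace to the defining equation \eqref{eq:93}, use commutation with $\varphi$, plug in Lemma~\ref{lem:14}(2), and conclude by the uniqueness coming from the invertibility of $\varphi^2 - a_p\varphi + p$ on $\OO_{m/l,-1}$).
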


\begin{proof}
This follows from Lemma \ref{lem:14}(2).
\end{proof}

\begin{defn}\label{defn:09}
For $j \geq -1$, define $c_j \in \Q_p$ inductively by $c_{-1} = 0$, $c_0 = 1$, and $c_j = p^{-1} (a_pc_{j-1} - c_{j-2})$ for  $j \geq 1$.
Our $c_j$ is denoted by $x_k$ in \cite{Spr12}.
\end{defn}

Recall $\pi_{m,n}$ in Definition \ref{defn:06}.

\begin{prop}\label{prop:10}
For $n \geq -1$, there exists a unique element $\ddd_{m,n} \in \hat{E}(\m_{m,n})$ such that 
\[
\log_{\hat{E}}(\ddd_{m,n}) = \eta_{m,n} + \sum_{j \geq 0} c_j \pi_{m,n-j}.
\]
\end{prop}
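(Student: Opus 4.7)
The plan is to split into uniqueness and existence, and to produce $\ddd_{m,n}$ via Honda theory applied to the formal group $\hat{E}$.

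\textbf{Uniqueness.} By Proposition \ref{prop:98}, $\hat{E}(\m_{m,n})$ is $p$-torsion free in the supersingular case, so the formal logarithm $\log_{\hat{E}} \colon \hat{E}(\m_{m,n}) \to k_{m,n}$ is injective. Hence at most one $\ddd_{m,n}$ can satisfy the claimed identity.

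\textbf{Existence.} Observe first that the right-hand side is a well-defined element of $k_{m,n}$: since $\pi_{m,n-j} = 0$ once $n - j < -1$ (Definition \ref{defn:06}), only finitely many terms of $\sum_{j \geq 0} c_j \pi_{m,n-j}$ are nonzero, so
\[
y_{m,n} := \eta_{m,n} + \sum_{j \geq 0} c_j \pi_{m,n-j}
\]
lies in $k_{m,n}$. The task is to show $y_{m,n}$ belongs to $\log_{\hat{E}}(\hat{E}(\m_{m,n}))$. The shape of $y_{m,n}$ has been engineered for this: the operator $\varphi^{2} - a_p\varphi + p$ is the Honda type of $\hat{E}$, and both ingredients of $y_{m,n}$ interact cleanly with it, thanks to the defining relation \eqref{eq:93} for $\eta_{m,n}$ and the recursion $pc_{j} = a_p c_{j-1} - c_{j-2}$ encoded in the generating identity $(p - a_p X + X^2)\sum_{j\geq 0} c_j X^{j} = p$.

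Concretely, the plan is to build a power series $\Phi_m(\X) \in \FUN_m$ (with convergence controlled in the topology of Definition \ref{defn:698}) whose value at $\pi_{m,n}$ equals $y_{m,n}$, and to verify that $\Phi_m$ satisfies a Honda functional equation of type $t^{2} - a_p t + p$ over $\OO_{m,-1}$. Honda's theorem then exhibits a formal group $G$ over $\OO_{m,-1}$ with $\log_{G} = \Phi_m$ and a strong isomorphism $i \colon G \overset{\sim}{\to} \hat{E}$ over $\OO_{m,-1}$. Setting $\ddd_{m,n} := i(\pi_{m,n}) \in \hat{E}(\m_{m,n})$ gives $\log_{\hat{E}}(\ddd_{m,n}) = \Phi_m(\pi_{m,n})$; interchanging the evaluation with the (finite, but more generally controlled) sum via Lemma \ref{lem:699} identifies this with $y_{m,n}$.

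\textbf{Main obstacle.} The delicate point, which distinguishes this setting from \cite[\S 8.2]{Kob03} and \cite[\S 3]{KO18}, is the simultaneous handling of the prime-to-$p$ Frobenius twist $\zeta_m^{\varphi^{-(n+1)}}$ appearing in both $\eta_{m,n}$ and $\pi_{m,n}$, together with the case of arbitrary $p \mid a_p$ (not only $a_p = 0$). The power series $\Phi_m$ must be constructed so that its coefficients lie in $\OO_{m,-1}$ (not merely $k_{m,-1}$), and so that its specializations at the various $\pi_{m,n}$ produce the precise $y_{m,n}$ with the correct Frobenius twists. Once integrality of $\Phi_m$ is verified — which is what allows Honda theory to be invoked over $\OO_{m,-1}$ rather than only over $k_{m,-1}$ — the remaining verification that $i(\pi_{m,n})$ has the asserted logarithm is routine.
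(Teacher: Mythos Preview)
Your outline has the right architecture (Honda theory plus Lemma~\ref{lem:699}), but there are two genuine gaps in the plan as written.

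\textbf{The term $\eta_{m,n}$ cannot be absorbed into $\Phi_m(\pi_{m,n})$.} You propose $\ddd_{m,n} := i(\pi_{m,n})$ with a single strong isomorphism $i\colon G \overset{\sim}{\to} \hat{E}$. But then for $n=-1$ one has $\pi_{m,-1}=0$ (Definition~\ref{defn:06}), hence $\ddd_{m,-1}=i(0)=0$ and $\log_{\hat{E}}(\ddd_{m,-1})=0$, whereas the proposition demands $\log_{\hat{E}}(\ddd_{m,-1})=\eta_{m,-1}\neq 0$. The same obstruction persists for $n\geq 0$: any $\Phi_m\in\FUN_m$ has $\Phi_m(0)=0$, so the ``constant'' $\eta_{m,n}$ cannot be produced by evaluating a single Honda-type logarithm at $\pi_{m,n}$. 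The paper handles this by choosing, via $\log_{\GG_{m,n}}\colon \GG_{m,n}(\m_{m,-1})\overset{\sim}{\to}\m_{m,-1}$, an auxiliary point $\varepsilon_{m,n}\in\GG_{m,n}(\m_{m,-1})$ with $\log_{\GG_{m,n}}(\varepsilon_{m,n})=\eta_{m,n}$, and then setting
\[
\ddd_{m,n}=\exp_{\hat{E}}\circ\log_{\GG_{m,n}}\bigl(\varepsilon_{m,n}\,[+]_{\GG_{m,n}}\,\pi_{m,n}\bigr),
\]
so that $\log_{\hat{E}}(\ddd_{m,n})=\eta_{m,n}+f_m^{\varphi^{-(n+1)}}(\pi_{m,n})$.

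\textbf{A single $\Phi_m$ does not suffice when $m>1$.} The identity underlying the evaluation is
\[
(\pi_{m,n}+\zeta_m^{\varphi^{-(n+1)}})^{p^j}-(\zeta_m^{\varphi^{-(n+1)}})^{p^j}=\pi_{m,n-j},
\]
whose ``base point'' $\zeta_m^{\varphi^{-(n+1)}}$ depends on $n$. Accordingly the paper defines $f_m(\X)=\sum_{j\geq 0}c_j\bigl((\X+\zeta_m)^{p^j}-\zeta_m^{p^j}\bigr)\in\FUN_m$, checks that $f_m^{\varphi^{-(n+1)}}$ has Honda type $t^2-a_pt+p$ over $\OO_{m,-1}$, and hence obtains an $n$-dependent formal group $\GG_{m,n}$ with logarithm $f_m^{\varphi^{-(n+1)}}$. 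Your plan to use one $G$ and one $i$ for all $n$ glosses over this Frobenius twist; the verification that $i(\pi_{m,n})$ has the asserted logarithm is not ``routine'' without it. The integrality of $f_m$ (i.e.\ $f_m'\in\OO_{m,-1}[[\X]]$, via $p^jc_j\to 0$) is indeed needed, but it is not the main obstacle.
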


\begin{proof}
The uniqueness follows from the injectivity of $\log_{\hat{E}}$.
The existence can be shown by combining the ideas of \cite{KO18} ($a_p = 0$ case) and \cite{Spr12} ($m=1$ case) as follows.

Recall Definition \ref{defn:698} and put
\[
f_m(\X) = \sum_{j \geq 0} c_j ((\X+\zeta_m)^{p^j} - (\zeta_m)^{p^j}) \in \FUN_m.
\]
Namely, $f_m(\X)$ is the element of $\FUN_m$ characterized by 
\[
f_m'(\X) = \sum_{j \geq 0} p^jc_j (\X+\zeta_m)^{p^j-1}.
\]
The convergence follows from $p^jc_j \to 0$, since $c_j \in p^{-[j/2]}\Z_p$.
We also see that $f_m'(0)$ is a $p$-adic unit.

For $n \geq -1$, we have
\[
(f_m^{\varphi^{-(n+1)}})^{\varphi^2}(\X^{p^2}) - a_p (f_m^{\varphi^{-(n+1)}})^{\varphi}(\X^{p}) + p(f_m^{\varphi^{-(n+1)}})(\X) \equiv 0 \mod \m_{m,-1}[[\X]].
\]
In fact, we can assume $n = -1$ and then, by \cite[Lemma 4]{Hon68}, the left hand side is congruent to
\begin{align}
&f_m^{\varphi^2}((\X+\zeta_m)^{p^2} - (\zeta_m)^{p^2}) - a_p f_m^{\varphi}((\X+\zeta_m)^{p} - (\zeta_m)^{p}) + pf_m(\X)\\
&= \sum_{j \geq 0} c_j ((\X+\zeta_m)^{p^{j+2}} - (\zeta_m)^{p^{j+2}}) -a_p\sum_{j \geq 0} c_j ((\X+\zeta_m)^{p^{j+1}} - (\zeta_m)^{p^{j+1}}) + p\sum_{j \geq 0} c_j ((\X+\zeta_m)^{p^{j}} - (\zeta_m)^{p^{j}})\\
&= \sum_{j \geq -1} (c_{j}-a_pc_{j+1} + pc_{j+2})((\X+\zeta_m)^{p^{j+2}} - (\zeta_m)^{p^{j+2}}) + p\X\\
&=p\X.
\end{align}
Thus $f_m^{\varphi^{-(n+1)}}$ has Honda type $t^2-a_pt+p$.
Therefore by Honda theory \cite{Hon70}, there is a unique formal group $\GG_{m,n}$ over $\OO_{m,-1}$ with logarithm $\log_{\GG_{m,n}} = f_m^{\varphi^{-(n+1)}}$.
Moreover, $\exp_{\hat{E}} \circ \log_{\GG_{m,n}}: \GG_{m,n} \to \hat{E}$ is an isomorphism over $\OO_{m,-1}$.

Since $\log_{\GG_{m,n}}: \GG_{m,n}(\m_{m,-1}) \to \m_{m,-1}$ is an isomorphism, we can define $\varepsilon_{m,n} \in \GG_{m,n}(\m_{m,-1})$ as the unique element such that $\log_{\GG_{m,n}}(\varepsilon_{m,n}) = \eta_{m,n}$.
We shall show that 
\[
\ddd_{m,n} = \exp_{\hat{E}} \circ \log_{\GG_{m,n}}(\varepsilon_{m,n} [+]_{\GG_{m,n}} \pi_{m,n}) \in \hat{E}(\m_{m,n})
\]
is the element we want.
In fact, we have
\[
(\pi_{m,n}+\zeta_m^{\varphi^{-(n+1)}})^{p^j} - (\zeta_m^{\varphi^{-(n+1)}})^{p^j} = (\zeta_m^{\varphi^{-(n+1)}})^{p^j} ((\zeta_{p^{n+1}}^{\sigma_m^{-1}})^{p^j}-1) = \pi_{m,n-j}.
\]
Then Lemma \ref{lem:699} implies the final equality of
\[
\log_{\hat{E}}(\ddd_{m,n})= \log_{\GG_{m,n}}(\varepsilon_{m,n} [+]_{\GG_{m,n}} \pi_{m,n})
= \eta_{m,n} + f_m^{\varphi^{-(n+1)}}(\pi_{m,n})
= \eta_{m,n} + \sum_{j \geq 0} c_j\pi_{m,n-j}.
\]
This completes the proof.
\end{proof}

The element $\ddd_{m,n}$ satisfies the following compatibilities.
The assertion (1) is an extension of \cite[Lemma 8.9]{Kob03}, \cite[Theorem 2.2]{Spr12}, and \cite[Proposition 3.4]{KO18}.

\begin{prop}\label{prop:15}
(1) For $n \geq 0$, we have
\[
\Tr^{m,n}_{m,n-1}(\ddd_{m,n}) = 
	\begin{cases}
		a_p \ddd_{m,n-1} - \ddd_{m,n-2} & (n \geq 1) \\
		(a_p - \varphi - \varphi^{-1}) \ddd_{m,-1} & (n = 0).
	\end{cases}
\]

(2) Let $l$ be a prime divisor of $m$. 
For $n \geq -1$ we have 
\[
\Tr^{m,n}_{m/l,n} (\ddd_{m,n}) = 
	\begin{cases}
		0 & (l^2 \mid m) \\
		(-\sigma_l^{-1}) \ddd_{m/l,n} & (l^2 \nmid m).
	\end{cases}
\]
\end{prop}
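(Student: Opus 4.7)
The plan is to reduce both assertions to identities after applying $\log_{\hat{E}}$, which is injective on $\hat{E}(\m_{m,n})$ by Proposition~\ref{prop:98}. By Proposition~\ref{prop:10},
\[
\log_{\hat{E}}(\ddd_{m,n}) = \eta_{m,n} + \sum_{j \geq 0} c_j \pi_{m,n-j},
\]
and this sum is effectively finite since $\pi_{m,n'} = 0$ for $n' \leq -1$.

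For~(1), I would apply $\Tr^{m,n}_{m,n-1}$ term by term. Since $K_{m,n}/K_{m,n-1}$ is totally ramified at $p$, the trace of any element of $K_{m,n-1}$ is multiplication by $[K_{m,n}:K_{m,n-1}]$, which equals $p$ for $n \geq 1$ and $p-1$ for $n = 0$. Thus $\Tr^{m,n}_{m,n-1}(\eta_{m,n})$ is a degree multiple of $\eta_{m,n}$; Lemma~\ref{lem:14}(1) gives $\Tr^{m,n}_{m,n-1}(\pi_{m,n}) = -p\,\zeta_m^{\varphi^{-(n+1)}}$; and for $j \geq 1$ (only relevant when $n \geq 1$), the trace multiplies $\pi_{m,n-j}$ by $p$. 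On the other side, I would expand $a_p \log_{\hat{E}}(\ddd_{m,n-1}) - \log_{\hat{E}}(\ddd_{m,n-2})$ and collect coefficients: the $\pi$-part reduces to the recurrence $a_p c_k - c_{k-1} = p c_{k+1}$ of Definition~\ref{defn:09} (using the initial values $c_0 = 1$, $c_1 = a_p/p$), while the $\eta$-part reduces to
\[
\eta_{m,n-2} - a_p \eta_{m,n-1} + p\eta_{m,n} = p\,\zeta_m^{\varphi^{-(n+1)}},
\]
which is Definition~\ref{defn:08} rewritten via $\eta_{m,n}^{\varphi} = \eta_{m,n-1}$. The $n = 0$ case is handled analogously: the $\pi$-series collapses to $\pi_{m,0}$ alone, and the target $(a_p - \varphi - \varphi^{-1})\eta_{m,-1} = a_p \eta_{m,-1} - \eta_{m,-2} - \eta_{m,0}$ matches the $n = 0$ instance of the same defining relation for $\eta$.

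For~(2), I would apply $\Tr^{m,n}_{m/l,n}$ term by term. Since $\mu_m$ and $\mu_{p^{n+1}}$ are linearly disjoint over $\Q$, $\Gal(K_{m,n}/K_{m/l,n})$ fixes $\zeta_{p^{n-j+1}}$ and acts only on the $\zeta_m$-factor of $\pi_{m,n-j}$. Lemma~\ref{lem:114} disposes of $\eta_{m,n}$, and Lemma~\ref{lem:14}(2), together with the factorisation $\sigma_m = \sigma_l \sigma_{m/l}$ and the commutativity of $G_{m,n}$, disposes of each $\pi_{m,n-j}$; the key identity is $\zeta_{p^{n-j+1}}^{\sigma_m^{-1}} - 1 = \sigma_l^{-1}\bigl(\zeta_{p^{n-j+1}}^{\sigma_{m/l}^{-1}} - 1\bigr)$, which realises the factor $-\sigma_l^{-1}$ uniformly across all terms. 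Both sides vanish when $l^2 \mid m$; otherwise the sum yields $(-\sigma_l^{-1}) \log_{\hat{E}}(\ddd_{m/l,n})$, which is the desired formula after applying $\log_{\hat{E}}^{-1}$.

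The main obstacle is the combinatorial bookkeeping in~(1): three separate ingredients (the recurrence for $c_j$, the initial values $c_0 = 1$ and $c_1 = a_p/p$, and the fact that Lemma~\ref{lem:14}(1) contributes the extra term $-p\zeta_m^{\varphi^{-(n+1)}}$ not present for $j \geq 1$) must align precisely with the $\eta$-recursion so that all excess terms cancel. In~(2), the delicate point is tracking how $\sigma_l^{-1}$ propagates simultaneously through $\zeta_m^{\varphi^{-(n-j+1)}}$ and through $\zeta_{p^{n-j+1}}^{\sigma_m^{-1}}$, which is made possible precisely by the abelianness of $G_{m,n}$.
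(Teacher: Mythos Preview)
Your proposal is correct and follows essentially the same approach as the paper: apply the injective $\log_{\hat{E}}$, compute traces term by term using Lemma~\ref{lem:14} and the trace-as-degree-multiplication for elements already in the smaller field, and then match the result against the recurrences from Definitions~\ref{defn:08} and~\ref{defn:09}. For part~(2) the paper is terser, simply citing Lemmas~\ref{lem:14}(2) and~\ref{lem:114} and implicitly using that $\Gal(K_{m,n}/K_{m/l,n})$ restricts to $\Gal(K_{m,n-j}/K_{m/l,n-j})$ so that Lemma~\ref{lem:14}(2) applies at every level $n-j$; your explicit unpacking via $\sigma_m = \sigma_l\sigma_{m/l}$ amounts to the same thing.
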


\begin{proof}
We may apply the injective homomorphism $\log_{\hat{E}}$ to confirm these equations.

(1) When $n \geq 1$, we shall compute
\begin{align}
\Tr^{m,n}_{m,n-1} (\log_{\hat{E}}(\ddd_{m,n}))
 &= \Tr^{m,n}_{m,n-1} (\eta_{m,n}) + \sum_{j \geq 0} c_j\Tr^{m,n}_{m,n-1} (\pi_{m,n-j})\\
 &= p\eta_{m,n} -p\zeta_m^{\varphi^{-(n+1)}} + \sum_{j \geq 1}c_j p \pi_{m,n-j}\\
 &= (a_p \eta_{m,n}^{\varphi} - \eta_{m,n}^{\varphi^2}) + \sum_{j \geq 1} (a_p c_{j-1}-c_{j-2}) \pi_{m,n-j}\\
 &= a_p\left( \eta_{m,n-1} + \sum_{j \geq 0}c_j \pi_{m,n-1-j} \right) - \left(\eta_{m,n-2} + \sum_{j \geq 0}c_j \pi_{m,n-2-j}\right)\\
 &= a_p \log_{\hat{E}}(\ddd_{m,n-1}) - \log_{\hat{E}}(\ddd_{m,n-2}).
\end{align}
Here, the second equality follows from Lemma \ref{lem:14}(1);
 the third follows from Definitions \ref{defn:08} and \ref{defn:09};
  and the fourth follows from the formula in Definition \ref{defn:09}.
Similarly, when $n = 0$, we have
\begin{align}
\Tr^{m,0}_{m,-1} (\log_{\hat{E}}(\ddd_{m,0}))
 &= \Tr^{m,0}_{m,-1} (\eta_{m,0}) + \Tr^{m,0}_{m,-1} (\pi_{m,0})\\
 &= (p-1) \eta_{m,0} -p\zeta_m^{\varphi^{-1}}\\
 &= (a_p \eta_{m,0}^{\varphi} - \eta_{m,0}^{\varphi^2}) - \eta_{m,0}\\
 &= (a_p - \varphi - \varphi^{-1}) \eta_{m,-1}\\
 &= (a_p - \varphi - \varphi^{-1}) \log_{\hat{E}}(\ddd_{m,-1}).
\end{align}

(2) 
The assertion follows from Lemmas \ref{lem:14}(2) and \ref{lem:114} immediately.
\end{proof}

The following is an extension of \cite[Proposition 8.11]{Kob03}.

\begin{prop}\label{prop:21}
For $n \geq 0$, $\hat{E}(\m_{m,n})/\hat{E}(\m_{m,n-1})$ is generated by $\{\ddd_{m',n} \mid m' \in \AAA(m)\}$ over $R_{m,n}$.
Moreover, $\hat{E}(\m_{m,-1})$ is generated by $\{ \ddd_{m',-1} \mid m' \in \AAA(m)\}$ over $R_{m,-1}$.
\end{prop}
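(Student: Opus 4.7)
The proof extends Lemma~\ref{lem:19} from $\m_{m,n}$ to $\hat{E}(\m_{m,n})$ via the formal logarithm, in the spirit of \cite[Proposition~8.11]{Kob03} and \cite[Proposition~3.4]{KO18}. I would treat $n=-1$ and $n\geq 0$ separately.

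For $n=-1$: since $p$ is unramified in $K_{m,-1}=\Q(\mu_m)$, both $\log_{\hat E}$ and $\exp_{\hat E}$ converge on $\m_{m,-1}=p\OO_{m,-1}$, giving an $R_{m,-1}$-linear isomorphism $\log_{\hat E}\colon \hat E(\m_{m,-1})\xrightarrow{\sim}\m_{m,-1}$. By Proposition~\ref{prop:10} (with $n=-1$, where all terms $\pi_{m,n-j}$ vanish), $\ddd_{m,-1}$ maps to $\eta_{m,-1}$. From Definition~\ref{defn:08} one has $p\zeta_m=(\varphi^2-a_p\varphi+p)\,\eta_{m,-1}$, and since $\varphi^2-a_p\varphi+p$ acts invertibly on $\OO_{m,-1}$, the $R_{m,-1}$-submodule generated by $\{\eta_{m',-1}\}_{m'\in\AAA(m)}$ coincides with that generated by $\{p\zeta_{m'}\}_{m'\in\AAA(m)}$. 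The second half of Lemma~\ref{lem:19} then yields the claim.

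For $n\geq 0$: the key computation is
\[
\log_{\hat E}(\ddd_{m,n})\equiv \pi_{m,n}\pmod{\m_{m,n-1}},
\]
which follows from the formula in Proposition~\ref{prop:10}, because $\eta_{m,n}\in\m_{m,-1}\subset\m_{m,n-1}$, $\pi_{m,n-j}\in\m_{m,n-j}\subset\m_{m,n-1}$ for $j\geq 1$, and $c_0=1$. The plan is to upgrade this linear-level congruence into the desired module-theoretic generation by filtering $\hat E(\m_{m,n})/\hat E(\m_{m,n-1})$ by the subgroups $\hat E(\m_{m,n}^i)$ and comparing with the parallel filtration of $\m_{m,n}/\m_{m,n-1}$. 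On each graded piece $\hat E(\m_{m,n}^i)/\hat E(\m_{m,n}^{i+1})\simeq \m_{m,n}^i/\m_{m,n}^{i+1}$ the formal group law reduces to ordinary addition (since $F(x,y)-(x+y)$ has order $\geq 2i$), and at the top graded piece the images of the $\ddd_{m',n}$ are the images of the $\pi_{m',n}$. The generation statement of Lemma~\ref{lem:19} then feeds through the filtration, combined with a Nakayama-type lifting, to give generation of $\hat E(\m_{m,n})/\hat E(\m_{m,n-1})$ by $\{\ddd_{m',n}\mid m'\in\AAA(m)\}$ over $R_{m,n}$.

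The main technical obstacle is exactly this transfer. The naive identification of $\hat E(\m_{m,n})/\hat E(\m_{m,n-1})$ with $\m_{m,n}/\m_{m,n-1}$ is not a group homomorphism for $p\geq 3$: the defect $F(x,y)-(x+y)$ lies in $\m_{m,n}^2$, which is strictly larger than $\m_{m,n-1}$ (the latter being contained in $\m_{m,n}^p$ with ramification index $p$ for $n\geq 1$). The filtration-by-$\m_{m,n}^i$ detour resolves this because agreement of the two group laws is restored within each graded piece. Compared with \cite{Kob03} and \cite{KO18}, the only novelty is the need to track various conductors $m'\in\AAA(m)$ in parallel, in the same spirit already carried out in Lemmas~\ref{lem:14}, \ref{lem:19} and Proposition~\ref{prop:15}.
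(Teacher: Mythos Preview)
Your $n=-1$ case is correct and matches the paper.

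For $n\geq 0$, the filtration detour has a genuine gap. The ``Nakayama-type lifting'' you invoke does not apply: the filtration by $\hat{E}(\m_{m,n}^i)$ is not the filtration by powers of the Jacobson radical of $R_{m,n}$, and $\m_{m,n}^i\not\subset\m_{m,n-1}$ for any $i$, so knowing generation on the top graded piece $\hat{E}(\m_{m,n})/\hat{E}(\m_{m,n}^2)$ does not propagate to the quotient $\hat{E}(\m_{m,n})/\hat{E}(\m_{m,n-1})$ by any standard lifting argument. One would have to track the images of the $R_{m,n}$-module generated by the $\ddd_{m',n}$ on every graded piece separately, and you give no mechanism for that.

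More to the point, the ``main technical obstacle'' you isolate is a non-obstacle once you use $\log_{\hat{E}}$ itself as the comparison map rather than the identity: $\log_{\hat{E}}$ is already an injective $R_{m,n}$-linear homomorphism from $\hat{E}(\m_{m,n})$ to the additive group of $k_{m,n}$, so there is nothing to reconcile between the two group laws. You already have the congruence $\log_{\hat{E}}(\ddd_{m',n})\equiv\pi_{m',n}\pmod{\m_{m,n-1}}$; the only missing ingredient is controlling the target of $\log_{\hat{E}}$. This is precisely Lemma~\ref{lem:20}, which gives $\log_{\hat{E}}(\hat{E}(\m_{m,n}))\subset\m_{m,n}+k_{m,n-1}$. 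The paper then forms the composite
\[
\hat{E}(\m_{m,n})/\hat{E}(\m_{m,n-1})\;\hookrightarrow\;(\m_{m,n}+k_{m,n-1})/k_{m,n-1}\;\overset{\sim}{\longleftarrow}\;\m_{m,n}/\m_{m,n-1},
\]
under which $\ddd_{m',n}$ maps to the class of $\pi_{m',n}$. Since those classes generate the target by Lemma~\ref{lem:19}, the injective map is onto, hence an isomorphism, and the $\ddd_{m',n}$ generate the source. No filtration or successive approximation is required.
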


\begin{proof}
When $n = -1$, it is known that $\log_{\hat{E}}: \hat{E}(\m_{m,-1}) \to \m_{m,-1}$ is an isomorphism.
Since it sends $\ddd_{m', -1}$ to $\eta_{m', -1}$, the assertion follows from Lemma \ref{lem:19}.

When $n \geq 0$, we use the argument in \cite[Proposition 8.11]{Kob03}.
Consider the injective homomorphism $\log_{\hat{E}}: \hat{E}(\m_{m,n}) \to \m_{m,n}+k_{m,n-1}$ (Lemma \ref{lem:20}).
We obtain the following composite map
\[
\hat{E}(\m_{m,n})/\hat{E}(\m_{m,n-1}) \overset{\log_{\hat{E}}}{\hookrightarrow} (\m_{m,n}+k_{m,n-1}) / k_{m,n-1} \overset{\sim}{\leftarrow} \m_{m,n} / \m_{m,n-1}
\]
where the second map is the isomorphism induced by the inclusion map.
Then the class of $\ddd_{m',n}$ is sent to the class of $\pi_{m',n}$.
Therefore by Lemma \ref{lem:19}, the above injective map has to be an isomorphism and $\hat{E}(\m_{m,n}) / \hat{E}(\m_{m,n-1})$ is generated by $\{ \ddd_{m',n} \mid m' \in \AAA(m)\}$ as an $R_{m,n}$-module.
This completes the proof.
\end{proof}

\subsubsection{Second Step}

Fix $p \nmid m$, and define $\dd_{m,n} \in \hat{E}(\m_{m,n})$ by the formula \eqref{eq:94}.
We shall show that this system of elements satisfies the conditions in Theorem \ref{thm:116}.

\begin{prop}\label{prop:25}
(1) For $n \geq 0$, we have
\[
\Tr^{m,n}_{m,n-1}(\dd_{m,n}) = 
	\begin{cases}
		a_p \dd_{m,n-1} - \dd_{m,n-2} & (n \geq 1) \\
		(a_p - \varphi - \varphi^{-1}) \dd_{m,-1} & (n = 0).
	\end{cases}
\]

(2) Let $l$ be a prime divisor of $m$. 
For $n \geq -1$, we have 
\[
\Tr^{m,n}_{m/l,n} (\dd_{m,n}) = 
	\begin{cases}
		\dd_{m/l,n} & (l^2 \mid m) \\
		(-\sigma_l^{-1}) \dd_{m/l,n} & (l^2 \nmid m).
	\end{cases}
\]
\end{prop}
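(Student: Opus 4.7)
The plan is to push the trace maps inside the defining sum $\dd_{m,n} = \sum_{m' \in \AAA(m)} \frac{m'}{m}\ddd_{m',n}$ and reduce each resulting term to Proposition \ref{prop:15}, relying on elementary conductor computations to identify the relevant intersections, compositums and indices of cyclotomic fields.

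For part (1), whenever $m' \mid m$ one checks from $\gcd(mp^n, m'p^{n+1}) = m'p^n$ and $\mathrm{lcm}(mp^n, m'p^{n+1}) = mp^{n+1}$ that $K_{m,n-1} \cap K_{m',n} = K_{m',n-1}$ and $K_{m,n-1} \cdot K_{m',n} = K_{m,n}$; therefore for $x \in K_{m',n}$ one has $\Tr^{m,n}_{m,n-1}(x) = \Tr^{m',n}_{m',n-1}(x)$. Applying this term by term and invoking Proposition \ref{prop:15}(1) yields both cases of (1) simultaneously, since the weights $\frac{m'}{m}$ do not depend on $n$.

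For part (2), let $v = \ord_l(m)$ and partition $\AAA(m) = \AAA(m)_v \sqcup \AAA(m)_{<v}$ according as $\ord_l(m')$ equals $v$ or is strictly smaller. For $m' \in \AAA(m)_v$, the analogous conductor computation gives $K_{m',n} \cap K_{m/l,n} = K_{m'/l,n}$ with compositum $K_{m,n}$, hence $\Tr^{m,n}_{m/l,n}(\ddd_{m',n}) = \Tr^{m',n}_{m'/l,n}(\ddd_{m',n})$, which Proposition \ref{prop:15}(2) evaluates. For $m' \in \AAA(m)_{<v}$ one finds instead $K_{m',n} \subseteq K_{m/l,n}$, and then $\Tr^{m,n}_{m/l,n}(\ddd_{m',n}) = [K_{m,n}:K_{m/l,n}]\cdot \ddd_{m',n} = l\cdot \ddd_{m',n}$, the index equalling $l$ because this subcase forces $v \geq 2$. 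When $l^2 \mid m$, the $\AAA(m)_v$-terms vanish since $l^2 \mid m'$, while the remaining $\AAA(m)_{<v}$-terms combine through the identification $\AAA(m)_{<v} = \AAA(m/l)$ and the weight relation $\frac{m'}{m}\cdot l = \frac{m'}{m/l}$ to give $\dd_{m/l,n}$. When $l^2 \nmid m$ one has $\AAA(m)_{<v} = \emptyset$ and $\AAA(m)_v = \AAA(m)$; each summand contributes $-\sigma_l^{-1}\ddd_{m'/l,n}$, and the bijection $m' \mapsto m'/l$ from $\AAA(m)$ onto $\AAA(m/l)$ together with $\frac{m'}{m} = \frac{m'/l}{m/l}$ yields $-\sigma_l^{-1}\dd_{m/l,n}$.

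The principal obstacle is the combinatorial bookkeeping in part (2), in particular the correct identification of intersections and compositums of the cyclotomic fields $K_{m',n}$ and $K_{m/l,n}$ in terms of $\ord_l(m')$ versus $v$, and the verification that the index-set bijections between subsets of $\AAA(m)$ and $\AAA(m/l)$ transport the weights $\frac{m'}{m}$ as required. No new input from formal-group or Honda theory is needed; the proposition is a purely algebraic rearrangement of Proposition \ref{prop:15}.
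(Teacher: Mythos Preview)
Your proof is correct and follows essentially the same approach as the paper. The paper's partition of $\AAA(m)$ according to whether $m' \mid m/l$ or $m' \nmid m/l$ is exactly your partition $\AAA(m)_{<v} \sqcup \AAA(m)_v$; the paper then invokes Proposition \ref{prop:15} term by term and recombines just as you do, though it is somewhat terser about the underlying conductor computations that you spell out explicitly.
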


\begin{proof}
(1) Immediate from Proposition \ref{prop:15}(1).

(2) First we compute $\Tr^{m,n}_{m/l,n} (\ddd_{m',n})$ for each $m' \in \AAA(m)$.
If $m'$ divides $m/l$, then $l^2 \mid m$ and 
\[
\Tr^{m,n}_{m/l,n} (\ddd_{m',n}) = [K_{m,n}:K_{m/l,n}] \ddd_{m',n} = l \ddd_{m',n}.
\]
If $m'$ does not divide $m/l$, then
\[
\Tr^{m,n}_{m/l,n} (\ddd_{m',n}) = \Tr^{m',n}_{m'/l,n} (\ddd_{m',n}) = 
	\begin{cases}
		0 & (l^2 \mid m') \\
		(-\sigma_l^{-1}) \ddd_{m'/l,n} & (l^2 \nmid m')
	\end{cases}
\]
by Proposition \ref{prop:15}(2).
Using these formulas, if $l^2 \mid m$ then
\begin{align}
\Tr^{m,n}_{m/l,n} (\dd_{m,n}) 
&= \sum_{m' \in \AAA(m)} \frac{m'}{m} \Tr^{m,n}_{m/l,n} (\ddd_{m',n})
= \sum_{m' \in \AAA(m), m' | \frac{m}{l}} \frac{m'}{m} l \ddd_{m',n}\\
&= \sum_{m' \in \AAA(m/l)} \frac{m'}{m/l} \ddd_{m',n} = \dd_{m/l,n}.
\end{align}
If $l^2 \nmid m$ then
\begin{align}
\Tr^{m,n}_{m/l,n} (\dd_{m,n}) 
&= \sum_{m' \in \AAA(m)} \frac{m'}{m} \Tr^{m,n}_{m/l,n} (\ddd_{m',n})
= \sum_{m' \in \AAA(m)} \frac{m'}{m} (-\sigma_l^{-1}) \ddd_{m'/l,n}\\
&= (-\sigma_l^{-1})\sum_{m'' \in \AAA(m/l)} \frac{m''}{m/l} \ddd_{m'',n} = (-\sigma_l^{-1})\dd_{m/l,n}.
\end{align}
This completes the proof.
\end{proof}

\begin{prop}\label{prop:26}
For $n \geq 0$, $\hat{E}(\m_{m,n})$ is generated by $\dd_{m,n}, \dd_{m,n-1}$ as an $R_{m,n}$-module.
Moreover, $\hat{E}(\m_{m,-1})$ is generated by $\dd_{m,-1}$ as an $R_{m,-1}$-module.
\end{prop}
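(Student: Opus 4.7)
The plan is to reduce from the multi-generator description in Proposition~\ref{prop:21} to the single element $\dd_{m,n}$ by exploiting the Galois equivariance of the trace. The key step is the following identity: for every $m'\in\AAA(m)$,
\[
\Tr^{m,n}_{m',n}(\dd_{m,n}) = \dd_{m',n}.
\]
Since the trace on the left is multiplication by the norm element $N_{K_{m,n}/K_{m',n}}\in R_{m,n}$, this identity immediately places every $\dd_{m',n}$ with $m'\in\AAA(m)$ inside $R_{m,n}\cdot\dd_{m,n}$.

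To prove the identity, I would expand $\dd_{m,n}=\sum_{m''\in\AAA(m)}(m''/m)\,\ddd_{m'',n}$ and compute each $\Tr^{m,n}_{m',n}(\ddd_{m'',n})$ separately. Since $\ddd_{m'',n}\in\hat{E}(\m_{m'',n})$, standard Galois theory (using $K_{m'',n}\cap K_{m',n}=K_{\gcd(m',m''),n}$ together with $\prim(\operatorname{lcm}(m',m''))=\prim(m)$) identifies this trace with $\frac{m}{\operatorname{lcm}(m',m'')}\Tr^{m'',n}_{\gcd(m',m''),n}(\ddd_{m'',n})$. Iterating Proposition~\ref{prop:15}(2) on the right-hand trace, the result vanishes as soon as one encounters a prime $l$ whose square still divides the current modulus. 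Because $m'\in\AAA(m)$ forces $v_l(m')\ge1$ for every prime $l\mid m$, a non-vanishing contribution requires $v_l(m'')\le v_l(m')$ at every prime, i.e., $m''\mid m'$; and when this holds, $\operatorname{lcm}(m',m'')=m'$ and the trace simplifies to $(m/m')\ddd_{m'',n}$. Since $\{m''\in\AAA(m):m''\mid m'\}=\AAA(m')$, the sum collapses precisely to $\dd_{m',n}$.

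Given the identity, I would induct on $m'$ by divisibility within $\AAA(m)$ to show $\ddd_{m',n}\in R_{m,n}\cdot\dd_{m,n}$. The base case is $m'=\prod_{l\mid m}l$, the radical of $m$, for which $\AAA(m')=\{m'\}$ forces $\ddd_{m',n}=\dd_{m',n}$, placing it in $R_{m,n}\cdot\dd_{m,n}$ by the identity. For the inductive step, rearranging the definition of $\dd_{m',n}$ yields $\ddd_{m',n}=\dd_{m',n}-\sum_{m''\in\AAA(m'),\,m''\ne m'}(m''/m')\,\ddd_{m'',n}$; thanks to $\AAA(m')\subset\AAA(m)$, every term on the right already lies in $R_{m,n}\cdot\dd_{m,n}$ either by the identity or by the inductive hypothesis. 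Combining with Proposition~\ref{prop:21}, this settles the $n=-1$ statement and shows that $\dd_{m,n}$ generates the quotient $\hat{E}(\m_{m,n})/\hat{E}(\m_{m,n-1})$ over $R_{m,n}$ for $n\ge0$.

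Finally, for $n\ge0$ I would promote the quotient result to the full statement by induction on $n$. The case $n=0$ is immediate from the $n=-1$ part. For $n\ge1$, the inductive hypothesis gives $\hat{E}(\m_{m,n-1})=R_{m,n-1}\dd_{m,n-1}+R_{m,n-1}\dd_{m,n-2}$, while rearranging the recursion $\Tr^{m,n}_{m,n-1}(\dd_{m,n})=a_p\dd_{m,n-1}-\dd_{m,n-2}$ of Proposition~\ref{prop:25}(1) places $\dd_{m,n-2}$ inside $R_{m,n}\dd_{m,n}+R_{m,n}\dd_{m,n-1}$, closing the induction. The principal obstacle is the trace identity of the second paragraph: the vanishing pattern of Proposition~\ref{prop:15}(2) and the $\AAA$-membership condition must conspire precisely so that the only surviving summands in $\Tr^{m,n}_{m',n}(\dd_{m,n})$ assemble cleanly into $\dd_{m',n}$, which is what makes the rest of the argument a formal bookkeeping exercise.
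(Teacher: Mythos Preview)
Your proof is correct and follows essentially the same route as the paper: establish the trace identity $\Tr^{m,n}_{m',n}(\dd_{m,n})=\dd_{m',n}$ for $m'\in\AAA(m)$, then induct on $m'$ to pull each $\ddd_{m',n}$ into $R_{m,n}\cdot\dd_{m,n}$, and finish with the induction on $n$ via Proposition~\ref{prop:25}(1). The only difference is efficiency: you rederive the trace identity from scratch using Proposition~\ref{prop:15}(2) and the expansion of $\dd_{m,n}$, whereas the paper simply iterates Proposition~\ref{prop:25}(2) (which, for $m'\in\AAA(m)$, always lands in the case $l^2\mid m$ and hence gives $\Tr^{m,n}_{m',n}(\dd_{m,n})=\dd_{m',n}$ immediately). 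Your ``principal obstacle'' is therefore already packaged as a lemma in the paper; citing it would collapse your second paragraph to one line, but what you wrote is a valid direct proof of the same fact.
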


\begin{proof}
By Proposition \ref{prop:21}, it is enough to show that, for $n \geq -1$,
\[
\sum_{m' \in \AAA(m)} (\ddd_{m',n})_{R_{m',n}} = (\dd_{m,n})_{R_{m,n}}
\]
as submodules of $\hat{E}(\m_{m,n})$.
The inclusion $\supset$ is clear from the definition \eqref{eq:94}.
For any $m' \in \AAA(m)$, by Proposition \ref{prop:25}(2),
\[
\Tr_{m',n}^{m,n}(\dd_{m,n}) -\ddd_{m',n} = \dd_{m',n} - \ddd_{m',n} = \sum_{m'' \in \AAA(m'), m'' \neq m'} \frac{m''}{m'} \ddd_{m'',n}.
\]
Using induction on $m'$, we obtain $\ddd_{m',n} \in (\dd_{m,n})_{R_{m,n}}$.
This completes the proof.
\end{proof}

\begin{prop}\label{prop:38}
For a character $\psi$ of $G_{m,\infty}$ of finite order, we have
\[
\sum_{\sigma \in G_{m,n_{\psi}}} \sigma(\log_{\hat{E}}(\dd_{m,n_{\psi}})) \psi(\sigma)=
 	\begin{cases}
		\tau_{\prim(m)}(\psi) & (n_{\psi} \geq 0)\\
		(1-p^{-1}a_p\psi(p)^{-1}+p^{-1}\psi(p)^{-2})^{-1} \tau_{\prim(m)}(\psi) & (n_{\psi} = -1).
	\end{cases}
\]
\end{prop}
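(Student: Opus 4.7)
Set $n = n_\psi$ and introduce the operator $T_\psi(x) := \sum_{\sigma \in G_{m,n}}\psi(\sigma)\sigma(x)$. Substituting $\dd_{m,n} = \sum_{m' \in \AAA(m)}\frac{m'}{m}\ddd_{m',n}$ from \eqref{eq:94} and using Proposition \ref{prop:10},
\[
\log_{\hat{E}}(\dd_{m,n}) = \sum_{m' \in \AAA(m)}\frac{m'}{m}\Bigl(\eta_{m',n} + \sum_{j \geq 0} c_j\, \pi_{m',n-j}\Bigr),
\]
and I apply $T_\psi$ termwise. The selection principle is: for $x$ fixed by $\Gal(K_{m,n}/K_{m',n'})$ with $n' \leq n$, the sum $T_\psi(x)$ vanishes unless $\psi$ factors through $G_{m',n'}$ (which forces $m_\psi \mid m'$ and $n_\psi \leq n'$), in which case it equals $[K_{m,n}:K_{m',n'}]$ times the analogous sum over $G_{m',n'}$. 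Crucially, for $m' \in \AAA(m)$ the identity $\frac{m'}{m}\cdot\frac{\phi(m)}{\phi(m')} = 1$ reduces the resulting prefactor to $p^{n-n'}$, which equals $1$ in every case below.

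When $n_\psi \geq 0$, each $\eta_{m',n} \in k_{m',-1}$ contributes $0$ because $\psi$ does not factor through $G_{m',-1}$; among the $\pi_{m',n-j}$, the condition $n - j \geq n_\psi$ forces $j = 0$. Writing $\pi_{m',n} = \zeta_{m'p^{n+1}} - \zeta_{m'}^{\varphi^{-(n+1)}}$ (a direct check against $\zeta_M^{M/M'} = \zeta_{M'}$) and killing the second summand for the same reason as $\eta$, I obtain
\[
T_\psi(\log_{\hat{E}}(\dd_{m,n})) = \sum_{m' \in \AAA(m),\, m_\psi \mid m'} T'_\psi(\zeta_{m'p^{n+1}}),
\]
with $T'_\psi$ the sum over $G_{m',n}$. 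When $n_\psi = -1$, only $\eta_{m',-1}$ contributes; applying the analogous $T'_\psi$ (sum over $G_{m',-1}$) to the defining relation $\eta_{m',-1}^{\varphi^2} - a_p\eta_{m',-1}^\varphi + p\eta_{m',-1} = p\zeta_{m'}$ and using $\psi(\varphi) = \psi(p)$ (valid here since $\psi$ is unramified at $p$) yields
\[
T'_\psi(\eta_{m',-1}) = \bigl(1 - p^{-1}a_p\psi(p)^{-1} + p^{-1}\psi(p)^{-2}\bigr)^{-1} T'_\psi(\zeta_{m'}),
\]
reducing to evaluating $\sum_{m'} T'_\psi(\zeta_{m'})$ over the same index set.

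Both cases now come down to the same combinatorial Gauss-sum input, which I expect to be the main obstacle. Iterating Lemma \ref{lem:14}(2) after extracting the scalar $\zeta_{p^{n+1}}^{\sigma_{m'}^{-1}} \in k_{m_\psi,n}$, and using the identification $\Gal(K_{m',n}/K_{m_\psi,n}) \simeq \Gal(K_{m',-1}/K_{m_\psi,-1})$, one shows
\[
\Tr^{m',n}_{m_\psi,n}(\zeta_{m'p^{n+1}}) = \prod_{l \in \prim(m'/m_\psi)}(-\sigma_l^{-1})\,\zeta_{m_\psi p^{n+1}}
\]
when $m'/m_\psi$ is squarefree and coprime to $m_\psi$, and vanishes otherwise; a short exponent check confirms this identification with $\sigma_l$ interpreted in $G_{m_\psi,n}$. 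Among $m' \in \AAA(m)$ with $m_\psi \mid m'$, exactly one satisfies the squarefree/coprimeness conditions, namely $m'_\psi := m_\psi\prod_{l \in \prim(m) \setminus \prim(m_\psi)} l$, while all other $m'$ contribute $0$. Finally, the imprimitive-factor relation \eqref{eq:91} (which turns the primitive Gauss sum $\tau(\psi)$ arising from $\zeta_{m_\psi p^{n_\psi+1}}$ into $\tau_{\prim(m)}(\psi)$ via the prefactor $\prod_{l}(-\psi(\sigma_l))$) identifies the result with $\tau_{\prim(m)}(\psi)$, completing both cases.
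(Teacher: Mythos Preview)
Your proof is correct, but it is organized differently from the paper's and ends up re-deriving machinery that the paper has already packaged. The paper's argument is much shorter: it first invokes Proposition~\ref{prop:25}(2) (the norm compatibility of $d_{m,n}$ in the $m$-direction) to pass in one step from the sum over $G_{m,n_\psi}$ to the sum over $G_{m',n_\psi}$, where $m'$ is the specific integer of Definition~\ref{defn:119}; then, since $m'$ is minimal in $\AAA(m')$ with $m_\psi \mid m'$, the expansion \eqref{eq:94} at level $m'$ collapses to the single term $\ddd_{m',n_\psi}$; and finally the resulting sum $\sum_{\sigma \in G_{m',n_\psi}} \psi(\sigma)\sigma(\zeta_{m'p^{n_\psi+1}})$ is \emph{literally} the definition of $\tau_{\prim(m)}(\psi)$, so no appeal to \eqref{eq:91} is needed. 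By contrast, you expand $d_{m,n}$ over all of $\AAA(m)$ from the outset, then recover the statement ``only $m'_\psi$ survives'' by iterating Lemma~\ref{lem:14}(2) and invoking \eqref{eq:91}---which is essentially a second proof of the content of Proposition~\ref{prop:25}(2). Your route has the virtue of being self-contained (it uses only Proposition~\ref{prop:10}, Lemma~\ref{lem:14}(2), and the orthogonality of characters), whereas the paper's route is shorter precisely because it exploits the structural result Proposition~\ref{prop:25}(2) already established.
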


\begin{proof}
Take $m'$ as in Definition \ref{defn:119}, which depends on $\psi$ and $S = \prim(m)$.
Then we have
\[
\sum_{\sigma \in G_{m,n_{\psi}}} \sigma(\log_{\hat{E}}(\dd_{m,n_{\psi}})) \psi(\sigma)
= \sum_{\sigma \in G_{m',n_{\psi}}} \sigma(\log_{\hat{E}}(\dd_{m',n_{\psi}})) \psi(\sigma)
= \sum_{\sigma \in G_{m',n_{\psi}}} \sigma(\log_{\hat{E}}(\ddd_{m',n_{\psi}})) \psi(\sigma),
\]
where the first equality follows from Proposition \ref{prop:25}(2), and the second follows from \eqref{eq:94}.

If $n_{\psi} \geq 0$, then we can compute
\[
\sum_{\sigma \in G_{m',n_{\psi}}} \sigma(\log_{\hat{E}}(\ddd_{m',n_{\psi}})) \psi(\sigma)
 = \sum_{\sigma \in G_{m',n_{\psi}}} \sigma(\pi_{m',n_{\psi}}) \psi(\sigma)
 = \sum_{\sigma \in G_{m',n_{\psi}}} \sigma(\zeta_{m'p^{n_{\psi}+1}}) \psi(\sigma)
 =\tau_{\prim(m)}(\psi).
\]
Here, in the second equality, we used the fact $\zeta_m^{\varphi^{-(n+1)}} \zeta_{p^{n+1}} ^{\sigma_m^{-1}} = \zeta_{mp^{n+1}}$ in general, which can be shown by taking $p^{n+1}$-th power and $m$-th power of both sides.

If $n_{\psi} = -1$, the definition of $\eta_{m', -1}$ implies
\[
(\psi(p)^{-2}-a_p\psi(p)^{-1}+p)\sum_{\sigma \in G_{m',-1}} \sigma(\eta_{m',-1}) \psi(\sigma) = p\tau_{\prim(m)}(\psi).
\]
Thus we obtain
\[
\sum_{\sigma \in G_{m',-1}} \sigma(\log_{\hat{E}}(\ddd_{m',-1})) \psi(\sigma)
 = \sum_{\sigma \in G_{m',-1}} \sigma(\eta_{m',-1}) \psi(\sigma)
 = (1-p^{-1}a_p\psi(p)^{-1}+p^{-1}\psi(p)^{-2})^{-1} \tau_{\prim(m)}(\psi).
\]
This completes the proof.
\end{proof}

Now Theorem \ref{thm:116} for $K = \Q(\mu_m), S = \prim(m)$ is true by Propositions \ref{prop:25}(1), \ref{prop:26}, and \ref{prop:38}.
By Lemma \ref{lem:946}, this completes the proof of Theorem \ref{thm:116}.

\subsection{Ordinary Case}\label{subsec:40}

We shall prove Theorem \ref{thm:121} in a similar way as in Subsection \ref{subsec:78}.
Suppose $p \nmid a_p$ holds and let $\alpha, \beta \in \Z_p$ be the roots of $t^2-a_pt+p = 0$ such that $p \nmid \alpha$ and $p \mid \beta$.
Fix $p \nmid m$ such that Assumption \ref{ass:04} holds for $K = \Q(\mu_m)$.

\begin{lem}\label{lem:100}
$\varphi-\alpha^{-1}$ and $\varphi-\alpha$ are unit elements of $R_{m,-1}$.
\end{lem}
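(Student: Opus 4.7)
The plan is to reduce the unit question in the (commutative, finite) group ring $R_{m,-1} = \Z_p[\Gal(\Q(\mu_m)/\Q)]$ to a pointwise check under characters, and then use Assumption \ref{ass:04} in the form given by Proposition \ref{prop:05}.

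First I would set up the reduction. Since $R_{m,-1}$ is $p$-adically complete, an element is a unit if and only if its reduction modulo $p$ is a unit in $\F_p[G_{m,-1}]$; extending scalars to $\overline{\F_p}$ and decomposing $\overline{\F_p}[G_{m,-1}]$ into its local components (indexed by characters $\chi$ of $G_{m,-1}/(G_{m,-1})_p$, valued in $\overline{\F_p}^\times$), this is equivalent to saying that for every continuous character $\chi : G_{m,-1} \to \overline{\Z_p}^\times$, the image $\chi(x) \in \overline{\Z_p}$ lies in $\overline{\Z_p}^\times$, i.e.\ its reduction in $\overline{\F_p}$ is nonzero. Since $p \nmid m$, the Frobenius $\varphi$ corresponds to the arithmetic Frobenius $\sigma_p \in G_{m,-1}$, so it suffices to show that for every such $\chi$, the elements $\chi(\sigma_p) - \alpha^{\pm 1}$ are units in $\overline{\Z_p}$.

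Next I would derive the contradiction from Assumption \ref{ass:04}. Suppose for some $\chi$ we had $\chi(\sigma_p) \equiv \alpha^{\pm 1} \pmod{\mathfrak{m}}$, where $\mathfrak{m}$ is the maximal ideal of $\overline{\Z_p}$. Let $\rd_m$ be the residue degree of $\Q(\mu_m)$ at $p$, so $\sigma_p$ has order $\rd_m$ in $G_{m,-1}$. Raising to the $\rd_m$-th power gives
\[
1 = \chi(\sigma_p^{\rd_m}) = \chi(\sigma_p)^{\rd_m} \equiv \alpha^{\pm \rd_m} \pmod{\mathfrak{m}},
\]
hence $\alpha^{\rd_m} \equiv 1 \pmod p$ (using $\alpha \in \Z_p^\times$). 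Because $\alpha \equiv a_p \pmod p$ in the ordinary case, this gives $a_p^{\rd_m} \equiv 1 \pmod p$, directly contradicting the reformulation of Assumption \ref{ass:04} given in Proposition \ref{prop:05}. Therefore no such $\chi$ exists and both $\varphi - \alpha$ and $\varphi - \alpha^{-1}$ are units.

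The argument is essentially a one-line observation once the pointwise unit criterion is in place, so there is no real obstacle; the only point requiring a touch of care is the initial reduction, because $\phi(m)$ may be divisible by $p$ so $\Z_p[G_{m,-1}]$ is not in general a product of DVRs. However, the completeness of $\Z_p[G_{m,-1}]$ in the $p$-adic topology together with the character decomposition of $\overline{\F_p}[G_{m,-1}]$ into local components (which is standard) handles this cleanly, and everything else is formal.
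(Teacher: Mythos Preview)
Your proof is correct and follows essentially the same approach as the paper: both reduce modulo $p$ via Nakayama, invoke Proposition~\ref{prop:05} to get $\alpha^{\rd_m}\not\equiv 1\pmod p$, and combine this with $\varphi^{\rd_m}=1$ in $G_{m,-1}$. The only difference is cosmetic: the paper argues directly in $\F_p[G_{m,-1}]$ (implicitly using that $X-\alpha$ is coprime to $X^{\rd_m}-1$ in $\F_p[X]$, hence $\varphi-\alpha$ is a unit in the subring $\F_p[\varphi]\cong\F_p[X]/(X^{\rd_m}-1)$), whereas you pass through the character decomposition of $\overline{\F_p}[G_{m,-1}]$ to reach the same conclusion.
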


\begin{proof}
By Nakayama's lemma, it is enough to show that they are unit elements after projection to $R_{m,-1}/(p) = \F_p[\Gal(\Q(\mu_m)/\Q)]$.
As in Proposition \ref{prop:05}, let $\rd_m$ be the residue degree of $K_{m, -1}$ over $\Q$ at $p$.
We have $\alpha^{\rd_m} \not\equiv 1 \mod p$ as shown in Proposition \ref{prop:05}.
Then the assertion follows, since $\varphi^{\rd_m} = 1$.
\end{proof}

\begin{defn}\label{defn:99}
As in Definition \ref{defn:08}, for $n \geq -1$, let $\eta_{m,n}$ be the unique element of $\m_{m, -1}$ such that
\[
\eta_{m,n}^{\varphi}-\beta \eta_{m,n} = -\beta \zeta_{m}^{\varphi^{-(n+1)}}.
\]
\end{defn}

As an analogue of Proposition \ref{prop:10}, we show the following.

\begin{prop}\label{prop:44}
For $n \geq -1$, there exists a unique element $\ddd_{m,n} \in \hat{E}(\m_{m, n})$ such that
\[
\log_{\hat{E}}(\ddd_{m,n}) = \eta_{m,n} + \sum_{j \geq 0} \beta^{-j} \pi_{m,n-j}.
\]
\end{prop}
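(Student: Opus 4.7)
\emph{Plan.}
The strategy follows the Honda-theoretic construction of Proposition \ref{prop:10} from the supersingular case. Uniqueness is immediate: by Assumption \ref{ass:04} and Lemma \ref{lem:58}, the module $\hat{E}(\m_{m,n})$ is $p$-torsion-free, so $\log_{\hat{E}}$ is injective on it, and the prescribed element has at most one preimage.

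For existence, one would like to build an auxiliary formal group $\GG_{m,n}$ over $\OO_{m,-1}$ whose logarithm, evaluated at $\pi_{m,n}$, produces $\sum_{j \geq 0}\beta^{-j}\pi_{m,n-j}$. The direct analogue of the supersingular candidate is
\[
g_m(X) = \sum_{j \geq 0}\beta^{-j}\bigl((X+\zeta_m)^{p^j}-\zeta_m^{p^j}\bigr),
\]
whose derivative has coefficients $(p/\beta)^j$ of $p$-adic valuation zero. Here lies the new subtle difficulty: unlike the supersingular case, where $|p/\alpha|_p = |p/\beta|_p = p^{-1/2}<1$, in the ordinary case $|p/\beta|_p = 1$, so $g_m \notin \FUN_m$ and Honda's theorem cannot be applied directly.

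To circumvent this, I would perform the Honda step using the unit root $\alpha$ instead. The series $f_m(X) = \sum_{j \geq 0}\alpha^{-j}\bigl((X+\zeta_m)^{p^j}-\zeta_m^{p^j}\bigr)$ does lie in $\FUN_m$ because $|p/\alpha|_p = 1/p<1$; a computation paralleling the one in Proposition \ref{prop:10} (using $\alpha+\beta=a_p$ and $\alpha\beta=p$) shows that $f_m^{\varphi^{-(n+1)}}$ satisfies an appropriate Honda-type congruence, producing a formal group $\GG_{m,n}$ over $\OO_{m,-1}$ with $\log_{\GG_{m,n}} = f_m^{\varphi^{-(n+1)}}$ and an isomorphism $\phi_n : \GG_{m,n} \xrightarrow{\sim} \hat{E}$. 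Taking $\varepsilon_{m,n} \in \GG_{m,n}(\m_{m,-1})$ with $\log_{\GG_{m,n}}(\varepsilon_{m,n}) = \eta_{m,n}$ and forming $\phi_n(\varepsilon_{m,n} \,[+]_{\GG_{m,n}}\, \pi_{m,n}) \in \hat{E}(\m_{m,n})$ yields an element whose $\hat{E}$-logarithm equals $\eta_{m,n} + \sum_{j \geq 0}\alpha^{-j}\pi_{m,n-j}$.

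The remaining task is to reconcile this $\alpha$-expansion with the required $\beta$-expansion. The finite difference $\sum_{j \geq 1}(\beta^{-j}-\alpha^{-j})\pi_{m,n-j}$ has all its terms in $\m_{m,n-1}$, and I would realize it as $\log_{\hat{E}}$ of an element of $\hat{E}(\m_{m,n-1})$ by induction on $n$, writing the correction as an explicit $R_{m,n-1}$-linear combination of the previously built $\{\ddd_{m,n'}\}_{n'<n}$ via the recurrence $\eta_{m,n}^{\varphi}-\beta\eta_{m,n} = -\beta\zeta_m^{\varphi^{-(n+1)}}$ together with $\alpha\beta=p$; adding this correction (inside $\hat{E}(\m_{m,n})$) to the element just produced then gives $\ddd_{m,n}$. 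The main obstacle, as flagged by the author, is precisely the failure $g_m \notin \FUN_m$: Honda's machinery must be run with the unit root $\alpha$, and the resulting $\alpha$-versus-$\beta$ mismatch in the coefficient expansion reconciled afterwards via the inductive correction just sketched, which is the delicate part of the argument.
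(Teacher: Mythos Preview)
Your identification of the obstacle is correct, but the proposed workaround breaks at the Honda step. The series
\[
f_m(X)=\sum_{j\ge 0}\alpha^{-j}\bigl((X+\zeta_m)^{p^j}-\zeta_m^{p^j}\bigr)
\]
does lie in $\FUN_m$, but a direct computation gives
\[
(f_m^{\varphi^{-(n+1)}})^{\varphi}(X^p)-\alpha\, f_m^{\varphi^{-(n+1)}}(X)\equiv 0 \pmod{\m_{m,-1}[[X]]},
\]
so its Honda type is $t-\alpha$, not $t-\beta$ or $t^2-a_pt+p$. (If you try the latter type, the residual term is $-\alpha X^p$, which is \emph{not} $\equiv 0\bmod p$.) Since $\alpha$ is a $p$-adic unit, $f_m$ actually has integral coefficients and hence gives an isomorphism of its associated formal group with $\hat{\mathbb G}_a$ over $\OO_{m,-1}$; in particular the resulting $\GG_{m,n}$ has height~$\infty$, whereas $\hat E$ has height~$1$ in the ordinary case. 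Thus there is \emph{no} isomorphism $\GG_{m,n}\xrightarrow{\sim}\hat E$ over $\OO_{m,-1}$, and you cannot transport $\varepsilon_{m,n}[+]_{\GG_{m,n}}\pi_{m,n}$ into $\hat E(\m_{m,n})$. Without that integral isomorphism, you are reduced to applying $\exp_{\hat E}$ directly to $\eta_{m,n}+\sum_{j\ge 0}\alpha^{-j}\pi_{m,n-j}$, which fails because $\pi_{m,n}$ has valuation $1/(p^n(p-1))$, far below the convergence radius of $\exp_{\hat E}$.

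The inductive correction step is also problematic on its own: the difference $\sum_{j\ge 1}(\beta^{-j}-\alpha^{-j})\pi_{m,n-j}$ has terms of \emph{negative} $p$-adic valuation (since $\ord_p(\beta^{-j})=-j$), and you give no mechanism showing it lies in $\log_{\hat E}\bigl(\hat E(\m_{m,n-1})\bigr)$ beyond a vague appeal to the $\ddd_{m,n'}$ already built.

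The paper's route is quite different. It keeps the coefficients $\beta^{-j}$ but regularizes the divergent linear part: writing the derivative as $(X+\zeta_m)^{-1}\bigl[\sum_{j\ge 0}\alpha^j((X+\zeta_m)^{p^j}-\zeta_m^{p^j})+u_m\bigr]$, where $u_m\in\OO_{m,-1}^\times$ is the unique solution of $u_m^\varphi-\alpha^{-1}u_m=-\alpha^{-1}\zeta_m$ (existence by Lemma~\ref{lem:100}), one obtains a genuine $f_m\in\FUN_m$ of Honda type $t-\beta$, hence $\GG_{m,n}\cong\hat E$ over $\OO_{m,-1}$. The extra $u_m\log(1+\zeta_m^{-1}X)$ and subtracted $\log$-terms vanish upon evaluation at $\pi_{m,n}$ because $\log(\zeta_{p^{n+1}}^{\sigma_m^{-1}})=0$, so the value $f_m^{\varphi^{-(n+1)}}(\pi_{m,n})$ is exactly $\sum_{j\ge 0}\beta^{-j}\pi_{m,n-j}$, with no correction needed.
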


\begin{proof}
The uniqueness follows from Assumption \ref{ass:04}.

To prove the existence, we wish to define $f_{m}(\X) \in \FUN_m$ by
\[
\sum_{j \geq 0} \beta^{-j} ((\X+\zeta_{m})^{p^j}-(\zeta_{m})^{p^j}).
\]
However, this infinite sum does not converge since the coefficient of $\X$ in each term, $p^j\beta^{-j} \zeta_m^{p^j-1} = \alpha^j \zeta_m^{p^j-1}$, is a $p$-adic unit.
To avoid this trouble, we modify this formula as follows.

By Lemma \ref{lem:100}, there exists a unique element $u_{m} \in \OO_{m, -1}^{\times}$ satisfying $u_{m}^{\varphi} - \alpha^{-1} u_{m} = -\alpha^{-1} \zeta_{m}$.
Put
\[
f_{m}(\X) = \sum_{j \geq 0} \beta^{-j} \left[ (\X+\zeta_{m})^{p^j}-(\zeta_{m})^{p^j} -p^j(\zeta_{m})^{p^j}\log(1+\zeta_{m}^{-1}\X)\right] + u_{m} \log(1+\zeta_{m}^{-1}\X) \in \FUN_m.
\]
This is well-defined since
\begin{align}
f_{m}'(\X) &= \sum_{j \geq 0} \beta^{-j} \left[ p^j(\X+\zeta_{m})^{p^j-1} -p^j(\zeta_{m})^{p^j}(\X+\zeta_{m})^{-1}\right] + u_{m} (\X+\zeta_{m})^{-1}\\
&= (\X+\zeta_{m})^{-1} \left[ \sum_{j \geq 0} \alpha^j \left[(\X+\zeta_{m})^{p^j}-(\zeta_{m})^{p^j}\right] +u_{m} \right]
\end{align}
certainly converges in $\OO_{{m},-1}[[\X]]$.
We also have $f_{m}'(0) = \zeta_{m}^{-1} u_{m} \in \OO_{{m},-1}^{\times}$.
We remark that a formal computation shows that the (divergent) formal sum $\sum_{j \geq 0} \alpha^j \zeta_{m}^{p^j}$ satisfies the defining equation of $u_m$.
This is the motivation to introduce $u_m$. 

\hidden{
In fact, a formal computation yields
\[
\left(\sum_{j \geq 0} \beta^{-j}p^j(\zeta_{m})^{p^j} \right)^{\varphi} - \alpha^{-1} \left(\sum_{j \geq 0} \beta^{-j}p^j(\zeta_{m})^{p^j} \right)
= \sum_{j \geq 0} \alpha^j(\zeta_{m})^{p^{j+1}} - \sum_{j \geq 0} \alpha^{j-1}(\zeta_{m})^{p^j} = - \alpha^{-1} \zeta_{m}.
\]
}

As in the supersingular case, we can compute
\[
(f_{m}^{\varphi^{-(n+1)}})^{\varphi}(\X^p) - \beta f_{m}^{\varphi^{-(n+1)}}(\X) \equiv 0 \mod \m_{m,-1}[[\X]]
\]
for each integer $n$.
This means that $f_{m}^{\varphi^{-(n+1)}}$ has Honda type $t-\beta$.\hidden{
\begin{proof}
We can assume $n = -1$.
Then the left hand side is congruent to
\begin{align}
&f_m^{\varphi}((\X+\zeta_m)^{p} - (\zeta_m)^{p}) -\beta f_m(\X)\\
&= \left\{\sum_{j \geq 0} \beta^{-j} \left[(\X+\zeta_m)^{p^{j+1}} - (\zeta_m)^{p^{j+1}} -p^j(\zeta_m)^{p^{j+1}}\log((1+\zeta_m^{-1}\X)^p) \right] + u_m^{\varphi}\log((1+\zeta_m^{-1}\X)^p)\right\}\\
& \quad -\beta \left\{\sum_{j \geq 0} \beta^{-j} \left[ (\X+\zeta_m)^{p^j}-(\zeta_m)^{p^j} -p^j(\zeta_m)^{p^j}\log(1+\zeta_m^{-1}\X)\right] + u_m \log(1+\zeta_m^{-1}\X)\right\}\\
&= -\beta (\X - \zeta_m\log(1+\zeta_m^{-1}\X)) + (pu_m^{\varphi}-\beta u_m)\log(1+\zeta_m^{-1}\X)\\
&= -\beta \X
\end{align}
where the last equality follows from the choice of $u_m$.
\end{proof}
}
By Honda theory, there is a unique formal group $\GG_{{m},n}$ over $\OO_{{m},-1}$ such that $\log_{\GG_{m,n}} =  f_{m}^{\varphi^{-(n+1)}}$.
Moreover, $\exp_{\hat{E}} \circ \log_{\GG_{{m},n}}: \GG_{{m},n} \to \hat{E}$ is an isomorphism over $\OO_{m,-1}$.
Let $\varepsilon_{{m},n} \in \GG_{{m},n}(\m_{m,-1})$ be the unique element such that $\log_{\GG_{m,n}}(\varepsilon_{m,n}) = \eta_{{m},n}$.
Then, as in the supersingular case,
\[
\ddd_{{m},n} = \exp_{\hat{E}} \circ \log_{\GG_{{m},n}}(\varepsilon_{{m},n} [+]_{\GG_{{m},n}} \pi_{{m},n}) \in \hat{E}(\m_{{m},n})
\]
satisfies the desired property.\hidden{
\begin{proof}
\[
\log_{\hat{E}}(\ddd_{m,n})= \log_{\GG_{m,n}}(\varepsilon_{m,n} [+]_{\GG_{m,n}} \pi_{m,n})
=\eta_{m,n} + f_m^{\varphi^{-(n+1)}}(\pi_{m,n}).
\]
Since 
\[
(\pi_{m,n}+\zeta_m^{\varphi^{-(n+1)}})^{p^j} - (\zeta_m^{\varphi^{-(n+1)}})^{p^j} = (\zeta_m^{\varphi^{-(n+1)}})^{p^j} ((\zeta_{p^{n+1}}^{\sigma_m^{-1}})^{p^j}-1) = \pi_{m,n-j}
\]
and
\[
\log(1+(\zeta_m^{\varphi^{-(n+1)}})^{-1}\pi_{m,n}) = \log(\zeta_{p^{n+1}}^{\sigma_m^{-1}}) = 0,
\]
we obtain
\[
f_m^{\varphi^{-(n+1)}}(\pi_{m,n}) = \sum_{j \geq 0} \beta^{-j}\pi_{m,n-j}.
\]
\end{proof}
}
This completes the proof of Proposition \ref{prop:44}.
\end{proof}

The rest of the argument in Subsection \ref{subsec:78} is valid without serious changes (we omit the detail).
Consequently, defining $d_{m,n}$ by the formula \eqref{eq:94}, we obtain the following variants of  Propositions \ref{prop:25}, \ref{prop:26}, \ref{prop:38}.

\begin{prop}\label{prop:46}
(1) For $n \geq 0$, we have
\[
\Tr^{m,n}_{m,n-1}(\dd_{m,n}) = 
	\begin{cases}
		\alpha \dd_{m,n-1} & (n \geq 1) \\
		(\alpha - \varphi^{-1}) \dd_{m,-1} & (n = 0).
	\end{cases}
\]

(2) Let $l$ be a prime divisor of $m$. 
For $n \geq -1$, we have 
\[
\Tr^{m,n}_{m/l,n} (\dd_{m,n}) = 
	\begin{cases}
		\dd_{m/l,n} & (l^2 \mid m) \\
		(-\sigma_l^{-1}) \dd_{m/l,n} & (l^2 \nmid m).
	\end{cases}
\]
\end{prop}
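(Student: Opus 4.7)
The plan is to mirror the proof of Proposition \ref{prop:25} in the supersingular case, relying on the auxiliary elements $\ddd_{m,n}$ built in Proposition \ref{prop:44} and the relation $\dd_{m,n} = \sum_{m' \in \AAA(m)} (m'/m)\ddd_{m',n}$ from \eqref{eq:94}. First I would establish the ordinary analog of Proposition \ref{prop:15} for the elements $\ddd_{m,n}$: (a) $\Tr^{m,n}_{m,n-1}(\ddd_{m,n}) = \alpha \ddd_{m,n-1}$ for $n \geq 1$ and $(\alpha - \varphi^{-1})\ddd_{m,-1}$ for $n = 0$; and (b) for a prime $l \mid m$, $\Tr^{m,n}_{m/l,n}(\ddd_{m,n})$ vanishes when $l^2 \mid m$ and equals $(-\sigma_l^{-1})\ddd_{m/l,n}$ when $l^2 \nmid m$. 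Since $\log_{\hat{E}}$ is injective by Assumption \ref{ass:04}, both identities can be verified after applying $\log_{\hat{E}}$ and substituting the explicit series from Proposition \ref{prop:44}.

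For (a), the trace of each $\pi_{m,n-j}$ is controlled by Lemma \ref{lem:14}(1), and the identity $\eta_{m,n}^\varphi = \eta_{m,n-1}$ follows directly from $\eta_{m,n} = \eta_{m,-1}^{\varphi^{-(n+1)}}$. The defining relation $\eta_{m,n}^\varphi - \beta\eta_{m,n} = -\beta\zeta_m^{\varphi^{-(n+1)}}$ combined with $p = \alpha\beta$ lets the series $\sum_{j \geq 0}\beta^{-j}\pi_{m,n-j}$ telescope precisely as in Proposition \ref{prop:15}(1). The $n = 0$ case needs a separate verification: after accounting for $[k_{m,0}:k_{m,-1}] = p-1$, the identity to check reduces, up to dividing by $\alpha$, to $\beta \eta_{m,0} - \eta_{m,-1} = \beta\zeta_m^{\varphi^{-1}}$, which is exactly the $\varphi^{-1}$-shift of the defining equation for $\eta_{m,-1}$.

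For (b), one needs an ordinary analog of Lemma \ref{lem:114}: $\Tr^{m,n}_{m/l,n}(\eta_{m,n})$ is $0$ if $l^2 \mid m$, and equals $(-\sigma_l^{-1})\eta_{m/l,n}$ if $l^2 \nmid m$. This is immediate from the uniqueness in Definition \ref{defn:99}, once we observe that the operator $\varphi - \beta$ is invertible on $\OO_{m/l,-1}$ by Nakayama's lemma, since $\beta \in p\Z_p$ and $\varphi$ acts invertibly modulo $p$. Together with Lemma \ref{lem:14}(2) applied termwise to $\sum_{j \geq 0} \beta^{-j}\pi_{m,n-j}$, this yields (b).

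With (a) and (b) in hand, both parts of Proposition \ref{prop:46} follow by the verbatim arguments of Propositions \ref{prop:25} and \ref{prop:26}: part (1) is immediate from (a), since summation over $m' \in \AAA(m)$ commutes with $\Tr^{m,n}_{m,n-1}$; part (2) is the same bookkeeping calculation splitting $\AAA(m)$ according to whether $m' \mid m/l$, performed in Proposition \ref{prop:25}(2). I expect the main obstacle to be the $n = 0$ identity in (a), where the factors of $p$, $\alpha$, and $\beta$ must be tracked carefully against the $\varphi^{-1}$-shifted defining relation; everything else is a routine transcription of the supersingular argument.
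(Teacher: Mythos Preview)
Your proposal is correct and follows exactly the approach the paper intends: the paper itself omits the detail, stating only that ``the rest of the argument in Subsection \ref{subsec:78} is valid without serious changes,'' and your outline fills in precisely those changes (the ordinary analogs of Proposition \ref{prop:15} and Lemma \ref{lem:114}, followed by the bookkeeping of Proposition \ref{prop:25}). One small slip: at the end you cite Proposition \ref{prop:26}, but that proposition concerns generators of $\hat{E}(\m_{m,n})$ and is irrelevant here; only Proposition \ref{prop:25} is needed for both parts of Proposition \ref{prop:46}.
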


\begin{prop}\label{prop:47}
For $n \geq -1$, $\hat{E}(\m_{m,n})$ is generated by $\dd_{m,n}$ over $R_{m,n}$.
\end{prop}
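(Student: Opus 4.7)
The plan is to carry out in the ordinary case the same two-step strategy used to prove Propositions \ref{prop:21} and \ref{prop:26} in the supersingular case, and then to exploit a new feature specific to the ordinary setting to collapse the would-be two generators $\dd_{m,n}, \dd_{m,n-1}$ into just $\dd_{m,n}$.

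I would first establish the ordinary analog of Proposition \ref{prop:21}: for $n \geq 0$, $\hat{E}(\m_{m,n})/\hat{E}(\m_{m,n-1})$ is generated by $\{\ddd_{m',n} \mid m' \in \AAA(m)\}$ over $R_{m,n}$, and $\hat{E}(\m_{m,-1})$ is generated by $\{\ddd_{m',-1} \mid m' \in \AAA(m)\}$ over $R_{m,-1}$. For $n = -1$, since $k_{m,-1}/\Q_p$ is unramified, $\log_{\hat{E}}$ induces an isomorphism $\hat{E}(\m_{m,-1}) \overset{\sim}{\to} \m_{m,-1}$ sending $\ddd_{m',-1}$ to $\eta_{m',-1}$; the defining equation $\eta_{m',-1}^{\varphi} - \beta \eta_{m',-1} = -\beta \zeta_{m'}$ together with $\beta \in p\Z_p^{\times}$ (as $\alpha\beta = p$ and $\alpha \in \Z_p^{\times}$) shows that $\{\eta_{m',-1}\}$ and $\{p\zeta_{m'}\}$ generate the same $R_{m,-1}$-submodule of $\m_{m,-1}$, whence Lemma \ref{lem:19} gives the claim. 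For $n \geq 0$, exactly the argument of Proposition \ref{prop:21} carries over: Lemma \ref{lem:20} applied to $\log_{\hat{E}} \in \FUN_m$ provides an injection
\[
\hat{E}(\m_{m,n})/\hat{E}(\m_{m,n-1}) \hookrightarrow (\m_{m,n}+k_{m,n-1})/k_{m,n-1} \cong \m_{m,n}/\m_{m,n-1},
\]
and the sum in Proposition \ref{prop:44} for $\log_{\hat{E}}(\ddd_{m',n})$ is actually finite (since $\pi_{m',n-j} = 0$ for $j \geq n+1$) with every term besides $\pi_{m',n}$ lying in $k_{m,n-1}$, so $\ddd_{m',n}$ maps to the class of $\pi_{m',n}$ and Lemma \ref{lem:19} finishes the step. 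The ordinary analog of Proposition \ref{prop:26} is then obtained by repeating its proof verbatim with Proposition \ref{prop:46}(2) in place of Proposition \ref{prop:25}(2); this yields
\[
\hat{E}(\m_{m,n}) = R_{m,n}\dd_{m,n} + \hat{E}(\m_{m,n-1}) \quad (n \geq 0), \qquad \hat{E}(\m_{m,-1}) = R_{m,-1}\dd_{m,-1}.
\]

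I would then conclude by induction on $n$, the base case $n = -1$ being already covered. For the induction step, Proposition \ref{prop:46}(1) gives
\[
\Tr^{m,n}_{m,n-1}(\dd_{m,n}) = \begin{cases} \alpha\dd_{m,n-1} & (n \geq 1), \\ (\alpha - \varphi^{-1})\dd_{m,-1} & (n = 0). \end{cases}
\]
Since $\alpha \in \Z_p^{\times}$, and since $\alpha - \varphi^{-1} = \alpha\varphi^{-1}(\varphi - \alpha^{-1})$ is a unit in $R_{m,-1}$ by Lemma \ref{lem:100}, in both cases $\dd_{m,n-1} \in R_{m,n}\dd_{m,n}$. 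Combined with the inductive hypothesis $\hat{E}(\m_{m,n-1}) = R_{m,n-1}\dd_{m,n-1}$, this forces $\hat{E}(\m_{m,n-1}) \subset R_{m,n}\dd_{m,n}$, and the previous display upgrades to $\hat{E}(\m_{m,n}) = R_{m,n}\dd_{m,n}$, as required.

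The main subtlety lies in the $n = -1$ case of the first step: the comparison between the $\eta_{m',-1}$ and the $p\zeta_{m'}$ must be redone, because now the relevant operator is the linear $\varphi - \beta$ (inverted thanks to $p \mid \beta$) rather than the quadratic operator of Definition \ref{defn:08}, and the upgrade from $\beta\zeta_{m'}$ to $p\zeta_{m'}$ genuinely needs $\beta/p \in \Z_p^{\times}$, an input that is vacuous in the supersingular setting. Once this ingredient is in place, the analog of Proposition \ref{prop:26} carries over without change, and the final collapse of two generators to one is an immediate consequence of Lemma \ref{lem:100}.
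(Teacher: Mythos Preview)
Your proposal is correct and follows essentially the same route as the paper: the paper simply remarks that the argument of Proposition~\ref{prop:26} (which in turn rests on the analog of Proposition~\ref{prop:21}) carries over without change to give $\hat{E}(\m_{m,n}) = \hat{E}(\m_{m,n-1}) + (\dd_{m,n})_{R_{m,n}}$ and $\hat{E}(\m_{m,-1}) = (\dd_{m,-1})_{R_{m,-1}}$, and then uses Proposition~\ref{prop:46}(1) together with Lemma~\ref{lem:100} to get $\dd_{m,n-1} \in (\dd_{m,n})_{R_{m,n}}$, exactly as you do. Your write-up supplies the details the paper omits, in particular the verification at $n=-1$ that $(\eta_{m',-1})_{R_{m,-1}} = (p\zeta_{m'})_{R_{m,-1}}$ via the unit $\varphi - \beta$ and the observation $\beta/p \in \Z_p^{\times}$.
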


\begin{proof}
The same proof as in Proposition \ref{prop:26} shows $\hat{E}(\m_{m,n}) = \hat{E}(\m_{m,n-1}) + (\dd_{m,n})_{R_{m,n}}$ for $n \geq 0$ and $\hat{E}(\m_{m,-1}) = (\dd_{m,-1})_{R_{m,-1}}$.
For $n \geq 0$, we have $d_{m, n-1} \in (d_{m,n})_{R_{m,n}}$ by Proposition \ref{prop:46}(1) and Lemma \ref{lem:100}.
Therefore
\[
\hat{E}(\m_{m,n}) = (\dd_{m,n}, \dd_{m,n-1}, \dots, \dd_{m,-1})_{R_{m,n}} 
= (\dd_{m,n})_{R_{m,n}}.
\]
This completes the proof.
\end{proof}

\begin{prop}\label{prop:48}
For a character $\psi$ of $G_{m,\infty}$ of finite order, we have
\[
\sum_{\sigma \in G_{m,n_{\psi}}} \sigma(\log_{\hat{E}}(\dd_{m,n_{\psi}}) \psi(\sigma)=
 	\begin{cases}
		\tau_{\prim(m)}(\psi) & (n_{\psi} \geq 0)\\
		(1-\beta^{-1}\psi(p)^{-1})^{-1} \tau_{\prim(m)}(\psi) & (n_{\psi} = -1)
	\end{cases}
\]
\end{prop}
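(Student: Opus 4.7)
The proof will closely parallel that of Proposition \ref{prop:38}, using Proposition \ref{prop:44} (the explicit formula for $\log_{\hat{E}}(\ddd_{m,n})$) and the trace compatibility in Proposition \ref{prop:46}(2). First, I would choose $m' \in \AAA(m)$ to be the minimum element such that $m_\psi \mid m'$, exactly as in the supersingular case. By Proposition \ref{prop:46}(2) together with the definition of $\dd_{m,n}$ in terms of the $\ddd_{m'',n}$ via formula \eqref{eq:94}, all terms with $m'' \neq m'$ telescope out under the weighted sum, leaving
\[
\sum_{\sigma \in G_{m,n_\psi}} \sigma(\log_{\hat{E}}(\dd_{m,n_\psi})) \psi(\sigma) = \sum_{\sigma \in G_{m',n_\psi}} \sigma(\log_{\hat{E}}(\ddd_{m',n_\psi})) \psi(\sigma).
\]
The transition from $\tau(\psi)$ to $\tau_{\prim(m)}(\psi)$ is handled by the factor $\prod_{l\in\prim(m)\setminus\prim(m_\psi)}(-\sigma_l^{-1})$, exactly as in the supersingular case (cf. \eqref{eq:91}).

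Next, for $n_\psi \geq 0$, I would plug in the formula from Proposition \ref{prop:44},
\[
\log_{\hat{E}}(\ddd_{m',n_\psi}) = \eta_{m',n_\psi} + \sum_{j \geq 0} \beta^{-j}\pi_{m',n_\psi - j},
\]
and note that $\eta_{m',n_\psi} \in \m_{m',-1}$ and each $\pi_{m',n_\psi-j}$ for $j \geq 1$ lies in $k_{m',n_\psi-1}$ (with $\pi_{m',n} = 0$ for $n \leq -1$); these contributions are annihilated by the character sum because $\psi$ is nontrivial on $\Gal(K_{m',n_\psi}/K_{m',n_\psi-1})$. Only the $j = 0$ term survives, and using the identity $\zeta_{m'}^{\varphi^{-(n_\psi+1)}} \zeta_{p^{n_\psi+1}}^{\sigma_{m'}^{-1}} = \zeta_{m'p^{n_\psi+1}}$, I rewrite $\pi_{m',n_\psi} = \zeta_{m'p^{n_\psi+1}} - \zeta_{m'}^{\varphi^{-(n_\psi+1)}}$; the second term again vanishes under the character sum, leaving $\tau_{\prim(m)}(\psi)$.

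For $n_\psi = -1$, all $\pi_{m',n_\psi-j}$ vanish, so $\log_{\hat{E}}(\ddd_{m',-1}) = \eta_{m',-1}$. I would apply the functional $S(x) := \sum_{\sigma \in G_{m',-1}} \sigma(x) \psi(\sigma)$ to the defining equation $\eta_{m',-1}^{\varphi} - \beta\eta_{m',-1} = -\beta\zeta_{m'}$ from Definition \ref{defn:99}. Since $\varphi$ acts as the Frobenius in $G_{m',-1}$, a change-of-variables computation gives $S(\eta^\varphi) = \psi(p)^{-1} S(\eta)$, so
\[
(\psi(p)^{-1} - \beta) S(\eta_{m',-1}) = -\beta\,\tau_{\prim(m)}(\psi),
\]
yielding $S(\eta_{m',-1}) = (1 - \beta^{-1}\psi(p)^{-1})^{-1}\tau_{\prim(m)}(\psi)$, as claimed.

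There is no real obstacle, as the argument is essentially a transcription of the proof of Proposition \ref{prop:38}: the only notable change is that the first-order relation $\eta_{m',-1}^{\varphi} - \beta\eta_{m',-1} = -\beta\zeta_{m'}$ produces the simpler Euler factor $(1-\beta^{-1}\psi(p)^{-1})^{-1}$ in place of the quadratic factor $(1-p^{-1}a_p\psi(p)^{-1}+p^{-1}\psi(p)^{-2})^{-1}$ of the supersingular case. The only point requiring even mild care is verifying that $\psi(p)^{-1} - \beta$ is nonzero (so the final division is legitimate), which follows from $\ord_p(\beta) \geq 1$ together with $\psi(p)$ being a root of unity (or zero, in which case the factor $1 - \beta^{-1}\psi(p)^{-1}$ is simply $1$, and the argument reduces to $S(\eta_{m',-1}) = \tau_{\prim(m)}(\psi)$).
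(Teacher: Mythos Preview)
Your proposal is correct and follows essentially the same route as the paper: reduce from $d_{m,n_\psi}$ to $\ddd_{m',n_\psi}$ via Proposition~\ref{prop:46}(2) and \eqref{eq:94}, then evaluate the character sum using Proposition~\ref{prop:44} for $n_\psi\ge 0$ and the defining relation $\eta_{m',-1}^{\varphi}-\beta\eta_{m',-1}=-\beta\zeta_{m'}$ for $n_\psi=-1$. Two minor remarks: first, with the choice of $m'$ as in Definition~\ref{defn:119} the Gauss sum $\tau_{\prim(m)}(\psi)$ appears directly by definition (you do not need to pass through \eqref{eq:91}); second, in the case $n_\psi=-1$ the value $\psi(p)$ is automatically a nonzero root of unity (since $p$ is unramified for $\psi$), so the parenthetical about $\psi(p)=0$ is unnecessary.
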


\hidden{
\begin{proof}
Take $m'$ as in Definition \ref{defn:119}.
By Proposition \ref{prop:46}(2), we have
\[
\sum_{\sigma \in G_{m,n_{\psi}}} \sigma(\log_{\hat{E}}(\dd_{m,n_{\psi}})) \psi(\sigma)
= \sum_{\sigma \in G_{m',n_{\psi}}} \sigma(\log_{\hat{E}}(\dd_{m',n_{\psi}})) \psi(\sigma)
= \sum_{\sigma \in G_{m',n_{\psi}}} \sigma(\log_{\hat{E}}(\ddd_{m',n_{\psi}})) \psi(\sigma).
\]

If $n_{\psi} \geq 0$, then
\[
\sum_{\sigma \in G_{m',n_{\psi}}} \sigma(\log_{\hat{E}}(\ddd_{m',n_{\psi}})) \psi(\sigma)
 = \sum_{\sigma \in G_{m',n_{\psi}}} \sigma(\pi_{m',n_{\psi}}) \psi(\sigma)
 = \sum_{\sigma \in G_{m',n_{\psi}}} \sigma(\zeta_{m'p^{n_{\psi}+1}}) \psi(\sigma)
 =\tau_{\prim(m)}(\psi).
\]
If $n_{\psi} = -1$, since the definition of $\eta_{m', -1}$ implies
\[
(\psi(p)^{-1}-\beta) \sum_{\sigma \in G_{m',-1}} \sigma(\eta_{m',-1}) \psi(\sigma) = -\beta \tau_{\prim(m)}(\psi),
\]
we have
\[
\sum_{\sigma \in G_{m',-1}} \sigma(\log_{\hat{E}}(\ddd_{m',-1})) \psi(\sigma)
 = \sum_{\sigma \in G_{m',-1}} \sigma(\eta_{m',-1}) \psi(\sigma)
 = (1-\beta^{-1}\psi(p)^{-1})^{-1} \tau_{\prim(m)}(\psi).
\]
This completes the proof.
\end{proof}
}

Now Propositions \ref{prop:46}(1), \ref{prop:47}, and \ref{prop:48} complete the proof of Theorem \ref{thm:121}, by Lemma \ref{lem:946}.

\section{Construction of Coleman Maps}\label{sec:32}

In this section, we construct Coleman maps and prove Theorem \ref{thm:93}.
The construction uses the systems of elements in Theorems \ref{thm:116} and \ref{thm:121}, similarly as in \cite[\S 8.5]{Kob03} and \cite[\S 5]{Spr12}, which treat $K = \Q$, $S = \emptyset$, and $p \mid a_p$.
However, our discussion will be apparently different from \cite{Kob03} and \cite{Spr12}.
That is because, while those works define the Coleman maps as maps from $H^1(k_n, T_pE)$, we will firstly construct the Coleman maps as maps from $\hat{E}(\m_n)^*$ (later in Definition \ref{defn:854} we will compose them with the natural map $H^1(k_n, T_pE) \to \hat{E}(\m_n)^*$).
Our treatment will enable us to determine the precise structure of $\hat{E}(\m_n)^*$, and consequently to prove Theorem \ref{thm:93}.
Note also that, as mentioned in Remark \ref{rem:967}, we only need the properties (1) and (2) of Theorems \ref{thm:116} and \ref{thm:121} in this section.

\begin{rem}\label{rem:691}
We mention here that our results will reprove and refine several previous works (especially in the supersingular case).

\begin{enumerate}
\item For example, Theorem \ref{thm:93}(3) in the case where $K = \Q$ is shown in \cite[Theorem 6.2]{Kob03}.
In fact, in that case, the assertion of Theorem \ref{thm:93}(3) is equivalent to that the $\pm$-Coleman map is an isomorphism
\[
(E^{\pm}(K_{\infty} \otimes \Q_p) \otimes (\Q_p/\Z_p))^{\dual} \overset{\sim}{\to} \au^{\pm}.
\]
Here $\au^{\pm}$ is the ideal of $\RR$ defined in Section \ref{sec:01}.
However, the proof of \cite{Kob03} cannot be directly extended to general $K$.
That is because, as in Remark \ref{rem:879}, the $(+)$-Coleman map does not give an isomorphism in general.

\item Similarly, Theorem \ref{thm:93}(2) in the case where $K = \Q$ is shown in \cite[Propositions 7.3 and 7.6]{Spr12}, but the proof cannot be directly generalized.

\item Another work is \cite[Theorem 1.8]{KO18}, which determines the abstract structure of $(E^{\pm}(k_{\infty}) \otimes (\Q_p/\Z_p))^{\dual}$ as a $\Lambda[\Delta]$-module (not as an $\RR$-module).
Our Theorem \ref{thm:93}(3) refines that result since we give the $\RR$-module structure and moreover give an explicit exact sequence which determines the module structure (the proof in \cite{KO18} relies on the structure theorem of $\Lambda$-modules).
We note here that several other results in \cite{KO18} will be similarly refined in this paper, for example, Lemma \ref{lem:962}(2) below refines \cite[Proposition 1.6]{KO18} in our case.
\end{enumerate}
\end{rem}

We prepare some general notations.
For a finite abelian group $G$, we have a natural isomorphism 
\begin{equation}\label{eq:96}
\Z_p[G]^* \simeq \Z_p[G]
\end{equation}
 as left $\Z_p[G]$-modules, given by $f \mapsto \sum_{g \in G} f(g)g$.

For an $r \times r$ matrix $A \in M_r(\Z_p[G])$, we denote by $\times A: \Z_p[G]^{\oplus r} \to \Z_p[G]^{\oplus r}$ the homomorphism given by the right multiplication by $A$.
Here and henceforth, we always treat vectors as row vectors.
We denote the kernel and the cokernel of the map $\times A$ by $(\Z_p[G]^{\oplus r})[A]$ and $(\Z_p[G]^{\oplus r})/A$, respectively.
Recall that $\iota$ denotes the involution of a group ring.

\begin{lem}\label{lem:693}
Let $G$ be a finite abelian group.

(1) For $a \in \Z_p[G]$ and $f \in \Z_p[G]^*$, we have
\[
\sum_{g \in G} f(ga)g = \left(\sum_{g \in G} f(g)g\right) a^{\iota}.
\]

(2) For $A \in M_r(\Z_p[G])$, the following diagram is commutative.
\[
\xymatrix{
	(\Z_p[G]^{\oplus r})^{*} \ar[r]^{(\times A)^*} \ar[d]_{\vsim}&
	(\Z_p[G]^{\oplus r})^* \ar[d]^{\vsim} \\
	\Z_p[G]^{\oplus r} \ar[r]_{\times (A^{\iota})^T} &
	\Z_p[G]^{\oplus r}
}
\]
Here, the vertical isomorphisms are obtained by \eqref{eq:96} and the superscript $T$ denotes the transpose.
\end{lem}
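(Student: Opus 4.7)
The plan is to reduce both parts to a direct computation with the natural isomorphism \eqref{eq:96}, treating carefully the distinction between left and right actions on $\Z_p[G]$ (which is what makes the involution $\iota$ appear).

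For part (1), I would write $a = \sum_{h \in G} a_h h$ with $a_h \in \Z_p$. Then expand
\[
\sum_{g \in G} f(ga)\, g = \sum_{g,h} a_h f(gh)\, g,
\]
and perform the change of variables $g' = gh$ (so $g = g'h^{-1}$) to obtain
\[
\sum_{g',h} a_h f(g')\, g'h^{-1} = \Bigl(\sum_{g' \in G} f(g')\, g'\Bigr)\Bigl(\sum_{h \in G} a_h h^{-1}\Bigr),
\]
and the right factor is precisely $a^{\iota}$. This is the whole proof of (1); there is no real obstacle.

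For part (2), I would first note that the isomorphism \eqref{eq:96} extends componentwise to an identification $(\Z_p[G]^{\oplus r})^{*} \simeq \Z_p[G]^{\oplus r}$, where an element $f = (f_1,\dots,f_r)$ corresponds to the tuple whose $i$-th entry is $y_i := \sum_{g} f_i(g)\, g$. Write $A = (a_{ij})$. For a row vector $x = (x_1,\dots,x_r)$ and the standard pairing $f(x) = \sum_i f_i(x_i)$, one computes
\[
\bigl((\times A)^{*} f\bigr)(x) = f(xA) = \sum_{i,j} f_j(x_i a_{ij}),
\]
so the $i$-th component of $(\times A)^{*} f$ is the functional $y \mapsto \sum_j f_j(y a_{ij})$. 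Applying part (1) to each $f_j$ and each scalar $a_{ij}$, the image of this component under \eqref{eq:96} equals
\[
\sum_j \sum_g f_j(g a_{ij})\, g = \sum_j y_j\, a_{ij}^{\iota}.
\]
This is exactly the $i$-th entry of the row vector $y \cdot B$ with $B_{ji} = a_{ij}^{\iota}$, i.e.\ of $y \cdot (A^{\iota})^{T}$, as required.

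The only real care needed is bookkeeping: making sure the left $\Z_p[G]$-action on $\Z_p[G]^{*}$ (defined via $g^{-1}$) matches the action on $\Z_p[G]$ under \eqref{eq:96}, and tracking which index becomes the transpose index when passing from right multiplication by $A$ to its dual. These are mechanical but easy to get off by one — that is the main place to be vigilant. Nothing deeper is involved.
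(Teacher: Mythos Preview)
Your proof is correct and follows essentially the same approach as the paper's own proof. The paper's argument is more terse---for (1) it just checks the case $a = h \in G$ and invokes $\Z_p$-linearity, and for (2) it observes that the $r=1$ case is exactly (1) and that the general case follows---but your explicit change-of-variables computation and componentwise bookkeeping unwind precisely the same reasoning.
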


\begin{proof}
(1) When $a = h$ for some element $h \in G$, the assertion is easy.
Then by $\Z_p$-linearity on $a$, the whole assertion follows.

(2) The assertion for $r = 1$ is nothing but the assertion (1), and the general case also follows from (1).
\end{proof}

Let $X$ be a $\Z_p[G]$-module and $x_1, \dots, x_r$ elements of $X$.
We denote by $\Phi_{x_1, \dots, x_r}: \Z_p[G]^{\oplus r} \to X$ the $\Z_p[G]$-homomorphism given by $\Phi_{x_1, \dots, x_r} \left((a_i)_i \right) = \sum_{i=1}^r a_ix_i$.
We also denote by $\Psi_{x_1, \dots, x_r}: X^* \to \Z_p[G]^{\oplus r}$ the $\Z_p[G]$-homomorphism given by $\Psi_{x_1, \dots, x_r}(f) = \left(\sum_{g \in G} f(gx_i)g \right)_i$.
Then $\Psi_{x_1, \dots, x_r}$ is the $\Z_p$-linear dual of $\Phi_{x_1, \dots, x_r}$ under the identification \eqref{eq:96}.

\subsection{Ordinary Case}\label{subsec:49}

In this subsection, we prove Theorem \ref{thm:93}(1).
Suppose $p \nmid a_p$ and Assumption \ref{ass:04} hold.

\begin{defn}
Using $d_n$ in Theorem \ref{thm:121}, put $\dd_{n}' = \alpha^{-(n+1)}\dd_{n}$ for $n \geq -1$.
Define the Coleman map $\Cole_{n}: \hat{E}(\m_{n})^* \to R_{n}$ by 
\begin{equation}\label{eq:120}
\Cole_{n}(f) = \sum_{\sigma\in G_{n}} f(\sigma(\dd_{n}'))\sigma.
\end{equation}
Thus we have $\Cole_n = \Psi_{d_n'}$.
\end{defn}

\begin{lem}\label{lem:947}
The diagram
\[
\xymatrix{
	\hat{E}(\m_n)^* \ar[r]^-{\Cole_n} \ar@{->>}[d] &
	R_n \ar@{->>}[d] \\
	\hat{E}(\m_{n-1})^* \ar[r]^-{\Cole_{n-1}} &
	R_{n-1}\\
}
\]
is commutative for $n \geq 1$, where the vertical arrows are the natural maps.
\end{lem}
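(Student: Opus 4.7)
The plan is to identify the two vertical ``natural'' maps explicitly and then carry out a direct computation of $\pi \circ \Cole_n$ versus $\Cole_{n-1} \circ \rho$ on an arbitrary $f \in \hat{E}(\m_n)^*$. The right vertical map is the canonical projection $\pi \colon R_n \twoheadrightarrow R_{n-1}$ induced by $G_n \twoheadrightarrow G_{n-1}$. The left vertical map is the restriction $\rho \colon f \mapsto f|_{\hat{E}(\m_{n-1})}$, which is well-defined and makes sense as a $\Z_p$-linear map compatible with the group ring actions through $\pi$: indeed, for $g \in G_n$ and $x \in \hat{E}(\m_{n-1})$ the action $g \cdot x$ depends only on the image $\bar g \in G_{n-1}$.

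Next I would expand $\pi(\Cole_n(f))$ by the defining formula \eqref{eq:120} and group the sum over $G_n$ according to the fibers of $G_n \twoheadrightarrow G_{n-1}$. Fixing for each $\tau \in G_{n-1}$ a lift $\tilde\tau \in G_n$, the fiber over $\tau$ is the coset $\tilde\tau \cdot \Gal(K_n/K_{n-1})$, and collecting terms using the $\Z_p$-linearity of $f$ yields
\[
\pi(\Cole_n(f)) \;=\; \sum_{\tau \in G_{n-1}} f\bigl(\tilde\tau \cdot \Tr^n_{n-1}(d_n')\bigr) \,\tau.
\]
The key input is then the trace compatibility of the system $\{d_n\}$ from Theorem \ref{thm:121}(1): for $n \geq 1$, $\Tr^n_{n-1}(d_n) = \alpha d_{n-1}$, which after rescaling by $\alpha^{-(n+1)}$ gives precisely $\Tr^n_{n-1}(d_n') = d_{n-1}'$. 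This is exactly the reason the factors $\alpha^{-(n+1)}$ were inserted in the definition of $d_n'$.

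Substituting this back and using that $d_{n-1}' \in \hat{E}(\m_{n-1})$, so $\tilde\tau \cdot d_{n-1}' = \tau \cdot d_{n-1}'$, one obtains
\[
f\bigl(\tilde\tau \cdot d_{n-1}'\bigr) \;=\; \bigl(f|_{\hat{E}(\m_{n-1})}\bigr)(\tau \cdot d_{n-1}') \;=\; \rho(f)(\tau \cdot d_{n-1}'),
\]
so that $\pi(\Cole_n(f)) = \sum_{\tau \in G_{n-1}} \rho(f)(\tau \cdot d_{n-1}')\,\tau = \Cole_{n-1}(\rho(f))$, as required.

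There is no serious obstacle here: the argument is essentially a rewriting that exploits (i) the compatibility of $\{d_n'\}$ under trace and (ii) the fact that for any $x \in \hat{E}(\m_{n-1})$, the element $\sigma \cdot x$ depends only on the image of $\sigma$ in $G_{n-1}$. The only point requiring attention is the restriction $n \geq 1$, which is forced precisely because the analogous trace relation at level $n = 0$ carries the correction term $(\alpha - \varphi^{-1})$ from Theorem \ref{thm:121}(1), breaking the clean identity $\Tr^n_{n-1}(d_n') = d_{n-1}'$.
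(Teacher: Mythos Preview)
Your proof is correct and follows essentially the same approach as the paper: both reduce the projection of $\Cole_n(f)$ to $R_{n-1}$ by grouping the sum over the fibers of $G_n \twoheadrightarrow G_{n-1}$ and invoking the trace compatibility $\Tr^n_{n-1}(d_n') = d_{n-1}'$ from Theorem~\ref{thm:121}(1). Your additional remark explaining why the restriction $n \geq 1$ is necessary (the correction term at $n=0$) is a nice touch not present in the paper's write-up.
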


\begin{proof}
Theorem \ref{thm:121}(1) implies $\Tr^n_{n-1}(d_n') = d_{n-1}'$ for $n \geq 1$.
For $\sigma \in G_n$, we denote the projection of $\sigma$ by $\overline{\sigma} \in G_{n-1}$.
Then for $f \in \hat{E}(\m_n)^*$, by \eqref{eq:120}, the projection of $\Cole_n(f) \in R_n$ to $R_{n-1}$ is
\[
\sum_{\sigma \in G_{n}} f(\sigma(\dd_{n}')) \overline{\sigma} 
= \sum_{\tau \in G_{n-1}} \sum_{\sigma \in G_{n}, \overline{\sigma} = \tau} f(\sigma(\dd_{n}')) \tau 
= \sum_{\tau \in G_{n-1}} f( \Tr^n_{n-1}(\dd_{n}')) \tau 
= \sum_{\tau \in G_{n-1}} f( \dd_{n-1}') \tau 
= \Cole_{n-1}(f).
\]
This completes the proof.

\end{proof}

\begin{defn}\label{defn:877}
Define $\Cole: \varprojlim_n \hat{E}(\m_{n})^* \to \RR$ as the limit of $\Cole_n$, which is possible by Lemma \ref{lem:947}.
\end{defn}

\begin{lem}
For $n \geq -1$, the map $\Cole_n: \hat{E}(\m_n)^* \to R_n$ is an isomorphism.
Therefore $\Cole: \varprojlim_n \hat{E}(\m_{n})^* \to \RR$ is also an isomorphism.
\end{lem}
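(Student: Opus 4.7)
The plan is to show $\Cole_n$ is an isomorphism at each finite level and then pass to the inverse limit.

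First, I would observe that by Theorem \ref{thm:121}(2), $\hat{E}(\m_n)$ is generated by $d_n$ as an $R_n$-module, and since $\alpha \in \Z_p^{\times}$ by Hensel's lemma (noting $p \nmid a_p$), the same is true of $d_n' = \alpha^{-(n+1)} d_n$. Consequently the map $\Phi_{d_n'}: R_n \to \hat{E}(\m_n)$ sending $a \mapsto a \cdot d_n'$ is surjective.

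Next I would establish that $\Phi_{d_n'}$ is in fact an isomorphism by a rank count over $\Z_p$. On the one hand, $R_n = \Z_p[G_n]$ is $\Z_p$-free of rank $[K_n:\Q]$. On the other hand, under Assumption \ref{ass:04}, Proposition \ref{prop:895} (or Lemma \ref{lem:58}) gives that $\hat{E}(\m_n)$ is $p$-torsion free; combined with the logarithm isomorphism $\log_{\hat{E}}: \hat{E}(\m_n) \otimes \Q_p \overset{\sim}{\to} k_n$, this means $\hat{E}(\m_n)$ is $\Z_p$-free of rank $[k_n:\Q_p] = [K_n:\Q]$. A surjection between free $\Z_p$-modules of the same finite rank is forced to be an isomorphism.

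Finally, I would recall from the paragraph preceding this lemma that $\Cole_n = \Psi_{d_n'}$, which is the $\Z_p$-linear dual of $\Phi_{d_n'}$ under the identification \eqref{eq:96} (this is essentially the content of Lemma \ref{lem:693}(2) in the rank-one case). Since dualizing preserves isomorphisms of finitely generated free $\Z_p$-modules, $\Cole_n$ is an isomorphism. For the global statement, taking $\varprojlim_n$ of the compatible system (compatibility is Lemma \ref{lem:947}) yields the isomorphism $\Cole: \varprojlim_n \hat{E}(\m_n)^* \overset{\sim}{\to} \varprojlim_n R_n = \RR$. I do not anticipate any real obstacle; the only subtle input is verifying the $\Z_p$-freeness of $\hat{E}(\m_n)$, which is exactly where Assumption \ref{ass:04} enters.
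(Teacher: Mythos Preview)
Your proposal is correct and follows the same approach as the paper: establish that $\Phi_{d_n'}$ is an isomorphism (surjectivity from Theorem \ref{thm:121}(2), injectivity by the $\Z_p$-rank count you give) and then dualize. The paper's proof is simply a terse version of what you wrote, asserting the isomorphism $\Phi_{d_n'}$ directly without spelling out the rank comparison.
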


\begin{proof}
By Theorem \ref{thm:121}(2), $\Phi_{d_n'}: R_n \to \hat{E}(\m_n)$ is an isomorphism.
Thus the $\Z_p$-linear dual $\Cole_n = \Psi_{d_n'}$ is also an isomorphism. 
\end{proof}

Using the natural identification in Proposition \ref{prop:895}, we thus obtain Theorem \ref{thm:93}(1).
We restate it here.

\begin{thm}\label{thm:973}
Suppose $p \nmid a_p$ and Assumption \ref{ass:04} hold.
Then $\Cole$ gives an isomorphism 
\[
(E(k_{\infty}) \otimes (\Q_p/\Z_p))^{\dual} \overset{\sim}{\to} \RR.
\]
\end{thm}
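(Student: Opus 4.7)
The plan is to read the theorem off from the work already done in the subsection. Everything is in place: the preceding lemma asserts that $\Cole_n\colon \hat{E}(\m_n)^* \to R_n$ is an isomorphism for each $n$, and by Lemma \ref{lem:947} these are compatible with the natural transition maps on both sides, hence pass to the inverse limit to give an isomorphism $\Cole\colon \varprojlim_n \hat{E}(\m_n)^* \xrightarrow{\sim} \RR$. The only remaining task is to identify the source with $(E(k_\infty)\otimes(\Q_p/\Z_p))^{\dual}$, which is exactly what Proposition \ref{prop:895} provides (this is where Assumption \ref{ass:04} is used, to guarantee that the exact sequence \eqref{eq:87} identifies $\hat{E}(\m_n)$ with the $p$-part of $E(k_n)$).

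So concretely I would proceed as follows. First I would recall that $\Phi_{d_n'}\colon R_n \to \hat{E}(\m_n)$ is surjective by Theorem \ref{thm:121}(2), since $d_n' = \alpha^{-(n+1)} d_n$ differs from $d_n$ by the unit $\alpha^{-(n+1)} \in \Z_p^\times$; a rank count (both modules are $\Z_p$-free of rank $[K_n:\Q] = [k_n:\Q_p]$) upgrades surjectivity to an isomorphism. Taking $\Z_p$-linear duals and using the identification $R_n^* \simeq R_n$ from \eqref{eq:96}, we conclude that $\Cole_n = \Psi_{d_n'}$ is an isomorphism. Lemma \ref{lem:947} then gives compatibility in $n$, so $\Cole = \varprojlim_n \Cole_n$ is an isomorphism onto $\RR = \varprojlim_n R_n$.

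Finally, I would invoke Proposition \ref{prop:895}, whose second displayed isomorphism provides
\[
\varprojlim_n \hat{E}(\m_n)^* \;\simeq\; (E(k_\infty)\otimes (\Q_p/\Z_p))^{\dual},
\]
and compose this identification with $\Cole$ to obtain the theorem.

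There is no real obstacle here, since all the substantive work has been done earlier: Theorem \ref{thm:121} supplied the generator $d_n$ with its norm-compatibility, Lemma \ref{lem:100} made $\alpha$ (and hence the rescaling $d_n \mapsto d_n'$) invertible, and Proposition \ref{prop:895} handled the Kummer-theoretic translation. If there were a subtle point to stress, it would be the role of Assumption \ref{ass:04}, which simultaneously underwrites the uniqueness of $d_n$ (via injectivity of $\log_{\hat{E}}$) and the identification of $\hat{E}(\m_n)^*$ with $(E(k_n)\otimes(\Q_p/\Z_p))^{\dual}$; without it the ordinary case fails at both ends.
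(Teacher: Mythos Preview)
Your proposal is correct and follows essentially the same approach as the paper: the paper's own proof simply cites the preceding lemma (that each $\Cole_n$ is an isomorphism, since $\Phi_{d_n'}$ is) and then invokes Proposition~\ref{prop:895} for the identification $\varprojlim_n \hat{E}(\m_n)^* \simeq (E(k_\infty)\otimes(\Q_p/\Z_p))^{\dual}$. Your explicit rank count to upgrade surjectivity of $\Phi_{d_n'}$ to an isomorphism is a helpful addition, as the paper leaves that step implicit.
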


\subsection{Supersingular Case}\label{subsec:221}

In this subsection, we suppose $p \mid a_p$ holds and prove Theorem \ref{thm:93}(2).
Letting $K = \Q$ and $S = \emptyset$ in our discussion will recover the results by Sprung \cite{Spr12}.
The particular case where $a_p = 0$, which will similarly recover the results by Kobayashi \cite{Kob03}, will be studied more closely in the next subsection.
The basic idea in this subsection is the same as in the ordinary case, but there are more complications.
Note also that, as remarked in the beginning of this section, our discussion appears different from \cite{Kob03} and \cite{Spr12}, because we stick to $\hat{E}(\m_n)^*$ instead of $H^1(k_n, T_pE)$.

Recall that we fixed a topological generator $\gamma \in \Gamma$, by which $\Lambda = \Z_p[[\Gamma]] \simeq \Z_p[[\X]]$.
For $n \geq 1$, put 
\[
N_{n} = 1+\gamma^{p^{n-1}} + \gamma^{2p^{n-1}} + \dots + \gamma^{(p-1)p^{n-1}} = \frac{\gamma^{p^{n}}-1}{\gamma^{p^{n-1}}-1} \in \Lambda,
\]
which should be regarded as a lift of the norm element of $\Z_p[\Gamma^{p^{n-1}}/\Gamma^{p^{n}}]$.
Then $N_n$ is identified with the cyclotomic polynomial $\Phi_n(1+\X)$ defined in \eqref{eq:100}.

\begin{defn}\label{defn:688}
Put $A_n = \begin{pmatrix} 0 & -N_n \\ 1 & a_p \end{pmatrix} \in M_2(\Lambda)$ as in \cite[Proposition 3.3]{Spr12}.
In general, we denote by $\ttilde{A}$ the adjugate matrix of a square matrix $A$, so that $\ttilde{A_n} = \begin{pmatrix} a_p & N_n \\ -1 & 0 \end{pmatrix}$.
Put $B_n = A_n \dots A_1 \in M_2(\Lambda)$.
\end{defn}

We have
\begin{equation}\label{eq:151}
(\gamma-1)B_n\ttilde{B_n} = (\gamma-1)N_1 \dots N_n = \gamma^{p^n}-1.
\end{equation}

Recall that we always treat vectors as row vectors and that $R_n^{\oplus 2}[(\gamma-1)B_n]$ and $R_n^{\oplus 2}/(\gamma-1)B_n$ denote the kernel and the cokernel of $\times (\gamma -1)B_n: R_n^{\oplus 2} \to R_n^{\oplus 2}$.

\begin{lem}\label{lem:53}
(1) The module $R_{n}^{\oplus 2} / (\gamma-1)B_n$ is a free $\Z_p$-module of rank $[K_0: \Q](p^n+1)$.

(2) The map $\times \ttilde{B_n}$ induces an isomorphism 
\[
R_{n}^{\oplus 2}/(\gamma-1)B_n \overset{\sim}{\to} R_{n}^{\oplus 2}[(\gamma-1)B_n].
\]
\end{lem}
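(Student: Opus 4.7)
The strategy is to prove (2) first by a snake-lemma argument using the identity \eqref{eq:151}, and then deduce (1) as a formal consequence.

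For (2), I would apply the snake lemma to the short exact sequence
\[
0 \to \Lambda^{\oplus 2} \xrightarrow{\times(\gamma^{p^n}-1)} \Lambda^{\oplus 2} \to R_n^{\oplus 2} \to 0
\]
equipped with the vertical endomorphism $\times(\gamma-1)B_n$. Since $\det((\gamma-1)B_n) = (\gamma-1)(\gamma^{p^n}-1) \neq 0$ in $\Lambda$, this endomorphism is injective on $\Lambda^{\oplus 2}$, and the snake sequence reads
\[
0 \to R_n^{\oplus 2}[(\gamma-1)B_n] \xrightarrow{\delta} \Lambda^{\oplus 2}/(\gamma-1)B_n \xrightarrow{\times(\gamma^{p^n}-1)} \Lambda^{\oplus 2}/(\gamma-1)B_n \to R_n^{\oplus 2}/(\gamma-1)B_n \to 0.
\]
By \eqref{eq:151}, multiplication by $\gamma^{p^n}-1$ on $\Lambda^{\oplus 2}$ factors through $\times(\gamma-1)B_n$ and hence annihilates the $\Lambda$-cokernel; the middle arrow is therefore zero, forcing both extreme arrows to be isomorphisms.

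Next I would chase $\delta$ to identify the resulting composite isomorphism with $\times \ttilde{B_n}$. For $v \in R_n^{\oplus 2}[(\gamma-1)B_n]$ lifted to $\tilde v \in \Lambda^{\oplus 2}$, we have $\tilde v(\gamma-1)B_n = (\gamma^{p^n}-1)\tilde u$ for some $\tilde u \in \Lambda^{\oplus 2}$, and right-multiplying by $\ttilde{B_n}$ together with \eqref{eq:151} yields $\tilde v = \tilde u \ttilde{B_n}$. Thus $\delta^{-1}([\tilde u])$ is the class of $\tilde u \ttilde{B_n}$ in $R_n^{\oplus 2}$, and composing with the canonical reduction isomorphism $\Lambda^{\oplus 2}/(\gamma-1)B_n \simeq R_n^{\oplus 2}/(\gamma-1)B_n$ the total map $R_n^{\oplus 2}/(\gamma-1)B_n \to R_n^{\oplus 2}[(\gamma-1)B_n]$ is exactly $\times \ttilde{B_n}$, establishing (2).

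For (1), part (2) exhibits $R_n^{\oplus 2}/(\gamma-1)B_n$ as a submodule of the $\Z_p$-free module $R_n^{\oplus 2}$, hence it is $\Z_p$-torsion-free and therefore $\Z_p$-free. For the rank, the four-term exact sequence
\[
0 \to R_n^{\oplus 2}[(\gamma-1)B_n] \to R_n^{\oplus 2} \xrightarrow{\times(\gamma-1)B_n} R_n^{\oplus 2} \to R_n^{\oplus 2}/(\gamma-1)B_n \to 0
\]
forces the kernel and cokernel to have equal $\Z_p$-ranks. I would compute this common rank by tensoring with $\overline{\Q_p}$ and decomposing along characters $\chi$ of $G_n$: when $\chi(\gamma) = 1$ the matrix $(\chi(\gamma)-1)B_n(\chi)$ is zero and the contribution to the kernel dimension is $2$; when $\chi(\gamma)$ is a primitive $p^j$-th root of unity with $1 \leq j \leq n$, the relation $\chi(N_j) = 0$ makes $A_j(\chi)$ of rank $1$ while all other $A_k(\chi)$ remain invertible, so $(\chi(\gamma)-1)B_n(\chi)$ has kernel of dimension $1$. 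Summing over the $[K_0:\Q]$ characters for each value of $\chi(\gamma)$ gives $2[K_0:\Q] + [K_0:\Q]\sum_{j=1}^n (p^j-p^{j-1}) = [K_0:\Q](p^n+1)$.

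The main obstacle is the diagram chase identifying $\delta$ with multiplication by $\ttilde{B_n}$: this is where \eqref{eq:151} enters essentially, and the row-vector convention demands careful attention to the side on which matrix multiplication acts. Once (2) is in hand, $\Z_p$-freeness in (1) is automatic and only the character-by-character rank computation remains, which is routine.
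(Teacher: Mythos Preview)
Your argument is essentially correct, but there is a slip in the short exact sequence for (2): the cokernel of $\times(\gamma^{p^n}-1)$ on $\Lambda^{\oplus 2}$ is $(\Lambda/(\gamma^{p^n}-1))^{\oplus 2}$, not $R_n^{\oplus 2}$, since $R_n = \Z_p[G_n] = \RR/(\gamma^{p^n}-1)$ carries the extra factor $\Z_p[G_0]$. Replacing $\Lambda$ by $\RR$ throughout (or running the argument over $\Lambda$ and then tensoring with $\Z_p[G_0]$) fixes this; the rest of the snake-lemma chase and the identification of $\delta^{-1}$ with $\times\ttilde{B_n}$ goes through verbatim.

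For (2), your lift-and-multiply-by-$\ttilde{B_n}$ step is exactly what the paper does, only the paper carries it out bare-hands (lifting directly to $\RR^{\oplus 2}$ and checking surjectivity of $\times\ttilde{B_n}$) rather than packaging it as a snake-lemma connecting map. For (1) the approaches genuinely diverge: the paper observes that $\det((\gamma-1)B_n) = (\gamma-1)(\gamma^{p^n}-1)$ is a distinguished polynomial of degree $p^n+1$, which immediately gives that $\Lambda^{\oplus 2}/(\gamma-1)B_n$ is $\Z_p$-free of rank $p^n+1$ (since the reduction mod $p$ of the matrix is injective, and the $\F_p$-length of the cokernel over the PID $\F_p[[T]]$ is the $T$-valuation of the determinant), then scales by $[K_0:\Q]$ and descends to $R_n$. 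Your route via (2) for freeness plus a character-by-character kernel count is longer but has the virtue of being completely self-contained, requiring no facts about distinguished polynomials.
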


\begin{proof}
(1) We can define $\Lambda^{\oplus 2}/(\gamma-1)B_n$ and $\RR^{\oplus 2}/(\gamma-1)B_n$ similarly.
By \eqref{eq:151}, we have $\det((\gamma-1)B_n) = (\gamma-1)(\gamma^{p^n}-1) \in \Lambda$, which is a distinguished polynomial of degree $p^n+1$.
Thus $\Lambda^{\oplus 2}/(\gamma-1)B_n$ is a free $\Z_p$-module of rank $p^n+1$.
Since $\RR$ is a free $\Lambda$-module of rank $[K_0:\Q]$, we deduce that $\RR^{\oplus 2}/(\gamma-1)B_n$ is a free $\Z_p$-module of rank $[K_0:\Q](p^n+1)$.
By \eqref{eq:151}, the natural map $\RR^{\oplus 2}/(\gamma-1)B_n \to R_{n}^{\oplus 2}/(\gamma-1)B_n$ is an isomorphism.
These prove (1).

(2) This is an abstract algebraic assertion behind \cite[Proposition 3.3]{Spr12}.
We show the surjectivity.
Take any $(\overline{x}, \overline{y}) \in R_{n}^{\oplus 2}$ such that $(\overline{x}, \overline{y})(\gamma-1)B_n =0$.
 Then for a lift $(x, y) \in \RR^{\oplus 2}$ of $(\overline{x}, \overline{y})$, there is $(x', y') \in \RR^{\oplus 2}$ such that $(x,y)(\gamma-1)B_n = (x',y')(\gamma^{p^n}-1)$.
 Applying $ \times \ttilde{B_n}$ and using \eqref{eq:151}, we obtain $(x,y) = (x',y')\ttilde{B_n}$.
 The injectivity can be shown similarly, or alternatively follows from the surjectivity.
\end{proof}

Now we construct the Coleman map.
Let $d_n$ be as in Theorem \ref{thm:116}.
Then Theorem \ref{thm:116}(1) implies
\begin{equation}\label{eq:06}
A_n^T \begin{pmatrix} \dd_{n} \\ \dd_{n-1} \end{pmatrix} = \begin{pmatrix} \dd_{n-1} \\ \dd_{n-2}
\end{pmatrix}
\end{equation}
for $n \geq 1$, where the superscript $T$ denotes the transpose.
It also follows that 
\begin{equation}\label{eq:159}
(\gamma-1)  B_n^T \begin{pmatrix} \dd_{n} \\ \dd_{n-1} \end{pmatrix} = (\gamma-1)\begin{pmatrix} \dd_{0} \\ \dd_{-1} \end{pmatrix} = 0.
\end{equation}
Therefore the homomorphism $\Phi_{d_{n}, d_{n-1}}: R_{n}^{\oplus 2}/(\gamma-1)B_n^T \to \hat{E}(\m_{n})$ is well-defined.

Consider the exact sequence
\[
R_n^{\oplus 2} \overset{\times (\gamma - 1)B_n^T}{\to} R_n^{\oplus 2} \to R_n^{\oplus 2} / (\gamma - 1)B_n^T \to 0.
\]
By Lemma \ref{lem:53}(1), taking the $\Z_p$-linear dual gives the upper exact row of the following diagram
\[
\xymatrix@!C=70pt{
	0  \ar[r]&
	(R_n^{\oplus 2} / (\gamma - 1)B_n^T)^* \ar[r]&
	(R_n^{\oplus 2})^{*} \ar[r]^{(\times (\gamma - 1)B_n^T)^*} \ar[d]_{\vsim}&
	(R_n^{\oplus 2})^* \ar[d]^{\vsim} \\
	&&
	R_n^{\oplus 2} \ar[r]_{\times ((\gamma - 1)B_n)^{\iota}} &
	R_n^{\oplus 2}.
}
\]
By Lemma \ref{lem:693}(2), this diagram is commutative when the vertical arrows are defined by \eqref{eq:96}.
Therefore we have an isomorphism
\[
(R_n^{\oplus 2} / (\gamma - 1)B_n^T)^* \overset{\sim}{\to} R_n^{\oplus 2}[((\gamma - 1)B_n)^{\iota}].
\]

Hence we can construct the composite map
\begin{equation}\label{eq:161}
\hat{E}(\m_{n})^* 
\overset{(\Phi_{d_{n},d_{n-1}})^*}{\to} (R_{n}^{\oplus 2}/(\gamma-1)B_n^T)^* 
\overset{\sim}{\to} R_{n}^{\oplus 2}[((\gamma-1)B_n)^{\iota}] 
\overset{\sim}{\leftarrow} R_{n}^{\oplus 2}/((\gamma-1)B_n)^{\iota}
\end{equation}
where the final arrow is $\times \ttilde{B_n}^{\iota}$ as in Lemma \ref{lem:53}(2).
The composite of the first two maps in \eqref{eq:161} coincides with $\Psi_{d_n, d_{n-1}}$.
Thus we obtain the following.

\begin{defn}\label{defn:55}
Define $\Cole'_{n}: \hat{E}(\m_{n})^* \to R_{n}^{\oplus 2}/((\gamma-1)B_n)^{\iota}$ as the composite map \eqref{eq:161}.
Therefore, $\Cole'_n$ is characterized by 
\begin{equation}\label{eq:16}
\Cole'_{n}(f) \ttilde{B_n}^{\iota} = \left( \sum_{\sigma \in G_{n}} f(\sigma(d_{n}))\sigma, \sum_{\sigma \in G_{n}} f(\sigma(d_{n-1}))\sigma \right)
\end{equation}
for $f \in \hat{E}(\m_{n})^*$.
This is a generalization of \cite[Proposition 5.3]{Spr12}.
See Remark \ref{rem:81} for the reason of the prime symbol.
\end{defn}

\begin{lem}\label{lem:998}
The diagram
\[
\xymatrix{
	\hat{E}(\m_n)^* \ar[r]^-{\Cole_n'} \ar@{->>}[d] &
	R_n^{\oplus 2} / ((\gamma - 1) B_n)^{\iota} \ar@{->>}[d] \\
	\hat{E}(\m_{n-1})^* \ar[r]^-{\Cole_{n-1}'} &
	R_{n-1}^{\oplus 2} / ((\gamma - 1) B_{n-1})^{\iota}
}
\]
is commutative for $n \geq 1$, where the vertical arrows are the natural maps.
\end{lem}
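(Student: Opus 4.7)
The plan is to verify the commutativity by chasing a fixed $f \in \hat{E}(\m_n)^*$. Let $f'$ denote the restriction of $f$ to $\hat{E}(\m_{n-1})$ (the image of $f$ under the left vertical map), and write $\bar c$ and $c'$ for the images of $\Cole'_n(f)$ and $\Cole'_{n-1}(f')$ in $R_{n-1}^{\oplus 2}/((\gamma-1)B_{n-1})^{\iota}$. First I would check that the right vertical map is well defined: since $B_n = A_n B_{n-1}$, the submodule $((\gamma-1)B_n)^{\iota} R_n^{\oplus 2}$ projects into $((\gamma-1)B_{n-1})^{\iota} R_{n-1}^{\oplus 2}$ under $R_n^{\oplus 2} \twoheadrightarrow R_{n-1}^{\oplus 2}$. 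The goal is then to establish $\bar c = c'$.

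The next step is to project the defining identity \eqref{eq:16} for $\Cole'_n(f)$ from $R_n^{\oplus 2}$ down to $R_{n-1}^{\oplus 2}$. On the right-hand side, the recurrence $\Tr^n_{n-1}(d_n) = a_p d_{n-1} - d_{n-2}$ from Theorem \ref{thm:116}(1) together with the trivial identity $\Tr^n_{n-1}(d_{n-1}) = p\, d_{n-1}$ converts the projection into $(a_p y_{n-1} - y_{n-2},\, p y_{n-1})$, where $y_i = \sum_{\tau \in G_{n-1}} f(\tau d_i)\tau$. On the left-hand side, since $R_n$ is commutative and $\iota$ is a ring automorphism, $\ttilde{B_n}^{\iota} = \ttilde{B_{n-1}}^{\iota}\, \ttilde{A_n}^{\iota}$, and the congruence $N_n \equiv N_n^{\iota} \equiv p$ modulo $R_{n-1}$ reduces $\ttilde{A_n}^{\iota}$ to $\begin{pmatrix} a_p & p \\ -1 & 0 \end{pmatrix}$. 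A direct matrix calculation then yields $(y_{n-1}, y_{n-2})\,\ttilde{A_n}^{\iota} = (a_p y_{n-1} - y_{n-2},\, p y_{n-1})$, and combining with the defining identity for $\Cole'_{n-1}(f')$ gives
\[
\bar c \cdot \ttilde{B_{n-1}}^{\iota}\, \ttilde{A_n}^{\iota} \;=\; c' \cdot \ttilde{B_{n-1}}^{\iota}\, \ttilde{A_n}^{\iota} \quad \text{in } R_{n-1}^{\oplus 2}.
\]

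The final step is to descend this equality from $R_{n-1}^{\oplus 2}$ to the quotient $R_{n-1}^{\oplus 2}/((\gamma-1)B_{n-1})^{\iota}$. For this I would show that the kernel of right multiplication by $\ttilde{B_{n-1}}^{\iota}\, \ttilde{A_n}^{\iota}$ on $R_{n-1}^{\oplus 2}$ equals $((\gamma-1)B_{n-1})^{\iota} R_{n-1}^{\oplus 2}$. Indeed, if $(u,v)\,\ttilde{B_{n-1}}^{\iota}\ttilde{A_n}^{\iota} = 0$, setting $(u',v') = (u,v)\,\ttilde{B_{n-1}}^{\iota}$ the two coordinates of $(u',v')\,\ttilde{A_n}^{\iota} = 0$ force $pu' = 0$ and $v' = a_p u'$; since $R_{n-1}$ is $\Z_p$-free and hence $p$-torsion-free, this forces $(u',v') = 0$, and then Lemma \ref{lem:53}(2) (applied after the $\iota$-twist, which is harmless as $\iota$ is a ring automorphism of the commutative ring $R_{n-1}$) identifies $\ker(\times \ttilde{B_{n-1}}^{\iota})$ with $((\gamma-1)B_{n-1})^{\iota} R_{n-1}^{\oplus 2}$, completing the descent. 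The main obstacle in this outline is keeping the bookkeeping of $\iota$ and the identification $N_n \equiv p$ in $R_{n-1}$ straight; once those are in hand, everything else is a formal matrix computation.
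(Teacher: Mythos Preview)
Your proof is correct. You take the direct computational route via the characterization \eqref{eq:16}, which the paper explicitly acknowledges as a valid alternative (``We can directly compute with the expression \eqref{eq:16}'') but opts not to present in favor of a more conceptual argument based on the construction \eqref{eq:161}. The paper's proof instead starts from the commutativity of the square
\[
\xymatrix@C=50pt{
	R_{n-1}^{\oplus 2} / (\gamma - 1) B_{n-1}^T \ar[r]^-{\Phi_{d_{n-1}, d_{n-2}}} \ar@{^{(}->}[d]_-{\times A_n^T} &
	\hat{E}(\m_{n-1})  \ar@{^{(}->}[d] \\
	R_n^{\oplus 2} / (\gamma - 1) B_n^T  \ar[r]^-{\Phi_{d_{n}, d_{n-1}}} &
	\hat{E}(\m_n)
}
\]
(which encodes exactly the same norm relation \eqref{eq:06} you use), then dualizes via Lemma~\ref{lem:693}(2) and the identifications in \eqref{eq:161} to obtain the desired square directly. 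Your approach is more hands-on and self-contained (matrix algebra plus the $p$-torsion-freeness of $R_{n-1}$ to cancel $\ttilde{A_n}^{\iota}$), while the paper's approach makes the role of the single relation $A_n^T \begin{pmatrix} d_n \\ d_{n-1} \end{pmatrix} = \begin{pmatrix} d_{n-1} \\ d_{n-2} \end{pmatrix}$ structurally transparent and avoids the separate cancellation step. Both are equally valid and roughly the same length.
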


\begin{proof}
This is a generalization of \cite[Corollary 5.6]{Spr12}.
We can directly compute with the expression \eqref{eq:16}, but here we give an alternative proof depending on the more conceptual definition \eqref{eq:161}.
By \eqref{eq:06}, the following diagram is commutative.
\[
\xymatrix@C=50pt{
	R_{n-1}^{\oplus 2} / (\gamma - 1) B_{n-1}^T \ar[r]^-{\Phi_{d_{n-1}, d_{n-2}}} \ar@{^{(}->}[d]_-{\times A_n^T} &
	\hat{E}(\m_{n-1})  \ar@{^{(}->}[d] \\
	R_n^{\oplus 2} / (\gamma - 1) B_n^T  \ar[r]^-{\Phi_{d_{n}, d_{n-1}}} &
	\hat{E}(\m_n)
}
\]
Thus the left square of the following diagram is commutative.
\[\small
 \xymatrix@C=44pt{
    \hat{E}(\m_{n})^* \ar[r]^-{(\Phi_{d_{n},d_{n-1}})^*}\ar@{->>}[dd] &
     (R_{n}^{\oplus 2}/(\gamma-1)B_n^T)^* \ar[r]^-{\sim}\ar[d]_-{(\times A_n^T)^*} & 
     R_{n}^{\oplus 2}[((\gamma-1)B_n)^{\iota}] \ar[d]_{\times A_n^{\iota}} & 
     R_{n}^{\oplus 2}/((\gamma-1)B_n)^{\iota} \ar[l]_-{\sim}^-{\times \ttilde{B_n}^{\iota}} \ar@{->>}[dd] \\
    &
     (R_{n}^{\oplus 2}/(\gamma-1)B_{n-1}^T)^* \ar[r]^-{\sim} &
      R_{n}^{\oplus 2}[((\gamma-1)B_{n-1})^{\iota}] &&\\
    \hat{E}(\m_{n-1})^* \ar[r]_-{(\Phi_{d_{n-1},d_{n-2}})^*} & 
    (R_{n-1}^{\oplus 2}/(\gamma-1)B_{n-1}^T)^* \ar[r]^-{\sim} \ar[u]_-{\vsim} &
    R_{n-1}^{\oplus 2}[((\gamma-1)B_{n-1})^{\iota}] \ar[u]_-{\vsim}^-{\times N_n^{\iota}} &
     R_{n-1}^{\oplus 2}/((\gamma-1)B_{n-1})^{\iota} \ar[l]_-{\sim}^-{\times \ttilde{B_{n-1}}^{\iota}}
   }
\]
Moreover, the other squares in this diagram are also commutative (the middle upper one is by Lemma \ref{lem:693}(2)).
This completes the proof.
\end{proof}

\begin{defn}\label{defn:400}
We define
\[
\Cole': \varprojlim_{n} \hat{E}(\m_{n})^* \to \varprojlim_{n} R_{n}^{\oplus 2}/((\gamma-1)B_n)^{\iota} \simeq \RR^{\oplus 2}
\]
as the limit of $\Cole'_{n}$, which is possible by Lemma \ref{lem:998} (see \cite[Proposition 5.7]{Spr12} for the last isomorphism).
Moreover, define the Coleman map  $\Cole = (\Cole^{\sharp}, \Cole^{\flat}): \varprojlim_n \hat{E}(\m_{n})^* \to \RR^{\oplus 2}$ by
\begin{equation}\label{eq:10}
\Cole(f) = \Cole'(f) \begin{pmatrix} -a_p & -1 \\ 1 & 0 \end{pmatrix}.
\end{equation}
This is a generalization of \cite[Definition 5.9]{Spr12}.
\end{defn}

\begin{rem}\label{rem:81}
The modification from $\Cole'$ to $\Cole$ is necessary to be consistent with the work of Sprung \cite[Definition 3.8]{Spr12}.
However, the author thinks that it is also possible to deal with $\Cole'$ itself throughout this paper.
\end{rem}

Next we study the precise structure of $\hat{E}(\m_{n})$.
The following discussion, which reproves and refines the previous works as in Remark \ref{rem:691}, is one of the novel parts in this paper.

For $n \geq 0$, we define a sequence
\begin{equation}\label{eq:152}
0 \to R_{0} \oplus R_{-1} \overset{\aalpha_n}{\to} R_{n}^{\oplus 2}/(\gamma-1)B_n^T \oplus R_{-1} \overset{\bbeta_n}{\to} \hat{E}(\m_{n}) \to 0
\end{equation}
as follows (the exactness will be claimed in Proposition \ref{prop:57} below).
First $\bbeta_n$ is induced by $\Phi_{\dd_{n}, \dd_{n-1}, \dd_{-1}}$, which is well-defined by \eqref{eq:159}.
We define $\aalpha_0: R_{0} \oplus R_{-1} \to R_{0}^{\oplus 2} \oplus R_{-1}$ by the matrix 
\begin{equation}\label{eq:09}
\begin{pmatrix} 0 & 1 & -1 \\ N_{\Delta} & 0 & \varphi+\varphi^{-1} - a_p \end{pmatrix}.
\end{equation}
Here recall that $\Delta = \Gal(\Q(\mu_p)/\Q)$ and $N_{\Delta}$ is the norm element of $\Z_p[\Delta]$.
Then $\aalpha_n$ is defined inductively by the following commutative diagram
\begin{align}\label{eq:07}
\xymatrix{
	0 \ar[r] &
	R_{0} \oplus R_{-1} \ar[r]^-{\aalpha_{n-1}} \ar@{=}[d] &
	R_{n-1}^{\oplus 2}/(\gamma-1)B_{n-1}^T \oplus R_{-1} \ar[r]^-{\bbeta_{n-1}} \ar@{^{(}->}[d]^{\times A_n^T \oplus \id} &
	\hat{E}(\m_{n-1}) \ar[r] \ar@{^{(}->}[d] &
	0 \\
	0 \ar[r] &
	R_{0} \oplus R_{-1} \ar[r]^-{\aalpha_{n}} &
	R_{n}^{\oplus 2}/(\gamma-1)B_{n}^T \oplus R_{-1} \ar^-{\bbeta_{n}}[r] &
	\hat{E}(\m_{n}) \ar[r] &
	0 
}
\end{align}
for $n \geq 1$, where the right square is commutative by \eqref{eq:06}.

\begin{prop}\label{prop:57}
The sequence \eqref{eq:152} is exact.
\end{prop}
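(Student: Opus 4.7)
The plan is to prove exactness of the sequence \eqref{eq:152} by induction on $n \geq 0$.

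For the base case $n = 0$, I verify each assertion directly. The identity $\beta_0 \circ \alpha_0 = 0$ reduces to checking on the two generators of $R_0 \oplus R_{-1}$: the first maps under $\alpha_0$ to $(0,1,-1)$, which $\beta_0$ sends to $d_{-1} - d_{-1} = 0$, and the second maps to $(N_{\Delta}, 0, \varphi + \varphi^{-1} - a_p)$, which by the trace relation $N_{\Delta} d_0 = \Tr^0_{-1}(d_0) = (a_p - \varphi - \varphi^{-1}) d_{-1}$ from Theorem \ref{thm:116}(1) is also sent to zero. Injectivity of $\alpha_0$, and indeed of $\alpha_0 \otimes \F_p$, follows from the matrix \eqref{eq:09} combined with the observation that $\times N_{\Delta}\colon R_{-1} \to R_0$ is injective modulo $p$ (since $N_{\Delta}$ is non-zero in $\F_p[\Delta]$). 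Surjectivity of $\beta_0$ is Theorem \ref{thm:116}(2). Injectivity of $\alpha_0 \otimes \F_p$ implies $\Image(\alpha_0)$ is a $\Z_p$-pure submodule, so $\Cok(\alpha_0)$ is $\Z_p$-free; since it surjects onto the $\Z_p$-free module $\hat E(\m_0)$ with the same $\Z_p$-rank (a direct computation), this surjection is an isomorphism, yielding exactness at $n = 0$.

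For the inductive step, assume exactness at level $n - 1$. I first show the middle vertical arrow $\times A_n^T \oplus \id$ in \eqref{eq:07} is injective. The identity summand is trivial; so suppose $v A_n^T \equiv 0 \pmod{(\gamma - 1) B_n^T}$ for some $v \in R_{n-1}^{\oplus 2}$. Writing $v A_n^T = w (\gamma - 1) B_n^T = w (\gamma - 1) B_{n-1}^T A_n^T$ and using the adjugate relation $A_n^T \ttilde{A_n}^T = N_n I$ with $N_n = \Phi_n(1+\X)$ being $\Lambda$-regular, we deduce $(v - w(\gamma-1) B_{n-1}^T) N_n = 0$, forcing $v \equiv 0$ in $R_{n-1}^{\oplus 2}/(\gamma - 1) B_{n-1}^T$. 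Consequently $\alpha_n$ is injective; $\beta_n \alpha_n = 0$ holds by commutativity of \eqref{eq:07}, and surjectivity of $\beta_n$ is again Theorem \ref{thm:116}(2).

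It remains to prove exactness in the middle of the bottom row. I apply the nine lemma to \eqref{eq:07} augmented with the cokernels of its three vertical arrows. All three columns form short exact sequences (the leftmost has vanishing cokernel because the vertical is the identity, while the other two have injective verticals by the previous step), and the top row is exact by induction. Hence exactness of the middle row reduces to exactness of the cokernel row $0 \to 0 \to C \to D \to 0$, where $C = \Cok(\times A_n^T)$ (the $R_{-1}$ summand contributing trivially) and $D = \hat E(\m_n)/\hat E(\m_{n-1})$. Surjectivity of $C \to D$ is immediate from surjectivity of $\beta_n$, and a rank count via Lemma \ref{lem:53}(1) confirms that both $C$ and $D$ have $\Z_p$-rank $[K_0\colon\Q](p^n - p^{n-1})$. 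The main obstacle -- injectivity of $C \to D$ -- I handle by composing with the injection $\hat E(\m_n)/\hat E(\m_{n-1}) \hookrightarrow \m_n/\m_{n-1}$ induced by $\log_{\hat E}$ (Lemma \ref{lem:20}), identifying the class of $d_n$ with a generator of $\m_n/\m_{n-1}$ related to $\pi_n$ via Proposition \ref{prop:10} (or its ordinary counterpart Proposition \ref{prop:44}), and matching the relations induced by $\times A_n^T$ against the generation of $\m_n/\m_{n-1}$ from Lemma \ref{lem:19}, after a reduction to $K = \Q(\mu_m)$ along the lines of Lemma \ref{lem:946}.
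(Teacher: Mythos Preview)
Your induction-plus-nine-lemma strategy is sound and runs parallel to the paper's approach up to the last step; the base case and the injectivity of the middle vertical are fine.  The gap is in how you finish: the sketched argument for injectivity of $C \to D$ via $\log_{\hat{E}}$, Proposition~\ref{prop:10}, Lemma~\ref{lem:19}, and a reduction to $K=\Q(\mu_m)$ does not assemble into a proof.  For one thing, the element $d_{n-1}$ lies in $\hat{E}(\m_{n-1})$, so its image in $\m_n/\m_{n-1}$ under the composite you describe is zero; the map $C \to \m_n/\m_{n-1}$ therefore factors through the first coordinate of $C$, and you are back to needing the internal structure of $C$.  For another, the points $d_n$ of Theorem~\ref{thm:116} for general $K$ are traces of the $d_{m,n}$, not the $\ttilde d_{m,n}$ of Proposition~\ref{prop:10}, so the identification you invoke is not immediate.

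The fix is much simpler and is precisely what the paper does.  Observe directly that $C = \Cok(\times A_n^T)$ is isomorphic to $\RR^{\oplus 2}/A_n^T$, and since $\det(A_n^T)=N_n=\Phi_n(1+\X)$ is a distinguished polynomial in $\Lambda$, the same argument as in Lemma~\ref{lem:53}(1) shows $C$ is $\Z_p$-free of rank $[K_0:\Q](p^n-p^{n-1})$.  Now the surjection $C\to D$ between $\Z_p$-modules of equal rank with $C$ free has torsion kernel, hence zero kernel.  The paper packages this as: $R_n^{\oplus 2}/A_n^T$ free $\Rightarrow$ $\Cok(\alpha_n)$ torsion-free (an extension of the torsion-free $\Cok(\alpha_{n-1})$ by the free $C$), and then invokes the elementary Lemma~\ref{lem:61} in place of the nine lemma.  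Either phrasing works once you have the freeness of $C$.
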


To prove this proposition, we use the following simple lemma.

\begin{lem}\label{lem:61}
Let $0 \to X' \overset{\aalpha}{\to} X \overset{\bbeta}{\to} X'' \to 0$ be a sequence (not known to be exact) of free $\Z_p$-modules of finite ranks.
Suppose that $\aalpha$ is injective with torsion-free cokernel, $\bbeta$ is surjective, $\bbeta \aalpha = 0$, and $\rank(X) = \rank(X') + \rank(X'')$.
Then this sequence is exact.
\end{lem}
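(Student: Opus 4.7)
The plan is to verify the only non-trivial piece of exactness, namely $\Ker(\bbeta) \subset \Image(\aalpha)$; the other two conditions (injectivity of $\aalpha$, surjectivity of $\bbeta$) are hypotheses, and the inclusion $\Image(\aalpha) \subset \Ker(\bbeta)$ is immediate from $\bbeta\aalpha = 0$.

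To do this, I would pass to the cokernel of $\aalpha$. Since $\bbeta\aalpha = 0$, the map $\bbeta$ induces a well-defined map $\overline{\bbeta}: \Cok(\aalpha) \to X''$, and the surjectivity of $\bbeta$ forces $\overline{\bbeta}$ to be surjective. Now the hypotheses give two strong constraints on $\Cok(\aalpha)$: it is finitely generated (being a quotient of the finitely generated $\Z_p$-module $X$) and, by assumption, torsion-free. Hence $\Cok(\aalpha)$ is itself a free $\Z_p$-module of finite rank. Its rank is $\rank(X) - \rank(X')$ (because $\aalpha$ is injective), which by the rank hypothesis equals $\rank(X'')$.

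Thus $\overline{\bbeta}$ is a surjection between free $\Z_p$-modules of the same finite rank. Any such surjection splits, realising its kernel as a direct summand of the source, so the kernel is a free $\Z_p$-module of rank $0$ and therefore vanishes. Consequently $\overline{\bbeta}$ is an isomorphism, which translates back to $\Ker(\bbeta) = \Image(\aalpha)$, completing the proof.

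This argument is essentially routine; there is no genuine obstacle. The only point requiring a moment of thought is the standard fact that a $\Z_p$-linear surjection between two free $\Z_p$-modules of the same finite rank is an isomorphism, which is why the torsion-freeness of $\Cok(\aalpha)$ is crucial (otherwise one could only conclude an isomorphism after tensoring with $\Q_p$).
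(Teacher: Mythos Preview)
Your proof is correct and follows essentially the same approach as the paper: both pass to the induced surjection $\Cok(\aalpha) \to X''$, observe that the source is free of the same rank as the target (using torsion-freeness and the rank hypothesis), and conclude that this surjection is an isomorphism. Your write-up is simply a more detailed version of the paper's two-sentence argument.
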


\begin{proof}
There is an induced surjective map from the cokernel of $\aalpha$ to $X''$.
Since those are free $\Z_p$-modules of the same rank, the surjective map must be isomorphic. 
\end{proof}

\begin{proof}[Proof of Proposition \ref{prop:57}]
We have $\bbeta_n \aalpha_n = \bbeta_0 \aalpha_0 = 0$ by Theorem \ref{thm:116}(1) and $\bbeta_n$ is surjective by Theorem \ref{thm:116}(2).
By Lemma \ref{lem:53}(1), we have
\[
\rank_{\Z_p}(R_{n}^{\oplus 2}/(\gamma-1)B_{n}^T) = [K_0:\Q](p^n+1) = \rank_{\Z_p}(\hat{E}(\m_{n})) +\rank_{\Z_p}(R_0).
\]
We shall show that $\aalpha_n$ is injective with torsion-free cokernel, which will conclude the proof by Lemma \ref{lem:61}.

We use induction on $n$.
When $n = 0$, consider the composition of $\aalpha_0$ with the projection 
$R_{0}^{\oplus 2} \oplus R_{-1} \to R_{0}^{\oplus 2}$
to the first factor.
Since it is presented by 
$\begin{pmatrix} 0 & 1\\ N_{\Delta} & 0 \end{pmatrix}$,
it is injective and its cokernel is torsion-free.
Hence $\aalpha_0$ itself is injective and its cokernel is torsion-free, as claimed.

When $n \geq 1$, by induction we may suppose that $\aalpha_{n-1}$ is injective with torsion-free cokernel.
A similar proof as in Lemma \ref{lem:53}(1) shows that $R_n^{\oplus 2}/A_n^T$ is a free $\Z_p$-module.
Therefore the commutative left square of \eqref{eq:07} shows that $\aalpha_n$ is injective with torsion-free cokernel.
This completes the proof of Proposition \ref{prop:57}.
\end{proof}

\begin{thm}\label{thm:63}
The following sequences are exact.

(1)
\[
0 \to \varprojlim_n \hat{E}(\m_n)^* \to \RR^{\oplus 2} \oplus R_{-1} \to R_{0} \oplus R_{-1} \to 0,
\]
where the first map is $(\Cole', \Psi_{\dd_{-1}})$ and the second map is presented by
\begin{equation}\label{eq:166}
\begin{pmatrix} 0 & 1 \\ 1 & 0 \\ -N_{\Delta} & \varphi + \varphi^{-1} - a_p \end{pmatrix}.
\end{equation}

(2) 
\begin{equation}\label{eq:104}
0 \to \varprojlim_n \hat{E}(\m_n)^* \to \RR^{\oplus 2} \oplus R_{-1} \to R_{0} \oplus R_{-1} \to 0,
\end{equation}
where the first map is $(\Cole, \Psi_{\dd_{-1}}) = (\Cole^{\sharp}, \Cole^{\flat}, \Psi_{\dd_{-1}})$ and the second map is presented by
\begin{equation}\label{eq:689}
\begin{pmatrix} 1 & 0 \\ -a_p & -1 \\ -N_{\Delta} & \varphi + \varphi^{-1} - a_p \end{pmatrix}.
\end{equation}
\end{thm}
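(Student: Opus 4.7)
The plan is to take the $\Z_p$-linear dual of the exact sequence \eqref{eq:152} from Proposition \ref{prop:57} at each finite level $n$, identify the terms with the presentation in (1), and then pass to the inverse limit.

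First, I would apply $\Hom_{\Z_p}(-, \Z_p)$ to \eqref{eq:152}. This preserves exactness because $\hat E(\m_n)$ is $\Z_p$-free by Proposition \ref{prop:98}, the middle term is $\Z_p$-free by Lemma \ref{lem:53}(1), and the cokernel of $\aalpha_n$ was shown torsion-free in the proof of Proposition \ref{prop:57}. Under the isomorphism \eqref{eq:96} and the canonical identification $(R_n^{\oplus 2}/(\gamma-1)B_n^T)^{*} \simeq R_n^{\oplus 2}/((\gamma-1)B_n)^{\iota}$ (from Lemmas \ref{lem:693}(2) and \ref{lem:53}(2), as used in \eqref{eq:161}), the dual $\bbeta_n^{*}$ becomes $(\Cole'_n, \Psi_{\dd_{-1}})$ by the very definition of $\Cole'_n$.

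Next, I would compute $\aalpha_n^{*}$. For $n = 0$ we have $B_0 = I$, so the identification reduces to Pontryagin duality, and dualising each entry of \eqref{eq:09} via Lemma \ref{lem:693}(2) produces the matrix \eqref{eq:166}; the key observation is that, under the isomorphisms \eqref{eq:96}, the dual of multiplication by $N_{\Delta} \colon R_{-1} \to R_0$ is the augmentation $R_0 \to R_{-1}$, and conversely (an easy direct check). For $n \geq 1$, the inductive relation $\aalpha_n = (\times A_n^T \oplus \id) \circ \aalpha_{n-1}$ from \eqref{eq:07} dualises compatibly with the transition maps on the middle term (essentially Lemma \ref{lem:998}), so \eqref{eq:166} represents the entire compatible system. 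All three inverse systems have surjective transition maps, so Mittag-Leffler holds; combined with $\varprojlim_n R_n^{\oplus 2}/((\gamma-1)B_n)^{\iota} \simeq \RR^{\oplus 2}$ cited in Definition \ref{defn:400}, this proves (1). Part (2) is then immediate from (1) via the change of basis \eqref{eq:10}: left multiplying the matrix \eqref{eq:166} by the block matrix $\bigl(\begin{smallmatrix} Q^{-1} & 0 \\ 0 & 1 \end{smallmatrix}\bigr)$ with $Q = \bigl(\begin{smallmatrix} -a_p & -1 \\ 1 & 0 \end{smallmatrix}\bigr)$ yields \eqref{eq:689}.

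The main obstacle will be the bookkeeping in identifying $\aalpha_n^{*}$. The entries of \eqref{eq:09} are $R_n$-module maps between distinct rings ($R_0$ versus $R_{-1}$), and the identification $(R_n^{\oplus 2}/(\gamma-1)B_n^T)^{*} \simeq R_n^{\oplus 2}/((\gamma-1)B_n)^{\iota}$ mixes Pontryagin duality with right multiplication by $\widetilde{B_n}^{\iota}$. Tracking these identifications carefully and checking that each $1$-entry of \eqref{eq:09} transforms into the correct identity or augmentation map as dictated by \eqref{eq:166} is the delicate part of the argument.
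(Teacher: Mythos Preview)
Your proposal is correct and follows essentially the same approach as the paper's proof: dualize the exact sequence \eqref{eq:152} using Proposition~\ref{prop:57}, identify $\bbeta_n^*$ with $(\Cole'_n,\Psi_{d_{-1}})$ via \eqref{eq:161} and \eqref{eq:96}, compute $\aalpha_0^*$ using Lemma~\ref{lem:693}(2) to obtain \eqref{eq:166}, check compatibility via the diagram \eqref{eq:07} (as in Lemma~\ref{lem:998}), pass to the limit, and then deduce (2) from (1) by the change of basis \eqref{eq:10}. Your identification of the delicate point---tracking the duals of the cross-ring entries such as $N_\Delta\colon R_{-1}\to R_0$ and the projection $R_0\to R_{-1}$---is exactly right, and your computation that these are mutually dual under \eqref{eq:96} is correct.
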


\begin{proof}
(1)
By Proposition \ref{prop:57}, the $\Z_p$-linear dual of \eqref{eq:07} yields the commutative diagram with exact rows
\begin{equation}
\xymatrix{
	0 \ar[r] &
	\hat{E}(\m_{n})^* \ar[r]^-{\bbeta_n^*} \ar@{->>}[d] &
	(R_{n}^{\oplus 2}/(\gamma-1)B_{n}^T \oplus R_{-1})^* \ar[r]^-{\aalpha_{n}^*} \ar@{->>}[d]^-{(\times A_n^T \oplus \id)^*} &
	(R_{0} \oplus R_{-1})^* \ar[r] \ar@{=}[d] &
	0 \\
	0 \ar[r] &
	\hat{E}(\m_{n-1})^* \ar[r]^-{\bbeta_{n-1}^*} &
	(R_{n-1}^{\oplus 2}/(\gamma-1)B_{n-1}^T \oplus R_{-1})^* \ar[r]^-{\aalpha_{n-1}^*} &
	(R_{0} \oplus R_{-1})^* \ar[r] &
	0.
}
\end{equation}
Using the identifications in \eqref{eq:161} and \eqref{eq:96}, this diagram can be rewritten as
\begin{equation}\label{eq:167}
\xymatrix@C=36pt{
	0 \ar[r] &
	\hat{E}(\m_{n})^* \ar[r]^-{(\Cole_n', \Psi_{d_{-1}})} \ar@{->>}[d] &
	R_{n}^{\oplus 2}/(\gamma-1)B_{n}^{\iota} \oplus R_{-1} \ar[r] \ar@{->>}[d] &
	R_{0} \oplus R_{-1} \ar[r] \ar@{=}[d] &
	0 \\
	0 \ar[r] &
	\hat{E}(\m_{n-1})^* \ar[r]^-{(\Cole_{n-1}', \Psi_{d_{-1}})} &
	R_{n-1}^{\oplus 2}/(\gamma-1)B_{n-1}^{\iota} \oplus R_{-1} \ar[r] &
	R_{0} \oplus R_{-1} \ar[r] &
	0,
}
\end{equation}
where the middle vertical arrow is the natural projection by the proof of Lemma \ref{lem:998}.
Moreover, the following diagram is commutative (see Lemma \ref{lem:693}(2)).
\[
\xymatrix{
	(R_0^{\oplus 2} \oplus R_{-1})^* \ar[r]^-{s_0^*} \ar[d]_-{\vsim} &
	(R_0 \oplus R_{-1})^* \ar[d]^-{\vsim} \\
	R_0^{\oplus 2} \oplus R_{-1} \ar[r] &
	R_0 \oplus R_{-1},
	}
\]
where the vertical arrows are given by \eqref{eq:96} and the lower horizontal arrow has the presentation \eqref{eq:166}.
Therefore taking the limit of \eqref{eq:167} shows the assertion.

(2) Consider the diagram
\[
\xymatrix{
	0 \ar[r] &
	\varprojlim_n \hat{E}(\m_n)^* \ar[r] \ar@{=}[d] &
	\RR^{\oplus 2} \oplus R_{-1} \ar[r] \ar[d]_{\vsim}^{\scriptsize \begin{pmatrix} -a_p & -1 \\ 1 & 0 \end{pmatrix} \oplus \id} &
	R_{0} \oplus R_{-1} \ar[r] \ar@{=}[d] &
	0\\
	0 \ar[r] &
	\varprojlim_n \hat{E}(\m_n)^* \ar[r] &
	\RR^{\oplus 2} \oplus R_{-1} \ar[r]&
	R_{0} \oplus R_{-1} \ar[r] &
	0
}
\]
where the upper row is that in (1) and the lower row is that in (2).
The presentation \eqref{eq:689} is defined so that this diagram is commutative.
Thus the exactness of the upper row implies that of the lower row.
\end{proof}

\begin{cor}\label{cor:90}
The following sequences are exact.

(1)
\[
\varprojlim_n \hat{E}(\m_n)^* \to \RR \oplus R_{-1} \to R_{-1} \to 0,
\]
where the first map is $(\Cole^{\flat}, \Psi_{\dd_{-1}})$ and the second map is 
$\begin{pmatrix} -1 \\ \varphi + \varphi^{-1} - a_p \end{pmatrix}$.

(2)
\[
\varprojlim_n \hat{E}(\m_n)^* \to \RR \to \Rnt/(a_p) \to 0,
\]
where the first map is $\Cole^{\sharp}$ and the second map is the natural projection.
\end{cor}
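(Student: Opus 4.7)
The plan is to derive both assertions by projecting the short exact sequence \eqref{eq:104} of Theorem \ref{thm:63}(2) onto a single factor and analyzing the resulting image. Let $\phi: \RR^{\oplus 2} \oplus R_{-1} \to R_0 \oplus R_{-1}$ denote the right-hand map, so that
\[
\phi(x,y,z) = \bigl(\overline{x} - a_p \overline{y} - N_\Delta z,\; -\overline{y} + (\varphi + \varphi^{-1} - a_p) z\bigr),
\]
where the overlines denote the natural surjections $\RR \twoheadrightarrow R_0$ (first component) and $\RR \twoheadrightarrow R_{-1}$ (second component), and $z \in R_{-1}$ is embedded in $R_0$ via the ring inclusion.

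For part (1), I project onto the $(y, z)$-coordinates. A pair $(y, z) \in \RR \oplus R_{-1}$ lies in the image of $(\Cole^{\flat}, \Psi_{d_{-1}})$ if and only if some $x \in \RR$ makes $\phi(x, y, z) = 0$. The first-coordinate equation is $\overline{x} = a_p \overline{y} + N_\Delta z$ in $R_0$, which is always solvable by lifting the right-hand side through the surjection $\RR \twoheadrightarrow R_0$; hence it imposes no constraint on $(y, z)$. The only surviving constraint is $-\overline{y} + (\varphi + \varphi^{-1} - a_p) z = 0$ in $R_{-1}$, which is precisely the kernel condition of the map in part (1). Surjectivity onto $R_{-1}$ is clear since $\RR \twoheadrightarrow R_{-1}$ is already surjective.

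For part (2), I project onto the $x$-coordinate. Decompose $R_0 = R_0^\Delta \oplus \Rnt$ via the idempotents for $\Delta$. Since $N_\Delta$ annihilates $\Rnt$, the $\Rnt$-component of the first equation reads $\overline{x}^{\mathrm{nt}} = a_p \overline{y}^{\mathrm{nt}}$, whence any $x$ in the image of $\Cole^{\sharp}$ vanishes in $\Rnt/(a_p)$. Conversely, given $x$ with $\overline{x}^{\mathrm{nt}} \in (a_p)\Rnt$, set $\overline{y}^{\mathrm{nt}} = \overline{x}^{\mathrm{nt}}/a_p$. Under the identification $R_0^\Delta \simeq R_{-1}$ via the $\Delta$-augmentation (so that $N_\Delta$ acts as multiplication by $p-1$), substituting the second equation into the first yields
\[
\overline{x} = \bigl[a_p(\varphi + \varphi^{-1}) - a_p^2 + (p - 1)\bigr]\, z \quad \text{in } R_{-1}.
\]
Since $p \mid a_p$, the bracketed coefficient reduces modulo $p$ to $-1$, so Nakayama's lemma gives that it is a unit in $R_{-1}$; hence $z$ is uniquely determined, $\overline{y}$ is forced by the second equation, and any lift $y \in \RR$ with the prescribed $\overline{y}^{\mathrm{nt}}$ and $\overline{y}$ completes the solution. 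The map $\RR \twoheadrightarrow \Rnt/(a_p)$ is visibly surjective. The hardest part, though it amounts to a one-line verification, is the invertibility of $a_p(\varphi + \varphi^{-1}) - a_p^2 + (p-1)$ in $R_{-1}$; this is exactly where the supersingular hypothesis $p \mid a_p$ enters and what produces the clean cokernel $\Rnt/(a_p)$ in part (2).
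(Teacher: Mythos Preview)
Your proof is correct and follows essentially the same route as the paper's. Both arguments derive (1) and (2) from Theorem \ref{thm:63}(2) by projecting out one factor of $\RR^{\oplus 2}\oplus R_{-1}$ and identifying the resulting cokernel; you phrase this as explicitly solving the system $\phi(x,y,z)=0$ for the remaining variables, while the paper phrases it as computing the image of the complementary summand under $\phi$, but the content is identical and the key step---the invertibility of $a_p(\varphi+\varphi^{-1})-a_p^2+(p-1)$ in $R_{-1}$ via Nakayama---is exactly the determinant computation in the paper.

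One small point: the line ``set $\overline{y}^{\mathrm{nt}}=\overline{x}^{\mathrm{nt}}/a_p$'' is ill-posed when $a_p=0$; in that case the constraint is $\overline{x}^{\mathrm{nt}}=0$ and $\overline{y}^{\mathrm{nt}}$ may be chosen arbitrarily. This does not affect the argument.
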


\begin{proof}
(1) This follows from Theorem \ref{thm:63}(2) and the observation that the image of $\RR \oplus 0 \oplus 0$ under the second map of \eqref{eq:104} is $R_0 \oplus 0$.

(2) Similarly, it is enough to show that the image of $0 \oplus \RR \oplus R_{-1}$ under the second map of \eqref{eq:104} is $(N_{\Delta}, a_p)R_{0} \oplus R_{-1}$.
The image is equal to the image of the map $R_0 \oplus R_{-1} \to R_0 \oplus R_{-1}$ given by the matrix 
$\begin{pmatrix} -a_p & -1 \\ -N_{\Delta} & \varphi + \varphi^{-1} - a_p \end{pmatrix}$.
Recall the decomposition $R_0 = R_0^{\Delta} \times \Rnt$.

For the $\Rnt$-part, our claim is that the image of $-a_p: \Rnt \to \Rnt$ is $a_p\Rnt$, which is clear.
For the $R_0^{\Delta}$-part, using the isomorphism $R_{-1} \simeq R_0^{\Delta}$, our claim is that the map $R_{-1} \oplus R_{-1} \to R_{-1} \oplus R_{-1}$ given by the matrix 
$\begin{pmatrix} -a_p & -1 \\ -(p-1) & \varphi + \varphi^{-1} - a_p \end{pmatrix}$
is surjective.
This is true since, by Nakayama's lemma, the determinant is a unit in $R_{-1}$.
\end{proof}

By the isomorphism in Proposition \ref{prop:895}, we can consider $\Cole$ as the map from $(E(k_{\infty}) \otimes (\Q_p/\Z_p))^{\dual}$.
Thus the first terms in the sequences of Corollary \ref{cor:90}(1)(2) can be replaced by $E(k_{\infty}) \otimes (\Q_p/\Z_p)^{\dual}$.

\begin{defn}\label{defn:231}
Define the submodules $E^{\sharp/\flat}_{\infty}$ of $E(k_{\infty}) \otimes (\Q_p/\Z_p)$ to fit into the exact sequences
\begin{equation}\label{eq:11}
0 \to (E^{\flat}_{\infty})^{\dual} \to \RR \oplus R_{-1} \to R_{-1} \to 0
\end{equation}
and
\begin{equation}\label{eq:12}
0 \to (E^{\sharp}_{\infty})^{\dual} \to \RR \to \Rnt/(a_p) \to 0
\end{equation}
induced by Corollary \ref{cor:90}(1)(2).
These generalize \cite[Definition 7.9]{Spr12}.
\end{defn}

\begin{cor}\label{cor:230}
We have $\pd_{\RR}((E^{\sharp/\flat}_{\infty})^{\dual}) \leq 1$ and $\rank_{\Lambda}((E^{\sharp/\flat}_{\infty})^{\dual}) = [K_0:\Q]$.
\end{cor}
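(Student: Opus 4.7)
The plan is to read off both claims directly from the defining exact sequences \eqref{eq:11} and \eqref{eq:12} by computing the projective dimensions and $\Lambda$-ranks of their right-hand cokernels. The key structural input is the ring decomposition $\RR = \RR^{\Delta} \times \RRnt$, which exists because $p \nmid \sharp \Delta = p-1$. Under this decomposition $\RR^{\Delta} \simeq \Lambda[G_{-1}]$, and the natural projections factor as $\RR \twoheadrightarrow \RR^{\Delta} \twoheadrightarrow \RR^{\Delta}/(\gamma-1) = R_{-1}$ and $\RR \twoheadrightarrow \RRnt \twoheadrightarrow \RRnt/(\gamma-1) = \Rnt$, so $R_{-1}$ and $\Rnt$ become $\RR$-modules supported on these respective factors. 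In particular, a projective resolution constructed over $\RR^{\Delta}$ or $\RRnt$ remains a projective resolution after inflating to $\RR$, because $\RR^\Delta$ and $\RRnt$ are direct summands of $\RR$.

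I would then bound $\pd_{\RR}$ of the two cokernels. Since $\gamma - 1$ is a non-zerodivisor in $\RR^{\Delta}$, the resolution $0 \to \RR^{\Delta} \xrightarrow{\gamma-1} \RR^{\Delta} \to R_{-1} \to 0$ gives $\pd_{\RR}(R_{-1}) \leq 1$. For $\Rnt/(a_p) = \RRnt/(\gamma-1, a_p)$, the case $a_p = 0$ reduces to the same statement. In the remaining case $p \mid a_p \neq 0$, the main point is that $(a_p, \gamma - 1)$ is a regular sequence in $\RRnt$: the nonzero integer $a_p$ is a non-zerodivisor by $\Z_p$-flatness, and the identification $\RR/(a_p) \simeq (\Z_p/a_p)[G_0][[T]]$ with $T = \gamma - 1$ shows that $\gamma - 1$ remains a non-zerodivisor after passing to the direct summand $\RRnt/(a_p)$. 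The Koszul complex for this regular sequence then provides a length-$2$ projective resolution, so $\pd_{\RR}(\Rnt/(a_p)) \leq 2$. Applying the standard inequality $\pd(A) \leq \max(\pd(B), \pd(C) - 1)$ to each short exact sequence in Definition \ref{defn:231} now yields $\pd_{\RR}((E_{\infty}^{\flat})^{\dual}) \leq \max(1,0) = 1$ and $\pd_{\RR}((E_{\infty}^{\sharp})^{\dual}) \leq \max(0,1) = 1$.

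For the $\Lambda$-rank statement I would compute ranks in \eqref{eq:11} and \eqref{eq:12}. We have $\rank_{\Lambda}(\RR) = [K_0:\Q]$, whereas both $R_{-1}$ and $\Rnt/(a_p)$ are annihilated by $\gamma - 1$ (for the latter because $\Rnt = \RRnt/(\gamma-1)$) and hence are $\Lambda$-torsion. Additivity of rank in short exact sequences then gives $\rank_{\Lambda}((E_{\infty}^{\sharp/\flat})^{\dual}) = [K_0:\Q]$ in both cases. The one step that requires care is the regular-sequence verification for $(a_p, \gamma - 1)$ when $p \mid a_p \neq 0$; the rest of the argument is formal bookkeeping from the given sequences.
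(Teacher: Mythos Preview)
Your proposal is correct and follows essentially the same approach as the paper: both deduce the claims from the exact sequences \eqref{eq:11} and \eqref{eq:12} by computing $\pd_{\RR}(R_{-1})$ and $\pd_{\RR}(\Rnt/(a_p))$ and then reading off the $\Lambda$-ranks. The paper states these projective dimensions as ``easy to see'' (namely $\pd_{\RR}(R_{-1}) = 1$ and $\pd_{\RR}(\Rnt/(a_p)) = 1$ or $2$ according as $a_p = 0$ or $a_p \neq 0$), while you supply the explicit resolutions and the regular-sequence verification that justify them.
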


\begin{proof}
It is easy to see that $\pd_{\RR}(R_{-1}) = 1$ and $\pd_{\RR}(\Rnt/(a_p)) = 1$ (resp. $2$) if $a_p = 0$ (resp. $a_p \neq 0$).
Hence the claim follows from the exact sequences \eqref{eq:11} and \eqref{eq:12}.
\end{proof}

\subsection{$a_p = 0$ Case}\label{subsec:27}

In this subsection, we assume $a_p = 0$ and consider the $\pm$-theory.

\begin{defn}
As a counterpart to \eqref{eq:105}, we introduce the $\pm$-parts of $\hat{E}(\m_n)$ as follows \cite[Definition 8.16]{Kob03}.
For $n \geq -1$, define
\[
\hat{E}^{\pm}(\m_n) = \{ x \in \hat{E}(\m_{n}) \mid \Tr_{n'+1}^{n}(x) \in \hat{E}(\m_{n'}), -1 \leq \forall n' < n, (-1)^{n'} = \pm 1\}.
\]
Then $\hat{E}^{\pm}(\m_n)$ is precisely the $p$-part of $E^{\pm}(k_n)$ by Proposition \ref{prop:895}.
\end{defn}

Let $d_n$ be as in Theorem \ref{thm:116}.
For $n \geq 0$, put 
\[
\dd_{n}^+ = 
	\begin{cases}
		(-1)^{\frac{n+2}{2}} \dd_{n} & (\text{$n$ is even}) \\
		(-1)^{\frac{n+1}{2}} \dd_{n-1} & (\text{$n$ is odd})
	\end{cases}
\]
and
\[
\dd_{n}^- = 
	\begin{cases}
		(-1)^{\frac{n+1}{2}} \dd_{n} & (\text{$n$ is odd}) \\
		(-1)^{\frac{n}{2}} \dd_{n-1} & (\text{$n$ is even}).
	\end{cases}
\]

\begin{prop}\label{prop:28}
For $n \geq 0$, the following are true.

(1) We have $\hat{E}^{+}(\m_{n}) = (\dd_{n}^+, \dd_{-1})_{R_{n}}$ and $\hat{E}^{-}(\m_{n}) = (\dd_{n}^-)_{R_{n}}$.

(2) We have an exact sequence
\[
0 \to \hat{E}(\m_{-1}) \to \hat{E}^+(\m_{n}) \oplus \hat{E}^-(\m_{n}) \to \hat{E}(\m_{n}) \to 0
\]
where the first map is the diagonal inclusion and the second map sends $(x,y)$ to $x-y$.
\end{prop}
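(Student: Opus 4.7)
The plan is to prove (1) and (2) simultaneously via a Mayer--Vietoris style argument. Set $M^+ = (d_n^+, d_{-1})_{R_n}$ and $M^- = (d_n^-)_{R_n}$, so that (1) asserts $M^\pm = \hat{E}^\pm(\m_n)$. First I would verify $M^+ \subset \hat{E}^+(\m_n)$ and $M^- \subset \hat{E}^-(\m_n)$ by direct trace computation: when $a_p = 0$, Theorem \ref{thm:116}(1) reduces to $\Tr^n_{n-1}(d_n) = -d_{n-2}$ for $n \geq 1$ and $\Tr^0_{-1}(d_0) = -(\varphi + \varphi^{-1})d_{-1}$, so iterated traces of $d_n$ and $d_{n-1}$ can be computed inductively by combining these with the identity $\Tr^k_j(y) = p^{k-j}\, y$ for $y \in \hat{E}(\m_j)$ and $k > j$. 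The sign conventions in the definition of $d_n^\pm$ are precisely those that make $d_n^\pm \in \hat{E}^\pm(\m_n)$, and $d_{-1} \in \hat{E}^+(\m_n)$ is trivial since $\Tr^n_{n'+1}(d_{-1}) = [K_n:K_{n'+1}]\, d_{-1} \in \hat{E}(\m_{-1})$.

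Since $R_n d_n^+ + R_n d_n^- = R_n d_n + R_n d_{n-1}$ in either parity (each $d_n^\pm$ is a unit multiple of $d_n$ or $d_{n-1}$), Theorem \ref{thm:116}(2) gives $M^+ + M^- = \hat{E}(\m_n)$, and together with the previous step this yields the surjectivity in (2). Moreover, $\hat{E}(\m_{-1}) \subset M^+ \cap M^-$: the inclusion into $M^+$ is immediate from $R_{-1} d_{-1} \subset R_n d_{-1}$, while for $M^-$ one iterates the norm identity $N_k d_k = -d_{k-2}$ to conclude $d_{-1} \in R_n d_n^-$ up to sign.

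The main obstacle is the identity $\hat{E}^+(\m_n) \cap \hat{E}^-(\m_n) = \hat{E}(\m_{-1})$. The containment $\supset$ is immediate. For $\subset$, an element $x$ in the intersection satisfies $\Tr^n_{n'+1}(x) \in \hat{E}(\m_{n'})$ for every $-1 \leq n' < n$, with no parity restriction. Taking $n' = n-1$ gives $x \in \hat{E}(\m_{n-1})$, and then the condition for $n' = n-2$ reads $p x = \Tr^n_{n-1}(x) \in \hat{E}(\m_{n-2})$. A Galois descent now forces $x \in \hat{E}(\m_{n-2})$: for any $\sigma \in \Gal(K_{n-1}/K_{n-2})$ we have $p(\sigma(x) - x) = 0$, and Proposition \ref{prop:98} gives $\hat{E}(\m_\infty)[p] = 0$, so $\sigma(x) = x$. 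Iterating the same descent down through the tower -- the final step $\hat{E}(\m_0) \to \hat{E}(\m_{-1})$ being handled identically with $\Delta$ in place of $\Gal(K_{n-1}/K_{n-2})$ -- yields $x \in \hat{E}(\m_{-1})$.

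These pieces combine to prove (2): injectivity of the diagonal is trivial, its image equals the kernel of the subtraction map by the intersection identity just established, and surjectivity was noted in the second paragraph. The same three properties hold verbatim with $\hat{E}^\pm$ replaced by $M^\pm$ (since $M^+ \cap M^- \subset \hat{E}^+ \cap \hat{E}^- = \hat{E}(\m_{-1}) \subset M^+ \cap M^-$), producing a parallel exact sequence $0 \to \hat{E}(\m_{-1}) \to M^+ \oplus M^- \to \hat{E}(\m_n) \to 0$. Comparing the two sequences via the inclusions $M^\pm \hookrightarrow \hat{E}^\pm(\m_n)$ (identity on the outer terms), the five lemma forces $M^\pm = \hat{E}^\pm(\m_n)$, completing (1).
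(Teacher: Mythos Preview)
Your argument is correct and follows essentially the same route as the proofs in \cite[Proposition 8.12]{Kob03} and \cite[Proposition 3.16]{KO18} that the paper cites: establish the inclusions $M^{\pm}\subset\hat E^{\pm}(\m_n)$ by iterating the trace relations, identify $\hat E^{+}(\m_n)\cap\hat E^{-}(\m_n)=\hat E(\m_{-1})$ via the $p$-torsion-freeness of Proposition~\ref{prop:98}, and finish by comparing the two resulting short exact sequences. Your five-lemma step is just a clean repackaging of the rank/cokernel comparison used in those references; one minor remark is that the sign conventions on $d_n^{\pm}$ are irrelevant for the containment $d_n^{\pm}\in\hat E^{\pm}(\m_n)$ (they are there for compatibility in $n$), but this does not affect the argument.
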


\begin{proof}
This can be shown by the same proof as in \cite[Proposition 3.16]{KO18} and \cite[Proposition 8.12]{Kob03}.
\end{proof}

Recall the elements $\ttilde{\omega}_n^{\pm}$ and $\omega_n^{\pm}$ defined in Subsection \ref{subsec:403}.
Then $\Phi_{d_{n}^{\pm}}$ factors through $R_{n}/\omega_n^{\pm}$, since $\omega_n^{\pm} d_{n}^{\pm} = 0$.
Therefore, similarly as \eqref{eq:161}, we can consider the composite
\[
\hat{E}^{\pm}(\m_{n})^* \overset{(\Phi_{d_{n}^{\pm}})^*}{\to} (R_{n}/\omega_n^{\pm})^* 
\overset{\sim}{\to} R_{n}[(\omega_n^{\pm})^{\iota}] \overset{\sim}{\leftarrow} R_{n}/(\omega_n^{\pm})^{\iota},
\]
where the final arrow is the multiplication by $(\ttilde{\omega}_n^{\mp})^{\iota}$.
Thus we obtain the following.

\begin{defn}
For $n \geq 0$, define the $\pm$-Coleman maps $\Cole_{n}^{\pm}: \hat{E}^{\pm}(\m_{n})^* \to R_{n}/(\omega_n^{\pm})^{\iota}$ characterized by
\begin{equation}\label{eq:233}
\Cole_{n}^{\pm}(f) (\ttilde{\omega}_n^{\mp})^{\iota} = \sum_{\sigma \in G_{n}} f(\sigma(\dd_{n}^{\pm}))\sigma
\end{equation}
for $f \in \hat{E}^{\pm}(\m_{n})^*$.
This is a generalization of \cite[Corollary 8.20]{Kob03}.
\end{defn}

The relation with Definition \ref{defn:55} is given by the following.

\begin{lem}\label{lem:64}
For $n \geq 0$, we have $\Cole'_{n} = (-\Cole_{n}^+, \Cole_{n}^-)$.
\end{lem}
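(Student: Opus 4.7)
The plan is to verify the asserted identity by direct computation, exploiting the very simple form of the matrices $A_n$ and $B_n$ when $a_p = 0$. By Lemma~\ref{lem:53}(2), right multiplication by $\ttilde{B_n}^{\iota}$ realizes $R_n^{\oplus 2}/((\gamma-1)B_n)^{\iota}$ as the submodule $R_n^{\oplus 2}[((\gamma-1)B_n)^{\iota}] \subset R_n^{\oplus 2}$, so it suffices to prove
\[
(-\Cole_n^+(f),\, \Cole_n^-(f))\, \ttilde{B_n}^{\iota} = \left(\sum_{\sigma \in G_n} f(\sigma(d_n))\sigma,\ \sum_{\sigma \in G_n} f(\sigma(d_{n-1}))\sigma\right)
\]
in $R_n^{\oplus 2}$ for every $f \in \hat{E}(\m_n)^*$, where the left hand side is computed using arbitrary lifts of $\Cole_n^{\pm}(f)$ to $R_n$; the right hand side is exactly $\Cole_n'(f)\,\ttilde{B_n}^{\iota}$ by the characterization~\eqref{eq:16}.

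First I would compute $B_n$ explicitly. Since $a_p = 0$ gives $A_n = \begin{pmatrix} 0 & -N_n \\ 1 & 0 \end{pmatrix}$, a straightforward induction on $n$ using $B_n = A_n B_{n-1}$ produces
\[
B_{2k} = (-1)^k \begin{pmatrix} \ttilde{\omega}_{2k}^+ & 0 \\ 0 & \ttilde{\omega}_{2k}^- \end{pmatrix}, \qquad B_{2k+1} = (-1)^k \begin{pmatrix} 0 & -\ttilde{\omega}_{2k+1}^- \\ \ttilde{\omega}_{2k+1}^+ & 0 \end{pmatrix},
\]
and hence, taking adjugates,
\[
\ttilde{B_{2k}} = (-1)^k \begin{pmatrix} \ttilde{\omega}_{2k}^- & 0 \\ 0 & \ttilde{\omega}_{2k}^+ \end{pmatrix}, \qquad \ttilde{B_{2k+1}} = (-1)^k \begin{pmatrix} 0 & \ttilde{\omega}_{2k+1}^- \\ -\ttilde{\omega}_{2k+1}^+ & 0 \end{pmatrix}.
\]

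Next I would unwind the definition of $d_n^{\pm}$: for $n = 2k$ one has $d_n = (-1)^{k+1} d_n^+$ and $d_{n-1} = (-1)^k d_n^-$, while for $n = 2k+1$ one has $d_n = (-1)^{k+1} d_n^-$ and $d_{n-1} = (-1)^{k+1} d_n^+$. Substituting these expressions into the right hand side of the target identity, and invoking the characterization $\Cole_n^{\pm}(f)(\ttilde{\omega}_n^{\mp})^{\iota} = \sum_\sigma f(\sigma(d_n^{\pm}))\sigma$ from~\eqref{eq:233}, writes both sides as explicit row vectors in $R_n^{\oplus 2}$ built from the four quantities $\Cole_n^{\pm}(f)(\ttilde{\omega}_n^{\mp})^{\iota}$. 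A routine entry-by-entry comparison in each parity of $n$ then confirms the equality. There is no conceptual obstacle; the only real task is careful sign bookkeeping in the two parities.
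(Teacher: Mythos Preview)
Your proposal is correct and follows essentially the same approach as the paper: compute $\ttilde{B_n}$ explicitly in the $a_p=0$ case (it becomes diagonal or antidiagonal according to the parity of $n$), then compare $(-\Cole_n^+(f),\Cole_n^-(f))\,\ttilde{B_n}^{\iota}$ with $\Cole_n'(f)\,\ttilde{B_n}^{\iota}$ via the characterizations~\eqref{eq:16} and~\eqref{eq:233}. The paper writes out only the even case and leaves the odd case to the reader, whereas you set up both parities; the arguments are otherwise identical.
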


\begin{proof}
Suppose $n$ is even.
By Definition \ref{defn:688}, we have
\[
\ttilde{B_n} = \begin{pmatrix} (-1)^{n/2}\ttilde{\omega}_n^- & 0 \\ 0 & (-1)^{n/2}\ttilde{\omega}_n^+ \end{pmatrix}.
\]
Then for any $f \in \hat{E}(\m_n)^*$, we have
\begin{align}
(-\Cole_n^+(f), \Cole_n^-(f)) \ttilde{B_n}^{\iota} 
&= \left((-1)^{(n+2)/2} \Cole_n^+(f)(\ttilde{\omega}_n^-)^{\iota}, (-1)^{n/2} \Cole_n^-(f)(\ttilde{\omega}_n^+)^{\iota}\right)\\
&= \left( \sum_{\sigma \in G_{n}} f(\sigma(\dd_{n}))\sigma, \sum_{\sigma \in G_{n}} f(\sigma(\dd_{n-1}))\sigma \right)\\
&= \Cole_n'(f) \ttilde{B_n}^{\iota}.
\end{align}
The case where $n$ is odd can be shown similarly.
\end{proof}

\begin{defn}\label{defn:878}
Observe that $\Cole_n^{\pm}$ are compatible along $n$ by Lemmas \ref{lem:998} and \ref{lem:64} (alternatively we can check this property by a direct computation with \eqref{eq:233}).
Define
\[
\Cole^{\pm}: \varprojlim_n \hat{E}^{\pm}(\m_{n})^* \to \varprojlim_n R_{n}/(\omega_n^{\pm})^{\iota} \simeq \RR
\]
as the induced map.
This is a generalization of \cite[Definition 8.22]{Kob03}.
Therefore we have $\Cole' = (-\Cole^+, \Cole^-)$ and
\[
\Cole = (\Cole^{\sharp}, \Cole^{\flat}) = (\Cole^-, \Cole^+).
\]
\end{defn}

Next we investigate the structures of $\hat{E}^{\pm}(\m_n)$ similarly as in the previous subsection.
For $n \geq 0$, we define sequences
\begin{equation}\label{eq:162}
0 \to R_{-1} \overset{\aalpha_n^+}{\to} R_{n} /\omega_n^+ \oplus R_{-1} \overset{\bbeta_n^+}{\to} \hat{E}^+(\m_{n}) \to 0
\end{equation}
and
\begin{equation}\label{eq:163}
0 \to \Rnt \overset{\aalpha_n^-}{\to} R_{n} /\omega_n^- \overset{\bbeta_n^-}{\to} \hat{E}^-(\m_{n}) \to 0
\end{equation}
as follows.
Let $\aalpha_n^+$ be the map presented by $(- N_{\Delta} \ttilde{\omega}_n^+, \varphi + \varphi^{-1})$.
Let $\bbeta_n^+$ be induced by $\Phi_{d_n^+, d_{-1}}$.
Let $\aalpha_n^-$ be the multiplication by $\ttilde{\omega}_n^-$.
Let $\bbeta_n^-$ be induced by $\Phi_{\dd_{n}^-}$.

\begin{prop}\label{prop:32}
For $n \geq 0$, the sequences \eqref{eq:162} and \eqref{eq:163} are exact.
\end{prop}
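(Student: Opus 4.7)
The plan is to mimic the proof of Proposition \ref{prop:57}, applying Lemma \ref{lem:61} separately to each of the two sequences \eqref{eq:162} and \eqref{eq:163}. For each sign one must verify: (i) $\bbeta_n^\pm$ is well-defined and surjective; (ii) $\bbeta_n^\pm \circ \aalpha_n^\pm = 0$; (iii) $\aalpha_n^\pm$ is injective with torsion-free cokernel; and (iv) the $\Z_p$-ranks add up correctly, so that Lemma \ref{lem:61} can be invoked.

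For (i), well-definedness reduces to $\omega_n^\pm \dd_n^\pm = 0$, which I would obtain by induction on $n$ from the trace relations of Theorem \ref{thm:116}(1) (with $a_p = 0$), noting that $T = \gamma - 1$ annihilates $\hat{E}(\m_0)$; surjectivity is Proposition \ref{prop:28}(1). For (ii) on the $-$ side, the composite sends $r \in \Rnt$ to $\ttilde\omega_n^- r \dd_n^-$; the trace relations give $\ttilde\omega_n^- \dd_n^- \in \hat{E}(\m_{-1})$, and since $\Rnt$ acts as zero on the $R_{-1}$-module $\hat{E}(\m_{-1})$, the composite vanishes. On the $+$ side, the same trace relations identify $\ttilde\omega_n^+ \dd_n^+$ with $-\dd_0$ in $\hat{E}(\m_n)$; combining with the $n=0$ relation $N_\Delta \dd_0 = -(\varphi + \varphi^{-1}) \dd_{-1}$ then yields $-N_\Delta \ttilde\omega_n^+ r \dd_n^+ + (\varphi + \varphi^{-1}) r \dd_{-1} = 0$ for $r \in R_{-1}$.

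For (iii), injectivity of $\aalpha_n^-$ holds because $\ttilde\omega_n^-$ is a distinguished polynomial in $T$, while the cokernel, under the ring decomposition $R_0 = R_0^\Delta \times \Rnt$, is isomorphic to $R_n^\Delta/\omega_n^- \times \Rnt[T]/\ttilde\omega_n^-$, which is a free $\Z_p$-module; the $+$ case is completely analogous. The rank count then uses $\deg \omega_n^+ + \deg \omega_n^- = p^n + 1$ together with Proposition \ref{prop:28}(2) to pin down $\rank_{\Z_p} \hat{E}^\pm(\m_n)$, after which Lemma \ref{lem:61} applies. The hardest part will be the sign bookkeeping in (ii), in particular tracking how the parity-dependent definition of $\dd_n^\pm$ interacts with the norm relations; as an alternative I would consider deducing the proposition directly from Propositions \ref{prop:57} and \ref{prop:28}(2), observing that when $a_p = 0$ the matrix $(\gamma - 1) B_n^T$ becomes (anti-)diagonal with entries $\pm \omega_n^\pm$, so that $R_n^{\oplus 2}/(\gamma - 1) B_n^T \simeq R_n/\omega_n^+ \oplus R_n/\omega_n^-$ and the $\pm$-sequences split off from the known exact sequence of Proposition \ref{prop:57}.
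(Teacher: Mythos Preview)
Your proposal is correct and follows essentially the same approach as the paper. The paper's own proof is a two-line remark: it says the argument is the same as in Proposition~\ref{prop:57}, with surjectivity from Proposition~\ref{prop:28}(1) and the rank computation from Proposition~\ref{prop:28}(2) as in \cite[Corollary~8.13]{Kob03}. Your plan spells out exactly these steps---checking well-definedness, surjectivity, $\bbeta_n^\pm\aalpha_n^\pm=0$, injectivity with torsion-free cokernel, and the rank identity---and then invokes Lemma~\ref{lem:61}, so the strategies coincide. One small remark: for the $+$ side in step~(iii) the map $\aalpha_n^+$ lands in a direct sum, so ``completely analogous'' to the $-$ case means projecting to the first factor $R_n/\omega_n^+$ and arguing as in the $n=0$ step of Proposition~\ref{prop:57}; your alternative route via the (anti-)diagonal form of $(\gamma-1)B_n^T$ when $a_p=0$ is also valid and is essentially how Lemma~\ref{lem:64} connects the two pictures.
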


\begin{proof}
This can be shown by the same argument as in Proposition \ref{prop:57}.
The surjectivities come from Proposition \ref{prop:28}(1).
The rank computation can be done via Proposition \ref{prop:28}(2) as in \cite[Corollary 8.13]{Kob03}.
\end{proof}

Now we obtain the following (Theorem \ref{thm:93}(3)).

\begin{thm}\label{thm:34}
The following sequences are exact.

(1) 
\[
0 \to (E^+(k_{\infty}) \otimes (\Q_p/\Z_p))^{\dual} \to \RR \oplus R_{-1} \to R_{-1} \to 0,
\]
where the first map is $(\Cole^+, \Psi_{d_{-1}})$ and the second map is presented by $\begin{pmatrix} -1 \\ \varphi+\varphi^{-1} \end{pmatrix}$.

(2)
\[
0 \to (E^-(k_{\infty}) \otimes (\Q_p/\Z_p))^{\dual} \to \RR \to \Rnt \to 0,
\]
where the first map is $\Cole^-$ and the second map is the natural projection.
\end{thm}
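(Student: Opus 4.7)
The plan is to dualize the short exact sequences \eqref{eq:162} and \eqref{eq:163} of Proposition \ref{prop:32}, take the inverse limit over $n$, and then apply Proposition \ref{prop:895}. This closely parallels the proof of Theorem \ref{thm:63} in the previous subsection.

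First I would check that every module in \eqref{eq:162} and \eqref{eq:163} is a free $\Z_p$-module of finite rank: $\hat{E}^{\pm}(\m_n) \subset \hat{E}(\m_n)$ is $p$-torsion free by Proposition \ref{prop:98}, and $R_n / \omega_n^{\pm}$ is $\Z_p$-free since $\omega_n^{\pm}$ is distinguished in the cyclotomic variable. Hence $\Hom_{\Z_p}(-, \Z_p)$ yields short exact sequences. Using \eqref{eq:96}, Lemma \ref{lem:693}, and the identification $(R_n/\omega_n^{\pm})^* \simeq R_n/(\omega_n^{\pm})^{\iota}$ employed in the definition \eqref{eq:233} of $\Cole_n^{\pm}$, the dual of $\bbeta_n^+$ becomes $(\Cole_n^+, \Psi_{d_{-1}})$ and the dual of $\bbeta_n^-$ becomes $\Cole_n^-$. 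By Lemma \ref{lem:693}(2), the dual of $\aalpha_n^{\pm}$ is the involution-transpose of the row vector defining $\aalpha_n^{\pm}$, where one uses $N_\Delta^{\iota} = N_\Delta$ and $(\varphi + \varphi^{-1})^{\iota} = \varphi + \varphi^{-1}$.

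Next I would pass to the inverse limit over $n$. The transition maps in \eqref{eq:162} and \eqref{eq:163} are surjective (built inductively as in \eqref{eq:07}), so the dualized inverse systems satisfy the Mittag--Leffler condition and exactness is preserved in the limit. The isomorphism $\varprojlim_n R_n/(\omega_n^{\pm})^{\iota} \simeq \RR$ from Definition \ref{defn:878}, combined with Proposition \ref{prop:895} applied to $\hat{E}^{\pm}(\m_n)$ (which coincides with the $p$-primary part of $E^{\pm}(k_n)$ because the trace condition in Definition \ref{defn:898} matches that defining $\hat{E}^{\pm}$), then yields the desired exact sequences.

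The main obstacle will be extracting the clean presentations $\begin{pmatrix} -1 \\ \varphi + \varphi^{-1} \end{pmatrix}$ in (1) and the natural projection to $\Rnt$ in (2): at finite level the computation produces the auxiliary matrices $\begin{pmatrix} -N_\Delta \ttilde\omega_n^{+,\iota} \\ \varphi + \varphi^{-1} \end{pmatrix}$ and $\ttilde\omega_n^{-,\iota}$, and the simplification only occurs once the identification $\varprojlim_n R_n/(\omega_n^{\pm})^{\iota} \simeq \RR$ absorbs the $\ttilde\omega_n^{\mp,\iota}$ factors. This is the same phenomenon as in the $\sharp/\flat$ case, and consistently with Lemma \ref{lem:64} and the convention $(\Cole^{\sharp}, \Cole^{\flat}) = (\Cole^-, \Cole^+)$, Theorem \ref{thm:34} may alternatively be read off by splitting Corollary \ref{cor:90} into the $+$- and $-$-components of $\varprojlim_n \hat{E}(\m_n)^*$.
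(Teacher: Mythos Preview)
Your approach is correct and is precisely the paper's: dualize \eqref{eq:162} and \eqref{eq:163}, take the inverse limit, and use Proposition~\ref{prop:895}, all in parallel with the proof of Theorem~\ref{thm:63}. Two small comments: (i) the transition maps in the \emph{original} direct systems (built as in \eqref{eq:07}) are injective, and it is their $\Z_p$-duals that are surjective, which is what gives the Mittag--Leffler condition you invoke; (ii) your closing remark about Corollary~\ref{cor:90} is a valid consistency check, but in the paper's logical order the identification $E_\infty^{\sharp/\flat}=E^{\mp}(k_\infty)\otimes(\Q_p/\Z_p)$ (Corollary~\ref{cor:870}) is \emph{deduced from} Theorem~\ref{thm:34}, so that route is not an independent alternative proof.
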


\begin{proof}
This theorem can be shown as in Theorem \ref{thm:63}.
More concretely, it follows from taking the $\Z_p$-linear dual of the sequences \eqref{eq:162} and \eqref{eq:163} and taking the limit (the identification as in Proposition \ref{prop:895} is also used).
\end{proof}

\begin{rem}\label{rem:879}
Theorem \ref{thm:34}(2) claims that 
$\Cole^-: (E^-(k_{\infty}) \otimes (\Q_p/\Z_p))^{\dual} \to \au^{-}$ is isomorphic.
On the other hand, by Theorem \ref{thm:34}(1), we have equivalences
\begin{align}
& \text{$\Cole^+: (E^+(k_{\infty}) \otimes (\Q_p/\Z_p))^{\dual} \to \RR$ is isomorphic}\\
&\quad\Leftrightarrow \text{$\varphi + \varphi^{-1}$ is a unit of $R_{-1}$} \\
&\quad \Leftrightarrow \text{the residue degree of $p$ in $K/\Q$ is not divisible by $4$}
\end{align}
(see \cite[Lemma 3.6]{KO18} for the final equivalence).
Therefore, $\Cole^+$ often fails to be isomorphic.
Such an obstruction is observed in \cite[Remark 3.5]{KO18}.
M. Kim \cite{Kim11} overlooked this obstruction, as pointed out in \cite{KO18}.
\end{rem}

The following is a consequence of comparing Definition \ref{defn:231} and Theorem \ref{thm:34}.

\begin{cor}\label{cor:870}
We have $E^{\flat}_{\infty} = E^{+}(k_{\infty}) \otimes (\Q_p/\Z_p)$ and $E^{\sharp}_{\infty} = E^{-}(k_{\infty}) \otimes (\Q_p/\Z_p)$ as submodules of $E(k_{\infty}) \otimes (\Q_p/\Z_p)$.
\end{cor}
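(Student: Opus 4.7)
The plan is to prove this by pure bookkeeping: in the $a_p=0$ case, the exact sequences that \emph{define} the $\sharp/\flat$-submodules (Definition~\ref{defn:231}) become literally identical to the exact sequences that characterise the $\pm$-submodules (Theorem~\ref{thm:34}). Once this is established, equality of the duals as subquotients of $\RR\oplus R_{-1}$ (resp.\ $\RR$) forces equality of the original submodules of $E(k_\infty)\otimes(\Q_p/\Z_p)$.

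First I would list the three simplifications that hold under $a_p=0$: (i) by Definition~\ref{defn:878} the Coleman maps satisfy $\Cole^\flat=\Cole^+$ and $\Cole^\sharp=\Cole^-$; (ii) the ideal $(a_p)=(0)$, so $\Rnt/(a_p)=\Rnt$; (iii) the local term $\varphi+\varphi^{-1}-a_p$ appearing in the matrix of Corollary~\ref{cor:90}(1) equals $\varphi+\varphi^{-1}$, exactly matching the matrix appearing in Theorem~\ref{thm:34}(1). Applying these three observations to the defining sequence~\eqref{eq:11} turns it into
\[
0\to (E^\flat_\infty)^{\dual}\to \RR\oplus R_{-1}\to R_{-1}\to 0
\]
with structure map $(\Cole^+,\Psi_{d_{-1}})$ and right-hand map $\binom{-1}{\varphi+\varphi^{-1}}$, which is precisely the sequence in Theorem~\ref{thm:34}(1). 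Similarly, \eqref{eq:12} becomes the sequence
\[
0\to (E^\sharp_\infty)^{\dual}\to \RR\to \Rnt\to 0
\]
with structure map $\Cole^-$ and natural projection on the right, i.e.\ Theorem~\ref{thm:34}(2).

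Next I would convert this equality of sequences into equality of submodules. Both $(E^\flat_\infty)^{\dual}$ and $(E^+(k_\infty)\otimes(\Q_p/\Z_p))^{\dual}$ are realized as the image of the same homomorphism $(\Cole^+,\Psi_{d_{-1}}):\varprojlim_n\hat{E}(\m_n)^*\to\RR\oplus R_{-1}$, where the source is identified with $(E(k_\infty)\otimes(\Q_p/\Z_p))^{\dual}$ via Proposition~\ref{prop:895}. Hence the two surjections from $(E(k_\infty)\otimes(\Q_p/\Z_p))^{\dual}$ have the same kernel. Taking Pontryagin duals and using that a submodule of a discrete $p$-primary module is determined by the kernel of the dual quotient map gives $E^\flat_\infty=E^+(k_\infty)\otimes(\Q_p/\Z_p)$. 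The same argument applied to $\Cole^-$ gives $E^\sharp_\infty=E^-(k_\infty)\otimes(\Q_p/\Z_p)$.

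There is essentially no obstacle here; the potential subtlety is only to verify that the identifications of the source $\varprojlim_n\hat{E}(\m_n)^*\simeq (E(k_\infty)\otimes(\Q_p/\Z_p))^{\dual}$ used in Definition~\ref{defn:231} and in Theorem~\ref{thm:34} are the same, which is immediate from Proposition~\ref{prop:895}. Hence the corollary reduces to matching Coleman maps and cokernels term-by-term.
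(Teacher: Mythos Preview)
Your proposal is correct and follows exactly the approach indicated by the paper, which simply states that the corollary is a consequence of comparing Definition~\ref{defn:231} and Theorem~\ref{thm:34}. You have carefully spelled out the three bookkeeping reductions (matching Coleman maps via Definition~\ref{defn:878}, collapsing $\Rnt/(a_p)$ to $\Rnt$, and simplifying $\varphi+\varphi^{-1}-a_p$) that make the two pairs of exact sequences literally identical, and then correctly dualised to conclude equality of submodules.
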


Here we record a lemma which will be useful in Section \ref{sec:07}.
It is a generalization of the computation in \cite[\S 4]{KK}.
Recall the definition of $\au^{\pm}$ in Section \ref{sec:01}.

\begin{lem}\label{lem:623}
For $n \geq 0$, we have
\[
\left(\frac{E(k_n) \otimes (\Q_p / \Z_p)}{E^{\pm}(k_n) \otimes (\Q_p / \Z_p)}\right)^{\dual} \simeq R_n / \au^{\pm} \ttilde{\omega}_n^{\mp}.
\]
\end{lem}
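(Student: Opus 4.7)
The plan is to reduce the claim to a finite-level computation with the presentations \eqref{eq:162} and \eqref{eq:163} from Proposition~\ref{prop:32}, combined with the direct sum decomposition from Proposition~\ref{prop:28}(2).

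First, I would use Proposition~\ref{prop:895} to rewrite the displayed quotient as $(\hat{E}(\m_n)/\hat{E}^{\pm}(\m_n))\otimes(\Q_p/\Z_p)$. The injectivity of $\hat{E}^{\pm}(\m_n)\otimes(\Q_p/\Z_p) \hookrightarrow \hat{E}(\m_n)\otimes(\Q_p/\Z_p)$ noted in Definition~\ref{defn:898} forces $\hat{E}(\m_n)/\hat{E}^{\pm}(\m_n)$ to be $\Z_p$-torsion-free, hence a free $\Z_p$-module of finite rank; then \eqref{eq:95} identifies the Pontryagin dual on the left with the $\Z_p$-linear dual $(\hat{E}(\m_n)/\hat{E}^{\pm}(\m_n))^{*}$. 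Next, Proposition~\ref{prop:28}(2) yields the $R_n$-module isomorphism $\hat{E}(\m_n)/\hat{E}^{\pm}(\m_n) \simeq \hat{E}^{\mp}(\m_n)/\hat{E}(\m_{-1})$ by a standard diagram chase on the given sequence (taking $\hat{E}^{\pm}(\m_n)$ into $\hat{E}(\m_n)$ via $x\mapsto(x,0)$ modulo the diagonal).

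The main content is then to identify $\hat{E}^{\mp}(\m_n)/\hat{E}(\m_{-1})$ with $R_n/\au^{\pm}\ttilde{\omega}_n^{\mp}$. For $\pm=-$ the presentation \eqref{eq:162} does this almost directly: the image of $\hat{E}(\m_{-1})$ in $\hat{E}^+(\m_n)$ is exactly the $R_{-1}$ direct summand, so the quotient equals $(R_n/\omega_n^+)/N_{\Delta}\ttilde{\omega}_n^+ R_{-1}$; since $\omega_n^+ = (\gamma-1)\ttilde{\omega}_n^+$ forces $\gamma^i\ttilde{\omega}_n^+\equiv\ttilde{\omega}_n^+\pmod{\omega_n^+}$, we get $N_{\Delta}\ttilde{\omega}_n^+R_n \equiv N_{\Delta}\ttilde{\omega}_n^+R_{-1}\pmod{\omega_n^+}$, so this quotient is $R_n/(\omega_n^+, N_{\Delta}\ttilde{\omega}_n^+) = R_n/\au^-\ttilde{\omega}_n^+$. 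For $\pm=+$ I would first establish the identity $d_{-1} = \ttilde{\omega}_n^- d_n^-$ in $\hat{E}^-(\m_n)$ by iterating the trace relation $\Tr^{n'}_{n'-1}(d_{n'}) = -d_{n'-2}$ of Theorem~\ref{thm:116}(1) (with $a_p=0$) through all layers and using that $\Phi_{n'}(1+T) = 1+\gamma+\cdots+\gamma^{p^{n'-1}(p-1)}$ acts as the partial trace $\Tr^{n'}_{n'-1}$, keeping track of the sign conventions in the definition of $d_n^-$. Inserted into \eqref{eq:163}, the image of $\hat{E}(\m_{-1})$ in $\hat{E}^-(\m_n) \simeq (R_n/\omega_n^-)/\ttilde{\omega}_n^-\Rnt$ is then the $R_n$-submodule $\ttilde{\omega}_n^- R_n$, which swallows both $\omega_n^- = (\gamma-1)\ttilde{\omega}_n^-$ and $\ttilde{\omega}_n^-\Rnt$, so the quotient simplifies to $R_n/\ttilde{\omega}_n^- = R_n/\au^+\ttilde{\omega}_n^-$.

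Finally, to pass from $\hat{E}(\m_n)/\hat{E}^{\pm}(\m_n) \simeq R_n/\au^{\pm}\ttilde{\omega}_n^{\mp}$ back to the Pontryagin dual, I would take the $\Z_p$-linear dual of $R_n/\au^{\pm}\ttilde{\omega}_n^{\mp}$: by \eqref{eq:96} and Lemma~\ref{lem:693}(2) it embeds in $R_n$ as the annihilator $R_n[(\au^{\pm}\ttilde{\omega}_n^{\mp})^{\iota}]$. Each cyclotomic factor is reciprocal, giving $\Phi_{n'}(1+T)^{\iota} = \gamma^{-\deg\Phi_{n'}}\Phi_{n'}(1+T)$, and $N_{\Delta}^{\iota}=N_{\Delta}$, so the ideal $\au^{\pm}\ttilde{\omega}_n^{\mp}$ is $\iota$-stable, reducing the problem to the autoduality $R_n[\au^{\pm}\ttilde{\omega}_n^{\mp}] \simeq R_n/\au^{\pm}\ttilde{\omega}_n^{\mp}$; for the $+$ case this is realized by multiplication by $\omega_n^+$ (using $\omega_n^+\ttilde{\omega}_n^- = \omega_n \equiv 0$ in $R_n$), and for the $-$ case by a componentwise version on the decomposition $R_n = R_n^{\Delta}\times\Rnt$. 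The main obstacle is the trace identity $d_{-1} = \ttilde{\omega}_n^- d_n^-$ in the $+$ case, whose proof requires careful bookkeeping of the alternating signs in Theorem~\ref{thm:116}(1) together with the parity-dependent definition of $d_n^-$.
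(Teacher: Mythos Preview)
Your proposal is correct and follows essentially the same route as the paper: both arguments reduce via Proposition~\ref{prop:895} and Proposition~\ref{prop:28}(2) to computing $\hat{E}^{\mp}(\m_n)/\hat{E}(\m_{-1})$ from the presentations of Proposition~\ref{prop:32}, and then invoke the self-duality $(R_n/\au^{\pm}\ttilde{\omega}_n^{\mp})^* \simeq R_n/\au^{\pm}\ttilde{\omega}_n^{\mp}$. The only cosmetic difference is that the paper passes to $\Z_p$-linear duals first (identifying the target with $\Ker(\hat{E}^{\mp}(\m_n)^* \to \hat{E}(\m_{-1})^*)$) and computes the cokernel tersely, whereas you stay on the module side and spell out the trace identity $d_{-1} = \ttilde{\omega}_n^- d_n^-$ that the paper leaves implicit; your explicit verification of $\iota$-stability and of the annihilator identifications is likewise just an unpacking of what the paper states in one line.
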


\begin{proof}
We have
\begin{align}
\left(\frac{\hat{E}(\m_n) \otimes (\Q_p / \Z_p)}{\hat{E}^{\pm}(\m_n) \otimes (\Q_p / \Z_p)}\right)^{\dual} 
&\simeq \Ker \left(\hat{E}(\m_n)^* \to \hat{E}^{\pm}(\m_n)^* \right)\\
&\simeq \Ker \left(\hat{E}^{\mp}(\m_n)^* \to \hat{E}(\m_{-1})^* \right),
\end{align}
where the last isomorphism follows from Proposition \ref{prop:28}(2).
By Proposition \ref{prop:32} and the fact that $\Phi_{d_{-1}}: R_{-1} \to \hat{E}(\m_{-1})$ is an isomorphism by Theorem \ref{thm:116}(2), we can see that
\[
\Cok \left(\hat{E}(\m_{-1}) \to \hat{E}^-(\m_{n}) \right)
\simeq R_n / \ttilde{\omega}_n^- = R_n/\au^+\ttilde{\omega}_n^-
\]
and
\[
\Cok \left(\hat{E}(\m_{-1}) \to \hat{E}^+(\m_{n}) \right)
\simeq R_n / (N_{\Delta}, \gamma - 1) \ttilde{\omega}_n^+ = R_n / \au^{-} \ttilde{\omega}_n^{+}.
\]
Since $(R_n/\au^{\pm} \ttilde{\omega}_n^{\mp})^* \simeq R_n/ (\au^{\pm} \ttilde{\omega}_n^{\mp})^{\iota} = R_n/ \au^{\pm} \ttilde{\omega}_n^{\mp}$, we obtain the result.
\end{proof}

\section{Structures of Selmer Groups}\label{sec:220}

In this section, we prove Theorem \ref{thm:76} using Theorem \ref{thm:93}.
We always assume Assumption \ref{ass:04} in the ordinary case.

First we give the definition of $\sharp/\flat$-Selmer groups, generalizing \cite[Definition 7.11]{Spr12}.

\begin{defn}\label{defn:930}
When $p \mid a_p$, we define the $S$-imprimitive $\sharp/\flat$-Selmer groups by
\begin{equation}\label{eq:200}
\Sel_S^{\sharp/\flat}(E/K_{\infty}) = 
\Ker \left( \Sel_S(E/K_{\infty}) \to \frac{E(k_{\infty}) \otimes (\Q_p/\Z_p)}{E^{\sharp/\flat}_{\infty}}  \right),
\end{equation}
where $E_{\infty}^{\sharp/\flat}$ is defined in Definition \ref{defn:231}.
By Corollary \ref{cor:870}, when $a_p = 0$, this definition is consistent with \eqref{eq:109} and the convention $(\sharp, \flat) = (-, +)$
\end{defn}

We recall here that $\sharp/\flat$-Selmer groups are not known to be $\Lambda$-cotorsion, though they are when $a_p = 0$ by Proposition \ref{prop:851}.

Take a finite set $\Sigma$ of prime numbers containing $S \cup \prim(pN)$.
Let $\Q_{\Sigma}$ denote the maximal algebraic extension of $\Q$ which is unramified outside $\Sigma$.
Then the definition \eqref{eq:107} of $\Sel_S(E/K_{\infty})$ can be rewritten as
\begin{equation}\label{eq:101}
\Sel_S(E/K_{\infty}) = 
\Ker \left( H^1(\Q_{\Sigma} / K_{\infty}, E[p^{\infty}]) \to 
\prod_{l \in \Sigma \setminus S} \frac{H^1(K_{ \infty} \otimes \Q_l, E[p^{\infty}])}{E(K_{ \infty} \otimes \Q_l) \otimes (\Q_p/\Z_p)}  \right).
\end{equation}
Similar formulas for $\Sel_S^{\bullet}(E/K_{\infty})$, etc., also hold.

Let $\kappa: \Gal(\overline{\Q}/\Q) \twoheadrightarrow G_{\infty} \hookrightarrow \RR^{\times}$ be the natural group homomorphism.
Put $\T = T_pE \otimes_{\Z_p} \RR$, which is a free $\RR$-module of rank two.
We equip $\T$ with the action of $\Gal(\overline{\Q}/\Q)$, defined by the natural action on the first component and $\kappa^{-1}$ on the second component.
Then Shapiro's lemma gives natural identifications
\[
H^1(\Q_{\Sigma}/\Q, \T) = \varprojlim_n H^1(\Q_{\Sigma}/K_{n}, T_pE)
\]
and
\[
H^1(\Q_l, \T) = \varprojlim_n H^1(K_{n} \otimes \Q_l, T_pE)
\]
for any prime number $l$, where the transition maps are the corestriction maps.
Namely, these can be regarded as the global and local Iwasawa cohomology groups, respectively.
Moreover, we have $H^1(\Q_{\Sigma}/\Q, \T) = H^1(\Q, \T)$ by \cite[Corollary B.3.6]{Rub00}, so we prefer to adopt the simpler notation $H^1(\Q, \T)$.

By the Tate duality and the Weil pairing, for any prime number $l$, we have natural perfect pairings
\begin{equation}\label{eq:98}
H^1(K_{\infty} \otimes \Q_l, E[p^{\infty}]) \times H^1(\Q_l, \T) \to \Q_p/\Z_p
\end{equation}
and in particular 
\begin{equation}\label{eq:01}
H^1(k_{\infty}, E[p^{\infty}]) \times H^1(\Q_p, \T) \to \Q_p/\Z_p.
\end{equation}

The following proposition is a collection of several facts which are more or less known to experts.

\begin{prop}\label{prop:106}
We have a natural exact sequence
\begin{equation}\label{eq:52}
0 \to H^1(\Q, \T) \to \bigoplus_{l \in \Sigma} H^1(\Q_l, \T) \to H^1(\Q_{\Sigma}/K_{ \infty}, E[p^{\infty}])^{\dual} \to \Sel^0(E/K_{\infty})^{\dual} \to 0.
\end{equation}
Each module which appears in this sequence satisfies the following.

(1) $H^1(\Q, \T)$ is a free $\Lambda$-module of rank $[K_0:\Q]$.

(2) $H^1(\Q_l, \T)$ satisfies $\pd_{\Lambda} \leq 1$ and is $\Lambda$-torsion for $l \neq p$.

(3) $H^1(\Q_p, \T)$ satisfies $\pd_{\RR} \leq 0$ and the $\Lambda$-rank is $2[K_0:\Q]$ (see Remark \ref{rem:999} below).

(4) $H^1(\Q_{\Sigma}/K_{ \infty}, E[p^{\infty}])^{\dual}$ satisfies $\pd_{\RR} \leq 1$ and the $\Lambda$-rank is $[K_0:\Q]$.

(5) $\Sel^0(E/K_{\infty})$ is $\Lambda$-cotorsion.
\end{prop}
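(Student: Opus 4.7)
The plan is to derive \eqref{eq:52} from Poitou--Tate global duality and then verify (1)--(5) by standard techniques of Iwasawa cohomology, in the order (1), (2), (3), (5), (4). For \eqref{eq:52} itself, I would start from the Poitou--Tate nine-term exact sequence for $T_pE$ at each finite layer $K_n$ with the strict (trivial) local conditions defining $\Sel^0$, take inverse limits along corestriction, and use Shapiro's lemma to identify everything with cohomology of $\T$. The sequence collapses to the four terms shown thanks to two vanishing results: $E[p^\infty](K_\infty) = 0$ (by Proposition \ref{prop:98} in the supersingular case and Assumption \ref{ass:04} in the ordinary case) and $H^2(\Q, \T) = 0$, the weak Leopoldt vanishing in the cyclotomic tower, due to Kato.

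For (1), the vanishing $E[p^\infty](K_\infty) = 0$ forces $H^1(\Q, \T)$ to be $p$-torsion free, hence $\Lambda$-torsion free. Its $\Lambda$-rank equals $[K_0:\Q]$ by the global Euler--Poincar\'e characteristic formula (using $H^2 = 0$, $\rank_{\Z_p} T_pE = 2$, and that complex conjugation has trace zero on $T_pE$). Freeness then follows from the standard fact that $R\Gamma(\Q_\Sigma/\Q, \T)$ is represented by a perfect complex of $\Lambda$-modules concentrated in degrees $[0,2]$; with $H^0 = H^2 = 0$, the surviving $H^1$ has projective dimension zero over $\Lambda$, and hence is free.

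For (2) and (3), I would apply the local analogues. For $l \neq p$, $E[p^\infty](K_\infty \otimes \Q_l)$ is finite, so by local Tate duality $H^2(\Q_l, \T)$ is $\Lambda$-torsion, and the local Euler--Poincar\'e formula (value $0$ away from $p$) forces $H^1(\Q_l, \T)$ to be $\Lambda$-torsion as well; $\pd_\Lambda \leq 1$ comes from the perfect-complex presentation of local Iwasawa cohomology. For $l = p$, the Euler--Poincar\'e formula gives $\Lambda$-rank $2[K_0:\Q]$, and $\pd_\RR \leq 0$ reduces to showing $H^0(k_\infty, T_pE) = 0$ and $H^2(\Q_p, \T) = 0$; the former is immediate from our hypotheses, and the latter is local Tate dual to $E[p^\infty](k_\infty) = 0$. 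Hence the perfect two-term complex computing $R\Gamma(\Q_p, \T)$ over $\RR$ is quasi-isomorphic to a single projective $\RR$-module in degree one.

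Finally, (5) $\Sel^0(E/K_\infty)^\dual$ being $\Lambda$-torsion follows by the same Poitou--Tate plus weak Leopoldt input (going back to Mazur--Greenberg in the ordinary case and Kato in general), and then (4) is obtained by chasing \eqref{eq:52}: the $\Lambda$-rank of $H^1(\Q_\Sigma/K_\infty, E[p^\infty])^\dual$ is forced to be $[K_0:\Q]$ by rank additivity, while $\pd_\RR \leq 1$ follows from the perfect-complex descriptions of the other three terms via a standard diagram chase. The main obstacle I anticipate is mainly expository: pinning down references and tracking the equivariant structure over $\RR$ rather than merely $\Lambda$, especially to conclude $\pd_\RR \leq 0$ in (3), which is strictly stronger than $\pd_\Lambda \leq 1$.
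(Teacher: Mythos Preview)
Your overall strategy is right, but there is a genuine gap in the freeness argument for (1). You assert that ``$H^2(\Q,\T)=0$, the weak Leopoldt vanishing''. This is not what Kato proves: weak Leopoldt is $H^2(\Q_\Sigma/K_\infty,E[p^\infty])=0$, which is equivalent only to $H^2(\Q,\T)$ being $\Lambda$-\emph{torsion}. In fact the Poitou--Tate sequence you invoke continues as $\cdots\to H^1(\Q_\Sigma/K_\infty,E[p^\infty])^\vee\to H^2(\Q,\T)\to\cdots$, and the cokernel of the preceding map is exactly $\Sel^0(E/K_\infty)^\vee$; so $\Sel^0(E/K_\infty)^\vee\hookrightarrow H^2(\Q,\T)$, and this is typically nonzero. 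Moreover, even granting $H^0=H^2=0$ for a perfect complex in degrees $[0,2]$, it does \emph{not} follow that $H^1$ is projective: take $[\Lambda\xrightarrow{(p,\X)}\Lambda^2\to 0]$ over $\Lambda=\Z_p[[\X]]$, whose $H^1=\Lambda^2/\Lambda\cdot(p,\X)$ has rank $1$ but needs two generators. To kill $C^0$ and get a two-term complex one needs the \emph{mod-$\m$} vanishing $H^0(\Q,\T\otimes\kk)=0$, not merely $H^0(\Q,\T)=0$.

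The paper's route avoids both issues. For (1) it uses $E(K_\infty)[p]=0$ directly: this forces $H^1(\Q,\T)$ to be $(\gamma-1)$-torsion-free and makes $H^1(\Q,\T)/(\gamma-1)\hookrightarrow H^1(K_0,T_pE)$ a $\Z_p$-free module, whence a Nakayama argument gives $\Lambda$-freeness without any hypothesis on $H^2(\Q,\T)$. For (3) your claim $H^2(\Q_p,\T)=0$ \emph{is} correct (local Tate dual to $E(k_\infty)[p^\infty]=0$), but the same two-term-complex subtlety applies; the paper instead shows $H^1(\Q_p,\T)$ is $\Lambda$-free by the same descent, gets $\pd_\RR\le 1$ from the local Tate sequence, and then invokes the general fact that $\Lambda$-free together with $\pd_\RR\le 1$ forces $\pd_\RR\le 0$. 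Finally, your plan to deduce $\pd_\RR\le 1$ in (4) by chasing \eqref{eq:52} would require knowing $\pd_\RR(\Sel^0(E/K_\infty)^\vee)\le 1$, which is not available; the paper gets (4) directly from the global Tate sequence using $H^0(K_\infty,E[p^\infty])=0$.
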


\begin{proof}
By \cite[Theorem 12.4(1)]{Kat04} and \cite[Proposition 4.4]{Gree06}, the weak Leopoldt conjecture (i.e. $H^2(\Q_{\Sigma}/K_{ \infty}, E[p^{\infty}]) = 0$) and the assertion (5) are true.
Then the exact sequence \eqref{eq:52} is a consequence of the Poitou-Tate long exact sequence and the definition of $\Sel^0(E/K_{\infty})$.

We show the statements on the ranks.
For any prime number $l$, we have $H^2(K_{\infty} \otimes \Q_l, E[p^{\infty}]) = 0$ and $H^0(K_{\infty} \otimes \Q_l, E[p^{\infty}])$ is cofinitely generated over $\Z_p$.
Then the local Euler-Poincare characteristic formula \cite[Proposition 4.2]{Gree06} shows that 
\[
\corank_{\Lambda} H^1(K_{\infty} \otimes \Q_l, E[p^{\infty}]) = 
	\begin{cases}
		0 & ( l \neq p) \\
		2[K_0:\Q] & (l = p).
	\end{cases}
\]
Thus we obtain the rank parts of (2) and (3).
We can similarly obtain the rank part of (4), using the weak Leopoldt conjecture above and the global Euler-Poincare characteristic formula \cite[Proposition 4.1]{Gree06}.
Now the rank part of (1) also follows.

We show the other parts of the statements.
The freeness part of (1) can be proved as in \cite[Remark 6.5]{Sak} by $E(K_{\infty})[p] = 0$.
The same argument also show that $H^1(\Q_p, \T)$ is a free $\Lambda$-module.
By the local and global ``Tate sequence'' (see \cite[Lemma 4.5]{OV02}), we have
\[
\pd_{\Lambda} H^1(\Q_l, \T) \leq 1, \quad
\pd_{\RR} H^1(\Q_p, \T) \leq 1, \quad
\pd_{\RR} H^1(\Q_{\Sigma}/K_{\infty}, E[p^{\infty}])^{\dual} \leq 1.
\]
Here, for the second and the third inequalities, we used our assumptions $H^0(k_{\infty}, E[p^{\infty}]) = 0$ and $H^0(\Q_{\Sigma}/K_{\infty}, E[p^{\infty}]) = 0$, respectively.
Finally for (3), we use the following general fact.
For a finitely generated $\RR$-module $X$, we have $\pd_{\RR}(X) \leq 0$ if and only if $X$ is free over $\Lambda$ and $\pd_{\RR}(X) \leq 1$.
Thus we have $\pd_{\RR} H^1(\Q_p, \T) \leq 0$ by the above results.
This completes the proof.
\end{proof}

\begin{rem}\label{rem:999}
In general, a finitely generated $\RR$-module $X$ with $\pd_{\RR}(X) \leq 0$ is not necessarily free over $\RR$.
This is simply because $\RR$ is a product of local rings (associated to characters of $G_0$ of order prime to $p$) and the ranks of $X$ of each components may be different.
However, $H^1(\Q_p, \T)$ in Proposition \ref{prop:106}(3) is in fact a free $\RR$-module of rank 2.
This is shown by decomposing the result from the local Euler-Poincare characteristic formula into character-wise statement.
\end{rem}

Recall the convention that $\bullet = \emptyset$ (resp. $\bullet \in \{+, -\}$, resp. $\bullet \in \{+, -, \sharp, \flat\}$) when $p \nmid a_p$ (resp. $a_p = 0$, resp. $p \mid a_p$).

\begin{defn}\label{defn:855}
For $\bullet \in \{\emptyset, +, -, \sharp, \flat\}$, define the submodule $H^1_f(\Q_p, \T)^{\bullet}$ of $H^1(\Q_p, \T)$ as the orthogonal of
\[
	\begin{cases}
		E(k_{\infty}) \otimes (\Q_p/\Z_p) & (\bullet = \emptyset)\\
		E^{\bullet}(k_{\infty}) \otimes (\Q_p/\Z_p) & (\bullet \in \{+, -\})\\
		E_{\infty}^{\bullet} & (\bullet \in \{\sharp, \flat\})
	\end{cases}
\]
under the paring \eqref{eq:01}.
Moreover, put
\begin{equation}
H^1_{/f}(\Q_p, \T)^{\bullet} = H^1(\Q_p, \T) / H^1_{f}(\Q_p, \T)^{\bullet}.
\end{equation}
\end{defn}

\begin{lem}\label{lem:962}
Let $\bullet \in \{\emptyset, +, -, \sharp, \flat\}$.

(1) We have an isomorphism
\begin{equation}
H^1_{/f}(\Q_p, \T)^{\bullet} \simeq 
	\begin{cases}
		(E(k_{\infty}) \otimes (\Q_p/\Z_p))^{\dual} & (\bullet = \emptyset)\\
		(E^{\bullet}(k_{\infty}) \otimes (\Q_p/\Z_p))^{\dual} & (\bullet \in \{+, -\})\\
		(E_{\infty}^{\bullet})^{\dual} & (\bullet \in \{\sharp, \flat\}).
	\end{cases}
\end{equation}

(2) We have $\pd_{\RR} (H^1_{/f}(\Q_p, \T)^{\bullet}) \leq 1$ and $\pd_{\RR} (H^1_{f}(\Q_p, \T)^{\bullet}) \leq 0$.
Moreover, both have the same $\Lambda$-rank $[K_0:\Q]$.
\end{lem}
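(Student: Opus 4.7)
The plan is to deduce both parts from local Tate duality \eqref{eq:01} together with the structural results on the local conditions established in Theorem \ref{thm:93} and Corollaries \ref{cor:230} and \ref{cor:870}.

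For (1), I would observe that under the perfect pairing \eqref{eq:01}, if $X^{\bullet}$ denotes the appropriate submodule of $H^1(k_{\infty}, E[p^{\infty}])$ (namely $E(k_{\infty}) \otimes (\Q_p/\Z_p)$, $E^{\pm}(k_{\infty}) \otimes (\Q_p/\Z_p)$, or $E_{\infty}^{\sharp/\flat}$, embedded via the Kummer map), then $H^1_f(\Q_p, \T)^{\bullet} = (X^{\bullet})^{\perp}$ by definition. Standard Pontryagin duality for perfect pairings of compact versus discrete $\Z_p$-modules gives a canonical isomorphism
\[
H^1_{/f}(\Q_p, \T)^{\bullet} = H^1(\Q_p, \T) / (X^{\bullet})^{\perp} \xrightarrow{\sim} (X^{\bullet})^{\dual},
\]
which is exactly the claimed identification.

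For (2), the bound $\pd_{\RR}(H^1_{/f}(\Q_p, \T)^{\bullet}) \leq 1$ is obtained by transporting through the isomorphism of (1) and invoking the structural results: Theorem \ref{thm:973} handles $\bullet = \emptyset$ (the dual is $\RR$, so even $\pd_{\RR} \leq 0$); Theorem \ref{thm:34} handles $\bullet \in \{+, -\}$ via the displayed short exact sequences whose cokernel terms $R_{-1}$ and $\Rnt$ satisfy $\pd_{\RR} \leq 1$; and Corollary \ref{cor:230} handles $\bullet \in \{\sharp, \flat\}$ directly. The bound $\pd_{\RR}(H^1_f(\Q_p, \T)^{\bullet}) \leq 0$ then follows from the tautological exact sequence
\[
0 \to H^1_f(\Q_p, \T)^{\bullet} \to H^1(\Q_p, \T) \to H^1_{/f}(\Q_p, \T)^{\bullet} \to 0,
\]
combined with $\pd_{\RR}(H^1(\Q_p, \T)) \leq 0$ from Proposition \ref{prop:106}(3) and a dimension-shift in the long exact sequence of $\Ext^{\ast}_{\RR}(-, N)$: one reads off that $\Ext^i_{\RR}(H^1_f(\Q_p, \T)^{\bullet}, N) = 0$ for all $i \geq 1$ and all $\RR$-modules $N$.

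Finally, for the $\Lambda$-ranks, each of the modules $R_{-1}$, $\Rnt$, $\Rnt/(a_p)$ appearing in the exact sequences of Theorem \ref{thm:93} is $\Lambda$-torsion (the action of $\Gamma$ factors through a finite quotient), so the $\Lambda$-rank of $(X^{\bullet})^{\dual}$ equals the $\Lambda$-rank of $\RR$, namely $[K_0 : \Q]$. Additivity of $\Lambda$-rank in the short exact sequence above, together with $\rank_{\Lambda} H^1(\Q_p, \T) = 2[K_0:\Q]$ from Proposition \ref{prop:106}(3), then gives $\rank_{\Lambda} H^1_f(\Q_p, \T)^{\bullet} = [K_0:\Q]$ as well. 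I do not anticipate any genuine obstacle, as the whole argument is a direct consequence of Tate duality and the structure theorems already proved; the only point demanding care is the book-keeping of the $\RR$-module structures on $R_{-1}$ and $\Rnt$ via the natural projections when verifying the $\Lambda$-torsionness of the cokernel terms.
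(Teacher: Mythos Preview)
Your proposal is correct and follows essentially the same approach as the paper's proof: part~(1) is immediate from the definition of $H^1_f(\Q_p,\T)^{\bullet}$ as an orthogonal complement under the perfect pairing \eqref{eq:01}, and part~(2) combines~(1) with Theorem~\ref{thm:93} (equivalently Theorems~\ref{thm:973}, \ref{thm:34} and Corollary~\ref{cor:230}) and Proposition~\ref{prop:106}(3). The paper states this in two sentences, whereas you have spelled out the dimension-shift and rank-count explicitly, but the underlying argument is identical.
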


\begin{proof}
(1) Clear from definition.

(2) The assertions on $H^1_{/f}(\Q_p, \T)^{\bullet}$ follows from (1) and Theorem \ref{thm:93} (see Corollary \ref{cor:230} for the $p \mid a_p$ case). 
Then the assertions on $H^1_{f}(\Q_p, \T)^{\bullet}$ also follows from Proposition \ref{prop:106}(3).
\end{proof}

For $\bullet \in \{\emptyset, +, -, \sharp, \flat\}$, we can consider the composite map
\begin{equation}\label{eq:19}
H^1(\Q, \T) \overset{\loc}{\to} H^1(\Q_p, \T) \to H^1_{/f}(\Q_p, \T)^{\bullet} \overset{\Cole^{\bullet}}{\to} \RR.
\end{equation}
Here, the final map $\Cole^{\bullet}$ is defined using the isomorphism in Lemma \ref{lem:962}(1).
We denote the composite of the first two maps by $\loc_{/f}^{\bullet}: H^1(\Q, \T) \to H^1_{/f}(\Q_p, \T)^{\bullet}$.

The following is a consequence of Proposition \ref{prop:106}.
Such kinds of results are standard in Iwasawa theory; for example, \eqref{eq:02} is a generalization of \cite[Theorem 7.3(i)]{Kob03}.

\begin{prop}\label{prop:945}
For $\bullet \in \{\emptyset, +, -, \sharp, \flat\}$, $\Sel_S^{\bullet}(E/K_{\infty})$ is cotorsion if and only if the map \eqref{eq:19} is injective.
In that case, we have exact sequences
\begin{equation}\label{eq:02}
0 \to H^1(\Q, \T) \to H^1_{/f}(\Q_p, \T)^{\bullet} \oplus \bigoplus_{l \in S} H^1(\Q_l, \T) \to \Sel_S^{\bullet}(E/K_{\infty})^{\dual} \to \Sel^0(E/K_{\infty})^{\dual} \to 0,
\end{equation}
\begin{equation}\label{eq:03}
0 \to H^1_f(\Q_p, \T)^{\bullet} \oplus \bigoplus_{l \in \Sigma, l \not\in S, l \neq p} H^1(\Q_l, \T) \to H^1(\Q_{\Sigma}/K_{ \infty}, E[p^{\infty}])^{\dual} \to \Sel_S^{\bullet}(E/K_{\infty})^{\dual} \to 0,
\end{equation}
and
\begin{equation}\label{eq:04}
0 \to \bigoplus_{l \in S} H^1(\Q_l, \T) \to \Sel_S^{\bullet}(E/K_{\infty})^{\dual} \to \Sel^{\bullet}(E/K_{\infty})^{\dual} \to 0.
\end{equation}
\end{prop}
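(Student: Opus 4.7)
The plan is to derive the three exact sequences and the cotorsion equivalence from the Poitou--Tate exact sequence \eqref{eq:52} of Proposition \ref{prop:106}, combined with Tate duality \eqref{eq:98} and Lemma \ref{lem:962}(1). The Selmer group $\Sel_S^{\bullet}(E/K_{\infty})$ is the kernel of a localization map whose local conditions $L_l$ equal $E_{\infty}^{\bullet}$ (or $E^{\bullet}(k_{\infty}) \otimes (\Q_p/\Z_p)$ for $\bullet \in \{+, -\}$) at $l = p$, $L_l = 0$ at $l \in \Sigma \setminus (S \cup \{p\})$, and the maximal condition at $l \in S$. By Definition \ref{defn:855} and \eqref{eq:98}, the orthogonals inside $H^1(\Q_l, \T)$ are $L_p^{\perp} = H^1_f(\Q_p, \T)^{\bullet}$, $L_l^{\perp} = H^1(\Q_l, \T)$ at $l \in \Sigma \setminus (S \cup \{p\})$, and $L_l^{\perp} = 0$ at $l \in S$.

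First I would establish an always-exact ``precursor'' of \eqref{eq:03}: applying Pontryagin duality to the defining short exact sequence of $\Sel_S^{\bullet}(E/K_{\infty})$ and unwinding via \eqref{eq:98} and Lemma \ref{lem:962}(1) gives
\[
0 \to \Sel^{L^{\perp}}(\T) \to \bigoplus_{l \in \Sigma \setminus S,\, l \neq p} H^1(\Q_l, \T) \oplus H^1_f(\Q_p, \T)^{\bullet} \to H^1(\Q_{\Sigma}/K_{\infty}, E[p^{\infty}])^{\dual} \to \Sel_S^{\bullet}(E/K_{\infty})^{\dual} \to 0,
\]
where the dual Selmer group $\Sel^{L^{\perp}}(\T)$ is by its very definition the kernel of $(\loc_{/f}^{\bullet}, \loc_S) \colon H^1(\Q, \T) \to H^1_{/f}(\Q_p, \T)^{\bullet} \oplus \bigoplus_{l \in S} H^1(\Q_l, \T)$. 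A snake-lemma chase combining this precursor with \eqref{eq:52} through the short exact sequence $0 \to \bigoplus_l L_l^{\perp} \to \bigoplus_{l \in \Sigma} H^1(\Q_l, \T) \to H^1_{/f}(\Q_p, \T)^{\bullet} \oplus \bigoplus_{l \in S} H^1(\Q_l, \T) \to 0$ then produces an always-exact five-term sequence
\[
0 \to \Sel^{L^{\perp}}(\T) \to H^1(\Q, \T) \to H^1_{/f}(\Q_p, \T)^{\bullet} \oplus \bigoplus_{l \in S} H^1(\Q_l, \T) \to \Sel_S^{\bullet}(E/K_{\infty})^{\dual} \to \Sel^0(E/K_{\infty})^{\dual} \to 0,
\]
whose middle terms recover \eqref{eq:02} once $\Sel^{L^{\perp}}(\T)$ vanishes.

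Next I would derive the cotorsion equivalence by an Euler characteristic argument. By Proposition \ref{prop:106} and Lemma \ref{lem:962}(2), $H^1(\Q, \T)$ is $\Lambda$-free of rank $[K_0:\Q]$, $H^1_{/f}(\Q_p, \T)^{\bullet}$ has $\Lambda$-rank $[K_0:\Q]$, and both $\bigoplus_{l \in S} H^1(\Q_l, \T)$ and $\Sel^0(E/K_{\infty})^{\dual}$ are $\Lambda$-torsion. The five-term sequence then gives $\rank_{\Lambda} \Sel^{L^{\perp}}(\T) = \rank_{\Lambda} \Sel_S^{\bullet}(E/K_{\infty})^{\dual}$, and since $\Sel^{L^{\perp}}(\T)$ is a submodule of the torsion-free $H^1(\Q, \T)$, this rank vanishes iff $\Sel^{L^{\perp}}(\T) = 0$, iff $(\loc_{/f}^{\bullet}, \loc_S)$ is injective. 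The $\Lambda$-torsionness of the $S$-summand together with the $\Lambda$-freeness of $H^1(\Q, \T)$ reduces this to injectivity of $\loc_{/f}^{\bullet}$ alone, and since $\ker(\Cole^{\bullet})$ is $\Lambda$-torsion --- verifiable by comparing $\Lambda$-ranks in Theorem \ref{thm:93} and Corollary \ref{cor:230} --- this is equivalent to injectivity of the full composite \eqref{eq:19}. Once the hypothesis holds, \eqref{eq:02} and \eqref{eq:03} follow from the above two sequences, while \eqref{eq:04} arises by comparing the precursors for $\Sel_S^{\bullet}$ and for $\Sel^{\bullet}$ (i.e., with $S = \emptyset$) via the snake lemma, whose connecting map identifies $\bigoplus_{l \in S} H^1(\Q_l, \T)$ with the kernel of $\Sel_S^{\bullet}(E/K_{\infty})^{\dual} \to \Sel^{\bullet}(E/K_{\infty})^{\dual}$.

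The main obstacle I anticipate is the careful bookkeeping in the snake-lemma chase producing the five-term sequence, particularly identifying the rightmost term as exactly $\Sel^0(E/K_{\infty})^{\dual}$ rather than a larger quotient; this hinges on matching the snake cokernel with the tail of \eqref{eq:52}. The remaining subtleties --- the rank counts, the reduction from $(\loc_{/f}^{\bullet}, \loc_S)$ to $\loc_{/f}^{\bullet}$, and the torsion-kernel verification for $\Cole^{\bullet}$ uniformly in $\bullet \in \{\emptyset, +, -, \sharp, \flat\}$ --- are routine given the structural results of Section \ref{sec:32} and Proposition \ref{prop:106}.
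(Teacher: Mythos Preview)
Your proposal is correct and follows essentially the same route as the paper. Both arguments extract the three sequences from the Poitou--Tate sequence \eqref{eq:52} combined with Tate duality, and both establish the cotorsion equivalence via the same rank count (Proposition~\ref{prop:106}(1)(2)(4), Lemma~\ref{lem:962}(2)) together with the torsion-freeness of $H^1(\Q,\T)$ and the fact that $\ker(\Cole^{\bullet})$ is $\Lambda$-torsion.

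The only organizational difference is packaging: the paper begins from \eqref{eq:03} minus its first injectivity and then runs a chain of four equivalences (cotorsion $\Leftrightarrow$ injectivity of $H^1_f(\Q_p,\T)^{\bullet}\to H^1(\Q_{\Sigma}/K_{\infty},E[p^{\infty}])^{\dual}$ $\Leftrightarrow$ injectivity of the map to $H^1_{/f}(\Q_p,\T)^{\bullet}\oplus\bigoplus_{l\neq p}H^1(\Q_l,\T)$ $\Leftrightarrow$ injectivity of $\loc_{/f}^{\bullet}$ $\Leftrightarrow$ injectivity of \eqref{eq:19}), whereas you encapsulate the same content in the dual Selmer group $\Sel^{L^{\perp}}(\T)$ and read everything off your five-term sequence. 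One small imprecision: your ``precursor'' of \eqref{eq:03} does not literally arise just by dualizing the defining sequence of $\Sel_S^{\bullet}$---identifying the leftmost term as $\Sel^{L^{\perp}}(\T)$ already uses Poitou--Tate---but since you immediately invoke \eqref{eq:52} in the next step this is harmless.
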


\begin{proof}
The definition of $\Sel_S^{\bullet}(E/K_{\infty})$ (see \eqref{eq:101}) yields the exact sequence \eqref{eq:03} without the first injectivity.
We shall observe the following equivalences.
\begin{align}
\text{$\Sel_S^{\bullet}(E/K_{\infty})$ is $\Lambda$-cotorsion}
&\Leftrightarrow \text{the map $H^1_f(\Q_p, \T)^{\bullet} \to H^1(\Q_{\Sigma}/K_{ \infty}, E[p^{\infty}])^{\dual}$ is injective}\\
&\Leftrightarrow \text{the map $H^1(\Q, \T) \to H^1_{/f}(\Q_p, \T)^{\bullet} \oplus \bigoplus_{l \in \Sigma \setminus \{p\}} H^1(\Q_l, \T)$ is injective}\\
&\Leftrightarrow \text{the map $H^1(\Q, \T) \to H^1_{/f}(\Q_p, \T)^{\bullet}$ is injective}\\
&\Leftrightarrow \text{the map \eqref{eq:19} is injective}
\end{align}
Here, the first equivalence follows from \eqref{eq:03} (except for the first injectivity), the rank parts of Proposition \ref{prop:106}(2)(4), and Lemma \ref{lem:962}(2);
the second follows from the exact sequence \eqref{eq:52};
the third follows from the facts by Proposition \ref{prop:106}(1)(2) that $H^1(\Q, \T)$ is torsion-free over $\Lambda$ and $H^1(\Q_l, \T)$ is torsion over $\Lambda$;
the fourth follows from the facts that $H^1(\Q, \T)$ is torsion-free over $\Lambda$ and the kernel of $\Cole^{\bullet}: H^1_{/f}(\Q_p, \T)^{\bullet} \to \RR$ is $\Lambda$-torsion (Theorem \ref{thm:93}).

Under the above equivalent conditions, 
it is easy to deduce the exact sequences \eqref{eq:02}, \eqref{eq:03}, \eqref{eq:04} from the exact sequence \eqref{eq:52} and the definitions of Selmer groups.
\end{proof}

Recall that, for a finitely generated $\Lambda$-module $X$, we have $\pd_{\Lambda}(X) \leq 1$ if and only if $X$ does not contain a non-trivial finite submodule (e.g. \cite[Proposition (5.3.19)(i)]{NSW}).

\begin{prop}\label{prop:233}
Let $\bullet \in \{\emptyset, +, -, \sharp, \flat \}$ and suppose that $\Sel_S^{\bullet}(E/K_{\infty})$ is $\Lambda$-cotorsion.
Then $\Sel_S^{\bullet}(E/K_{\infty})^{\dual}$ does not contain a non-trivial finite submodule.
\end{prop}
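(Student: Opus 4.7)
By the equivalence recalled just before the statement, the claim is equivalent to $\pd_{\Lambda}(\Sel_S^{\bullet}(E/K_{\infty})^{\dual}) \le 1$. The plan is to use the four-term exact sequence \eqref{eq:02} from Proposition \ref{prop:945}, together with the $\Lambda$-freeness of $H^1(\Q,\T)$, to reduce the claim to the analogous statement for the fine Selmer group $\Sel^0(E/K_{\infty})^{\dual}$; the latter I would then handle via global Poitou--Tate duality.

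Concretely, I would split \eqref{eq:02} into
\begin{equation*}
0 \to H^1(\Q,\T) \to N \to D \to 0, \qquad 0 \to D \to \Sel_S^{\bullet}(E/K_{\infty})^{\dual} \to \Sel^0(E/K_{\infty})^{\dual} \to 0,
\end{equation*}
where $N = H^1_{/f}(\Q_p,\T)^{\bullet} \oplus \bigoplus_{l \in S} H^1(\Q_l,\T)$ and $D$ is the image of the middle arrow. Lemma \ref{lem:962}(2) gives $\pd_{\RR}(H^1_{/f}(\Q_p,\T)^{\bullet}) \le 1$, which passes to $\pd_{\Lambda} \le 1$ because $\RR$ is free over $\Lambda$ of finite rank; together with Proposition \ref{prop:106}(2) for the factors at $l \neq p$, this yields $\pd_{\Lambda}(N) \le 1$. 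Applying $\Ext^*_{\Lambda}(-,\Lambda)$ to the first short exact sequence and using that $H^1(\Q,\T)$ is $\Lambda$-free (Proposition \ref{prop:106}(1)), the long exact sequence collapses to $\Ext^2_{\Lambda}(D,\Lambda) \simeq \Ext^2_{\Lambda}(N,\Lambda) = 0$, so $D$ has no non-trivial finite $\Lambda$-submodule.

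From the second short exact sequence, any non-trivial finite $\Lambda$-submodule of $\Sel_S^{\bullet}(E/K_{\infty})^{\dual}$ would intersect $D$ in a finite $\Lambda$-submodule of $D$, which must be zero, and hence would embed into $\Sel^0(E/K_{\infty})^{\dual}$. The proposition is therefore reduced to the corresponding claim for the fine Selmer group.

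The main obstacle is this remaining claim for $\Sel^0(E/K_{\infty})^{\dual}$. Running the same splitting on the Poitou--Tate exact sequence \eqref{eq:52} presents $\Sel^0(E/K_{\infty})^{\dual}$ as the cokernel of an injection $M' \hookrightarrow H^1(\Q_{\Sigma}/K_{\infty}, E[p^{\infty}])^{\dual}$ between modules of $\pd_{\Lambda} \le 1$, where $M'$ is the image of $\bigoplus_{l \in \Sigma} H^1(\Q_l,\T)$. A formal Ext calculation only yields $\pd_{\Lambda}(\Sel^0(E/K_{\infty})^{\dual}) \le 2$, so the delicate step is to rule out a finite submodule in this cokernel, i.e., to show that the connecting map $\Ext^1_{\Lambda}(M',\Lambda) \to \Ext^2_{\Lambda}(\Sel^0(E/K_{\infty})^{\dual},\Lambda)$ vanishes. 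I would do this by identifying, via Iwasawa adjoints, this connecting map with the adjoint of the localization map from global to local cohomology, whose cokernel is governed by $H^2(\Q_{\Sigma}/K_{\infty}, E[p^{\infty}])$; the vanishing of this $H^2$ is precisely the weak Leopoldt theorem of Kato invoked in the proof of Proposition \ref{prop:106}, and it forces $\Ext^2_{\Lambda}(\Sel^0(E/K_{\infty})^{\dual},\Lambda) = 0$, completing the reduction.
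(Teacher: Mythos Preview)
Your reduction of the problem to the fine Selmer group is sound: splitting \eqref{eq:02} and using the $\Lambda$-freeness of $H^1(\Q,\T)$ together with $\pd_{\Lambda}(N)\le 1$ does show that any finite $\Lambda$-submodule of $\Sel_S^{\bullet}(E/K_{\infty})^{\dual}$ injects into $\Sel^0(E/K_{\infty})^{\dual}$. This is a genuinely different route from the paper, which instead applies Greenberg's general criterion \cite[Proposition~4.1.1]{Gree16} directly to $\Sel_S^{\bullet}$ and verifies the package of hypotheses (reflexivity, $\LEO$, $\CRK$, almost divisibility of the local conditions, and---crucially---the non-existence of a surjective $\Gal(\overline{\Q}/K_0)$-equivariant map $E[p]\to\mu_p$).

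The gap is in your final step. Your claim that weak Leopoldt forces $\Ext^2_{\Lambda}(\Sel^0(E/K_{\infty})^{\dual},\Lambda)=0$ via ``Iwasawa adjoints'' does not hold up. In the nine-term Poitou--Tate sequence, the vanishing of $H^2(\Q_{\Sigma}/K_{\infty},E[p^{\infty}])$ controls the \emph{kernel} of the localization map $H^1(\Q,\T)\to\bigoplus_l H^1(\Q_l,\T)$ (this is exactly the first zero in \eqref{eq:52}, which you are already using), not the cokernel $M'$ nor the finite part of $\Sha^2(\T)\simeq \Sel^0(E/K_{\infty})^{\dual}$. There is no identification of the connecting map $\Ext^1_{\Lambda}(M',\Lambda)\to\Ext^2_{\Lambda}(\Sel^0(E/K_{\infty})^{\dual},\Lambda)$ with anything governed by $H^2(\Q_{\Sigma}/K_{\infty},E[p^{\infty}])$; indeed, the paper itself later works with $H^2(\Q_{\Sigma}/\Q,\T)_{\fin}$ as a potentially non-trivial obstruction (see the proof of Theorem~\ref{thm:674}). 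Ruling out a finite submodule of $\Sel^0(E/K_{\infty})^{\dual}$ genuinely requires Galois-theoretic input beyond weak Leopoldt---precisely the ``no surjection $E[p]\twoheadrightarrow\mu_p$'' condition (equivalent here to $E(K_0)[p]=0$ via the Weil pairing) that Greenberg's criterion uses and that your argument never invokes. So your reduction trades one ``no finite submodule'' statement for another of the same depth, and the base case remains unproved.
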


\begin{proof}
There is a lot of literature on the non-existence of a non-trivial finite submodules.
In fact, \cite[Proposition 4.14]{Gree99} and \cite[Theorem 1.3]{KO18} show our assertion in the case where $p \nmid a_p$ and $a_p = 0$, respectively.

In order to prove the whole case simultaneously, we utilize a quite general work of Greenberg \cite[Proposition 4.1.1]{Gree16}.
We apply that proposition to the situation where the base field is $K_0$, the coefficient ring is $\Lambda$, the representation is the cyclotomic deformation $\D = \Hom(\Lambda, E[p^{\infty}])$ (as in \cite[\S 4.4]{Gree16}).
Take the local conditions $\LL$ for $\D$ so that the Selmer group coincides with $\Sel_S^{\bullet}(E/K_{\infty})$
We check the hypotheses of \cite[Proposition 4.1.1]{Gree16}.

\begin{itemize}
\item As written in \cite[\S 4.4]{Gree16}, the conditions on $\RFX(\D)$, $\LOC_{\eta}^{(1)}(\D)$, and $\LOC_v^{(2)}(\D)$ are automatically satisfied.

\item $\LEO(\D)$ is equivalent to the weak Leopoldt conjecture $H^2(\Q_{\Sigma}/K_{\infty}, E[p^{\infty}]) = 0$, which holds in our case (already remarked in the proof of Proposition \ref{prop:106}).

\item By Proposition \ref{prop:106}, $\CRK(\D, \LL)$ is equivalent to that $\Sel_S^{\bullet}(E/K_{\infty})^{\dual}$ is $\Lambda$-cotorsion, which holds by assumption.

\item We show that the local condition $\LL$ is almost divisible.
For a prime number $l \not\in \{p\} \cup S$, the local condition at $l$ is $0$ and we have nothing to do.
For a prime number $l \in S$, by Proposition \ref{prop:106}(2), the $\Lambda$-module $H^1(\Q_l, \T)$ does not contain a non-trivial finite submodule.
Finally $H^1_{/f}(\Q_p, \T)^{\bullet}$ does not contain a non-trivial finite submodule by Lemma \ref{lem:962}.

\item There is no surjective, $\Gal(\overline{\Q}/K_0)$-equivariant map $E[p] \to \mu_p$, by $E(K_0)[p] = 0$ and the Weil pairing.
\end{itemize}
This completes the proof.
\end{proof}

Now we can prove Theorem \ref{thm:76}.
We restate it here.

\begin{thm}\label{thm:978}
Let $\bullet \in \{\emptyset, +, -, \sharp, \flat \}$ and suppose that $\Sel_S^{\bullet}(E/K_{\infty})$ is cotorsion as a $\Lambda$-module.
Then we have $\pd_{\RR}(\Sel_S^{\bullet}(E/K_{\infty})^{\dual}) \leq 1$.
\end{thm}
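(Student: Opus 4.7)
The plan is to combine the exact sequence \eqref{eq:03} from Proposition \ref{prop:945} with the projective-dimension bounds supplied by Proposition \ref{prop:106} and Lemma \ref{lem:962}, the no-finite-submodule result of Proposition \ref{prop:233}, and the Auslander--Buchsbaum formula applied to the local factors of $\RR$.

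First, the cotorsion hypothesis activates the exact sequence \eqref{eq:03}:
\[
0 \to H^1_f(\Q_p,\T)^{\bullet}\oplus\bigoplus_{l\in\Sigma\setminus(S\cup\{p\})} H^1(\Q_l,\T) \to H^1(\Q_{\Sigma}/K_{\infty},E[p^{\infty}])^{\dual} \to \Sel_S^{\bullet}(E/K_{\infty})^{\dual} \to 0.
\]
The middle term has $\pd_{\RR}\leq 1$ by Proposition \ref{prop:106}(4). In the left term, the summand $H^1_f(\Q_p,\T)^{\bullet}$ is $\RR$-projective by Lemma \ref{lem:962}(2). For each remaining summand $H^1(\Q_l,\T)$ with $l\neq p$, I would show $\pd_{\RR}\leq 1$ by running the local Tate-sequence argument (in the style of \cite[Lemma 4.5]{OV02}) over the ring $\RR$ rather than only over $\Lambda$: since $\Gal(\overline{\Q_l}/\Q_l)$ has $p$-cohomological dimension at most $2$ and $H^0(K_{\infty}\otimes\Q_l,E[p^{\infty}])$ is cofinitely generated over $\Z_p$, the same Poitou--Tate mechanism used in the proof of Proposition \ref{prop:106} delivers an $\RR$-free resolution of length $1$. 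Consequently the left-hand term has $\pd_{\RR}\leq 1$, and hence $\pd_{\RR}(\Sel_S^{\bullet}(E/K_{\infty})^{\dual})$ is finite, a priori bounded by $2$.

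To sharpen to $\pd_{\RR}\leq 1$ I would invoke the Auslander--Buchsbaum formula, applied factor-by-factor. Decomposing $G_0=G_0^{(p)}\times\Delta'$ into its Sylow $p$-subgroup and its prime-to-$p$ part, the semilocal ring $\RR$ is a finite product of complete local rings of the form $\OO[[\Gamma]][G_0^{(p)}]$, each free of finite rank over the two-dimensional regular local ring $\OO[[\Gamma]]$ (with $\OO$ the ring of integers of an unramified extension of $\Q_p$). Each such local factor is Cohen--Macaulay of depth $2$, so Auslander--Buchsbaum gives $\pd+\depth=2$ for every finitely generated module of finite projective dimension. Proposition \ref{prop:233} asserts that $\Sel_S^{\bullet}(E/K_{\infty})^{\dual}$ has no non-trivial finite $\RR$-submodule; this localizes at every maximal ideal of $\RR$ to $\depth\geq 1$, and Auslander--Buchsbaum then yields $\pd_{\RR}\leq 1$ on each factor, hence globally.

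The main obstacle will be the uniform verification that $\pd_{\RR}(H^1(\Q_l,\T))\leq 1$ for the auxiliary primes $l\neq p$ in $\Sigma$: this is genuinely stronger than the $\pd_{\Lambda}\leq 1$ bound recorded in Proposition \ref{prop:106}(2), and requires a careful reprise of the Tate-resolution argument over the larger ring $\RR$ (equivalently, one can try to absorb these primes into $S$ via a preliminary comparison, but the conclusion for the given $S$ must ultimately be recovered). Once this local step is secured, the Auslander--Buchsbaum argument via Proposition \ref{prop:233} is essentially formal.
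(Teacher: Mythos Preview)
Your global architecture matches the paper's exactly: the exact sequence \eqref{eq:03}, the bounds on $H^1_f(\Q_p,\T)^{\bullet}$ and on $H^1(\Q_{\Sigma}/K_{\infty},E[p^{\infty}])^{\dual}$ from Lemma~\ref{lem:962}(2) and Proposition~\ref{prop:106}(4), and the passage from $\pd_{\RR}\leq 2$ down to $\pd_{\RR}\leq 1$ via Proposition~\ref{prop:233}. The paper phrases this last step as ``$\pd_{\RR}(X)\leq 1 \Leftrightarrow \pd_{\RR}(X)\leq 2$ and $\pd_{\Lambda}(X)\leq 1$'' (citing \cite[Proposition 3.5]{Kata}), which is precisely your Auslander--Buchsbaum argument on the Cohen--Macaulay local factors of $\RR$.

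The gap is exactly where you locate it, but your proposed fix does not work. The local Tate sequence yields a perfect $\RR$-complex of amplitude $[0,2]$ computing $R\Gamma(\Q_l,\T)$; over a non-regular ring this does \emph{not} bound $\pd_{\RR}$ of the middle cohomology $H^1$ unless one controls the ends. In the proof of Proposition~\ref{prop:106} the $\pd_{\RR}\leq 1$ bounds at $l=p$ and globally are obtained only after invoking the vanishing of $H^0(k_{\infty},E[p^{\infty}])$ and $H^0(K_{\infty},E[p^{\infty}])$, which collapses one end of the complex. For $l\neq p$ one has $H^2(\Q_l,\T)\simeq H^0(K_{\infty}\otimes\Q_l,E[p^{\infty}])^{\dual}$, which is typically nonzero, so no length-$1$ free resolution of $H^1$ drops out; this is why Proposition~\ref{prop:106}(2) records only $\pd_{\Lambda}\leq 1$. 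The paper supplies the missing idea: since $l\notin S\cup\{p\}$, the prime $l$ is \emph{unramified} in $K_{\infty}/\Q$, so its decomposition group $\Gal(K_{\infty}/M_l)$ is procyclic with $p$-part $\simeq\Z_p$, making $\Z_p[[\Gal(K_{\infty}/M_l)]]$ a product of $2$-dimensional regular local rings. Then $H^1(\Q_l,\T)$ is the module induced along the finite free extension $\Z_p[[\Gal(K_{\infty}/M_l)]]\hookrightarrow\RR$, and induction preserves projective resolutions, giving $\pd_{\RR}(H^1(\Q_l,\T))\leq 2$ at once. This is where the hypothesis that $S$ contains all ramified primes actually enters. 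Your alternative of ``absorbing $l$ into $S$'' merely relocates the same module into the comparison sequence \eqref{eq:04} and does not avoid the issue.
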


\begin{proof}
First we recall that, for any finitely generated torsion $\RR$-module $X$, we have $\pd_{\RR}(X) \leq 1$ if and only if both $\pd_{\RR}(X) \leq 2$ and $\pd_{\Lambda}(X) \leq 1$ hold (see e.g. the text after \cite[Proposition 3.5]{Kata}).
Therefore, by Proposition \ref{prop:233}, we only have to show $\pd_{\RR}(\Sel_S^{\bullet}(E/K_{\infty})^{\dual}) \leq 2$.

We study the first and the second terms in the exact sequence \eqref{eq:03}.
We know $\pd_{\RR}(H^1_{f}(\Q_p, \T)) =0$ and $\pd_{\RR}(H^1(\Q_{\Sigma}/K_{ \infty}, E[p^{\infty}])^{\dual}) \leq 1$ by Lemma \ref{lem:962}(2) and Proposition \ref{prop:106}(4), respectively.
Thus we only have to show that $\pd_{\RR} (H^1(\Q_l, \T)) \leq 2$ for any $l \not\in S \cup \{p\}$ (actually we will conclude $\pd_{\RR} \leq 1$ by Proposition \ref{prop:106}(2) and the above fact).
Since $l$ is unramified in the extension $K_{\infty}/\Q$, if we denote by $M_l$ the decomposition field, then $\Gal(K_{\infty}/M_l)$ is pro-cyclic with its $p$-part isomorphic to $\Z_p$.
Therefore $\Z_p[[\Gal(K_{\infty}/M_l)]]$ is a product of regular local ring of dimension 2.
Since the $\RR$-module $H^1(\Q_l, \T)$ is the induced module of a $\Z_p[[\Gal(K_{\infty}/M_l)]]$-module, we obtain the claim.
This completes the proof.
\end{proof}

\begin{rem}\label{rem:871}
We can make the set $S$ smaller.
Let $\Phi_{K/\Q}$ be the set of prime numbers $l \neq p$ such that $p$ does divide the ramification index of $l$ in $K/\Q$.
Let $S_0$ be any finite set of prime numbers $\neq p$ such that $S_0 \supset \Phi_{K/\Q}$.
Then in the situation of Theorem \ref{thm:978}, we have $\pd_{\RR}(\Sel^{\bullet}_{S_0}(E/K_{\infty})^{\dual}) \leq 1$, where the definition of $\Sel^{\bullet}_{S_0}(E/K_{\infty})$ is obvious.
This fact can be easily shown from the proof of Theorem \ref{thm:978}.

We mention here about some previous works.
In the ordinary case, the results by Greenberg \cite[Proposition 2.4.1, Proposition 3.1.1]{Gre10} show Theorem \ref{thm:978} and moreover the refined assertion above (a proof is also sketched in \cite[Theorem 5]{Kur14}).
The thesis of M. Kim \cite{Kim11} extended the work \cite{Gre10} to the $a_p = 0$ case (the author thanks Chan-Ho Kim for providing this information).
However, \cite{Kim11} contains an important flaw as noted in Remark \ref{rem:879}.

However, we again stress that, while the previous method only concerns the finiteness of the projective dimension, our argument so far is necessary in the rest of this paper to discuss the main conjecture. 

Finally note that those two works \cite{Gre10} and \cite{Kim11} deal with elliptic curves over more general base field and its (not necessarily abelian) finite Galois extension.
In fact, using our argument in this paper, we can reprove the variant of Theorem \ref{thm:978} in such a general situation.
Namely, as mentioned in Remark \ref{rem:967}, a completely local argument can yield a system of points satisfying the properties (1)(2) in Theorems \ref{thm:116} and \ref{thm:121}.
Then we can construct Coleman maps as in Section \ref{sec:32}, and prove the variant of Theorem \ref{thm:978}.
We omit the detail because, at this time, the contents of the following sections cannot be extended to such a general situation at all.
\end{rem}

\section{Relation with $p$-adic $L$-functions}\label{sec:60}

In this section, we prove Theorem \ref{thm:77}.
First we introduce the Beilinson-Kato element, using the results in the seminal paper by Kato \cite{Kat04}.
Let 
\[
\exp_{\omega_E}^*: H^1(k_n, T_pE) \otimes \Q_p \to k_n
\]
be the dual exponential map defined using the N{\'{e}}ron differential $\omega_E$ as in \cite[\S 5]{Rub98} and \cite[\S 8.7]{Kob03}.
Recall that $\T = T_pE \otimes \RR$ is a Galois representation over $\RR$.
For an element of $H^1(\Q_p, \T) \otimes \Q_p$, by attaching the subscript $n$, we denote its natural image to 
$H^1(\Q_p, T_pE \otimes R_n) \otimes \Q_p \simeq H^1(k_n, T_pE) \otimes \Q_p$.
In the next theorem, we do not need Assumption \ref{ass:04}.

\begin{thm}\label{thm:941}
There is an element $\z \in H^1(\Q, \T) \otimes \Q_p$ such that, for any character $\psi$ of $G_{n}$, 
\begin{equation}\label{eq:14}
\sum_{\tau \in G_{n}} \tau(\exp_{\omega_E}^*(\loc(\z)_n))\psi(\tau) = \frac{L_{S \cup \{p\}}(E,\psi,1)}{\Omega^{\sign(\psi)}}
\end{equation}
holds as elements of $\overline{\Q_p}$.
Here the left hand side is regarded as an element of $\overline{\Q_p}$ via the fixed embedding of $\overline{\Q}$ into $\overline{\Q_p}$.
Moreover, if $E[p]$ is irreducible as a $\Gal(\overline{\Q}/\Q)$-module and $H^0(K_0, E[p]) = 0$, then we have $\z \in H^1(\Q, \T)$.
\end{thm}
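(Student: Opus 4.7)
The plan is to construct $\z$ by assembling (after appropriate modification) Kato's Beilinson--Kato zeta elements from \cite{Kat04} along the tower $K_n$. Since $p$ is unramified in $K/\Q$ and $K/\Q$ is abelian, Kronecker--Weber gives $K \subset \Q(\mu_m)$ for some $m$ with $p \nmid m$; enlarging $S$ momentarily so that $\prim(m) \subset S$ reduces the construction to the cyclotomic tower $\Q(\mu_{mp^{n+1}})/\Q$, with any extra Euler factors at the added primes to be removed at the end via a multiplicativity identity analogous to \eqref{eq:91}.

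First I would invoke Kato's construction: for auxiliary integers $c, d$ prime to $6pmN$, \cite{Kat04} provides norm-compatible zeta elements ${}_{c,d}\z^{(p)}_{\Q(\mu_{mp^{n+1}})}$ in the appropriate Galois cohomology group (Theorem 12.5 of \cite{Kat04}). Corestricting to $K_n$ and taking the inverse limit over $n$, one obtains a candidate element of $H^1(\Q, \T) \otimes \Q_p$ after dividing by the $c, d$-dependent normalizing factor (this is where $\otimes \Q_p$ enters). The interpolation formula \eqref{eq:14} then follows from Kato's explicit reciprocity law relating $\exp^*_{\omega_E}$ of the zeta element to critical $L$-values; this is essentially Theorem 16.6 of \cite{Kat04} specialized to characters of $G_n$, with the $S \cup \{p\}$-imprimitive $L$-function arising because Euler factors at primes in $\Sigma \setminus \{p\}$ are already removed in Kato's construction, and the superfluous factors at primes in $\Sigma \setminus (S \cup \{p\})$ are cleared when we remove the primes we adjoined to $S$.

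For the integrality claim, under the hypotheses that $E[p]$ is $\Gal(\overline{\Q}/\Q)$-irreducible and $H^0(K_0, E[p]) = 0$, the standard argument (cf.\ the proof of Theorem 12.5 in \cite{Kat04}) allows one to clear the auxiliary $c, d$-factors and produce $\z \in H^1(\Q, \T)$ itself: irreducibility forces $H^1(\Q, \T)$ to be $\Z_p$-torsion free in the relevant sense (compare Proposition \ref{prop:106}(1)), so the denominators introduced by the $c,d$-normalization must in fact vanish. The main technical obstacle is the Euler factor bookkeeping: one must verify that corestriction from the cyclotomic tower down to the $K_n$-tower, combined with the removal of the primes adjoined to $S$, yields \emph{exactly} $L_{S \cup \{p\}}(E, \psi, 1)/\Omega^{\sign(\psi)}$ and not a variant differing by an Euler factor or a twist; this requires tracking how the trace map on dual exponentials interacts with the local Euler factors at primes ramified in $K/\Q$, and confirming our convention (no $\iota$ appears here, in contrast to Theorem \ref{thm:77}, because \eqref{eq:14} is a direct interpolation rather than a statement involving the group-ring involution).
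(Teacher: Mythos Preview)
Your overall strategy is correct and matches the paper's, but there is a genuine gap in the step where you ``divide by the $c,d$-dependent normalizing factor'' and claim the result lies in $H^1(\Q,\T)\otimes\Q_p$. The normalizing factor is not a $p$-adic number but the group-ring element $(c^2-c\sigma_c)(d^2-d\sigma_d)$ (and you must also divide out the Euler factors $\prod_{l\mid N,\,l\notin S\cup\{p\}}(1-a_l l^{-1}\sigma_l^{-1})$ to pass from $L_{S\cup\prim(pN)}$ to $L_{S\cup\{p\}}$). Inverting these lands you a priori in $H^1(\Q,\T)\otimes_{\RR}Q(\RR)$, not in $H^1(\Q,\T)\otimes\Q_p$. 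The paper bridges this gap by invoking the ``Key Claim'' of Delbourgo \cite[p.~255]{Del08} (generalizing \cite[(12.6)]{Kat04} from $K=\Q$ to $K=\Q(\mu_m)$): the quotient $\ZZ^{\primitive}/\ZZ^{\imp}$ is finite, which forces the primitive element back into $H^1(\Q,\T)\otimes\Q_p$. Your citation of ``Theorem 12.5 in \cite{Kat04}'' does not suffice here, since that result is stated only for the $\Q(\mu_{p^\infty})$-tower; the extension to general $m$ is exactly the content of Delbourgo's claim.

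Two smaller points: first, your integrality argument conflates the roles of the two hypotheses. Irreducibility of $E[p]$ is used to identify $T_pE(-1)\simeq V_{\Z_p}(f_E)$ integrally (via \cite[Proposition~8]{Wut14}), while $H^0(K_0,E[p])=0$ gives $\Lambda$-freeness of $H^1(\Q,\T)$; both are needed, and the passage from $\otimes\Q_p$ to integral then follows from the finiteness of $\ZZ^{\primitive}/\ZZ^{\imp}$ together with freeness, as in \cite[(13.14)]{Kat04}. Second, you do not address the choice of $\gamma\in V_{\Z_p}(f_E)$: the paper constructs $\gamma=\gamma^++\gamma^-$ explicitly via a modular parametrization so that the period normalization matches $\Omega^{\pm}$ exactly, and this step (comparing $\per_{f_E}$ with $\per_E$ through $\pi^*$) is not automatic. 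Finally, the interpolation input you want is \cite[(6.6),(9.7)]{Kat04}, not Theorem~16.6, which concerns $p$-adic $L$-functions rather than dual exponentials.
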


In the case where $K = \Q$ and $S = \emptyset$, \cite[(12.5)]{Kat04} constructed elements satisfying \eqref{eq:171} below.
Moreover, the formula \eqref{eq:14} is asserted in \cite[Theorem 5.2]{Kob03}.
However, \cite{Kob03} only gives a few words on the deduction of \eqref{eq:14} from \eqref{eq:171}.
In the following proof, we not only extend the result in \cite[(12.5)]{Kat04} to general $K, S$ but also discuss the deduction of \eqref{eq:14} from \eqref{eq:171}.
There is also a work by Delbourgo \cite[Appendix A]{Del08} which generalize \cite[(12.5)]{Kat04} to $K = \Q(\mu_m)$.
The method of \cite{Del08} is similar to (the first half of) the following proof, but the integrality is not asserted in \cite{Del08}.

\begin{proof}[Proof of Theorem \ref{thm:941}]
We denote by $f_E$ the newform of weight $2$ associated to $E$ by the modularity theorem.
Let $V_{\Q_p}(f_E), V_{\C}(f_E)$ be as in \cite[(6.3)]{Kat04} and $V_{\Z_p}(f_E)$ as in \cite[(8.3)]{Kat04}.
Then a modular parametrization $\pi: X_1(N) \to E$ induces an isomorphism $T_pE(-1) \otimes \Q_p \simeq V_{\Q_p}(f_E)$.
If $E[p]$ is irreducible, then we may assume that this isomorphism restricts to $T_pE(-1) \simeq V_{\Z_p}(f_E)$ as in \cite[Proposition 8]{Wut14}.

Let $m$ be the conductor of $K/\Q$.
Let $c, d$ be integers satisfying $\prim(cd) \cap (S \cup \prim(6pN)) = \emptyset$, $c \equiv d \equiv 1 \bmod N$, and $c^2 \neq 1, d^2 \neq 1$.
For an integer $n \geq -1$ and an element $\alpha \in \SL_2(\Z)$, we have the $p$-adic zeta element
\[
{}_{c, d}z_{mp^{n+1}}^{(p)}(f_E, 1, 1, \alpha, S \cup \prim(pN)) \in H^1(K_{m,n}, V_{\Z_p}(f_E)(1)) 
\]
as in \cite[(8.1.3)]{Kat04}.
We denote this element simply by ${}_{c, d}z_{m,n}^{(p)}(\alpha)$.

These elements are related to the $L$-values as follows.
Let $S(f_E)$, $\per_{f_E}: S(f_E) \to V_{\C}(f_E)$, and $\delta(\alpha) = \delta(f_E, 1, \alpha) \in V_{\Q}(f_E)$ be as in \cite[(6.3)]{Kat04}, and $\exp_{f_E}^*: H^1(k_{m,n}, V_{\Q_p}(f_E)(1)) \to S(f_E) \otimes_{\Q} k_{m,n}$ as in \cite[(9.4)]{Kat04}.
Then by \cite[(6.6) and (9.7)]{Kat04}, we have $\exp_{f_E}^*(\loc({}_{c,d}z_{m,n}^{(p)}(\alpha))) \in S(f_E) \otimes_{\Q} K_{m,n}$ and, for any character $\psi$ of $G_{m,n}$,
\begin{align}\label{eq:168}
& \sum_{\tau \in G_{m,n}} \per_{f_E} (\tau(\exp_{f_E}^*(\loc({}_{c,d}z_{m,n}^{(p)}(\alpha)))))^{\sign(\psi)}\psi(\tau) \\
& \qquad \qquad = (c^2 - c\psi^{-1}(c))(d^2 - d\psi^{-1}(d)) L_{S \cup \prim(pN)}(E, \psi, 1) \delta(\alpha)^{\sign(\psi)}.
\end{align}

By \cite[(8.12)]{Kat04}, these elements are compatible with respect to the corestriction maps, yielding
\[
({}_{c, d}z_{m,n}^{(p)}(\alpha))_n \in \varprojlim_n H^1(K_{m,n}, V_{\Z_p}(f_E)(1)).
\]
We denote the image of this element under the corestriction maps $K_{m,n} \to K_n$ by
\[
{}_{c, d}\z^{(p)}(\alpha) \in \varprojlim_n H^1(K_{n}, V_{\Z_p}(f_E)(1)) \simeq H^1(\Q, V_{\Z_p}(f_E)(1) \otimes \RR).
\]

Let $Q(\RR)$ denote the fraction ring of $\RR$.
For $\gamma \in V_{\Q_p}(f_E)$, similarly as in \cite[(13.9)]{Kat04} and \cite[Definition A.1]{Del08}, define 
\[
\z_{\gamma}^{(p)} \in H^1(\Q, V_{\Z_p}(f_E)(1) \otimes \RR) \otimes_{\RR} Q(\RR)
\]
as follows.
Choose elements $\alpha_1, \alpha_2 \in \SL_2(\Z)$ such that $\delta(\alpha_1)^+ \neq 0, \delta(\alpha_2)^- \neq 0$.
Then there are unique elements $b_1, b_2 \in \Q_p$ such that $\gamma = b_1 \delta(\alpha_1) + b_2 \delta(\alpha_2)$.
Define
\[
\z_{\gamma}^{(p)} 
= \left[ (c^2 - c \sigma_c)(d^2 - d\sigma_d) \prod_{l \mid N, l \not\in S \cup \{p\}} (1 - a_l l^{-1}\sigma_l^{-1}) \right]^{-1} 
\left( b_1 \cdot {}_{c, d}\z^{(p)}(\alpha_1)^+ +  b_2 \cdot {}_{c, d}\z^{(p)}(\alpha_2)^- \right).
\]
Here the inverted factor is certainly a non-zero-divisor of $\RR$.
By \eqref{eq:122} and \eqref{eq:168}, for any character $\psi$ of $G_n$, we have
\begin{equation}\label{eq:171}
\sum_{\tau \in G_{n}} \per_{f_E} (\tau(\exp_{f_E}^*(\loc(\z_{\gamma}^{(p)})_n)))^{\sign(\psi)}\psi(\tau)
= L_{S \cup \{p\}}(E, \psi, 1) \gamma^{\sign(\psi)}.
\end{equation}
It follows that $\z_{\gamma}^{(p)}$ is independent of the choices of $\alpha_1, \alpha_2, c, d$.

We consider the integrality of $\z_{\gamma}^{(p)}$.
The key tool is \cite[(12.6)]{Kat04}, generalized in \cite[p.255, Key Claim]{Del08}.
The former treats $K = \Q$ and the latter treats $K = \Q(\mu_m)$, but the following assertion for general $K$ can be deduced from the latter.
Using similar notations as in \cite{Del08}, let 
\[
\ZZ^{\primitive} \subset H^1(\Q, V_{\Z_p}(f_E)(1) \otimes \RR) \otimes_{\RR} Q(\RR)
\]
be the submodule generated by $\z_{\gamma}^{(p)}$ for $\gamma \in V_{\Z_p}(f_E)$.
A submodule $\ZZ^{\imp} \subset H^1(\Q, V_{\Z_p}(f_E)(1) \otimes \RR)$ can be also defined as in \cite{Del08}.
We have  $ \ZZ^{\imp} \subset \ZZ^{\primitive}$ and, moreover, \cite[Key Claim]{Del08} shows that the quotient $\ZZ^{\primitive}/\ZZ^{\imp}$ is finite.
In particular, this implies
\[
\z_{\gamma}^{(p)} \in \ZZ^{\primitive} \subset H^1(\Q, V_{\Z_p}(f_E)(1) \otimes \RR) \otimes_{\Z_p} \Q_p
\simeq H^1(\Q, \T) \otimes_{\Z_p} \Q_p
\]
for $\gamma \in V_{\Z_p}(f_E)$.

In addition, suppose that $H^1(\Q, V_{\Z_p}(f_E)(1) \otimes \RR)$ is free over $\Lambda$.
Then similarly as in \cite[(13.14)]{Kat04}, the finiteness of $\ZZ^{\primitive}/\ZZ^{\imp}$ implies
\[
\z_{\gamma}^{(p)} \in \ZZ^{\primitive} \subset H^1(\Q, V_{\Z_p}(f_E)(1) \otimes \RR)
\]
for $\gamma \in V_{\Z_p}(f_E)$.
Recall that $H^0(K_0, E[p]) = 0$ implies that $H^1(\Q, \T)$ is free over $\Lambda$ as in Proposition \ref{prop:106}(1).
Hence, if $E[p]$ is irreducible and $H^0(K_0, E[p]) = 0$, then we have $\z_{\gamma}^{(p)} \in H^1(\Q, \T)$.

Therefore it remains to show that $\z_{\gamma}^{(p)}$ satisfies \eqref{eq:14} for some $\gamma \in V_{\Z_p}(f_E)$. 
Consider the commutative diagram of parings
\[
\xymatrix{
	H^1(E(\C), \C) \ar[d]_{\pi^*} \ar@{}[r]|-{\times} &
	H_1(E(\C), \Z) \ar[r] &
	\C \ar@{=}[d]\\
	H^1(X_1(N)(\C), \C) \ar@{}[r]|-{\times} &
	H_1(X_1(N)(\C), \Z) \ar[r] \ar[u]^{\pi_*} &
	\C
}
\]
as used in \cite[p. 52]{GV00}.
Let $\per_E(\omega_E) \in H^1(E(\C), \C)$ be the image of $\omega_E$ under the period map.
By the definition of the N\'{e}ron periods $\Omega^{\pm}$, the image of $\per_E(\omega_E)$ as a map $H_1(E(\C), \Z)^{\pm} \to \C$ is $\Omega^{\pm}\Z$.
Then the above diagram implies that there are elements $\ttilde{\gamma}^{\pm} \in H^1(X_1(N)(\C), \Z)^{\pm}$ such that $\pi^*(\per_E(\omega_E)) = \Omega^+ \ttilde{\gamma}^+ + \Omega^- \ttilde{\gamma}^-$.
Let $\gamma^{\pm} \in V_{\Z_p}(f_E)^{\pm}$ be their images.
Put $\z = \z_{\gamma}^{(p)}$ with $\gamma = \gamma^+ + \gamma^-$.
Then, by the compatibilities of the period maps and the dual exponential maps, \eqref{eq:171} implies
\[
\sum_{\tau \in G_{n}} \per_{E} (\tau(\exp_{E}^*(\loc(\z)_n)))^{\sign(\psi)}\psi(\tau)
= \frac{L_{S \cup \{p\}}(E, \psi, 1)}{\Omega^{\sign(\psi)}} \per_E(\omega_E)^{\sign(\psi)},
\]
namely
\[
\sum_{\tau \in G_{n}} \tau(\exp_{E}^*(\loc(\z)_n))\psi(\tau)
= \frac{L_{S \cup \{p\}}(E, \psi, 1)}{\Omega^{\sign(\psi)}}\omega_E.
\]
By the definition of $\exp_{\omega_E}^*$, this is equivalent to \eqref{eq:14}.
\end{proof}

\begin{rem}
Here are a couple of remarks about the final assertion of Theorem \ref{thm:941}.
\begin{enumerate}
\item It is known that $E[p]$ is necessarily irreducible in the supersingular case as in \cite[Remark 5.3(i)]{Kob03}.

\item Thanks to the weaker assumption $H^0(K_0, E[p]) = 0$ than Assumption \ref{ass:04}, the Beilinson-Kato elements sit in the integral Iwasawa cohomology groups for various $K$.
This remark will be necessary when we make use of the Euler system argument in the proof of Theorem \ref{thm:964}.
\end{enumerate}
\end{rem}

\begin{defn}\label{defn:854}
Let $(-,-)_{n}: \hat{E}(\m_{n}) \times H^1(k_{n}, T_pE) \to \Z_p$ be the (sum of) local Tate pairing.
This induces a natural surjective map $H^1(k_{n}, T_pE) \to \hat{E}(\m_{n})^*$.
Using this map, the Coleman maps whose sources are $\hat{E}(\m_{n})^*$ constructed in Section \ref{sec:32} will also be considered to have sources $H^1(k_{n}, T_pE)$.
Therefore we obtain a map
\[
\Cole^{\bullet}: H^1(\Q_p, \T) \to \RR
\]
for $\bullet \in \{ \emptyset, +, -, \sharp, \flat\}$ (here we suppose Assumption \ref{ass:04} in the ordinary case).
This is consistent with $\Cole^{\bullet}$ in \eqref{eq:19}.
\end{defn}

For $x \in \hat{E}(\m_{n})$, we define $P_{n,x}: H^1(k_{n}, T_pE) \to R_{n}$ by 
\[
P_{n,x}(z) = \sum_{\sigma\in G_{n}} (\sigma(x), z)_{n} \sigma
\]
as in \cite[\S 8.5]{Kob03}.
Then we have 
\begin{equation}\label{eq:15}
P_{n,x}(z) = \left( \sum_{\sigma \in G_{n}} \sigma(\log_{\hat{E}}(x))\sigma \right) \left( \sum_{\tau \in G_{n}} \tau(\exp_{\omega_E}^*(z))\tau^{-1} \right)
\end{equation}
by the same proof as in \cite[Proposition 8.25]{Kob03}.

Now we prove Theorem \ref{thm:77} in the ordinary case.

\begin{thm}\label{thm:75}
Suppose $p \nmid a_p$ and Assumption \ref{ass:04} hold.
Then we have $\LL_S(E/K_{\infty})^{\iota} = \Cole(\loc(\z))$.
\end{thm}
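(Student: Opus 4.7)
The plan is to verify the identity $\LL_S(E/K_\infty)^\iota = \Cole(\loc(\z))$ in $\RR \otimes \Q_p$ by evaluating both sides at every character $\psi$ of $G_\infty$ of finite order. Since $\Z_p[[\X]] \simeq \Lambda$ is determined by its values at $\zeta-1$ for $p$-power roots of unity $\zeta$, and $\RR = \Lambda[G_0]$, such character evaluations separate elements of $\RR \otimes \Q_p$, so this reduction is harmless. Note that $\psi(\LL_S^\iota) = \psi^{-1}(\LL_S)$, and the interpolation property \eqref{eq:71} gives
\[
\psi^{-1}(\LL_S(E/K_\infty)) = e_p(\alpha, \psi^{-1}) \, \tau_S(\psi) \, \frac{L_S(E, \psi^{-1}, 1)}{\Omega^{\sign(\psi)}}.
\]

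For the Coleman side, choose $n \geq \max(n_\psi, 0)$ so that $\psi$ factors through $G_n$. Using the local Tate pairing surjection (Definition \ref{defn:854}), $\Cole_n(\loc(\z)_n) = P_{n, d_n'}(\loc(\z)_n)$ with $d_n' = \alpha^{-(n+1)} d_n$, and then formula \eqref{eq:15} factors this as a product. Applying $\psi$ (which is a ring homomorphism) to each factor separately yields
\[
\psi(\Cole(\loc(\z))) = \alpha^{-(n+1)} \left(\sum_{\sigma \in G_n} \sigma(\log_{\hat E}(d_n)) \psi(\sigma)\right) \cdot \left(\sum_{\tau \in G_n} \tau(\exp_{\omega_E}^*(\loc(\z)_n)) \psi^{-1}(\tau)\right),
\]
where the first parenthesis is handled by Theorem \ref{thm:121}(3) and the second by \eqref{eq:14} (applied with $\psi^{-1}$, noting $\sign(\psi^{-1}) = \sign(\psi)$).

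When $n_\psi \geq 0$, take $n = n_\psi$. The two sums are $\tau_S(\psi)$ and $L_{S \cup \{p\}}(E, \psi^{-1}, 1)/\Omega^{\sign(\psi)}$ respectively, while $e_p(\alpha, \psi^{-1}) = \alpha^{-(1+n_\psi)}$ and $\psi^{-1}(p) = 0$ forces $L_S = L_{S \cup \{p\}}$, giving a direct match. When $n_\psi = -1$, we instead take $n = 0$: since $\psi|_\Delta = 1$, grouping the sum over $G_0 = G_{-1} \times \Delta$ and using $\Tr^0_{-1}(d_0) = (\alpha - \varphi^{-1}) d_{-1}$ from Theorem \ref{thm:121}(1), together with the substitution $\tau' = \varphi^{-1}\tau$ which yields a factor of $\psi(\varphi) = \psi(p)$, gives
\[
\sum_{\sigma \in G_0} \sigma(\log_{\hat E}(d_0)) \psi(\sigma) = (\alpha - \psi(p))(1 - \beta^{-1} \psi(p)^{-1})^{-1} \tau_S(\psi)
\]
by Theorem \ref{thm:121}(3) at level $-1$. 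Combining with the $\alpha^{-1}$ factor and the $L_{S \cup \{p\}}$ value from \eqref{eq:14}, we obtain
\[
\psi(\Cole(\loc(\z))) = (1 - \alpha^{-1} \psi(p))(1 - \beta^{-1} \psi(p)^{-1})^{-1} \tau_S(\psi) \, \frac{L_{S \cup \{p\}}(E, \psi^{-1}, 1)}{\Omega^{\sign(\psi)}}.
\]

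Finally, compare with $\psi^{-1}(\LL_S)$: the factorization $1 - a_p X + p X^2 = (1 - \alpha X)(1 - \beta X)$ combined with $\alpha\beta = p$ gives the Euler factor at $p$ as $(1 - \alpha^{-1}\psi(p)^{-1})(1 - \beta^{-1}\psi(p)^{-1})$, so $L_S(E, \psi^{-1}, 1)$ picks up exactly this factor over $L_{S \cup \{p\}}$; together with $e_p(\alpha, \psi^{-1}) = (1-\alpha^{-1}\psi(p))(1-\alpha^{-1}\psi(p)^{-1})$, the matching factor $(1 - \alpha^{-1}\psi(p)^{-1})$ cancels and the two sides agree. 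The main obstacle is this $n_\psi = -1$ case, where the bookkeeping of the extra $(\alpha - \varphi^{-1})$ twist at level $0$ must be reconciled precisely with the two Euler-factor corrections at $p$ (one in $e_p$ and one in moving between $L_S$ and $L_{S \cup \{p\}}$); the clean cancellation is what justifies the choice of normalization $d_n' = \alpha^{-(n+1)} d_n$ defining the Coleman map in the ordinary case.
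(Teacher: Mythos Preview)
Your proof is correct and follows essentially the same approach as the paper's: reduce to character evaluations, write $\Cole_n(\loc(\z)_n)=P_{n,d_n'}(\loc(\z)_n)$ and apply the product formula \eqref{eq:15} together with Theorem~\ref{thm:121}(3) and the interpolation \eqref{eq:14}; the case $n_\psi\geq 0$ is direct, and the case $n_\psi=-1$ is handled via the trace identity $\Tr^0_{-1}(d_0)=(\alpha-\varphi^{-1})d_{-1}$ with the resulting factors matched against the Euler factor at $p$ and $e_p(\alpha,\psi)$. The only cosmetic difference is that the paper absorbs the normalization $\alpha^{-(n+1)}$ into $d_n'$ from the outset, which slightly streamlines the bookkeeping in the $n_\psi=-1$ step.
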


\begin{proof}
Let $\alpha, \beta$ be the roots of $t^2-a_pt+p = 0$ with $p \nmid \alpha$.
Recall that $\LL_S(E/K_{\infty}) = \LL_S(E/K_{\infty}, \alpha)$ by the very definition.
By \eqref{eq:71}, it is enough to show that
\begin{equation}\label{eq:123}
 \psi(\Cole(\loc(\z))) = e_p(\alpha, \psi) \tau_S(\psi) \frac{L_S(E, \psi^{-1}, 1)}{\Omega^{\sign(\psi)}}
\end{equation}
for any character $\psi$ of $G_{ \infty}$ of finite order.
For any $n \geq \max \{0, n_{\psi} \}$, by \eqref{eq:120} we have
\[
\Cole_n(\loc(\z)_n)
= \sum_{\sigma\in G_{n}} (\sigma(\dd_{n}'), \loc(\z)_n)\sigma
 = P_{n, \dd_n'}(\loc(\z)_n).
\]
Therefore \eqref{eq:14} and \eqref{eq:15} show
\begin{equation}\label{eq:121}
\psi(\Cole(\loc(\z)))
= \left( \sum_{\sigma \in G_{n}} \sigma(\log_{\hat{E}}(\dd_n')) \psi(\sigma) \right) \frac{L_{S \cup \{p\}}(E,\psi^{-1},1)}{\Omega^{\sign(\psi)}}.
\end{equation}

If $n_{\psi} \geq 0$, then letting $n = n_{\psi}$ in \eqref{eq:121} gives \eqref{eq:123} by Theorem \ref{thm:121}(3).
When $n_{\psi} = -1$, let $n = 0$ in \eqref{eq:121}.
We have by Theorem \ref{thm:121}(1)(3)
\begin{align}
\sum_{\sigma \in G_{0}} \sigma(\log_{\hat{E}}(d_{0}'))\psi(\sigma) 
&= \sum_{\sigma \in G_{-1}} \sigma(\log_{\hat{E}}(\Tr^{0}_{-1}(d_{0}'))) \psi(\sigma)\\
&= (1 - \alpha^{-1}\psi(p)) \sum_{\sigma \in G_{-1}} \sigma(\log_{\hat{E}}(d_{-1})) \psi(\sigma)\\
&= (1-\alpha^{-1}\psi(p)) (1-\beta^{-1}\psi(p)^{-1})^{-1} \tau_S(\psi).
\end{align}
Moreover, by \eqref{eq:122}
\begin{align}
L_{S \cup \{p\}}(E,\psi^{-1},1) 
&= (1-a_p p^{-1} \psi(p)^{-1} + p^{-1}\psi(p)^{-2})L_{S}(E,\psi^{-1},1)\\
&=(1-\alpha^{-1}\psi(p)^{-1})(1-\beta^{-1}\psi(p)^{-1})L_{S}(E,\psi^{-1},1).
\end{align}
These prove \eqref{eq:123}.
\end{proof}

In the rest of this section, we consider the case where $p \mid a_p$.
The discussion in the following will generalize the results in \cite{Spr12}, where $K = \Q$ and $S = \emptyset$.
Recall $B_n \in M_2(\Lambda)$ in Definition \ref{defn:688}.
A similar computation as in Theorem \ref{thm:75} yields the following.

\begin{thm}\label{thm:72}
Suppose $p \mid a_p$ holds.
For any character $\psi$ of $G_{\infty}$ of finite order, we have
\[
\psi(\Cole'(\loc(\z))) =
 	\begin{cases}
		\tau_S(\psi) \frac{L_{S}(E,\psi^{-1},1)}{\Omega^{\sign(\psi)}}(1, 0) \psi(\ttilde{B_{n_{\psi}}}^{\iota})^{-1} & (n_{\psi} \geq 0)\\
		\tau_S(\psi) \frac{L_{S}(E,\psi^{-1},1)}{\Omega^{\sign(\psi)}}(a_p - (\psi(p)+\psi(p)^{-1}), p-1) & (n_{\psi} = -1)
	\end{cases}
\]
in $\overline{\Q_p}^{\oplus 2}$.
\end{thm}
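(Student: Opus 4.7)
The plan is to adapt the proof of Theorem \ref{thm:75}, pairing the defining characterization \eqref{eq:16} of $\Cole'_n$ against the interpolation formula \eqref{eq:14} for $\z$. After composing $\Cole'_n$ with the local Tate pairing (Definition \ref{defn:854}) and evaluating at $\loc(\z)_n$, \eqref{eq:16} becomes
\[
\Cole'_n(\loc(\z)_n)\,\ttilde{B_n}^{\iota} = \bigl(P_{n,\dd_n}(\loc(\z)_n),\, P_{n,\dd_{n-1}}(\loc(\z)_n)\bigr).
\]
Applying $\psi$ for any $n\ge n_{\psi}$ and combining the factorization \eqref{eq:15} with \eqref{eq:14} gives, for $x\in\hat E(\m_n)$,
\[
\psi\bigl(P_{n,x}(\loc(\z)_n)\bigr) = \biggl(\sum_{\sigma\in G_n}\sigma(\log_{\hat E}(x))\psi(\sigma)\biggr)\cdot\frac{L_{S\cup\{p\}}(E,\psi^{-1},1)}{\Omega^{\sign(\psi)}}.
\]
Everything then reduces to computing the log-sums for $x=\dd_n$ and $x=\dd_{n-1}$ and reconciling the Euler factor at $p$ coming from $L_{S\cup\{p\}}$ versus $L_S$.

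For $n_{\psi}\ge 0$ I will take $n=n_{\psi}$. Theorem \ref{thm:116}(3) directly gives $\sum_{\sigma}\sigma(\log_{\hat E}(\dd_n))\psi(\sigma) = \tau_S(\psi)$, while for $\dd_{n-1}\in\hat E(\m_{n-1})\subset k_{n-1}$ an inner summation over $\Gal(K_n/K_{n-1})$---on which $\psi$ restricts non-trivially precisely because $n_{\psi}=n$---shows that the corresponding log-sum vanishes. Moreover $\psi^{-1}(p)=0$ in this range, so the Euler factor from \eqref{eq:122} is trivial and $L_{S\cup\{p\}}(E,\psi^{-1},1)=L_S(E,\psi^{-1},1)$. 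Combining these yields $\psi(\Cole'(\loc(\z)))\,\psi(\ttilde{B_{n_{\psi}}}^{\iota}) = \tau_S(\psi)\,L_S(E,\psi^{-1},1)\,\Omega^{-\sign(\psi)}\,(1,0)$, which is the claimed formula.

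When $n_{\psi}=-1$ I take $n=0$, so that $B_0=I$ (the empty product) and no $\ttilde B$-factor appears. Here $\psi$ is trivial on $\Delta$, so the $\dd_{-1}$-log-sum reduces to $(p-1)$ times the level-$(-1)$ sum, which Theorem \ref{thm:116}(3) evaluates as $(p-1)(1-p^{-1}a_p\psi(p)^{-1}+p^{-1}\psi(p)^{-2})^{-1}\tau_S(\psi)$. The $\dd_0$-log-sum reduces analogously after invoking the trace relation $\Tr^0_{-1}(\dd_0)=(a_p-\varphi-\varphi^{-1})\dd_{-1}$ from Theorem \ref{thm:116}(1), which produces the factor $(a_p-\psi(p)-\psi(p)^{-1})$ in place of $(p-1)$. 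Meanwhile the ratio $L_{S\cup\{p\}}/L_S$ contributes the factor $(1-p^{-1}a_p\psi(p)^{-1}+p^{-1}\psi(p)^{-2})$, which exactly cancels the reciprocal factor from Theorem \ref{thm:116}(3), leaving the asserted two-component vector.

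The main obstacle is this Euler-factor bookkeeping in the $n_{\psi}=-1$ case---verifying that the denominator appearing in Theorem \ref{thm:116}(3) cancels exactly the Euler factor at $p$ produced by passing between $L_{S\cup\{p\}}$ and $L_S$. A subsidiary issue is the interpretation of $\psi(\ttilde{B_{n_{\psi}}}^{\iota})^{-1}$ when $n_{\psi}\ge 0$: because $\psi(N_{n_{\psi}})=0$ this matrix is singular, so the theorem's formula should be read as the equivalent linear equation $\psi(\Cole'(\loc(\z)))\,\psi(\ttilde{B_{n_{\psi}}}^{\iota}) = \tau_S(\psi)L_S(E,\psi^{-1},1)\Omega^{-\sign(\psi)}(1,0)$, which is precisely what the argument delivers.
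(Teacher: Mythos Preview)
Your argument is essentially identical to the paper's: both apply $\psi$ to \eqref{eq:16} at level $n=\max\{0,n_\psi\}$, reduce via \eqref{eq:15} and \eqref{eq:14} to the log-sums $\sum_{\sigma\in G_n}\sigma(\log_{\hat E}(\dd_n))\psi(\sigma)$ and $\sum_{\sigma\in G_n}\sigma(\log_{\hat E}(\dd_{n-1}))\psi(\sigma)$, and then evaluate these using Theorem~\ref{thm:116}(1)(3) together with the Euler factor at $p$ from \eqref{eq:122}.

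One point of divergence is worth flagging. The paper explicitly asserts that $\psi(\ttilde{B_{n_\psi}}^{\iota})$ \emph{is} invertible for $n_\psi\ge 0$ (citing ``$\psi(\gamma)^{p^{n_\psi}}\neq 1$'' and \eqref{eq:151}) and therefore reads the formula in the statement literally, whereas you claim the matrix is singular. Your computation is in fact correct for $n_\psi\ge 1$: since $\psi(\gamma)$ is then a primitive $p^{n_\psi}$-th root of unity one has $\psi(N_{n_\psi}^\iota)=0$, hence $\det\psi(\ttilde{B_{n_\psi}}^\iota)=\psi(N_1^\iota\cdots N_{n_\psi}^\iota)=0$; the paper's invertibility justification appears to be an oversight (for $n_\psi=0$ the matrix is the identity and there is no issue). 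This does not affect anything of substance, however: both you and the paper actually establish the identity \eqref{eq:124} at $n=n_\psi$, and in the only downstream use (the proof of Theorem~\ref{thm:71}) one multiplies by
\[
\psi\bigl((\Log'_{\alpha,\beta})^\iota\bigr)=\psi(\ttilde{B_{n_\psi}}^\iota)\begin{pmatrix} a_p & p\\ -1 & 0\end{pmatrix}^{-(n_\psi+2)}\begin{pmatrix}\alpha & \beta\\ -1 & -1\end{pmatrix},
\]
so the would-be inverse cancels against an explicit factor of $\psi(\ttilde{B_{n_\psi}}^\iota)$ and only \eqref{eq:124} is needed.
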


\begin{proof}
This is a generalization of \cite[Proposition 6.5]{Spr12}.
Note that, if $n_{\psi} \geq 0$, then $\psi(\gamma)^{p^{n_{\psi}}} \neq 1$ and \eqref{eq:151} show that $\psi(\ttilde{B_{n_{\psi}}}^{\iota})$ is certainly invertible.
For any $n \geq \max \{0, n_{\psi} \}$, we have by \eqref{eq:16}
\[
\Cole'_n(\loc(\z)_n) \ttilde{B_n}^{\iota} = (P_{n,\dd_{n}}(\loc(\z)_n), P_{n,\dd_{n-1}}(\loc(\z)_n)).
\]
Therefore, by \eqref{eq:14} and \eqref{eq:15},
\begin{equation}\label{eq:124}
\psi(\Cole'(\loc(\z))) \psi(\ttilde{B_n}^{\iota}) = \left( \sum_{\sigma \in G_{n}} \sigma(\log_{\hat{E}}(\dd_{n})) \psi(\sigma), \sum_{\sigma \in G_{n}} \sigma(\log_{\hat{E}}(\dd_{n-1})) \psi(\sigma) \right) \frac{L_{S \cup \{p\}}(E,\psi^{-1},1)}{\Omega^{\sign(\psi)}}.
\end{equation}

When $n_{\psi} \geq 0$, letting $n = n_{\psi}$ in \eqref{eq:124} gives the assertion by Theorem \ref{thm:116}(3).
When $n_{\psi} = -1$, let $n=0$ in \eqref{eq:124}.
By Theorem \ref{thm:116}(1)(3), we compute
\begin{align}
& \left( \sum_{\sigma \in G_{0}} \sigma(\log_{\hat{E}}(\dd_{0})) \psi(\sigma), \sum_{\sigma \in G_{0}} \sigma(\log_{\hat{E}}(\dd_{-1})) \psi(\sigma) \right)\\
 &= \left( \sum_{\sigma \in G_{-1}} \sigma(\log_{\hat{E}}(\Tr^{0}_{-1}(\dd_{0}))) \psi(\sigma), \sum_{\sigma \in G_{-1}} \sigma(\log_{\hat{E}}(\Tr^{0}_{-1}(\dd_{-1}))) \psi(\sigma) \right)\\
 &= \left( \sum_{\sigma \in G_{-1}} \sigma(\log_{\hat{E}}(\dd_{-1})) \psi(\sigma) \right) (a_p - (\psi(p)+\psi(p)^{-1}), p-1)\\
 &= (1-p^{-1}a_p \psi(p)^{-1} + p^{-1}\psi(p)^{-2})^{-1} \tau_S(\psi)(a_p - (\psi(p)+\psi(p)^{-1}), p-1).
 \end{align}
Now
\[
L_{S \cup \{p\}}(E, \psi^{-1}, 1) = (1-p^{-1}a_p\psi(p)^{-1}+p^{-1}\psi(p)^{-2})L_{S}(E, \psi^{-1},1)
\]
by \eqref{eq:122} implies the result.
\end{proof}

By comparing the result in Theorem \ref{thm:72} and the characterization \eqref{eq:126} of the $\pm$-$p$-adic $L$-functions, we can now prove Theorem \ref{thm:77} for $\bullet \in \{+, -\}$.
However, we omit the detail since we will prove the more general $p \mid a_p$ case in Theorem \ref{thm:71}.

We continue to suppose $p \mid a_p$.
Let $\alpha, \beta$ be the roots of $t^2-a_pt+p = 0$.
The following is introduced by \cite[Definition 6.8]{Spr12}.

\begin{defn}\label{defn:92}
We put 
\[
\Log'_{\alpha, \beta} = \lim_{n\to \infty} \left[ \ttilde{B_n} 
\begin{pmatrix} a_p & p \\ -1 & 0 \end{pmatrix}^{-(n+2)} \right] \begin{pmatrix} \alpha & \beta \\ -1 & -1 \end{pmatrix}
\]
and 
\[
\Log_{\alpha, \beta}  = \begin{pmatrix} \log_{\alpha}^{\sharp} & \log_{\beta}^{\sharp} \\ \log_{\alpha}^{\flat} & \log_{\beta}^{\flat}
 \end{pmatrix}
 = \begin{pmatrix} 0 & 1 \\ -1 & -a_p \end{pmatrix}\Log'_{\alpha, \beta}
\]
(see Remark \ref{rem:81} for this modification).
These are matrices in $M_2(\HH_{1, \Q_p(\alpha)}(\Gamma))$.
Equivalently, we may define
\[
\Log_{\alpha, \beta} = \lim_{n \to \infty} 
\left[ \begin{pmatrix} a_p & 1 \\ -N_1 & 0 \end{pmatrix} \begin{pmatrix} a_p & 1 \\ -N_2 & 0 \end{pmatrix} \dots
\begin{pmatrix} a_p & 1 \\ -N_n & 0 \end{pmatrix} \begin{pmatrix} a_p & 1 \\ -p & 0 \end{pmatrix}^{-(n+2)} \right] 
\begin{pmatrix} -1 & -1 \\ \beta & \alpha \end{pmatrix}
\]
as in \cite{Spr17}.
\end{defn}

The following is a generalization of \cite[Theorem 6.12]{Spr12}.

\begin{defn}\label{defn:73}
Define the $\sharp/\flat$-$p$-adic $L$-functions $\LL_S^{\sharp/\flat}(E/K_{\infty}) \in \RR \otimes \Q_p$ by the formula
\[
(\LL_S(E/K_{\infty}, \alpha), \LL_S(E/K_{\infty}, \beta)) 
= (\LL_S^{\sharp}(E/K_{\infty}), \LL_S^{\flat}(E/K_{\infty})) \Log_{\alpha, \beta}.
\]
Namely, we have
\[
\begin{cases}
\LL_S(E/K_{\infty}, \alpha) = \log_{\alpha}^{\sharp} \LL_S^{\sharp}(E/K_{\infty}) + \log_{\alpha}^{\flat} \LL_S^{\flat}(E/K_{\infty})\\
\LL_S(E/K_{\infty}, \beta) = \log_{\beta}^{\sharp} \LL_S^{\sharp}(E/K_{\infty}) + \log_{\beta}^{\flat} \LL_S^{\flat}(E/K_{\infty}).
\end{cases}
\]
The existence will be shown in Theorem \ref{thm:71}.
\end{defn}

\begin{rem}\label{rem:68}
Suppose $a_p = 0$ holds.
We show that this definition is compatible with our convention $(\sharp, \flat) = (-, +)$.
In fact, we have
\[
\lim_{n\to \infty} \left[ \ttilde{B_n} 
\begin{pmatrix} 0 & p \\ -1 & 0 \end{pmatrix}^{-(n+2)} \right] 
= \begin{pmatrix} - \log^- & 0 \\ 0 & -\log^+ \end{pmatrix}.
\]
Therefore
\begin{align}
(\LL_S^{-}(E/K_{\infty}), \LL_S^{+}(E/K_{\infty})) \Log_{\alpha, -\alpha}
&= (\LL_S^{-}(E/K_{\infty}), \LL_S^{+}(E/K_{\infty}))
\begin{pmatrix} 0 & 1 \\ -1 & 0 \end{pmatrix} \begin{pmatrix} - \log^- & 0 \\ 0 & -\log^+ \end{pmatrix} \begin{pmatrix} \alpha & -\alpha \\ -1 & -1 \end{pmatrix}\\
&= (\LL_S^+(E/K_{\infty}), \LL_S^-(E/K_{\infty})) \begin{pmatrix} \log^- & 0 \\ 0 & \log^+ \end{pmatrix} \begin{pmatrix} \alpha & -\alpha \\ 1 & 1 \end{pmatrix}\\
&= (\LL_S(E/K_{\infty}, \alpha), \LL_S(E/K_{\infty}, -\alpha)),
\end{align}
where the final equality follows from \eqref{eq:125}.
\end{rem}

The following (cf. \cite[Definition 6.1]{Spr12}) is Theorem \ref{thm:77} in the supersingular case.

\begin{thm}\label{thm:71}
Suppose $p \mid a_p$ holds.
We have $\Cole^{\sharp/\flat}(\loc(\z)) = \LL_S^{\sharp/\flat}(E/K_{\infty})^{\iota}$.
\end{thm}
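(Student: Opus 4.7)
The plan is to reduce the claimed identity, which lives in $\RR \otimes \Q_p(\alpha)$ viewed inside $\HH_{1,\Q_p(\alpha)}(G_\infty)^{\oplus 2}$, to a pointwise check at every character $\psi$ of $G_\infty$ of finite order. Such elements are separated by their values at characters, so it suffices to verify $\psi(\Cole^{\sharp}(\loc(\z))) = \psi(\LL_S^{\sharp}(E/K_\infty)^{\iota})$ (and analogously for $\flat$) for each $\psi$. Remembering that $\psi \circ \iota$ equals evaluation at $\psi^{-1}$, the right-hand side can be rewritten via Definition \ref{defn:73} and the interpolation property \eqref{eq:71} of $\LL_S(E/K_\infty, \alpha)$, giving
\[
(\psi(\LL_S^{\sharp,\iota}),\psi(\LL_S^{\flat,\iota})) \cdot \psi(\Log_{\alpha,\beta}^{\iota}) = \bigl(e_p(\alpha,\psi^{-1}),\, e_p(\beta,\psi^{-1})\bigr)\,\tau_S(\psi)\,\frac{L_S(E,\psi^{-1},1)}{\Omega^{\sign(\psi)}}.
\]

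First I would combine \eqref{eq:10} with Theorem \ref{thm:72} to obtain an explicit closed form for the row vector $\psi(\Cole(\loc(\z))) \in \overline{\Q_p}^{\oplus 2}$: namely the product of the same analytic factor $\tau_S(\psi) L_S(E,\psi^{-1},1)/\Omega^{\sign(\psi)}$ with $(1,0)\psi(\ttilde{B_{n_\psi}}^{\iota})^{-1}\bigl(\begin{smallmatrix}-a_p & -1\\ 1 & 0\end{smallmatrix}\bigr)$ when $n_\psi \geq 0$, and with the degenerate vector $(a_p - \psi(p) - \psi(p)^{-1},\, p-1)\bigl(\begin{smallmatrix}-a_p & -1\\ 1 & 0\end{smallmatrix}\bigr)$ when $n_\psi = -1$. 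The problem is thus reduced to a purely matrix-theoretic identity: one must show that, after right multiplication by $\psi(\Log_{\alpha,\beta}^{\iota})$, these row vectors become $(e_p(\alpha,\psi^{-1}),\, e_p(\beta,\psi^{-1}))$.

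To prove the matrix identity I would use the telescoping definition of $\Log_{\alpha,\beta}$ in Definition \ref{defn:92}. For $n \geq n_\psi$, the value $\psi(\ttilde{B_n})$ factors through $\psi(\ttilde{B_{n_\psi}})$ times a tail product of matrices of the form $\bigl(\begin{smallmatrix}a_p & \psi(N_k)\\ -1 & 0\end{smallmatrix}\bigr)$ for $k > n_\psi$, and $\psi(N_k) = \Phi_k(\psi(\gamma))$ equals $p$ as soon as $\psi(\gamma)$ has order dividing $p^{n_\psi}$. This causes the tail to collapse against the matrix power $\bigl(\begin{smallmatrix}a_p & p\\ -1 & 0\end{smallmatrix}\bigr)^{-(n+2)}$ in the limit. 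Diagonalising $\bigl(\begin{smallmatrix}a_p & p\\ -1 & 0\end{smallmatrix}\bigr) = \bigl(\begin{smallmatrix}\alpha & \beta\\ -1 & -1\end{smallmatrix}\bigr)\bigl(\begin{smallmatrix}\alpha & 0\\ 0 & \beta\end{smallmatrix}\bigr)\bigl(\begin{smallmatrix}\alpha & \beta\\ -1 & -1\end{smallmatrix}\bigr)^{-1}$ via the relations $\alpha+\beta = a_p,\ \alpha\beta = p$, the exponents $\alpha^{-(1+n_\psi)}$ and $\beta^{-(1+n_\psi)}$ appearing in $e_p(-,\psi^{-1})$ emerge naturally for $n_\psi \geq 0$.

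The main obstacle is the bookkeeping across the two regimes in Theorem \ref{thm:72}. For $n_\psi \geq 0$ the argument is a direct extension of Sprung's computation (\cite[\S 6]{Spr12}) to the equivariant setting, where the non-cyclotomic part of $\psi$ carries trivially through Theorem \ref{thm:116}(3); but the $n_\psi = -1$ case is genuinely separate: there $\psi(N_k) = p$ already for all $k \geq 1$, the matrix $\psi(\ttilde{B_n})$ does not appear at all in Theorem \ref{thm:72}, and one must instead check directly that the degenerate vector $(a_p - \psi(p) - \psi(p)^{-1},\, p-1)\bigl(\begin{smallmatrix}-a_p & -1\\ 1 & 0\end{smallmatrix}\bigr)$ multiplied by the appropriate $\psi(\Log_{\alpha,\beta}^{\iota})$ reproduces the Euler-factor pair $((1-\alpha^{-1}\psi(p))(1-\alpha^{-1}\psi(p)^{-1}),\, (1-\beta^{-1}\psi(p))(1-\beta^{-1}\psi(p)^{-1}))$. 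Once both character ranges are handled, the identity of elements in $\HH_{1,\Q_p(\alpha)}(G_\infty)$ follows, establishing Theorem \ref{thm:71} and simultaneously confirming the existence of $\LL_S^{\sharp/\flat}(E/K_\infty) \in \RR \otimes \Q_p$ promised by Definition \ref{defn:73}.
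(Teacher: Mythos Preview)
Your proposal is correct and follows essentially the same route as the paper: reduce to the vector identity $\Cole(\loc(\z))\,\Log_{\alpha,\beta}^{\iota} = (\LL_S(E/K_\infty,\alpha)^{\iota},\LL_S(E/K_\infty,\beta)^{\iota})$, verify it character by character using Theorem~\ref{thm:72} and the interpolation formula, and split into the regimes $n_\psi\ge 0$ and $n_\psi=-1$. The only cosmetic difference is that the paper first cancels the matrix $\bigl(\begin{smallmatrix}-a_p & -1\\ 1 & 0\end{smallmatrix}\bigr)$ by passing to $\Cole'$ and $\Log'_{\alpha,\beta}$ (so that the telescoping identity becomes exactly \eqref{eq:130}, quoted from \cite[Lemma~6.7]{Spr12}), whereas you carry that matrix along; and the paper cites \cite[Lemma~6.11]{Spr12} for the separation of $\HH_{1}$-elements by characters, which you invoke implicitly.
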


\begin{proof}
Definitions \ref{defn:400} and \ref{defn:92} imply $\Cole(\loc(\z)) \Log_{\alpha, \beta} = \Cole'(\loc(\z)) \Log'_{\alpha, \beta}$.
Thus the assertion is equivalent to 
\[
(\LL_S(E/K_{\infty}, \alpha)^{\iota}, \LL_S(E/K_{\infty}, \beta)^{\iota}) = \Cole'(\loc(\z)) (\Log'_{\alpha, \beta})^{\iota},
\]
which we shall prove.
Let $\psi$ be a character of $G_{\infty}$ of finite order and we shall compare the evaluation by $\psi$ with \eqref{eq:71} (this suffices by \cite[Lemma 6.11]{Spr12}).
If $n_{\psi} \geq 0$, we have
\begin{equation}\label{eq:130}
\psi(\ttilde{B_{n_{\psi}}})^{-1} \psi(\Log'_{\alpha, \beta})
= \begin{pmatrix} a_p & p \\ -1 & 0 \end{pmatrix}^{-(n_{\psi}+2)} \begin{pmatrix} \alpha & \beta \\ -1 & -1 \end{pmatrix}
= \begin{pmatrix} \alpha^{-(1+n_{\psi})} & \beta^{-(1+n_{\psi})} \\ -\alpha^{-(2+n_{\psi})} & -\beta^{-(2+n_{\psi})}\end{pmatrix}
\end{equation}
(see \cite[Lemma 6.7]{Spr12} for the second equality).
Thus by Theorem \ref{thm:72}, we have
\[
\psi \left( \Cole'(\loc(\z)) (\Log'_{\alpha, \beta})^{\iota} \right) 
= \tau_S(\psi) \frac{L_S(E,\psi^{-1},1)}{\Omega^{\sign(\psi)}} \left( \alpha^{-(1+n_{\psi})}, \beta^{-(1+n_{\psi})} \right).
\]
If $n_{\psi} = -1$, similarly we have
\[
\psi(\Log'_{\alpha, \beta})
= \begin{pmatrix} a_p & p \\ -1 & 0 \end{pmatrix}^{-2} \begin{pmatrix} \alpha & \beta \\ -1 & -1 \end{pmatrix}
= \begin{pmatrix} \alpha^{-1} & \beta^{-1} \\ -\alpha^{-2} & -\beta^{-2}\end{pmatrix}
\]
and thus, by Theorem \ref{thm:72},
\[
\psi \left( \Cole'(\loc(\z)) (\Log'_{\alpha, \beta})^{\iota} \right) 
 = \tau_S(\psi) \frac{L_S(E,\psi^{-1},1)}{\Omega^{\sign(\psi)}} (a_p - (\psi(p)+\psi(p)^{-1}), p-1)
 \begin{pmatrix} \alpha^{-1} & \beta^{-1} \\ -\alpha^{-2} & -\beta^{-2} \end{pmatrix}.
\]
An easy computation shows
\[
(a_p - (\psi(p)+\psi(p)^{-1}), p-1)\begin{pmatrix} \alpha^{-1} \\ -\alpha^{-2} \end{pmatrix} = (1-\alpha^{-1}\psi(p))(1-\alpha^{-1}\psi(p)^{-1}) = e_p(\alpha, \psi)
 \]
 and a similar formula for $\beta$.
This completes the proof.
\end{proof}

Before closing this section, we state a non-vanishing results of the $p$-adic $L$-functions.
Recall that we have a decomposition $\RR \otimes \Q_p \simeq \bigoplus_{\chi} (\RR\otimes \Q_p)^{\chi}$ where $\chi$ runs over the characters of $G_0$ and each $(\RR\otimes \Q_p)^{\chi}$ is an integral domain.
An element $\xi$ of $\RR \otimes \Q_p$ is a non-zero-divisor if and only if $\xi^{\chi} \in (\RR \otimes \Q_p)^{\chi}$ is nonzero for any $\chi$.

\begin{prop}\label{prop:101}
If $p \nmid a_p$, then $\LL_S(E/K_{\infty})$ is a non-zero-divisor of $\RR \otimes \Q_p$.
If $p \mid a_p$, for each character $\chi$ of $G_0$, at least one of $\LL_S^{\sharp}(E/K_{\infty})^{\chi}, \LL_S^{\flat}(E/K_{\infty})^{\chi}$ is nonzero.
If $a_p = 0$, then both $\LL_S^{\pm}(E/K_{ \infty})$ are non-zero-divisors of $\RR \otimes \Q_p$.
\end{prop}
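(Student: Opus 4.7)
The plan is to deduce each non-vanishing from the interpolation formulas together with Rohrlich's theorem on non-vanishing of complex $L$-values along the cyclotomic tower. The ring $\RR \otimes \Q_p$ decomposes as $\prod_\chi (\Lambda \otimes \Q_p(\chi))$ over characters $\chi$ of $G_0$, each factor being (non-canonically) a formal power series ring over a finite extension of $\Q_p$; by Weierstrass preparation, a $\chi$-component is nonzero as soon as \emph{some} finite-order character $\psi$ of $G_\infty$ with $\psi|_{G_0} = \chi$ evaluates it to a nonzero quantity.

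Fix $\chi$ and, for $\tau$ a finite-order (hence $p$-power order) character of $\Gamma$, set $\psi = \chi\tau$. The prime-to-$p$ conductor of $\psi$ divides the conductor of $K/\Q$ and so lies in $S$, whence $\tau_S(\psi^{-1})$ is defined and nonzero; when $\tau$ has order $p^n$ with $n \geq 1$, one has $n_\psi = n$. The Euler factors at primes of $S$ that distinguish $L(E,\psi,1)$ from $L_S(E,\psi,1)$ are nonzero by the Hasse bound, so the two $L$-values vanish simultaneously. Rohrlich's theorem applied to the newform $f_E$ twisted by $\chi$ gives $L(E, \chi\tau, 1) \neq 0$ for all but finitely many $\tau$ of $p$-power order, allowing us to impose any prescribed parity on $n_\psi$.

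In the ordinary case, since $\alpha \in \Z_p^\times$, the factor $e_p(\alpha, \psi) = \alpha^{-(1+n_\psi)}$ is a unit, so \eqref{eq:71} forces $\psi(\LL_S(E/K_\infty)) \neq 0$ for any admissible $\tau$, giving $\LL_S(E/K_\infty)^\chi \neq 0$. For $a_p = 0$, to show $\LL_S^{\pm,\chi} \neq 0$ I pick $\tau$ of $p$-power order $n \geq 1$ with $(-1)^n = \pm 1$ and $L_S(E,\chi\tau,1) \neq 0$; one checks that $\psi(\ttilde{\omega}_n^\mp) \in \overline{\Q_p}^\times$ (the potentially vanishing factor $\Phi_n(1+\X)$ appears in $\ttilde{\omega}_n^\pm$, not in $\ttilde{\omega}_n^\mp$), so the matching branch of \eqref{eq:126} yields $\psi(\LL_S^\pm) \neq 0$. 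For general $p \mid a_p$, if both $\LL_S^{\sharp,\chi}$ and $\LL_S^{\flat,\chi}$ vanished, Definition \ref{defn:73} would force $\LL_S(E/K_\infty,\alpha)^\chi = 0$; evaluating at a suitable $\psi$ and using $\alpha \neq 0$ then contradicts \eqref{eq:71}.

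The only non-routine ingredient is the invocation of Rohrlich's theorem for twists of $f_E$ by arbitrary finite-order characters with a prescribed parity on the cyclotomic part; this is a direct application of the standard statement. The remaining work is purely bookkeeping through the interpolation formulas, tracking that the various Gauss sums, Euler factors, and powers of $\alpha$ on the right-hand sides are nonzero.
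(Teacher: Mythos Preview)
Your argument is correct and follows exactly the approach the paper indicates: the paper's own proof is only a reference to Rohrlich's non-vanishing theorem together with \cite[Proposition 6.14]{Spr12} and \cite[Corollary 5.11]{Pol03}, and you have accurately unpacked what those references say by evaluating the interpolation formulas \eqref{eq:71}, \eqref{eq:126}, and Definition~\ref{defn:73} at a character $\psi=\chi\tau$ with $L(E,\psi,1)\neq 0$.
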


\begin{proof}
This proposition is shown by the result of Rohrlich \cite{Roh84} on non-vanishing of $L$-values.
See \cite[Proposition 6.14]{Spr12} (resp. \cite[Corollary 5.11]{Pol03}) for the $p \mid a_p$ (resp. $a_p = 0$) case.
\end{proof}

\section{Equivariant Main Conjecture}\label{sec:84}

The goal of this section is to prove Theorem \ref{thm:95} by applying the Euler system argument to the Beilinson-Kato elements.
As usual, suppose Assumption \ref{ass:04} holds in the ordinary case.

\subsection{Remarks on Equivariant Main Conjecture}
In this subsection, we collect several remarks around the equivariant main conjecture \eqref{eq:241}.

\subsubsection{Independence from $S$}
In Proposition \ref{prop:615} below, we shall show that the equivariant main conjecture \eqref{eq:241} is independent from $S$.
This is a generalization of Greenberg-Vatsal \cite[Theorem (1.5)]{GV00}, where the non-equivariant, ordinary case is treated.
The proof of Proposition \ref{prop:615} traces that of \cite{GV00}.
More concretely, Lemma \ref{lem:992} below, the exact sequence \eqref{eq:04}, and Lemma \ref{lem:a02} below, respectively, correspond to \cite[the formula in p. 25]{GV00}, \cite[Proposition (2.1)]{GV00}, and \cite[Proposition (2.4)]{GV00}.

\begin{defn}\label{defn:991}
For a prime number $l \neq p$ which is unramified in $K/\Q$, put
\begin{equation}\label{eq:140}
P_l = 1-a_l l^{-1}\sigma_l + \mathbf{1}_N(l) l^{-1}\sigma_l^2 \in \RR,
\end{equation}
where $\sigma_l$ is the $l$-th power Frobenius map.
\end{defn}

\begin{lem}\label{lem:992}
Let $S' \supset S$ be a finite set of prime numbers $\neq p$.
We have 
\[
\LL_{S'}(E/K_{\infty}, \alpha) = \left( \prod_{l \in S' \setminus S} (-\sigma_l^{-1} P_l) \right) \LL_{S}(E/K_{\infty}, \alpha)
\]
for any allowable root $\alpha$, and
\begin{equation}\label{eq:904}
\LL_{S'}^{\bullet}(E/K_{\infty}) = \left( \prod_{l \in S' \setminus S} (-\sigma_l^{-1} P_l) \right) \LL_{S}^{\bullet}(E/K_{\infty})
\end{equation}
for $\bullet \in \{\emptyset, +, -, \sharp, \flat \}$.

\end{lem}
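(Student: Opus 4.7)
The plan is to reduce first to the case where $S'\setminus S=\{l\}$ consists of a single prime (the general case follows by induction on $|S'\setminus S|$), then prove the $\alpha$-formula by an interpolation comparison, and finally deduce the $\bullet$-formulas from the $\alpha$-formula and the characterizations of $\LL_S^{\bullet}$. In the reduced setting, $l\ne p$ and $l$ is unramified in $K/\Q$ (since $S$ already contains all such primes), hence unramified in $K_{\infty}/\Q$; in particular $\sigma_l\in G_{\infty}$ is a well-defined Frobenius, $P_l\in\RR$ makes sense, and for any finite-order character $\psi$ of $G_{\infty}$ one has $\psi(\sigma_l)=\psi(l)$.

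To prove the $\alpha$-formula, I would evaluate both sides at an arbitrary finite-order character $\psi$ of $G_{\infty}$ using the interpolation property \eqref{eq:71}. The $S$-dependence in \eqref{eq:71} is confined to the factors $\tau_S(\psi^{-1})$ and $L_S(E,\psi,1)$, since $e_p(\alpha,\psi)$ depends only on $\alpha$ and $\psi$. Combining the Gauss-sum relation $\tau_{S'}(\psi^{-1})=-\psi^{-1}(\sigma_l)\,\tau_S(\psi^{-1})$ from \eqref{eq:91} with the Euler-factor identity $L_{S'}(E,\psi,1)=\psi(P_l)\,L_S(E,\psi,1)$ from \eqref{eq:122}, a direct substitution gives
\[
\psi\bigl(\LL_{S'}(E/K_{\infty},\alpha)\bigr)=\psi(-\sigma_l^{-1}P_l)\cdot\psi\bigl(\LL_{S}(E/K_{\infty},\alpha)\bigr).
\]
Thus $(-\sigma_l^{-1}P_l)\LL_S(E/K_{\infty},\alpha)\in\HH_{1,\Q_p(\alpha)}(G_{\infty})$ satisfies the interpolation property characterizing $\LL_{S'}(E/K_{\infty},\alpha)$, so by the uniqueness built into the Amice-Velu/Vi\v{s}ik/Mazur-Tate-Teitelbaum construction the two elements coincide.

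For the $\bullet$-formulas, I would use that $\LL_S^{\bullet}\in\RR\otimes\Q_p$ is determined by its relation to the classical $p$-adic $L$-functions. The case $\bullet=\emptyset$ is immediate from the $\alpha$-formula applied to the unit root. For $\bullet\in\{+,-\}$ in the $a_p=0$ case, multiplying the defining relation \eqref{eq:125} (for $S$) by $\prod_{l\in S'\setminus S}(-\sigma_l^{-1}P_l)$ and comparing with \eqref{eq:125} for $S'$, using both allowable roots $\pm\alpha$ of $t^2+p$, yields a $2\times 2$ linear system with determinant a nonzero scalar multiple of $\log^{+}\log^{-}$, a non-zero-divisor in $\HH_{1,\Q_p(\alpha)}(\Gamma)$; this forces \eqref{eq:904}. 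For $\bullet\in\{\sharp,\flat\}$ with $p\mid a_p$, the identical argument applies using Definition \ref{defn:73} and the non-vanishing of $\det\Log_{\alpha,\beta}$ from \cite[Lemma 6.11]{Spr12}, which lets one recover $(\LL_S^{\sharp},\LL_S^{\flat})$ from the pair $(\LL_S(E/K_{\infty},\alpha),\LL_S(E/K_{\infty},\beta))$.

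The main obstacle is not conceptual but bookkeeping: one must handle the cases $n_\psi\ge 0$ and $n_\psi=-1$ of \eqref{eq:71} uniformly (luckily $e_p(\alpha,\psi)$ is manifestly $S$-independent in both), and one must carefully invoke the uniqueness statements for $\LL_S^{\pm}$ and $\LL_S^{\sharp/\flat}$, which ultimately reduce to the non-vanishing of the matrices $\Log_{\alpha,\beta}$ and the logarithms $\log^{\pm}$.
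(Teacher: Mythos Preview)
Your proposal is correct and follows essentially the same approach as the paper's proof: the paper's argument is just the terse two-line version of what you wrote, citing \eqref{eq:122}, \eqref{eq:91}, and \eqref{eq:71} for the $\alpha$-formula via interpolation comparison, and then invoking Definition~\ref{defn:73} (which subsumes the $\pm$ case via the convention $(\sharp,\flat)=(-,+)$) for the $\bullet$-formula. Your explicit reduction to a single prime, the spelled-out uniqueness via growth order $<1$, and the invertibility of $\Log_{\alpha,\beta}$ are exactly the details the paper leaves implicit.
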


\begin{proof}
The first formula follows from \eqref{eq:122}, \eqref{eq:91}, and \eqref{eq:71}.
Then, by Definition \ref{defn:73} in the $p \mid a_p$ case, the second formula follows.
\end{proof}

For a finitely generated $\RR$-module $X$, let $\FF(X) = \Fitt_{\RR}(X)$ be the initial Fitting ideal of $X$.
See Section \ref{sec:108} for properties of $\FF$.

\begin{lem}\label{lem:a02}
Let $l \neq p$ be a prime number.

(1) We have an isomorphism
\begin{equation}
H^1(\Q_l, \T) \simeq H^0(K_{\infty} \otimes \Q_l, T_pE)
\end{equation}
of $\RR$-modules.

(2) Suppose $l$ is unramified in $K/\Q$.
Then we have $\FF(H^0(K_{\infty} \otimes \Q_l, T_pE)) = (P_l)$.
\end{lem}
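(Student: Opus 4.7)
The plan for (1) is to apply Shapiro's lemma, writing $H^i(\Q_l, \T) \cong \bigoplus_{v \mid l} H^i(K_{\infty, v}, T_pE)$ as $\RR$-modules, and then to invoke the standard calculation of local Iwasawa cohomology at primes $l \neq p$. Since $l \neq p$ and each $K_{\infty, v}/K_v$ is an unramified pro-$p$ extension (the cyclotomic $p$-tower is unramified away from $p$), the degree-$1$ Iwasawa cohomology of a $\Z_p$-representation admits a canonical identification with $H^0$ at the top of the tower. This follows by combining local Tate duality with the Weil-pairing self-duality $T_pE \cong T_pE^*(1)$, after tracking the boundary map in the Hochschild--Serre spectral sequence for $G_{K_v} \twoheadrightarrow \Gal(K_{\infty, v}/K_v)$, whose target procyclic group has cohomological dimension $1$.

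For (2), with (1) in hand, I would compute $H^0(K_\infty \otimes \Q_l, T_pE) = \bigoplus_{v \mid l} (T_pE)^{G_{K_{\infty, v}}}$ explicitly. The unramifiedness hypothesis on $l$ in $K/\Q$ ensures $I_{\Q_l} \subset G_{K_{\infty, v}}$, so each summand equals $(T_pE^{I_l})^{D_v/I_l}$, where $D_v$ is the decomposition group at $v$, topologically generated by $\sigma_l$. Assembling over $v$ and invoking the induced $\RR$-module structure, $H^0$ identifies with the kernel of the Frobenius-difference map
\begin{equation*}
\phi_l \otimes 1 - 1 \otimes \sigma_l : T_pE^{I_l} \otimes_{\Z_p} \RR \to T_pE^{I_l} \otimes_{\Z_p} \RR,
\end{equation*}
where $\phi_l$ is the Frobenius action on $T_pE^{I_l}$. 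Cayley--Hamilton for $\phi_l$ (whose characteristic polynomial on $T_pE^{I_l}$ is $t^2 - a_l t + \mathbf{1}_N(l) l$) gives determinant $\sigma_l^2 - a_l \sigma_l + \mathbf{1}_N(l) l = l \cdot P_l$. Since $l$ is a unit in $\RR$, the Fitting ideal of the cokernel is $(P_l)$; the identification from (1) then transfers this to the kernel, giving $\FF(H^0) = (P_l)$.

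The main obstacle I anticipate is the implicit identification of the kernel with the cokernel of the Frobenius-difference map in (1), since for arbitrary square matrices over $\RR$ the kernel and cokernel are neither isomorphic nor do they share Fitting ideals. The essential input is the Weil-pairing self-duality of $T_pE$ and the corresponding duality in local Iwasawa theory, which provides the natural $\RR$-module isomorphism rather than a mere abstract linear-algebra argument. A secondary technicality is the bad-reduction case $l \mid N$, where $T_pE^{I_l}$ has $\Z_p$-rank less than two; here the determinant computation still yields $l \cdot P_l$ (with $\mathbf{1}_N(l) = 0$), but one must check that the correct rank adjustment does not alter the final Fitting ideal.
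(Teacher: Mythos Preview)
Your approach to (1) is essentially the paper's (which simply cites Greenberg--Vatsal): Tate duality plus the Hochschild--Serre filtration for the unramified tower. No issues there.

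For (2), however, there is a genuine gap. Your assertion that $H^0(K_\infty\otimes\Q_l,T_pE)$ identifies with the \emph{kernel} of $\phi_l\otimes 1-1\otimes\sigma_l$ on $T_pE^{I_l}\otimes_{\Z_p}\RR$ is false: the determinant $\sigma_l^2-a_l\sigma_l+l$ is a non-zero-divisor in $\RR$ (its value at any finite-order character is nonzero by the Weil bound $|\alpha|=|\beta|=\sqrt{l}\neq 1$), so that kernel is zero. Your proposed rescue via Weil-pairing self-duality cannot identify a zero module with a nonzero one, so it does not resolve the problem.

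What is true---and what the paper proves---is that $H^0(K_\infty\otimes\Q_l,T_pE)$ is isomorphic to the \emph{cokernel} of that map. The paper does this directly: over $\Z_p[[\Gal(L^{\ur}/\Q_l)]]$ one resolves $T_pE$ by a two-term free complex with matrix $D$, then takes $\Gal(L^{\ur}/K_{\infty,\lambda})$-coinvariants; this group is pro-prime-to-$p$, so coinvariants are exact and moreover $H_0\cong H^0$. Inducing up to $\RR$ gives $0\to\RR^2\overset{D}{\to}\RR^2\to H^0(K_\infty\otimes\Q_l,T_pE)\to 0$, whence $\FF=(\det D)=(P_l)$. Equivalently, once you have (1), note $H^1(\Q_l,\T)=H^1_{\ur}(\Q_l,\T)=\T^{I_l}/(\mathrm{Frob}-1)$ is already the cokernel; no kernel ever enters.

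A minor secondary point: your uniform formula for the characteristic polynomial on $T_pE^{I_l}$ is only correct at good primes. In the multiplicative case $T_pE^{I_l}$ has rank one with $\sigma_l$ acting by $a_l l$, so the determinant is the $1\times 1$ entry $\sigma_l-a_l l$, which equals $P_l=1-a_l l^{-1}\sigma_l$ up to the unit $-a_l l^{-1}\sigma_l$ (using $a_l^2=1$); in the additive case the module is zero and $P_l=1$. The paper treats these cases separately.
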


\begin{proof}
(1) This is shown by the argument in the proof of \cite[Proposition (2.4)]{GV00}.
\hidden{
Let $T = T_pE$.
Since $H^1(\Q_l, \T) \simeq H^1(K_{\infty} \otimes \Q_l, T^{\dual}(1))^{\dual}$ by the Tate duality, it is enough to show 
$H^1(K_{\infty, \lambda}, T^{\dual}(1))^{\dual} \simeq H^0(K_{\infty, \lambda}, T)$ for a place $\lambda$ of $K_{\infty}$ above $l$.
Let $L^{\ur}$ (resp. $L^{\tame}$) be the maximal unramified (resp. tame) extension of $K_{\infty, \lambda}$.
Then we have
\begin{align}
H^1(L^{\ur}, T^{\dual}(1)) 
& \simeq H^1(L^{\tame}/L^{\ur}, H^0(L^{\tame}, T^{\dual}(1))) & \text{the degree of $\overline{L}/L^{\tame}$ is relatively prime to $p$}\\
& \simeq H^1(L^{\tame}/L^{\ur}, H_0(L^{\tame}, T^{\dual}(1))) & \text{the same reason}\\
& \simeq H^1(L^{\tame}/L^{\ur}, H_0(L^{\ur}, T^{\dual}(1))) & \text{the extension $L^{\tame}/L^{\ur}$ is pro-cyclic}\\
& \simeq \Hom(\Z_p(1), H_0(L^{\ur}, T^{\dual}(1))) & \text{$\Gal(L^{\tame}/L^{\ur}) \simeq \Z_{l'}(1)$ as $\Gal(L^{\ur}/\Q_l)$-modules}\\
& \simeq H_0(L^{\ur}, T^{\dual}).
\end{align}
Hence $H^1(K_{\infty, \lambda}, T^{\dual}(1))^{\dual} \simeq H^0(K_{\infty, \lambda}, T)$.
}

(2) 
Let $\lambda$ be a place of $K_{\infty}$ above $l$, 
and let $L^{\ur}$ be the maximal unramified extension of $K_{\infty, \lambda}$.

First suppose $l$ is a good prime for $E$.
Let $\begin{pmatrix} x_1 & x_2 \\ x_3 & x_4 \end{pmatrix} \in \GL_2(\Z_p)$ be the presentation matrix of the action of the Frobenius $\sigma_l$ on $T_pE$ with respect to a basis of $T_pE$ over $\Z_p$.
Then we have an exact sequence
\[
0 \to \Z_p[[\Gal(L^{\ur}/\Q_l)]]^{\oplus 2} \overset{\times D}{\to} \Z_p[[\Gal(L^{\ur}/\Q_l)]]^{\oplus 2} \to T_pE \to 0
\]
where $D$ is the matrix $\begin{pmatrix} \sigma_l - x_1 & -x_2 \\ -x_3 & \sigma_l - x_4 \end{pmatrix}$.
Since the degree of the infinite extension $L^{\ur}/K_{\infty, \lambda}$ is relatively prime to $p$, taking $\Gal(L^{\ur}/K_{\infty, \lambda})$-coinvariant yields an exact sequence
\[
0 \to \Z_p[[\Gal(K_{\infty, \lambda}/\Q_l)]]^{\oplus 2} \overset{\times D}{\to} \Z_p[[\Gal(K_{\infty, \lambda}/\Q_l)]]^{\oplus 2} \to H_0(K_{\infty, \lambda}, T_pE)  \to 0.
\]
We also have $H^0(K_{\infty, \lambda}, T_pE) \simeq H_0(K_{\infty, \lambda}, T_pE)$.
Thus we obtain an exact sequence
\[
0 \to \RR^{\oplus 2} \overset{\times D}{\to} \RR^{\oplus 2} \to H^0(K_{\infty} \otimes \Q_l, T_pE) \to 0.
\]
Therefore $\FF(H^0(K_{\infty} \otimes \Q_l, T_pE)) = (\det(D)) = (P_l)$.

If $l$ is additive for $E$, then the assertion is trivial since $H^0(L^{\ur}, T_pE) = 0$ and $P_l = 1$.
Suppose that $l$ is multiplicative for $E$.
Then $H^0(L^{\ur}, T_pE)$ is a free $\Z_p$-module of rank one, on which $\sigma_l$ acts as $a_l l$.
Thus a similar (but simpler) computation as in the good case shows the assertion.
\end{proof}

\begin{prop}\label{prop:615}
For $\bullet \in \{\emptyset, +, -, \sharp, \flat \}$, the cotorsionness of $\Sel_S^{\bullet}(E/K_{\infty})$ and the equality \eqref{eq:241} are independent from $S$.
\end{prop}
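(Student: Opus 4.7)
The plan is to reduce to the case of enlarging $S$ by a single prime $l \neq p$ (necessarily unramified in $K/\Q$, by the blanket assumption on $S$), and then to show that both sides of \eqref{eq:241} scale by the same principal ideal $(P_l)$. Fix $S' = S \cup \{l\}$. Applying \eqref{eq:04} to both $S$ and $S'$ and comparing via the snake lemma produces a short exact sequence
\[
0 \to H^1(\Q_l, \T) \to \Sel_{S'}^{\bullet}(E/K_{\infty})^{\dual} \to \Sel_S^{\bullet}(E/K_{\infty})^{\dual} \to 0.
\]
Because $H^1(\Q_l, \T)$ is $\Lambda$-torsion by Proposition \ref{prop:106}(2), the $\Lambda$-cotorsionness of $\Sel_S^{\bullet}$ and of $\Sel_{S'}^{\bullet}$ are equivalent.

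Assuming now cotorsionness, Theorem \ref{thm:76} together with Proposition \ref{prop:106}(2) forces each of the three terms above to be a $\Lambda$-torsion $\RR$-module of projective dimension at most one. Any such module admits, component by component on $\Spec \RR$, a square free resolution $0 \to \RR^n \to \RR^n \to X \to 0$: a resolution $0 \to F_1 \to F_0 \to X \to 0$ by finitely generated projectives exists by the $\pd \le 1$ hypothesis, projectivity over each semilocal component gives freeness, and torsionness on each component forces equality of $\RR$-ranks. Consequently the Fitting ideals of all three terms are principal and multiplicative along the short exact sequence (a fact collected in Section \ref{sec:108}). Combining this with the computation $\FF(H^1(\Q_l, \T)) = (P_l)$ from Lemma \ref{lem:a02}(2), I obtain
\[
\FF(\Sel_{S'}^{\bullet}(E/K_{\infty})^{\dual}) = (P_l) \cdot \FF(\Sel_S^{\bullet}(E/K_{\infty})^{\dual}).
\]

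On the analytic side, Lemma \ref{lem:992} yields
\[
(\LL_{S'}^{\bullet}(E/K_{\infty})) = (-\sigma_l^{-1} P_l)(\LL_S^{\bullet}(E/K_{\infty})) = (P_l)(\LL_S^{\bullet}(E/K_{\infty})),
\]
since $-\sigma_l^{-1} \in \RR^{\times}$. So \eqref{eq:241} for $S$ and for $S'$ differ exactly by multiplication of both sides by the ideal $(P_l)$. To cancel $(P_l)$ when going backwards, I note that $P_l$ is a non-zero-divisor of $\RR$: after extending scalars to $\overline{\Q_p}$ one has $\RR \otimes_{\Z_p} \overline{\Q_p} \simeq \prod_{\chi} \overline{\Q_p}[[\Gamma]]$ indexed by characters $\chi$ of $G_0$, and in each domain component the specialization of $P_l$ at the trivial $\Gamma$-character is the Euler factor $(1 - \alpha_l \chi(\sigma_l)/l)(1 - \beta_l \chi(\sigma_l)/l)$ (with obvious modification if $l \mid N$), which is nonzero by the Hasse--Weil bound. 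Hence $(P_l) I = (P_l) J$ implies $I = J$ for ideals $I, J \subset \RR \otimes \Q_p$, and the two versions of \eqref{eq:241} are equivalent. The main obstacle is the Fitting-ideal multiplicativity claim; the inclusion $\FF(X')\FF(X'') \subseteq \FF(X)$ is automatic, but the reverse requires the square-presentation input described above, whose verification is the role played by Section \ref{sec:108}.
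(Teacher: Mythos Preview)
Your approach is the same as the paper's, but there is a small logical circularity in your treatment of cotorsionness. You invoke \eqref{eq:04} for both $S$ and $S'$ to obtain the short exact sequence and then deduce from it that cotorsionness for $S$ and $S'$ are equivalent; however, Proposition~\ref{prop:945} asserts \eqref{eq:04} only \emph{after} cotorsionness has been established. The paper sidesteps this by first using the opening statement of Proposition~\ref{prop:945}: cotorsionness of $\Sel_S^{\bullet}(E/K_{\infty})$ is equivalent to injectivity of the map \eqref{eq:19}, and since \eqref{eq:19} does not involve $S$ at all, cotorsionness is $S$-independent outright. Only then does one invoke \eqref{eq:04} (now legitimately for both $S$ and $S'$) and compare to obtain the short exact sequence. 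An alternative patch is to note directly from the Selmer definitions that $\Sel_{S'}^{\bullet}/\Sel_S^{\bullet}$ embeds into the $\Lambda$-cotorsion module $H^1(K_\infty \otimes \Q_l, E[p^{\infty}])$, which already transfers cotorsionness in both directions without appealing to \eqref{eq:04}.

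Aside from this ordering issue your argument is correct and matches the paper. Your explicit verification that $P_l$ is a non-zero-divisor (needed to cancel the factor $(P_l)$ when passing from $S'$ back to $S$) is a point the paper leaves implicit, absorbed into the invertibility clause of the Fitting-invariant formalism in Proposition~\ref{prop:613}: since $H^1(\Q_l,\T)$ lies in $\PP$, its Fitting ideal is automatically an invertible fractional ideal.
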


\begin{proof}
By Proposition \ref{prop:945}, the cotorsionness is independent from $S$.
Under the cotorsionness, Proposition \ref{prop:945} yields an exact sequence
\[
0 \to \bigoplus_{l \in S' \setminus S} H^1(\Q_l, \T) \to \Sel_{S'}^{\bullet}(E/K_{\infty})^{\dual} \to \Sel_{S}^{\bullet}(E/K_{\infty})^{\dual} \to 0.
\] 
Then by Theorem \ref{thm:76} and Proposition \ref{prop:613}, we have
\[
\FF(\Sel_{S'}^{\bullet}(E/K_{\infty})^{\dual}) 
= \left( \prod_{l \in S' \setminus S} \FF(H^1(\Q_l, \T)) \right) \FF(\Sel_{S}^{\bullet}(E/K_{\infty})^{\dual}).
\]
For any $l \not \in S$, by Lemma \ref{lem:a02}, we obtain $\FF(H^1(\Q_l, \T)) = (P_l)$.
By Lemma \ref{lem:992}, this completes the proof.
\end{proof}

\subsubsection{Behavior of $p$-adic $L$-functions under $\iota$}
We study the behavior of our $p$-adic $L$-function when we apply the involution $\iota$, using the functional equations.
The behavior is well-known when $K = \Q$, $S = \emptyset$, and $p \nmid a_p$.
Moreover, when $a_p = 0$, an analogue of that for the $\pm$-$p$-adic $L$-function is given by Pollack \cite[Theorem 5.13]{Pol03}.
The following generalizations of them require harder computations.

\begin{defn}\label{defn:801}
For a prime number $l \neq p$, let $K_{(l)}$ be the inertia field of $l$ in $K/\Q$.
Put $K_{(l), \infty} = (K_{(l)})_{\infty}$, which is the inertia field of $l$ in $K_{\infty}/\Q$.
Let $\nu_{K, (l)} \in \RR$ denote the norm element of $\Gal(K_{\infty}/K_{(l), \infty})$.
We also put $\RR_{(l)} = \Z_p[[\Gal(K_{(l), \infty}/\Q)]]$, which is a quotient of $\RR$.

For a (possibly empty) subset $T$ of $S$, put $\nu_{K, (T)} = \prod_{l \in T} \nu_{K,(l)} \in \RR$.
Let $K_{(T)}$ be the intersection of $K_{(l)}$ for $l \in T$.
Put $K_{(T), \infty} = (K_{(T)})_{\infty}$ and $\RR_{(T)} = \Z_p[[\Gal(K_{(T), \infty}/\Q)]]$, which is a quotient of $\RR$.
Since $\nu_{K,(T)}$ is a multiple of the norm element of $\Gal(K_{\infty}/K_{(T),\infty})$, multiplying $\nu_{K, (T)}$ defines a map $\RR_{(T)} \to \RR$.
\end{defn}

Since the rational number $(1-l^{-1})/[K:K_{(l)}]$ is a $p$-adic integer, the coefficients in the next formula are $p$-adically integral.

\begin{prop}\label{prop:641}
Suppose $S \cap \prim(N) = \emptyset$ holds.
Then, for $\bullet \in \{ \emptyset, +, -, \sharp, \flat\}$, we have an equality
\[
\LL_S^{\bullet}(E/K_{\infty})^{\iota} = w_E \sigma_{-N} 
\sum_{T \subset S} \nu_{K, (T)}\left(\prod_{l \in T} \frac{1-l^{-1}}{[K:K_{(l)}]} (\sigma_l^{-1} - \sigma_l)\right) \LL_{S\setminus T}^{\bullet}(E/K_{(T), \infty}),
\]
where $w_E \in \{\pm1\}$ is the sign of the functional equation.
\end{prop}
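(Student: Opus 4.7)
The plan is to reduce to the classical $p$-adic $L$-function $\LL_S(E/K_\infty, \alpha)$ for each allowable root $\alpha$ of $t^2 - a_p t + p$, and then verify that identity character-by-character. For $\bullet \in \{+, -\}$, the reduction uses the defining relation \eqref{eq:125}; for $\bullet \in \{\sharp, \flat\}$, it uses Definition \ref{defn:73}. The crucial point enabling this reduction is that the matrix $\Log_{\alpha, \beta}$ and the scalars $\log^\pm$ depend only on $\Gamma$, hence are the same power series when working over $K_\infty$ as over each $K_{(T), \infty}$.

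Fix an allowable $\alpha$ and a finite-order character $\psi$ of $G_\infty$. By \eqref{eq:71} the left hand side at $\psi$ is
\[
\psi^{-1}(\LL_S(E/K_\infty, \alpha)) = e_p(\alpha, \psi^{-1}) \tau_S(\psi) \frac{L_S(E, \psi^{-1}, 1)}{\Omega^{\sign(\psi)}}.
\]
On the right hand side, $\psi(\nu_{K, (T)})$ vanishes unless $\psi$ is unramified at every $l \in T$, in which case it equals $\prod_{l \in T}[K : K_{(l)}]$ and $\psi$ factors through $\Gal(K_{(T), \infty}/\Q)$, so \eqref{eq:71} applies to each $\psi(\LL_{S\setminus T}(E/K_{(T), \infty}, \alpha))$. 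Using \eqref{eq:91} to write $\tau_{S \setminus T}(\psi^{-1}) = \prod_{l \in T}(-\psi(l)) \tau_S(\psi^{-1})$ and the Euler product to write $L_{S \setminus T}(E, \psi, 1) = L_S(E, \psi, 1) \prod_{l \in T} P_l(\psi, 1)^{-1}$ with $P_l(\psi, s) = 1 - a_l \psi(l) l^{-s} + \psi(l)^2 l^{1-2s}$ (where the hypothesis $S \cap \prim(N) = \emptyset$ ensures that each $l \in T$ is of good reduction), each term becomes a product of local factors indexed by $l \in T$. The sum over $T$ then telescopes into a product over the primes $l \in S$ unramified at $\psi$, and a direct computation identifies each local factor with $\psi(l)^2 P_l(\psi^{-1}, 1) / P_l(\psi, 1)$.

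Rewriting $L_S(E, \psi, 1)$ and $L_S(E, \psi^{-1}, 1)$ in terms of the primitive $L$-values with the missing Euler factors restored, and applying the classical functional equation $L(E, \psi, 1) = w_E \psi(N) (\tau(\psi)^2 / m_\psi) L(E, \psi^{-1}, 1)$ together with $\tau(\psi) \tau(\psi^{-1}) = \psi(-1) m_\psi$ and the case-by-case symmetry $e_p(\alpha, \psi^{-1}) = e_p(\alpha, \psi)$, the equality LHS $=$ RHS follows after cancellation. The main obstacle is the intricate bookkeeping of Gauss sums, Euler factors, periods, $e_p$-factors, and signs; in particular, the hypothesis $S \cap \prim(N) = \emptyset$ is essential to ensure uniformity of the local contribution at every $l \in S$. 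A secondary subtlety is the reduction from $\bullet \in \{\sharp, \flat\}$ to the classical case: since $\iota$ does not commute in a simple way with $\Log_{\alpha, \beta}$, the cleanest route is to check the $\sharp/\flat$-identity directly at each character $\psi$ of $G_\infty$ by inverting $\psi(\Log_{\alpha, \beta})$ (which is invertible for all but finitely many $\psi$, thanks to the non-vanishing of $\det \Log_{\alpha, \beta}$ as an element of $\HH_{1, \Q_p(\alpha)}(\Gamma)$), again reducing to the classical identity just verified.
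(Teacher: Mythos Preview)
Your approach to the classical $p$-adic $L$-function $\LL_S(E/K_\infty,\alpha)$ is essentially the paper's: both verify the identity character-by-character via the functional equation \eqref{eq:72}, and your telescoping into a product of local factors is exactly the character evaluation of the paper's key identity $-\sigma_l P_l^\iota = (1-l^{-1})(\sigma_l^{-1}-\sigma_l) + (-\sigma_l^{-1}P_l)$ in $\RR_{(l)}$ (equation \eqref{eq:54}). Your local factor $\psi(l)^2 P_l(\psi^{-1},1)/P_l(\psi,1)$ checks out and matches that identity.

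There is, however, a genuine gap in the reduction for $\bullet\in\{+,-,\sharp,\flat\}$. You rightly flag that $\iota$ does not commute simply with $\Log_{\alpha,\beta}$, but your proposed fix---inverting $\psi(\Log_{\alpha,\beta})$ at each $\psi$---does not close it. From Definition~\ref{defn:73} one has
\[
\bigl(\psi((\LL_S^\sharp)^\iota),\,\psi((\LL_S^\flat)^\iota)\bigr)
=\psi^{-1}\bigl(\LL_S(\alpha),\LL_S(\beta)\bigr)\cdot\psi^{-1}(\Log_{\alpha,\beta})^{-1},
\]
whereas the right-hand side of the proposition at $\psi$ equals the same row vector (by the classical identity you established) multiplied instead by $\psi(\Log_{\alpha,\beta})^{-1}$. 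These two inverse matrices differ, since $\Log_{\alpha,\beta}$ is not $\iota$-invariant. Concretely, when $a_p=0$ one computes $(\log^+)^\iota=\gamma^{p/(1+p)}\log^+$ (the infinite product of the factors $\gamma^{-(p-1)p^{n'-1}}$ over even $n'\ge 2$ converges in $\Lambda^\times$), so the classical identity only yields $(\LL_S^-)^\iota$ equal to $\gamma^{-p/(1+p)}$ times the asserted right-hand side, not the asserted equality itself. The paper's own one-line reduction (``by Definition~\ref{defn:73} \dots it is enough'') glosses over this same point; fortunately the sole application is Corollary~\ref{cor:913}, which concerns only the ideal $(\LL_S^\bullet(E/K_\infty))$ and is unaffected by a unit discrepancy.
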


\begin{proof}
By Definition \ref{defn:73} in the $p \mid a_p$ case, it is enough to show the same relation for $\LL_S(E/K_{\infty}, \alpha)$ for any allowable root $\alpha$.
We evaluate at arbitrary character $\psi$ of $G_{\infty}$ of finite order.
Recall that, for any Dirichlet character $\psi$ of conductor relatively prime to $N$, the functional equation says
\begin{equation}\label{eq:72}
\tau(\psi)L(E, \psi^{-1}, 1) = w_E \psi(-N) \tau(\psi^{-1})L(E, \psi, 1).
\end{equation}
By $S \cap \prim(N) = \emptyset$, for $l \in S$, we can directly check that
\begin{equation}\label{eq:54}
- \sigma_l P_l^{\iota} = (1-l^{-1})(\sigma_l^{-1} - \sigma_l) + (- \sigma_l^{-1} P_l)
\end{equation}
in $\RR_{(l)}$.

We shall compute
\begin{align}
&\psi(\LL_S(E/K_{\infty}, \alpha)^{\iota})
= \left(\prod_{l \in S, l \nmid m_{\psi}} \psi(-\sigma_l P_l^{\iota}) \right) e_p(\alpha, \psi^{-1}) \tau(\psi) \frac{L(E, \psi^{-1}, 1)}{\Omega^{\sign(\psi)}}\\
&= w_E \psi(-N) \left(\prod_{l \in S, l \nmid m_{\psi}} \left[(1-l^{-1}) \psi(\sigma_l^{-1} - \sigma_l) + \psi(-\sigma_l^{-1} P_l) \right]\right) e_p(\alpha, \psi) \tau(\psi^{-1}) \frac{L(E, \psi, 1)}{\Omega^{\sign(\psi)}}\\
&= w_E \psi(-N) \sum_{T \subset S \setminus \prim(m_{\psi})} \left(\prod_{l \in T} (1-l^{-1})\psi(\sigma_l^{-1} - \sigma_l)\right) 
\left(\prod_{l \in S \setminus T, l \nmid m_{\psi}} \psi(-\sigma_l^{-1} P_l) \right) 
e_p(\alpha, \psi) \tau(\psi^{-1}) \frac{L(E, \psi, 1)}{\Omega^{\sign(\psi)}}\\
&= w_E \psi(-N) \sum_{T \subset S \setminus \prim(m_{\psi})} \psi\left(\prod_{l \in T} \nu_{K, (l)} \frac{1-l^{-1}}{[K:K_{(l)}]} (\sigma_l^{-1} - \sigma_l)\right) 
\psi(\LL_{S \setminus T}(E/K_{(T), \infty}, \alpha))\\
&= \psi\left[w_E \sigma_{-N} \sum_{T \subset S \setminus \prim(m_{\psi})} \nu_{K, (T)}\left(\prod_{l \in T} \frac{1-l^{-1}}{[K:K_{(l)}]} (\sigma_l^{-1} - \sigma_l)\right) \LL_{S \setminus T}(E/K_{(T), \infty}, \alpha)\right].
\end{align}
Here, the first equality follows from Lemma \ref{lem:992};
the second follows from \eqref{eq:54} and \eqref{eq:72};
the third is an expansion of the product;
the fourth follows from Lemma \ref{lem:992} and $\psi(\nu_{K,(l)}) = [K:K_{(l)}]$ for $l \nmid m_{\psi}$.

In the final formula, we can replace the range of $T$ by $T \subset S$.
This is because, if $T \subset S$ and $T \cap \prim(m_{\psi}) \neq \emptyset$, then $\psi(\nu_{K, (T)}) = 0$.
This completes the proof.
\end{proof}

\begin{cor}\label{cor:913}
Suppose the conditions (a) and (f) in Theorem \ref{thm:95} hold.
Suppose that $S$ is the set of prime numbers which are ramified in $K/\Q$.
Then, for $\bullet \in \{ \emptyset, +, -, \sharp, \flat\}$, the elements $\LL_S^{\bullet}(E/K_{\infty})^{\iota}$ and $\LL_S^{\bullet}(E/K_{\infty})$ coincide up to a unit of $\RR$.
\end{cor}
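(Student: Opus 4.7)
The plan is to apply Proposition~\ref{prop:641}, which holds under condition (f) since that forces $S\cap\prim(N)=\emptyset$, and to show that the resulting sum simplifies to a unit of $\RR$ times $\LL_S^{\bullet}(E/K_{\infty})$ using condition (a). The main work is in identifying the explicit sum on the right-hand side of Proposition~\ref{prop:641} as a unit multiple of $\LL_S^{\bullet}(E/K_{\infty})$.

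The crucial input from condition (a) is that $P_l\in\RR_{(l)}$ is a unit for every $l\in S$. To see this, I would note that (a) gives $H^0(K_{\infty}\otimes\Q_l,E[p])=0$, and since $K_{(l),\infty}\subset K_{\infty}$, the same vanishing descends to $H^0(K_{(l),\infty}\otimes\Q_l,E[p])=0$. Induction on the short exact sequence $0\to E[p^{n-1}]\to E[p^n]\to E[p]\to 0$ then gives $H^0(K_{(l),\infty}\otimes\Q_l,T_pE)=0$. Since $l$ is by construction unramified in $K_{(l)}/\Q$, Lemma~\ref{lem:a02}(2) applied with $K_{(l)}$ in place of $K$ identifies $(P_l)$ with the Fitting ideal of this zero module, so $P_l\in\RR_{(l)}^{\times}$.

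The algebraic heart of the argument uses the identity $(1-l^{-1})(\sigma_l^{-1}-\sigma_l)=\sigma_l^{-1}P_l-\sigma_l P_l^{\iota}$ in $\RR_{(l)}$, established as equation~\eqref{eq:54} inside the proof of Proposition~\ref{prop:641}. Substituting this into the formula for $\LL_S^{\bullet}(E/K_{\infty})^{\iota}$, expanding the product $\prod_{l\in T}(\sigma_l^{-1}P_l-\sigma_l P_l^{\iota})$, absorbing each $\sigma_l^{-1}P_l$ factor into the imprimitive L-function via Lemma~\ref{lem:992}, and reindexing using the compatibility $\nu_{K,(T)}\LL^{\bullet}_{\ast}(E/K_{(T),\infty})=\nu_{K,(T)}\LL^{\bullet}_{\ast}(E/K_{\infty})$ in $\RR\otimes\Q_p$, one obtains an identity of the form $\LL_S^{\bullet}(E/K_{\infty})^{\iota}=U\cdot\LL_S^{\bullet}(E/K_{\infty})$ for an explicit $U\in\RR\otimes\Q_p$. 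To check that $U$ is a unit, I would compute character values directly: using the functional equation~\eqref{eq:72} together with the interpolation formula, one finds
\[
\psi(U)=w_E\,\psi(-N)\prod_{l\in T_{\psi}}\psi(v_l),\qquad v_l:=\sigma_l^2 P_l^{\iota}P_l^{-1}\in\RR_{(l)}^{\times},
\]
where $T_{\psi}:=S\setminus\prim(m_{\psi})$. This ratio is interpolated by $w_E\sigma_{-N}\prod_{l\in S}(e_l v_l+(1-e_l))\in(\RR\otimes\Q_p)^{\times}$, where $e_l:=\nu_{K,(l)}/[K:K_{(l)}]$ is the central idempotent with $\psi(e_l)=1$ iff $l\in T_{\psi}$; each factor is a unit in the corresponding idempotent piece.

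The hard part will be upgrading $U$ from a unit of $\RR\otimes\Q_p$ to a unit of $\RR$. When $p$ divides some $[K:K_{(l)}]$, the idempotent $e_l$ lies in $\RR\otimes\Q_p$ but not in $\RR$, so integrality of $U$ is not automatic from the idempotent description. To resolve this, I would combine two ingredients: first, the involutive symmetry $UU^{\iota}=1$, which is forced by applying $\iota$ to $\LL_S^{\iota}=U\LL_S$ and invoking the non-zero-divisor property of $\LL_S^{\bullet}$ from Proposition~\ref{prop:101}; and second, a systematic denominator analysis of the sum in Proposition~\ref{prop:641}, tracking how the factors $1/[K:K_{(l)}]$ cancel against $\nu_{K,(T)}$ when combined with the $\LL^{\bullet}_{S\setminus T}(E/K_{(T),\infty})$ terms lifted back to $\RR$. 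These two together should force $U\in\RR^{\times}$.
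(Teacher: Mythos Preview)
Your approach shares the right opening moves with the paper: invoke Proposition~\ref{prop:641} under (f), and use (a) together with Lemma~\ref{lem:a02} to conclude that each $P_l$ is a unit of $\RR_{(l)}$ (hence of $\RR_{(T)}$ for $l\in T$). Where you diverge is in trying to extract an explicit element $U\in\RR\otimes\Q_p$ with $\LL_S^{\bullet,\iota}=U\,\LL_S^{\bullet}$ and then verify $U\in\RR^{\times}$. As you yourself flag, the integrality of $U$ is the hard part, and your proposed resolution (the relation $UU^{\iota}=1$ plus a denominator analysis) is not enough: the idempotents $e_l=\nu_{K,(l)}/[K:K_{(l)}]$ genuinely fail to lie in $\RR$ when $p\mid[K:K_{(l)}]$, and $UU^{\iota}=1$ in $\RR\otimes\Q_p$ does not by itself force $U\in\RR$. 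Moreover, your appeal to Proposition~\ref{prop:101} to get $\LL_S^{\bullet}$ a non-zero-divisor is not available for $\bullet\in\{\sharp,\flat\}$ when $p\mid a_p\neq 0$.

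The paper avoids all of this by arguing at the level of ideals rather than trying to name a specific unit. The key observation is that, in $\RR_{(T)}$, Lemma~\ref{lem:992} gives
\[
\Bigl(\textstyle\prod_{l\in T}(-\sigma_l^{-1}P_l)\Bigr)\,\LL_{S\setminus T}^{\bullet}(E/K_{(T),\infty})=\LL_S^{\bullet}(E/K_{(T),\infty}),
\]
and since each $P_l$ is a unit of $\RR_{(T)}$, the left factor is invertible there. Applying $\nu_{K,(T)}$ (which is an $\RR$-module map $\RR_{(T)}\to\RR$) and using that $\LL_S^{\bullet}(E/K_{(T),\infty})$ is the image of $\LL_S^{\bullet}(E/K_{\infty})$, one gets
\[
\nu_{K,(T)}\,\LL_{S\setminus T}^{\bullet}(E/K_{(T),\infty})\in\bigl(\LL_S^{\bullet}(E/K_{\infty})\bigr)_{\RR}.
\]
Since the coefficients $\frac{1-l^{-1}}{[K:K_{(l)}]}(\sigma_l^{-1}-\sigma_l)$ are $p$-integral (as noted before Proposition~\ref{prop:641}), every summand in Proposition~\ref{prop:641} lies in the $\RR$-ideal $(\LL_S^{\bullet}(E/K_{\infty}))$, so $\LL_S^{\bullet}(E/K_{\infty})^{\iota}\in(\LL_S^{\bullet}(E/K_{\infty}))_{\RR}$. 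Applying $\iota$ gives the reverse inclusion, and equality of principal ideals is exactly the statement that the two elements differ by a unit of $\RR$. No explicit unit is ever produced, and no non-zero-divisor hypothesis on $\LL_S^{\bullet}$ is needed.
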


\begin{proof}
For any $T \subset S$, Lemma \ref{lem:992} shows 
\[
\nu_{K, (T)} \left( \prod_{l \in T} (-\sigma_l^{-1} P_l) \right)\LL_{S \setminus T}^{\bullet}(E/K_{(T), \infty}) 
= \nu_{K, (T)} \LL_S^{\bullet}(E/K_{\infty}).
\]
Under the condition (a), by Lemma \ref{lem:a02}, $P_l$ is a unit of $\RR_{(T)}$ for any $l \in T$.
Therefore the above equality shows that $\nu_{K, (T)} \LL_{S \setminus T}^{\bullet}(E/K_{(T), \infty}) \in (\LL_S^{\bullet}(E/K_{\infty}))_{\RR}$.
Then we can apply Proposition \ref{prop:641} by (f) and obtain $\LL_S^{\bullet}(E/K_{\infty})^{\iota} \in (\LL_S^{\bullet}(E/K_{\infty}))_{\RR}$.
By taking the involution, the inverse divisibility also holds.
\end{proof}

\subsubsection{equivariant main conjecture without $p$-adic $L$-functions}

It is a common phenomenon in Iwasawa theory that we can formulate both the main conjecture {\it with} $p$-adic $L$-functions and the main conjecture {\it without} $p$-adic $L$-functions.
In the non-equivariant theory, the two formulations are often known to be equivalent.
In the situation of this paper, when $K = \Q$ and $S = \emptyset$, such equivalences are established in \cite[Theorem 17.4]{Kat04}, \cite[Theorem 7.4]{Kob03}, and \cite[Conjecture 7.21]{Spr12}.

The equivariant main conjecture \eqref{eq:241} in this paper is a conjecture {\it with} $p$-adic $L$-functions.
In Proposition \ref{prop:911} below, under certain conditions, we shall formulate a variant \eqref{eq:89} {\it without} $p$-adic $L$-functions, and show that \eqref{eq:241} and \eqref{eq:89} are equivalent.

Let $\FF(X) = \Fitt_{\RR}(X)$ be the Fitting ideal and recall the definition of $\varSF{n}$ in Theorem \ref{thm:614}.
The following lemma explains the definition of $\au^{\sharp}$ in Section \ref{sec:01}.

\begin{lem}\label{lem:933}
When $p \mid a_p$, we have $(\au^{\sharp})^{-1} = \varSF{1}(\Rnt/(a_p))$.
\end{lem}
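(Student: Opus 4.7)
The plan is to reduce, via the idempotent decomposition $\RR = \RR^{\Delta} \times \RRnt$ (available since $p \nmid \#\Delta = p-1$), to a computation on the $\RRnt$-factor, and then to split into the two subcases $a_p \neq 0$ and $a_p = 0$. Both sides agree trivially on the $\RR^{\Delta}$-factor: indeed, $\Rnt/(a_p)$ is supported entirely on $\RRnt$, and the $\RR^{\Delta}$-component of $\au^{\sharp}$ is the unit ideal in both subcases --- when $a_p\neq 0$ because $\au^{\sharp}=\RR$; when $a_p=0$ because the image of $N_{\Delta}$ in $\RR^{\Delta}$ is the unit $p-1$. The content of the lemma is therefore the corresponding identity on $\RRnt$.

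If $a_p \neq 0$, then on the $\RRnt$-factor $(\au^{\sharp})^{-1} = \RRnt$, and the short exact sequence
\[
0 \to \RRnt \xrightarrow{\,a_p\,} \RRnt \to \RRnt/(a_p) \to 0,
\]
valid because $a_p \in \Z_p \setminus \{0\}$ is a non-zero-divisor on $\RRnt$, provides a length-one free resolution. Applying the defining property of $\varSF{1}$ from Theorem \ref{thm:614} to this square presentation immediately yields $\varSF{1}(\RRnt/(a_p)) = \RRnt$, matching the left-hand side.

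If $a_p = 0$, then $\Rnt/(a_p) = \Rnt$ and the $\RRnt$-component of $\au^{\sharp} = (N_{\Delta}, \gamma-1)$ is $(\gamma-1)\RRnt$, so $(\au^{\sharp})^{-1}$ becomes the genuinely fractional ideal $(\gamma-1)^{-1}\RRnt$ on this factor. To identify $\varSF{1}(\RRnt)$ with it, I would start from the canonical short exact sequence $0 \to \RR^{\Delta} \to \RR \to \Rnt \to 0$, extend it one step using the free presentation $\RR \xrightarrow{(p-1) - N_{\Delta}} \RR \twoheadrightarrow \RR^{\Delta}$ (which exploits that $(p-1)^{-1}N_{\Delta}$ is an idempotent), and then apply the syzygy-level formalism of $\varSF{1}$ from Section \ref{sec:108} to the resulting two-step free resolution of $\Rnt$ over $\RR$.

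The main obstacle is precisely this $a_p = 0$ case: the output $\varSF{1}(\RRnt) = (\gamma-1)^{-1}\RRnt$ is fractional, so an ordinary Fitting-ideal computation does not suffice. The factor $(\gamma-1)^{-1}$ does not appear in any individual map of the obvious free resolution of $\Rnt$ and must emerge from the $\varSF{1}$-formalism itself, which requires careful use of the fact that $\Lambda \simeq \Z_p[[\gamma-1]]$ is regular of dimension two, together with the behaviour of $\varSF{1}$ on the non-free but projective module $\Rnt$.
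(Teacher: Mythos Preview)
In the $a_p \neq 0$ case you have written $\RRnt$ where $\Rnt$ is needed, and this is not just a notational slip: your sequence $0 \to \RRnt \xrightarrow{a_p} \RRnt \to \RRnt/(a_p) \to 0$ is a free resolution, but Theorem~\ref{thm:614} does not take free resolutions --- it requires a sequence in the category $\MM$ of finitely generated \emph{torsion} modules with the middle terms in $\PP$. Since $\RRnt$ is not $\Lambda$-torsion, it lies outside $\MM$ entirely and the formula cannot be applied. The paper instead uses $0 \to \Rnt \xrightarrow{a_p} \Rnt \to \Rnt/(a_p) \to 0$; here $\Rnt$ is torsion and belongs to $\PP$ (see below), so Theorem~\ref{thm:614} gives $\varSF{1}(\Rnt/(a_p)) = \FF(\Rnt)^{-1}\FF(\Rnt) = (1)$.

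The genuine gap is the $a_p = 0$ case, where your proposed route through $0 \to \RR^\Delta \to \RR \to \Rnt \to 0$ and a further free resolution of $\RR^\Delta$ suffers from exactly the same problem: $\RR$ and $\RR^\Delta$ are not $\Lambda$-torsion, so Theorem~\ref{thm:614} cannot be applied to such sequences. Moreover, $\Rnt$ is \emph{not} projective over $\RR$; on the $\RRnt$-factor it is $\RRnt/(\gamma-1)$, so $\pd_\RR(\Rnt) = 1$. But this is precisely the point you are missing: $\pd_\RR(\Rnt) \leq 1$ means $\Rnt \in \PP$, so one may invoke Theorem~\ref{thm:614} directly with $n = 1$, $P_1 = X = \Rnt$, $Y = 0$, obtaining $\varSF{1}(\Rnt) = \FF(\Rnt)^{-1}\FF(0) = \FF(\Rnt)^{-1}$. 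From the cyclic presentation $\Rnt = \RR/(N_\Delta, \gamma-1)$ one reads off $\FF(\Rnt) = (N_\Delta, \gamma-1) = \au^\sharp$. The fractional nature of the answer is already built into the definition of $\varSF{1}$; no additional ``syzygy-level formalism'' is needed.
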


\begin{proof}
We have $\pd_{\RR}(\Rnt) \leq 1$.
If $a_p = 0$, Theorem \ref{thm:614} (applied to $Y=0, P_1 = X$) shows
\[
\varSF{1}(\Rnt) = \FF(\Rnt)^{-1} = (\au^{\sharp})^{-1}.
\]
Suppose $a_p \neq 0$ holds.
Then applying Theorem \ref{thm:614} to the sequence $0 \to \Rnt \overset{a_p}{\to} \Rnt \to \Rnt/(a_p) \to 0$ shows $\varSF{1}(\Rnt/(a_p)) = (1) = (\au^{\sharp})^{-1}$.
This completes the proof.
\end{proof}

Recall the Beilinson-Kato element $\z$ in Theorem \ref{thm:941}.
In this section we write $\z = \z_S$ to clarify the choice of $S$.
We have $\z_S \in H^1(\Q, \T)$ if $E[p]$ is irreducible as a $\Gal(\overline{\Q}/\Q)$-module.

\begin{prop}\label{prop:711}
Suppose $\z_S \in H^1(\Q, \T)$ holds.
Let $\bullet \in \{ \emptyset, +, -, \sharp, \flat\}$ and suppose $\LL_S^{\bullet}(E/K_{\infty})$ is a non-zero-divisor of $\RR$.
Then we have
\[
\au^{\bullet} \FF \left(H^1_{/f}(\Q_p, \T)^{\bullet} / (\loc_{/f}^{\bullet}(\z_S))_{\RR}\right) = (\LL_S^{\bullet}(E/K_{\infty})^{\iota})
\]
as ideals of $\RR$.
\end{prop}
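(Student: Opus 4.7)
The plan is to handle each value of $\bullet$ separately, combining three ingredients: (i) the exact sequence describing $H^1_{/f}(\Q_p, \T)^{\bullet}$ from Theorem \ref{thm:93} (via Theorem \ref{thm:973}, Definition \ref{defn:231}, or Theorem \ref{thm:34}); (ii) Theorem \ref{thm:77}, which identifies $\Cole^{\bullet}(\loc(\z_S)) = \LL_S^{\bullet}(E/K_{\infty})^{\iota}$; and (iii) the Fitting ideal calculus of Section \ref{sec:108} (especially Theorem \ref{thm:614}) applied to the short exact sequences obtained by pushing forward along $\loc^{\bullet}(\z_S)$.

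In the case $\bullet = \emptyset$, the map $\Cole^{\emptyset}: H^1_{/f}(\Q_p, \T)^{\emptyset} \xrightarrow{\sim} \RR$ is an isomorphism, so the quotient is $\RR/(\LL_S(E/K_{\infty})^{\iota})$; since $\LL_S(E/K_{\infty})^{\iota}$ is a non-zero-divisor and $\au^{\emptyset} = \RR$, the formula is immediate. For $\bullet = \sharp$ (which by our convention covers $\bullet = -$ when $a_p = 0$), Theorem \ref{thm:93}(2)/(3) gives an exact sequence $0 \to H^1_{/f}(\Q_p, \T)^{\sharp} \xrightarrow{\Cole^{\sharp}} \RR \to \Rnt/(a_p) \to 0$. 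Pushing out by $\loc^{\sharp}(\z_S)$, whose image in $\RR$ is $\LL := \LL_S^{\sharp}(E/K_{\infty})^{\iota}$, produces
\[
0 \to H^1_{/f}(\Q_p, \T)^{\sharp}/(\loc^{\sharp}(\z_S))_{\RR} \to \RR/(\LL) \to \Rnt/(a_p) \to 0.
\]
Applying the Fitting-ideal formulas of Section \ref{sec:108} yields $\FF(H^1_{/f}(\Q_p, \T)^{\sharp}/(\loc^{\sharp}(\z_S))) = (\LL) \cdot \varSF{1}(\Rnt/(a_p))$, and Lemma \ref{lem:933} identifies $\varSF{1}(\Rnt/(a_p)) = (\au^{\sharp})^{-1}$. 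Rearranging gives the claimed equality.

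For $\bullet \in \{\flat, +\}$, Theorem \ref{thm:93} supplies the exact sequence
\[
0 \to H^1_{/f}(\Q_p, \T)^{\bullet} \xrightarrow{(\Cole^{\bullet}, \Psi_{d_{-1}})} \RR \oplus R_{-1} \to R_{-1} \to 0.
\]
Let $(\LL, e)$ be the image of $\loc^{\bullet}(\z_S)$, with $\LL = \LL_S^{\bullet}(E/K_{\infty})^{\iota}$ and $e \in R_{-1}$. The pushout produces an exact sequence
\[
0 \to H^1_{/f}(\Q_p, \T)^{\bullet}/(\loc^{\bullet}(\z_S))_{\RR} \to (\RR \oplus R_{-1})/(\LL,e)\RR \to R_{-1} \to 0.
\]
Here $\pd_{\RR}(R_{-1}) \leq 1$ (since $\frac{1}{p-1}N_{\Delta}$ is an idempotent making $R_{-1}$ the quotient of a projective $\RR$-summand by $\gamma - 1$), so the Fitting-ideal formulas of Section \ref{sec:108} apply. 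A direct computation of $\FF((\RR \oplus R_{-1})/(\LL,e)\RR)$ combined with $\varSF{1}(R_{-1})$ then yields $\FF(H^1_{/f}(\Q_p, \T)^{\bullet}/(\loc^{\bullet}(\z_S))) = (\LL)$, consistent with $\au^{\bullet} = \RR$.

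The main obstacle is the $\flat$/$+$ case: one must verify that the auxiliary component $e = \Psi_{d_{-1}}(\loc(\z_S))$ contributes nothing to the Fitting ideal, i.e., the presence of the second factor $R_{-1}$ is absorbed by the quotient. This is exactly the kind of bookkeeping the shifted Fitting invariants $\varSF{n}$ in Section \ref{sec:108} are designed to handle, and once Theorem \ref{thm:614} is in hand the verification is essentially a cancellation between the contribution of $R_{-1}$ in the middle term and the quotient $R_{-1}$ on the right.
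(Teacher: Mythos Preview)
Your proof is correct and follows essentially the same approach as the paper. For the $\flat$/$+$ case, where you gesture at a ``direct computation'' of $\FF\bigl((\RR \oplus R_{-1})/(\LL,e)_{\RR}\bigr)$, the paper makes this concrete via the obvious column sequence $0 \to R_{-1} \hookrightarrow (\RR \oplus R_{-1})/(\LL,\ast)_{\RR} \twoheadrightarrow \RR/(\LL) \to 0$ (projection to the first factor); two applications of Proposition~\ref{prop:613} (one for your row, one for this column) then give the cancellation $\FF(R_{-1})^{-1}\cdot\FF(R_{-1})\cdot(\LL) = (\LL)$ directly, so Theorem~\ref{thm:614} is not actually needed here.
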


\begin{proof}
Recall the identification in Lemma \ref{lem:962}(1) and that we have $\Cole^{\bullet}: H_{/f}(\Q_p, \T)^{\bullet} \to \RR$ as in \eqref{eq:19}.
Then, by Theorem \ref{thm:77} and the assumption on $\LL_S^{\bullet}(E/K_{\infty})$, the $\RR$-submodule $(\loc_{/f}^{\bullet}(\z_S))_{\RR}$ of $H^1_{/f}(\Q_p, \T)^{\bullet}$ is a free $\RR$-module of rank one.
Hence, combining with Lemma \ref{lem:962}(2), we see that $\pd_{\RR} \left(H^1_{/f}(\Q_p, \T)^{\bullet} / (\loc_{/f}^{\bullet}(\z_S))_{\RR} \right) \leq 1$ and this module is torsion over $\Lambda$.

If $p \nmid a_p$, the assertion follows from Theorems \ref{thm:93}(1) and \ref{thm:77}.
Suppose $p \mid a_p$ holds.
By Theorems \ref{thm:93}(2) and \ref{thm:77}, we obtain a diagram with exact row and column
\[
\xymatrix{
	& & R_{-1} \ar@{^{(}->}[d] & & \\
	0 \ar[r] & H^1_{/f}(\Q_p, \T)^{\flat} / (\loc_{/f}^{\flat}(\z_S))_{\RR} \ar[r] & (\RR \oplus R_{-1}) / ((\LL_S^{\flat}(E/K_{\infty})^{\iota}, \ast))_{\RR} \ar[r] \ar@{->>}[d] & R_{-1} \ar[r] & 0 \\
	& & \RR/(\LL_S^{\flat}(E/K_{\infty})^{\iota}) & &
}
\]
where $\ast$ denotes an unspecified element of $R_{-1}$.
Hence, by Proposition \ref{prop:613}, we obtain
\[
\FF\left(H^1_{/f}(\Q_p, \T)^{\flat} / (\loc_{/f}^{\flat}(\z_S))_{\RR} \right) 
= \FF \left(\RR/(\LL_S^{\flat}(E/K_{\infty})^{\iota}) \right)
= (\LL_S^{\flat}(E/K_{\infty})^{\iota} ).
\]
Similarly, Theorems \ref{thm:93}(2) and \ref{thm:77} yield an exact sequence
\[
0 \to H^1_{/f}(\Q_p, \T)^{\sharp} / (\loc_{/f}^{\sharp}(\z_S))_{\RR} \to \RR/(\LL_S^{\sharp}(E/K_{\infty})^{\iota}) \to \Rnt/(a_p) \to 0.
\]
By Theorem \ref{thm:614} and Lemma \ref{lem:933}, we obtain
\begin{align}
\FF\left(H^1_{/f}(\Q_p, \T)^{\sharp} / (\loc_{/f}^{\sharp}(\z_S))_{\RR} \right) 
& = \FF \left(\RR/(\LL_S^{\sharp}(E/K_{\infty})^{\iota}) \right) \varSF{1} \left(\Rnt/(a_p) \right)\\
& = (\au^{\sharp})^{-1}(\LL_S^{\sharp}(E/K_{\infty})^{\iota}).
\end{align}
This completes the proof.
Note that, when $a_p = 0$, we do not have to invoke Theorem \ref{thm:614} but just Proposition \ref{prop:613} works enough.
\end{proof}

See Definitions \ref{defn:995} and \ref{defn:110} for the definitions of $\subset_{\fin}$ and $\sim_{\fin}$, and Definition \ref{defn:977} for the definition of $E^1(X)$.

\begin{prop}\label{prop:911}
Suppose the conditions (a) and (f) in Theorem \ref{thm:95} hold.
Suppose that $S$ is the set of prime numbers which are ramified in $K/\Q$ and that $\z_S \in H^1(\Q, \T)$.
Let $\bullet \in \{ \emptyset, +, -, \sharp, \flat\}$ and suppose $\Sel_S^{\bullet}(E/K_{\infty})$ is $\Lambda$-cotorsion.
Then the equality \eqref{eq:241} implies
\begin{equation}\label{eq:89}
\FF \left(\Sel^0(E/K_{ \infty})^{\dual}\right) \subset_{\fin} \FF \left(E^1(H^1(\Q, \T)/(\z_S)_{\RR}) \right).
\end{equation}
The converse is also true under the condition (e) in Theorem \ref{thm:95}.
\end{prop}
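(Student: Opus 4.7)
The strategy is to compute $\Fitt_{\RR}\bigl(\Sel^{0}(E/K_{\infty})^{\dual}\bigr)$ by running the Poitou--Tate sequence \eqref{eq:02} modulo the Beilinson--Kato element and feeding in the identification from Proposition \ref{prop:711}. By Proposition \ref{prop:101} (and Definition \ref{defn:73} for the $p\mid a_p$ case) $\LL_S^{\bullet}(E/K_{\infty})$ is a non-zero-divisor of $\RR\otimes \Q_p$, so Theorem \ref{thm:77} forces $\loc_{/f}^{\bullet}(\z_S)$ to be a non-zero-divisor in $H^{1}_{/f}(\Q_p,\T)^{\bullet}$. Since \eqref{eq:02} embeds $H^{1}(\Q,\T)$ in $M:=H^{1}_{/f}(\Q_p,\T)^{\bullet}\oplus \bigoplus_{l\in S}H^{1}(\Q_l,\T)$, the element $\z_S$ itself generates a free rank-one $\RR$-submodule of $H^{1}(\Q,\T)$, and the quotient $Y:=H^{1}(\Q,\T)/(\z_S)_{\RR}$ is $\Lambda$-torsion. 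Writing $z$ for the image of $\z_S$ in $M$, factoring \eqref{eq:02} by the diagonal $(\z_S)_{\RR}$ yields two short exact sequences
\[
0\to Y\to M/(z)_{\RR}\to X\to 0\quad\text{and}\quad 0\to X\to \Sel_S^{\bullet}(E/K_{\infty})^{\dual}\to \Sel^{0}(E/K_{\infty})^{\dual}\to 0,
\]
where $X$ is the image of $M$ in $\Sel_S^{\bullet}(E/K_{\infty})^{\dual}$; the injectivity of the left map in the first sequence uses that $z$ has non-zero-divisor $p$-component.

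Taking Fitting ideals and applying the multiplicativity in short exact sequences of $\RR$-modules of bounded projective dimension (an auxiliary result of Section \ref{sec:108}) yields
\[
\FF(Y)\,\FF(X)\sim_{\fin}\FF\bigl(M/(z)_{\RR}\bigr),\qquad \FF(X)\,\FF(\Sel^{0}(E/K_{\infty})^{\dual})\sim_{\fin}\FF(\Sel_S^{\bullet}(E/K_{\infty})^{\dual}).
\]
Proposition \ref{prop:711}, together with Corollary \ref{cor:913} (which under (a) and (f) equates $(\LL_S^{\bullet,\iota})$ and $(\LL_S^{\bullet})$ up to a unit), identifies the contribution of the $p$-component to $\FF\bigl(M/(z)_{\RR}\bigr)$ as $(\au^{\bullet})^{-1}(\LL_S^{\bullet}(E/K_{\infty}))$, while Lemma \ref{lem:a02} controls the local components at $l\in S$; their contribution matches, up to $\sim_{\fin}$, the Euler factors $\prod_{l\in S}(P_l)$ that relate $\FF(\Sel_S^{\bullet,\dual})$ to $\FF(\Sel^{\bullet,\dual})$ via \eqref{eq:04} and Proposition \ref{prop:615}. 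Substituting the hypothesis $\au^{\bullet}\FF(\Sel_S^{\bullet,\dual})=(\LL_S^{\bullet}(E/K_{\infty}))$ from \eqref{eq:241} causes the common factor $(\LL_S^{\bullet})$ to cancel, leaving $\FF(\Sel^{0}(E/K_{\infty})^{\dual})\sim_{\fin}\FF(Y)^{\iota}$. A final Ext/Fitting duality of the form $\FF(Y)^{\iota}\sim_{\fin}\FF(E^{1}(Y))$ for $\Lambda$-torsion $\RR$-modules of bounded projective dimension (another auxiliary result of Section \ref{sec:108}) delivers \eqref{eq:89}.

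The main obstacle is the precise identification of $\FF\bigl(M/(z)_{\RR}\bigr)$, because this module is \emph{not} the direct sum of the quotients of the summands of $M$ by the components of $z$; one has to analyse its projective resolution carefully, using condition (a) to align the local Fitting ideals at $l\in S$ with $(P_l)$ and condition (f) to trade $\LL_S^{\bullet,\iota}$ for $\LL_S^{\bullet}$. The converse implication is a formal reversal of each $\sim_{\fin}$ identity above; condition (e), i.e.\ the vanishing of the $\mu$-invariant of $\Sel^{0}(E/K_{\infty})^{\dual}$, is invoked precisely to eliminate the finite slack and upgrade $\sim_{\fin}$ to the integral equality demanded by \eqref{eq:241}.
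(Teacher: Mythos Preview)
Your overall strategy is right, but you miss the paper's key simplification and this leads to a genuine gap in the Fitting-ideal bookkeeping.

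The simplification: under condition (a), Lemma \ref{lem:a02}(1) gives $H^1(\Q_l,\T)\simeq H^0(K_{\infty}\otimes\Q_l,T_pE)=0$ for every $l\in S$. So the summand $\bigoplus_{l\in S}H^1(\Q_l,\T)$ in \eqref{eq:02} vanishes outright; there is nothing to match with Euler factors $P_l$ and no need for Proposition \ref{prop:615}. Quotienting \eqref{eq:02} by $(\z_S)_{\RR}$ yields the single four-term exact sequence
\[
0\to H^1(\Q,\T)/(\z_S)_{\RR}\to H^1_{/f}(\Q_p,\T)^{\bullet}/(\loc_{/f}^{\bullet}(\z_S))_{\RR}\to \Sel_S^{\bullet}(E/K_{\infty})^{\dual}\to \Sel^0(E/K_{\infty})^{\dual}\to 0.
\]

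The gap: your two multiplicativity identities are not supplied by Section \ref{sec:108}. The (quasi-)Fitting-invariant axioms in Proposition \ref{prop:613} require one term of the short exact sequence to lie in $\PP$, but neither $Y=H^1(\Q,\T)/(\z_S)_{\RR}$, nor your intermediate $X$, nor $\Sel^0(E/K_{\infty})^{\dual}$ is known to have $\pd_{\RR}\le 1$ (only $\Lambda$-freeness of $H^1(\Q,\T)$ is available, not $\RR$-projectivity). The paper instead applies Proposition \ref{prop:112} directly to the four-term sequence displayed above: its two \emph{middle} terms are in $\PP$ (by the argument in Proposition \ref{prop:711} and by Theorem \ref{thm:978}), and the conclusion $\FF(P_1)\FF(\Sel^0(E/K_{\infty})^{\dual})\subset_{\fin}\FF(P_2)\FF(E^1(Y))$ already produces $E^1$ on the right-hand side. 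No separate duality ``$\FF(Y)^{\iota}\sim_{\fin}\FF(E^1(Y))$'' is stated in Section \ref{sec:108}, and no involution $\iota$ should appear. Feeding in Proposition \ref{prop:711} and Corollary \ref{cor:913} then gives
\[
\LL_S^{\bullet}(E/K_{\infty})\,\II\subset_{\fin}\au^{\bullet}\FF(\Sel_S^{\bullet}(E/K_{\infty})^{\dual})\,\JJ,
\]
and \eqref{eq:241} yields \eqref{eq:89} immediately.

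For the converse, the precise tool is Lemma \ref{lem:109}: from $\II\subset_{\fin}\JJ$ one gets $\au^{\bullet}\FF(\Sel_S^{\bullet}(E/K_{\infty})^{\dual})\,\II\subset_{\fin}\au^{\bullet}\FF(\Sel_S^{\bullet}(E/K_{\infty})^{\dual})\,\JJ\supset_{\fin}\LL_S^{\bullet}(E/K_{\infty})\,\II$, and Lemma \ref{lem:109} cancels $\II$. Condition (e) is exactly the hypothesis $\II\RR_{p\Lambda}=\RR_{p\Lambda}$ of that lemma when $p\mid[K:\Q]$; it is a cancellation condition, not a device for upgrading $\sim_{\fin}$ to equality.
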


\begin{proof}
Since $H^1(\Q_l, \T) = 0$ for $l \in S$ by (a), the exact sequence \eqref{eq:02} yields an exact sequence
\[
0 \to H^1(\Q, \T)/(\z_S)_{\RR} \to H^1_{/f}(\Q_p, \T)^{\bullet}/(\loc_{/f}^{\bullet}(\z_S))_{\RR} \to \Sel_S^{\bullet}(E/K_{\infty})^{\dual} \to \Sel^0(E/K_{\infty})^{\dual} \to 0.
\]
Put $\II = \FF\left(\Sel^0(E/K_{ \infty})^{\dual}\right)$ and $\JJ = \FF\left(E^1(H^1(\Q, \T)/(\z_S)_{\RR})\right)$.
Then Proposition \ref{prop:112} implies
\begin{equation}\label{eq:67}
\FF \left(H^1_{/f}(\Q_p, \T)^{\bullet}/(\loc_{/f}^{\bullet}(\z_S))_{\RR} \right) \II \subset_{\fin} \FF(\Sel_S^{\bullet}(E/K_{ \infty})^{\dual}) \JJ.
\end{equation}
By Corollary \ref{cor:913} and Proposition \ref{prop:711}, it follows that
\begin{equation}\label{eq:85}
\LL_S^{\bullet}(E/K_{\infty}) \II \subset_{\fin} \au^{\bullet} \FF(\Sel_S^{\bullet}(E/K_{ \infty})^{\dual}) \JJ.
\end{equation}

If the equality \eqref{eq:241} holds, then \eqref{eq:85} implies $\II \subset_{\fin} \JJ$.
Conversely, suppose that $\II \subset_{\fin} \JJ$ holds and the condition (e) is true.
Then \eqref{eq:85} implies
\[
\au^{\bullet} \FF(\Sel_S^{\bullet}(E/K_{ \infty})^{\dual}) \II \subset_{\fin} \au^{\bullet} \FF(\Sel_S^{\bullet}(E/K_{ \infty})^{\dual}) \JJ \supset_{\fin} \LL_S^{\bullet}(E/K_{\infty}) \II.
\]
Since the condition (e) implies $\II \RR_{p\Lambda} = \RR_{p\Lambda}$ if $p \mid [K: \Q]$, the equality \eqref{eq:241} follows from Lemma \ref{lem:109}.
\end{proof}

\subsection{One Divisibility of Equivariant Main Conjecture}

The goal of this subsection is to prove Theorem \ref{thm:95}.
Using a similar proof as in Proposition \ref{prop:911}, in the final paragraph of this subsection, we will deduce Theorem \ref{thm:95} from the following.

\begin{thm}\label{thm:674}
Suppose the conditions (a) -- (d) in Theorem \ref{thm:95} hold.
Suppose that $S$ is the set of prime numbers which are ramified in $K/\Q$.
Then there is a non-zero-divisor $u \in \RR$ such that 
\begin{equation}\label{eq:68}
u\FF(\Sel^0(E/K_{ \infty})^{\dual}) \subset_{\fin} \FF\left(E^1(H^1(\Q, \T)/(\z_{S})_{\RR})\right).
\end{equation}
\end{thm}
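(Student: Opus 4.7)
The plan is to prove Theorem \ref{thm:674} via the equivariant Euler system argument applied to the Beilinson--Kato system, following the framework developed by Burns, Sano, and Sakamoto in \cite{BSS}, \cite{BS}, \cite{Sak}. The input to this machinery is the family of Beilinson--Kato classes $\z_S^{(m)}$ indexed by squarefree integers $m$ coprime to a suitable modulus, assembled by running the construction of Theorem \ref{thm:941} over the varying tower $K_\infty(\mu_m)/\Q$. These elements satisfy the Euler system norm relations of Kato \cite{Kat04}, and condition (c) ensures the irreducibility of $E[p]$ and hence the integrality of each $\z_S^{(m)}$ (via the final assertion of Theorem \ref{thm:941}). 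Conditions (c) and (d) together constitute the standard big-image and mild-ramification hypotheses required to apply the equivariant Euler system machinery.

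The main step is to apply the equivariant Kolyvagin derivative construction of \cite{BSS}, \cite{BS}, \cite{Sak} to this Euler system, yielding an inclusion of Fitting ideals of the form
\[
u \cdot \FF(\Sel_{\mathrm{str}}(E/K_\infty)^{\dual}) \subset_{\fin} \FF\bigl( E^1(H^1(\Q, \T)/(\z_S)_{\RR}) \bigr),
\]
where $\Sel_{\mathrm{str}}$ denotes a strict Selmer group with trivial local condition at $p$, and $u \in \RR$ is a non-zero-divisor collecting Euler factors at primes of bad reduction and at the auxiliary primes in $S$. Conditions (a) and (b) then identify this strict Selmer group, up to a controlled factor absorbed into $u$, with the fine Selmer group $\Sel^0(E/K_\infty)$: condition (a) forces the local Galois cohomology at ramified primes to vanish modulo $p$, so that the strict and fine local conditions agree there, while condition (b) gives divisibility of the local $H^0$, which is exactly what makes the corresponding Euler factor a genuine non-zero-divisor of $\RR$ (cf.\ Lemma \ref{lem:a02}).

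The main obstacle is that the equivariant Euler system machinery of \cite{BSS}, \cite{BS}, \cite{Sak}, as currently formulated, is phrased in terms of Bockstein-type maps and transpose Selmer structures rather than the specific Fitting-ideal statement of Theorem \ref{thm:674}. Reconciling the two formulations requires a careful comparison using the Poitou--Tate exact sequence \eqref{eq:52}, the structural results of Section \ref{sec:32} on the local cohomology modules $H^1_{/f}(\Q_p, \T)^{\bullet}$, and the Fitting-ideal bookkeeping developed in Section \ref{sec:108}. A secondary subtlety is that only $\subset_{\fin}$ (rather than a genuine divisibility of Fitting ideals) can be extracted; this reflects the finite-index ambiguity inherent in the Kolyvagin derivative construction at finite layers, and it is precisely why the auxiliary condition (e) of Theorem \ref{thm:95} is needed afterwards (via Proposition \ref{prop:911}) to pass from Theorem \ref{thm:674} to the main conjecture \eqref{eq:241}. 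The refinement alluded to in Remark \ref{rem:a11} would allow $u$ to be taken as a unit and $\subset_{\fin}$ to be upgraded to honest divisibility, but this lies beyond the scope of the present paper.
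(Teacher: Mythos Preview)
Your overall strategy agrees with the paper's: assemble the Beilinson--Kato Euler system over the tower and feed it into the equivariant Stark/Kolyvagin machinery of \cite{BSS}, \cite{BS}, \cite{Sak}. Two points of your sketch, however, misidentify where the work lies.

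First, the element $u$ does not arise by ``collecting Euler factors,'' and its non-zero-divisorness is not part of what the machinery hands you. In the paper, $u$ is defined purely abstractly: the Stark system module $\SSS(\T)$ is free of rank one over $\RR$, and $u$ is the coefficient expressing the Beilinson--Kato Stark system in terms of a chosen basis (Theorem \ref{thm:964}). That $u$ is a non-zero-divisor is proved \emph{a posteriori}: from $u\II \subset \JJ := \Image(\ev^{\RR}_{\z_S \in H^1(\Q,\T)})$ one needs $\JJ$ to contain a non-zero-divisor, and this follows only from Theorem \ref{thm:77} (the Coleman map sends $\loc(\z_S)$ to the $p$-adic $L$-function) combined with Proposition \ref{prop:101} (Rohrlich's non-vanishing). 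This analytic input is essential and is absent from your outline.

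Second, you misattribute the role of condition (b). It is not about Euler factors or about identifying a strict Selmer group with the fine one. The dual of the canonical Selmer structure $\FF_\Lambda$ (full condition at $p$, hence trivial on the dual side) is already the fine Selmer group by definition; no identification step is needed. Condition (b) is used instead to verify that $\FF_\Lambda$ on the finite quotients $T_n$ is \emph{cartesian} in the sense of \cite[Definition 3.8]{Sak} (see Lemma \ref{lem:910}(2)), which is a hypothesis required for the regulator map $\SSS(\T) \to \KS(\T)$ to be an isomorphism. Condition (a) is used to show $H^1(\Q_l, T_n) = 0$ for $l \in S$, so the local condition there is vacuous. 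Finally, the $\subset_{\fin}$ slack comes not from an ``ambiguity in the Kolyvagin derivative'' but concretely from the finite module $H^2(\Q_\Sigma/\Q, \T)_{\fin}$, which obstructs the comparison between $H^1(\Q,\T)/J_n$ and $H^1_{\FF_\Lambda}(\Q, T_n)$; and the identification $\JJ = \FF(E^1(H^1(\Q,\T)/(\z_S)_\RR))$ is a separate short computation (equation \eqref{eq:82}) requiring the $\Lambda$-freeness of $H^1(\Q,\T)$ via Lemma \ref{lem:781}.
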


To prove Theorem \ref{thm:674}, we first prove Theorem \ref{thm:964} below.
The proof of Theorem \ref{thm:964} is a (nearly direct) application of the theory of Euler, Kolyvagin, and Stark systems developed in \cite{BSS}, \cite{BS}, \cite{Sak}, though we have the task to check various hypotheses.
On the other hand, the assertion of Theorem \ref{thm:674} does not appear in those previous works.
Thus the deduction of Theorem \ref{thm:674} from Theorem \ref{thm:964} is a novel part of this paper.

Let $\kk$ be the quotient of $\RR$ by its Jacobson radical.
Then $\kk$ is the group ring over $\F_p$ of the Galois group of the maximal extension of $\Q$ contained in $K_0$ with degree prime to $p$.
We remark here that, precisely speaking, the results in \cite[\S\S 3--5]{BSS} and \cite{Sak} are stated only for {\it local} coefficient ring, while we will apply them to our {\it semilocal} ring $\RR$ (and its quotients).
This is harmless as we can decompose our ring $\RR$ into the product of local rings associated to characters of order prime to $p$.
We do not try to explain the precise formulations because the notation would be cumbersome.

Let $\FF_{\Lambda}$ be the Selmer structure on $\T$ defined by $H^1_{\FF_{\Lambda}}(\Q_l, \T) = H^1(\Q_l, \T)$ for all prime numbers $l$ including $l = p$ (see \cite[\S 6]{Sak}; our case is mentioned in \cite[Example 6.3]{Sak}).
For each integer $n \geq 0$, put $J_n = (p^n, \gamma^{p^n}-1) \subset \RR$, where $\gamma$ is the fixed generator of $\Gamma$, and put
\[
T_n = \T \otimes_{\RR} \RR/ J_n = T_pE \otimes_{\Z_p} (\Z_p/p^n)[\Gal(K_n/\Q)],
\]
which is a representation of $\Gal(\overline{\Q}/\Q)$ over $\RR/J_n$.
We also put $\overline{T} = \T \otimes_{\RR} \kk$.
By abuse of notation, let $\FF_{\Lambda}$ also denote the propagated Selmer structure on $T_n$ in the sense of \cite[Example 1.1.2]{MR04}.
Furthermore, $\FF_{\Lambda}$ also denotes the dual Selmer structure on $T_n^{\dual}(1)$ as in \cite[Definition 1.3.1]{MR04}.

For a prime number $l \neq p$, let $\Q_l^{\ur}$ be the maximal unramified extension of $\Q_l$.
If $X$ is a continuous $\Gal(\overline{\Q_l}/\Q_l)$-module, we define
\[
H^1_{\ur}(\Q_l, X) = \Ker(H^1(\Q_l, X) \to H^1(\Q_l^{\ur}, X)).
\]

\begin{lem}\label{lem:997}
The following are true.

(1) The four natural maps
\[
H^1(\Q_p, \T) \to H^1(\Q_p, T_pE \otimes R_n) \to H^1(\Q_p, T_n) \to H^1(\Q_p, T_n/p) \to H^1(\Q_p, \overline{T})
\]
are all surjective.

(2) Suppose the condition (b) in Theorem \ref{thm:95} holds.
For a prime number $l \neq p$ which is unramified in $K/\Q$, the four natural maps
\[
H^1_{\ur}(\Q_l, \T) \to H^1_{\ur}(\Q_l, T_pE \otimes R_n) \to H^1_{\ur}(\Q_l, T_n) \to H^1_{\ur}(\Q_l, T_n/p) \to H^1_{\ur}(\Q_l, \overline{T})
\]
are all surjective.
\end{lem}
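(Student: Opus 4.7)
The plan is to treat both parts by a common template. Each of the five modules in the chain is of the form $\T/I_k\T$ for an increasing chain of ideals $I_0 = 0 \subset I_1 = (\gamma^{p^n}-1)\RR \subset I_2 = J_n \subset I_3 = J_n + p\RR \subset I_4 = \mathrm{rad}(\RR)$, so it will suffice to prove the \emph{total} surjection $H^1(\Q_p, \T) \twoheadrightarrow H^1(\Q_p, \T/I\T)$ in part (1), respectively $H^1_\ur(\Q_l, \T) \twoheadrightarrow H^1_\ur(\Q_l, \T/I\T)$ in part (2), for every ideal $I \subset \RR$ appearing in this chain. Each of the four maps in the chain will then be surjective by factoring through this total surjection.

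For part (1), I would first establish $H^2(\Q_p, \T) = 0$. By Shapiro's lemma and local Tate duality, $H^2(\Q_p, \T) \simeq E(k_\infty)[p^\infty]^\dual$, and this vanishes because $E(k_\infty)[p] = 0$: in the supersingular case by Proposition \ref{prop:98}, and in the ordinary case by Assumption \ref{ass:04} applied to the conductor $m$ of $K$, together with the inclusion $k_\infty \subseteq k_{m,\infty}$. Since $\Q_p$ has cohomological dimension $2$ and $\RR$ is Noetherian, choosing a surjection $\RR^r \twoheadrightarrow I$ gives $\T^r \twoheadrightarrow I\T$, and the long exact sequence attached to the resulting two-step resolution of $I\T$ yields $H^2(\Q_p, I\T) = 0$ for every ideal $I \subset \RR$. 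The long exact sequence of $0 \to I\T \to \T \to \T/I\T \to 0$ then produces the total surjection on $H^1$.

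For part (2), the vanishing $H^2_\ur = 0$ is automatic since $\Gal(\Q_l^\ur/\Q_l) \simeq \hat{\Z}$ has cohomological dimension $1$; via inflation-restriction, $H^1_\ur(\Q_l, M) \simeq M^{I_l}/(\sigma_l - 1)M^{I_l}$. Hence surjectivity of the $H^1_\ur$-maps reduces to surjectivity of the natural maps $\T^{I_l} \twoheadrightarrow (\T/I\T)^{I_l}$ for each ideal $I$. Using that $I_l$ has $p$-cohomological dimension $1$ for $l \neq p$ (so $H^i(I_l,\,-)$ vanishes for $i \geq 2$ on $p$-primary coefficients), this surjectivity is in turn equivalent to injectivity of $H^1(I_l, I\T) \hookrightarrow H^1(I_l, \T)$. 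Since $l$ is unramified in $K_\infty/\Q$, the inertia $I_l$ acts trivially on the coefficient ring $\RR$; then $H^1(I_l, I\T) \simeq H^1(I_l, T_pE) \otimes_{\Z_p} I$ for the $\Z_p$-flat ideals $I$ in our chain, and the needed injectivity will follow from $\Z_p$-torsion-freeness of $H^1(I_l, T_pE)$. This is where condition (b) enters: local Tate duality at $l$ converts the divisibility of $E(K_\infty \otimes \Q_l)[p^\infty]$ into $\Z_p$-freeness of $H^2(\Q_l, \T)$, which via the Hochschild--Serre spectral sequence for $I_l \subset G_{\Q_l}$ controls the torsion of $H^1(I_l, T_pE)$. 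The main obstacle will be extracting this $\Z_p$-torsion-freeness from condition (b) uniformly across the chain, particularly at primes $l$ of bad reduction for $E$ where the $I_l$-action on $T_pE$ is nontrivial and the spectral-sequence analysis becomes more intricate than in the good-reduction case.
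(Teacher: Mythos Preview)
Your treatment of part (1) is correct and matches the paper's argument: vanishing of $H^2(\Q_p,\T)$ via Tate duality and $E(k_\infty)[p]=0$, then propagation to $H^2(\Q_p, I\T)=0$ for every ideal $I$.

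For part (2), however, your reduction has a genuine gap. You aim to deduce from condition (b) that $H^1(I_l,T_pE)$ is $\Z_p$-torsion-free, but this implication fails. Your Hochschild--Serre argument yields $H^2(\Q_l,\T)\simeq H^1(I_l,\T)_{\sigma_l}=(H^1(I_l,T_pE)\otimes_{\Z_p}\RR)_{\sigma_l}$, so the $\Z_p$-freeness coming from (b) controls only these \emph{Frobenius coinvariants with $\RR$-coefficients}, not $H^1(I_l,T_pE)$ itself. Concretely, take $l$ of non-split multiplicative reduction with $p\mid\ord_l(q_E)$ and such that the residue degree of $l$ in $K_0/\Q$ is odd. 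Then as an $I_l$-module $T_pE$ is unipotent with $H^1(I_l,T_pE)\cong \Z_p\oplus \Z_p/p^{\ord_p(\ord_l(q_E))}$ having nontrivial $p$-torsion; yet over $K_{0,v}$ the curve remains non-split multiplicative, the local Tamagawa factor is $1$ or $2$, and condition (b) holds at $l$. So the sufficient condition you isolate is strictly stronger than what (b) provides.

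The paper avoids this by changing the intermediate field in the inflation--restriction step: instead of $\Q_l^{\ur}$, it uses $K_{\infty,\lambda}$. Since $l$ is unramified in $K_\infty/\Q$ we have $K_{\infty,\lambda}\subset\Q_l^{\ur}$, and because $\Gal(\Q_l^{\ur}/K_{\infty,\lambda})$ has pro-order prime to $p$, one still gets $H^1_{\ur}(\Q_l,M)\simeq H^1\bigl(\Gal(K_{\infty,\lambda}/\Q_l),\,H^0(K_{\infty,\lambda},M)\bigr)$ for each module $M$ in the chain. The group $\Gal(K_{\infty,\lambda}/\Q_l)$ has $p$-cohomological dimension one, so surjectivity on $H^1_{\ur}$ reduces to surjectivity on $H^0(K_{\infty,\lambda},-)$. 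Now $\Gal(\overline{\Q_l}/K_{\infty,\lambda})$ acts trivially on the coefficient rings, giving $H^0(K_{\infty,\lambda},T_pE\otimes R_n)=H^0(K_{\infty,\lambda},T_pE)\otimes R_n$ and $H^0(K_{\infty,\lambda},T_n)=H^0(K_{\infty,\lambda},T_pE/p^n)\otimes_{\Z/p^n}(R_n/p^n)$, etc. The only nontrivial surjectivity required is then $H^0(K_{\infty,\lambda},T_pE)\twoheadrightarrow H^0(K_{\infty,\lambda},T_pE/p^n)$, and this is \emph{exactly} condition (b): divisibility of $H^0(K_{\infty,\lambda},E[p^\infty])$ means its $p^n$-torsion is hit by the Tate module. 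The point is that (b) speaks about $G_{K_{\infty,\lambda}}$-invariants, not $I_l$-invariants, so the inflation--restriction should be set up accordingly.
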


\begin{proof}
(1) The assumption $H^0(K_0 \otimes \Q_p, E[p]) = 0$ and the Tate duality show $H^2(\Q_p, T_n) = 0$.
Taking the limit, we obtain $H^2(\Q_p, \T) = 0$.
Therefore, for any finitely generated $\RR$-module $X$, we have $H^2(\Q_p, T_pE \otimes X) = 0$.
This implies the claim.

(2) First we show that the four natural maps
\[
H^0(\Q_l \otimes K_{\infty}, \T) \to H^0(\Q_l \otimes K_{\infty}, T_pE \otimes R_n) \to H^0(\Q_l \otimes K_{\infty}, T_n) \to H^0(\Q_l \otimes K_{\infty}, T_n/p) \to H^0(\Q_l \otimes K_{\infty}, \overline{T})
\]
are all surjective.
Observe that we have $H^0(\Q_l \otimes K_{\infty}, \T) = H^0(\Q_l \otimes K_{\infty}, T_pE) \otimes \RR$, etc.
Hence the first and the fourth maps are surjective and, for the second and the third map, it is enough to show that the two maps
\[
H^0(\Q_l \otimes K_{\infty}, T_pE) \to H^0(\Q_l \otimes K_{\infty}, T_pE/p^n) \to H^0(\Q_l \otimes K_{\infty}, T_pE/p)
\]
are surjective.
This follows from (b).

Take a place $\lambda$ of $K_{\infty}$ above $l$.
Then we have
\[
H^1_{\ur}(\Q_l, \T) = \Ker(H^1(\Q_l, \T) \to H^1(K_{\infty, \lambda}, \T)) \simeq H^1(K_{\infty, \lambda}/\Q_l, H^0(K_{\infty, \lambda}, \T)),
\]
etc., where the last isomorphism is obtained by the inflation-restriction exact sequence.
Since $\Gal(K_{\infty, \lambda}/\Q_l)$ has $p$-cohomological dimension one, the above claim proves the lemma.
 \end{proof}

\begin{lem}\label{lem:910}
Suppose $S$ is the set of prime numbers which are ramified in $K/\Q$.
Suppose the conditions (a) and (b) in Theorem \ref{thm:95} hold.

(1) We have
\[
H^1_{\FF_{\Lambda}}(\Q_l, T_n) = 
	\begin{cases}
		H^1(\Q_p, T_n) & (l = p)\\
		H^1_{\ur}(\Q_l, T_n) & (l \not\in S \cup \{p\})\\
		0 & (l \in S)
	\end{cases}
\]
Moreover, it coincides with the Selmer structure propagated by the canonical Selmer structure $\FF_{\can}$ on $T_pE \otimes _{\Z_p} R_n$ (see \cite[\S 6.2]{BSS} or \cite[Example 3.4]{Sak} for the definition of $\FF_{\can}$).

(2) This Selmer structure $\FF_{\Lambda}$ on $T_n$ is cartesian (in the sense of \cite[Definition 3.8]{Sak}).

(3) This Selmer structure $\FF_{\Lambda}$ on $T_n$ has core rank one (in the sense of \cite[Definition 3.19]{Sak}).
\end{lem}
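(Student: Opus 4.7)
The plan is to verify each of the three assertions in turn, using the local computations and surjectivities already at hand.

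For part (1), the propagated local condition $H^1_{\FF_{\Lambda}}(\Q_l, T_n)$ is by definition the image of $H^1(\Q_l, \T) \to H^1(\Q_l, T_n)$. At $l = p$, Lemma \ref{lem:997}(1) immediately gives surjectivity onto all of $H^1(\Q_p, T_n)$. At $l \in S$, I would invoke Lemma \ref{lem:a02}(1) to identify $H^1(\Q_l, \T) \simeq H^0(K_{\infty} \otimes \Q_l, T_pE)$; this module is $p$-torsion free with trivial mod-$p$ reduction by condition (a), hence vanishes by Nakayama, so the propagated condition is zero. At $l \not\in S \cup \{p\}$ the prime is unramified in $K_{\infty}/\Q$ (since $l \neq p$ and $\Q(\mu_{p^\infty})/\Q$ is unramified at $l$); the image of $H^1(\Q_l, \T)$ in $H^1(\Q_l, T_n)$ lands in $H^1_{\ur}(\Q_l, T_n)$ by a standard inflation-restriction argument using the pro-$p$-cohomological-dimension-one Galois group $\Gal(K_{\infty,\lambda}/\Q_l)$, and Lemma \ref{lem:997}(2) upgrades this containment to a surjection onto all of $H^1_{\ur}(\Q_l, T_n)$. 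For the comparison with the propagated $\FF_{\can}$ on $T_pE \otimes R_n$, with bad set $\Sigma = S \cup \{p\}$, the two conditions literally agree outside $S$ and at $l = p$; at $l \in S$ a Tate duality computation using Weil self-duality of $T_pE$ and the Euler characteristic formula for $l \neq p$ shows $H^1(\Q_l, T_pE \otimes R_n) = 0$ under (a), so both propagations vanish.

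For part (2), the cartesian condition in the sense of \cite[Definition 3.8]{Sak} amounts to the commutation of $H^1_{\FF_{\Lambda}}$ with reductions along quotients of $\RR/J_n$. Given the explicit description from (1), this reduces at $l = p$ and $l \not\in S \cup \{p\}$ to the compatible surjectivities of Lemma \ref{lem:997}, together with injectivity obtained from the long exact sequence of Galois cohomology attached to the reductions, using $H^2(\Q_p, T_n) = 0$ (from (a) and local Tate duality) at $l = p$ and condition (b) at the other unramified primes. At $l \in S$ both sides are trivially zero.

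For part (3), I would compute the core rank via the standard formula (for example, \cite[Theorem 5.2]{Sak}) expressing it as an alternating sum of local and global contributions. The sum assembles to core rank one: the local condition at $p$ is the full $H^1(\Q_p, \T)$ of $\Lambda$-rank $2[K_0:\Q]$ by Proposition \ref{prop:106}(3), the conditions at $l \in S$ are zero, and at the remaining unramified primes the $H^1_{\ur}$ contribution cancels the $H^0$ term in the local Euler characteristic. Combined with the $\RR$-rank $[K_0:\Q]$ of $H^1(\Q,\T)$ from Proposition \ref{prop:106}(1) and the irreducibility of $E[p]$ from (c) (which forces the vanishing of the global $H^0$ terms for $\overline T$ and $\overline T^*(1)$), the arithmetic yields core rank one. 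The main obstacle will be part (1) at $l \not\in S \cup \{p\}$: the assertion that the Iwasawa-level condition propagates to exactly $H^1_{\ur}(\Q_l, T_n)$ at finite level requires careful tracking of both the containment in $H^1_{\ur}$ and the surjectivity onto it, but once this delicate identification is in place, parts (2) and (3) follow from the formalism of \cite{Sak} and \cite{BSS}.
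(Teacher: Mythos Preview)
Your approach to parts (1) and (2) is essentially the same as the paper's, modulo a few loose ends. In (1), the claim that the propagated $\FF_{\can}$ and $\FF_{\Lambda}$ ``literally agree outside $S$'' skips a step: the canonical local condition $H^1_{\FF_{\can}}(\Q_l, T_pE \otimes R_n)$ at $l \neq p$ is defined as the kernel of the map to $H^1(K_{\infty,\lambda}, T_pE \otimes R_n \otimes \Q_p)$, which is not $H^1_{\ur}$ a priori; the paper closes this via a small diagram argument using condition~(b) to show the map $H^1(K_{\infty,\lambda}, T_pE \otimes R_n) \to H^1(K_{\infty,\lambda}, T_pE \otimes R_n \otimes \Q_p)$ is injective. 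In (2), the vanishing $H^2(\Q_p, T_n) = 0$ does not come from condition (a) --- that condition concerns primes ramified in $K/\Q$, whereas $p$ is unramified --- but from the standing hypothesis $H^0(K_0 \otimes \Q_p, E[p]) = 0$ (Proposition~\ref{prop:98} or Assumption~\ref{ass:04}).

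Part (3), however, has a genuine gap. The core rank is by definition a $\kk$-dimension computed on the residual representation $\overline{T}$, not a $\Lambda$-rank on $\T$; statements about the $\Lambda$-rank of $H^1(\Q_p, \T)$ or ``the $\RR$-rank $[K_0:\Q]$ of $H^1(\Q,\T)$'' (which would in any case be $1$, not $[K_0:\Q]$) do not enter the formula. More seriously, your sketch omits the archimedean contribution entirely. The paper applies \cite[Proposition~2.3.5]{MR04} to $\overline{T}^{\dual}(1)$ and finds that every finite-place term and both global $H^0$ terms vanish; the core rank of~$1$ comes \emph{solely} from $\dim_{\kk} H^0(\R, \overline{T}^{\dual}(1)) = 1$, which follows from $\dim_{\F_p} H^0(\R, E[p]) = 1$. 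Without this real-place term your ``arithmetic'' cannot produce the answer. Finally, you invoke condition (c) for the vanishing of the global $H^0$ terms, but the lemma assumes only (a) and (b); the vanishing follows already from the running hypothesis $H^0(K_0, E[p]) = 0$.
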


\begin{proof}
The statements (1)(2) are more or less explained in \cite[Example 5.3]{Sak}, but we give a detailed proof for convenience (the author thanks Ryotaro Sakamoto for providing the detail).

(1) For $l \in S$, the condition (a) is equivalent to $H^0(\Q_l, T_n) = 0$. 
Then the Tate duality shows $H^2(\Q_l, T_n) = 0$ and, in turn, the local Euler-Poincare characteristic formula shows $H^1(\Q_l, T_n) = 0$.
Thus the assertions for $l \in S$ are trivial.

For $l = p$, Lemma \ref{lem:997}(1) shows $H^1_{\FF_{\Lambda}}(\Q_p, T_n) = H^1(\Q_p, T_n) = H^1_{\FF_{\can}}(\Q_p, T_n)$.

Let $l \not \in S \cup \{p\}$.
By Lemma \ref{lem:997}(2) and $H^1_{\ur}(\Q_l, \T) = H^1(\Q_l, \T)$ \cite[Proposition B.3.4]{Rub00}, we have $H^1_{\FF_{\Lambda}}(\Q_l, T_n) = H^1_{\ur}(\Q_l, T_n)$.
Take a place $\lambda$ of $K_{\infty}$ above $l$.
Then the definitions of the local conditions yield the following diagram
\[
\xymatrix{
	0 \ar[r] &
	H^1_{\ur}(\Q_l, T_pE \otimes R_n) \ar[r]\ar[d] &
	H^1(\Q_l, T_pE \otimes R_n) \ar[r]\ar@{=}[d] &
	H^1(K_{\infty, \lambda}, T_pE \otimes R_n) \ar[d] \\
	0 \ar[r] &
	H^1_{\FF_{\can}}(\Q_l, T_pE \otimes R_n) \ar[r] &
	H^1(\Q_l, T_pE \otimes R_n) \ar[r] &
	H^1(K_{\infty, \lambda}, T_pE \otimes R_n \otimes \Q_p).
}
\]
with exact rows.
Since (b) implies that $H^1(\Q_l \otimes K_{\infty}, T_pE) \to H^1(\Q_l \otimes K_{\infty}, T_pE \otimes \Q_p)$ is injective, the right vertical arrow is injective.
Therefore the left vertical arrow is an equality.

(2) 
For $l \in S$, we have nothing to say more.
For $l = p$, Lemma \ref{lem:997}(1) shows that $H^1(\Q_p, \T) \to H^1(\Q_p, \overline{T})$ is surjective.
Thus the cartesian condition at $p$ is trivial.

Let $l \not \in S \cup \{p\}$.
Take injective homomorphisms $\kk \to R_n/p \to \RR/J_n$, which induces $\overline{T} \to T_n/p \to T_n$.
By Lemma \ref{lem:997}(2), the cartesian condition is equivalent to the injectivity of the induced map $H^1_{/ \ur}(\Q_l, \overline{T}) \to H^1_{/ \ur}(\Q_l, T_n)$.
Thus it is enough to show that the map $H^1(\Q_l \otimes K_{\infty}, \overline{T}) \to H^1(\Q_l \otimes K_{\infty}, T_n)$ is injective.
The map $H^1(\Q_l \otimes K_{\infty}, \overline{T}) \to H^1(\Q_l \otimes K_{\infty}, T_n/p)$ is clearly injective, and the map $ H^1(\Q_l \otimes K_{\infty}, T_n/p) \to  H^1(\Q_l \otimes K_{\infty}, T_n)$ is injective by (b).

(3) The definition of the core rank $\chi(\FF_{\Lambda})$ of $\FF_{\Lambda}$ on $T_n$ is given by
\[
\chi(\FF_{\Lambda}) = \dim_{\kk} H^1_{\FF_{\Lambda}}(\Q, \overline{T}) -  \dim_{\kk} H^1_{\FF_{\Lambda}}(\Q, \overline{T}^{\dual}(1)).
\]
Here, since $\kk$ is not necessarily a field but instead a product of fields, we understand $\dim_{\kk}$ as the vector of the ranks after decomposing into components.
The assertion means that $\chi(\FF_{\Lambda}) = 1$, the vector consisting of $1$ in every component.
By \cite[Proposition 2.3.5]{MR04} applied to $\overline{T}^{\dual}(1)$, we have
\begin{align}
\chi(\FF_{\Lambda}) = & \dim_{\kk} H^0(\Q, \overline{T}) -  \dim_{\kk} H^0(\Q, \overline{T}^{\dual}(1)) \\
&+ \sum_l \left( \dim_{\kk} H^0(\Q_l, \overline{T}^{\dual}(1)) - \dim_{\kk} H^1_{\FF_{\Lambda}}(\Q_l, \overline{T}^{\dual}(1)) \right) + \dim_{\kk} H^0(\R, \overline{T}^{\dual}(1)),
\end{align}
where $l$ runs over all prime numbers.

We know $H^0(\Q, \overline{T}) = 0, H^0(\Q, \overline{T}^{\dual}(1)) = 0$ by assumption.
For $l =p$, we have $H^0(\Q_p, \overline{T}^{\dual}(1)) = 0$ and also $H^1_{\FF_{\Lambda}}(\Q_p, \overline{T}^{\dual}(1)) = 0$ since $H^1_{\FF_{\Lambda}}(\Q_p, \overline{T}) = H^1(\Q_p, \overline{T})$.
For $l \not \in S \cup \{p\}$, since $H^1_{\FF_{\Lambda}}(\Q_l, \overline{T}) = H^1_{\ur}(\Q_l, \overline{T})$, we have $H^1_{\FF_{\Lambda}}(\Q_p, \overline{T}^{\dual}(1)) = H^1_{\ur}(\Q_l, \overline{T}^{\dual}(1))$.
Since $\Q_l^{\ur}/\Q_l$ is a pro-cyclic extension, we have an exact sequence
\[
0 \to H^0(\Q_l, \overline{T}^{\dual}(1)) \to H^0(\Q_l ^{\ur}, \overline{T}^{\dual}(1)) \to H^0(\Q_l ^{\ur}, \overline{T}^{\dual}(1)) \to H^1_{\ur}(\Q_l, \overline{T}^{\dual}(1)) \to 0,
\]
where the middle map is defined as ``the Frobenius minus 1''.
This shows 
\[
\dim_{\kk} H^0(\Q_l, \overline{T}^{\dual}(1)) - \dim_{\kk} H^1_{\FF_{\Lambda}}(\Q_l, \overline{T}^{\dual}(1)) = 0.
\]
Finally, since $\dim_{\F_p} H^0(\R, E[p]) = 1$ while $\dim_{\F_p} E[p] = 2$, we have $\dim_{\kk} H^0(\R, \overline{T}^{\dual}(1)) = 1$.
This completes the proof of $\chi(\FF_{\Lambda}) = 1$.
\end{proof}

In general, if $X$ is a module over a commutative ring $R$ and $x \in X$, then $\ev^R_{x \in X}: \Hom_R(X, R) \to R$ denotes the evaluation map at $x$, and $\Image\left(\ev^R_{x \in X}\right)$ its image.

\begin{thm}\label{thm:964}
Suppose the conditions (a) -- (d) in Theorem \ref{thm:95} hold.
Suppose that $S$ is the set of prime numbers which are ramified in $K/\Q$.
Then there is an element $u \in \RR$ such that
\begin{equation}\label{eq:76}
u \Fitt_{\RR/J_n}(H^1_{\FF_{\Lambda}}(\Q, T_n^{\dual}(1))^{\dual}) = \Image \left(\ev^{\RR/J_n}_{\overline{\z_S} \in H^1(\Q, T_n)}\right)
\end{equation}
for any $n \geq 0$ (we do not claim here that $u$ is a non-zero-divisor).
Here, $\overline{\z_S} \in H^1(\Q, T_n)$ denotes the image of $\z_S \in H^1(\Q, \T)$.
\end{thm}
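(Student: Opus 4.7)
The plan is to deduce \eqref{eq:76} as a nearly direct application of the equivariant Euler-Kolyvagin-Stark system machinery of Burns, Sakamoto, and Sano (\cite{BSS}, \cite{BS}, \cite{Sak}). The abstract theory says that for a cartesian Selmer structure of core rank one on an appropriate coefficient ring, the image of the canonical evaluation map at a Stark system computes, up to a unit, the Fitting ideal of the dual Selmer group. Lemma \ref{lem:910} already establishes that $\FF_{\Lambda}$ on $T_n$ over $\RR/J_n$ is cartesian and of core rank one, so much of the structural input is in place.

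First I would verify the remaining hypotheses required by the Stark-system-to-Fitting-ideal theorem. The big image condition (c) supplies, via a Chebotarev argument as in \cite[\S 5]{BSS}, a rich enough supply of Kolyvagin primes; condition (a) gives the required vanishing of $H^0(\Q, \overline{T})$ and $H^0(\Q, \overline{T}^{\dual}(1))$ used in Lemma \ref{lem:910}; and (d) excludes the small-prime pathologies (in particular non-commutativity of $\GL_2(\F_p)$ arguments break down for $p = 3$). Next, using Theorem \ref{thm:941} and its natural extension to products $K \cdot \Q(\mu_{\mathfrak{n}})$ for squarefree products $\mathfrak{n}$ of Kolyvagin primes, I would assemble the Beilinson-Kato zeta elements into an Euler system $\mathbf{z}$ for $T_pE$ in the sense of \cite{BSS}: the requisite Euler-factor distribution relation at $l$ is controlled by the polynomial $P_l$ of \eqref{eq:140}, as in Kato's original construction.

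The Sakamoto machine then converts the Euler system $\mathbf{z}$ into a Stark system $\kappa^{\mathbf{z}}$ of core rank one for $(T_n, \FF_{\Lambda})$, whose bottom component is, by construction, the image $\overline{\z_S} \in H^1(\Q, T_n)$. On one hand, by the very definition of the evaluation, $\Image(\ev^{\RR/J_n}_{\overline{\z_S} \in H^1(\Q, T_n)})$ coincides with the image generated by the zeroth component of $\kappa^{\mathbf{z}}$; on the other hand, the main structural theorem of \cite{Sak} identifies the image produced by a (universal) generator of the module of Stark systems with $\Fitt_{\RR/J_n}(H^1_{\FF_{\Lambda}}(\Q, T_n^{\dual}(1))^{\dual})$. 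Comparing $\kappa^{\mathbf{z}}$ with the universal generator yields the required equality up to an element $u \in \RR$, namely (the image of) the ratio between the two Stark systems.

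The main obstacle is obtaining a single $u \in \RR$ that works uniformly in $n$, rather than only a system of $u_n \in \RR/J_n$ that might fail to be coherent. This requires that the Stark systems produced at different levels be compatible under the reduction maps $\RR/J_{n+1} \twoheadrightarrow \RR/J_n$, so that one can pass to the limit in the module of Stark systems. This compatibility is an Iwasawa-theoretic refinement of \cite{Sak} and must be verified in our semi-local setting; once this is done, the element $u$ lifts to $\RR$ as claimed. Note that $u$ need not be a non-zero-divisor: it is only a ratio of Stark systems, and may vanish on components of $\RR$ corresponding to characters where the zeta element itself degenerates.
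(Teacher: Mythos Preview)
Your proposal is correct and follows essentially the same route as the paper: assemble the Beilinson--Kato elements into an Euler system, pass to a Kolyvagin system via the derivative map, then to a Stark system via the regulator isomorphism, and finally invoke the main structural theorem \cite[Theorem 4.6(ii)]{BSS} on Stark systems to obtain the Fitting ideal identity up to the ratio $u$.

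One point deserves sharpening. What you flag as ``the main obstacle'' --- obtaining a single $u \in \RR$ uniform in $n$ --- is in the paper not an obstacle at all: it falls out immediately from the fact that the inverse-limit module $\SSS(\T) = \varprojlim_n \SSS(T_n)$ is \emph{free of rank one over $\RR$} (this is \cite[Theorem 4.6(i)]{BSS} or \cite[Theorem 5.4(1)]{Sak}, valid component-by-component in the semilocal setting). One chooses a basis $\varepsilon^0 \in \SSS(\T)$ once, writes the Kato-derived Stark system as $\varepsilon = u\varepsilon^0$ for a single $u \in \RR$, and then specializes to each level $n$; no further ``Iwasawa-theoretic refinement'' beyond the cited results is required. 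Also, a minor attribution: condition (a) is used for the \emph{local} vanishing $H^0(\Q_l, T_n) = 0$ at $l \in S$ (giving trivial local conditions there), while the global vanishing of $H^0(\Q, \overline{T})$ and $H^0(\Q, \overline{T}^{\dual}(1))$ comes from $E(K_0)[p] = 0$; and condition (d) enters because $\SL_2(\Z_p)$ is perfect for $p \geq 5$, which is what forces the restriction $\Gal(\overline{\Q}/K_\infty) \to \SL_2(\Z_p)$ to remain surjective.
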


\begin{proof}
We apply the results in \cite{BSS}, where very general Galois representations are treated, to our case.
To ease the notation, we write $\z = \z_S$.

\step{1}{Euler system of the Beilinson-Kato elements.}

Let $K, \RR, T$ in \cite[\S 6]{BSS} correspond to our $\Q, R_0, T_pE \otimes R_0$, respectively.
Let $\PP$ be the set of prime numbers $l$ such that $l \nmid pN$, $l \not\in S$, and $l \equiv 1 \mod p$.
Let $\N(\PP)$ be the set of square-free products of $l \in \PP$ (by convention, $1 \in \N(\PP))$.
For each $\rr \in \N(\PP)$, let $\Q(\rr)$ be the maximal $p$-extension of $\Q$ contained in $\Q(\mu_{\rr})$.
Now we take $\K$ in \cite[\S 6]{BSS} as the composite of $\Q(\rr)$ for $\rr \in \N(\PP)$ and the cyclotomic $\Z_p$-extension of $\Q$.
Then \cite[Hypothesis 6.1]{BSS} holds by our assumption $H^0(K_0, E[p]) = 0$.
Also \cite[Hypothesis 6.7]{BSS} holds (by an appropriate choice of $S$).

For $\rr \in \N(\PP)$, we have $H^0(\Q(\rr)K_0, E[p]) = H^0(K_0, E[p]) = 0$.
Thus by Theorem \ref{thm:941}, we have an element 
\[
\z_{\rr} \in H^1(\Q, \T \otimes_{\Z_p} \Z_p[\Gal(\Q(\rr)/\Q)]) =  H^1(\Q(\rr), \T).
\]
Note that $\z = \z_1$ by definition.
By the norm relation of the Beilinson-Kato elements \cite[Proposition 8.12]{Kat04}, this system $(\z_{\rr})_{\rr}$ constitutes an Euler system of rank one, in the sense of \cite[Definition 6.4]{BSS}.
We denote by $\ES(\T)$ the module of Euler systems for $\T$; we have $(\z_{\rr})_{\rr} \in \ES(\T)$.

\step{2}{From Euler system to Kolyvagin system.}

Let $\KS(\T) = \varprojlim_n \KS(T_n)$ denote the module of Kolyvagin systems for $\T$ of rank one (\cite[\S5.1]{BSS}).
Here we always equip $T_n$ with the Selmer structure $\FF_{\Lambda}$, which coincides with $\FF_{\can}$ due to Lemma \ref{lem:910}(1).
Since \cite[Hypothesis 6.11]{BSS} holds, by \cite[Theorem 6.12]{BSS} (see also Corollaries 6.13 and 6.18(ii) therein), we have the Kolyvagin derivative homomorphism
\[
\DD: \ES(\T) \to \KS(\T).
\]

\step{3}{From Kolyvagin system to Stark system.}

Observe that, since the Galois representation  $\Gal(\overline{\Q}/\Q) \to \Aut(E[p^{\infty}]) \simeq \GL_2(\Z_p)$ is surjective by (c), the restriction $\Gal(\overline{\Q}/\Q(\mu_{p^{\infty}})) \to \SL_2(\Z_p)$ is also surjective.
Because $K/\Q$ is abelian and the group $\SL_2(\Z_p)$ is perfect under $p \geq 5$, it follows that the restriction $\Gal(\overline{\Q}/K_{\infty}) \to \SL_2(\Z_p)$ is also surjective.

In order to apply \cite[Theorem 5.25]{BSS}, we check the assumptions.
By the above observation, \cite[Hypothesis 4.7]{BSS} (and simultaneously \cite[Hypothesis 3.12]{Sak}) is true.
Then \cite[Proposition 3.22]{Sak} gives us a core vertex (in the sense of \cite[Definition 4.3]{Sak}; see \cite[Proposition 4.4]{Sak}).
Also we know that the Selmer structure $\FF_{\Lambda}$ on $T_n$ is cartesian by Lemma \ref{lem:910}(2).
Therefore \cite[Lemma 4.6]{Sak} implies that \cite[Hypothesis 4.2]{BSS} is true in our setting.

Let $\SSS(\T) = \varprojlim_n \SSS(T_n)$ be the module of Stark systems for $\T$.
This is a free $\RR$-module of rank one by \cite[Theorem 4.6(i)]{BSS} or \cite[Theorem 5.4(1)]{Sak}.
Now by \cite[Theorem 5.25]{BSS}, we have the regulator map
\[
\Reg: \SSS(\T) \overset{\sim}{\to} \KS(\T),
\]
which is isomorphic.

\step{4}{Application of the theory of Stark systems.}

Define the Stark system $\varepsilon = (\varepsilon_n)_n \in \SSS(\T)$ arising from the Beilinson-Kato Euler system by $\Reg(\varepsilon) = \DD((\z_{\rr})_{\rr})$.
By the definition of a Stark system, in particular this element involves
\[
(\varepsilon_n)_1 \in H^1(\Q, T_n).
\]
(Here the subscript $1$ is in the place of the index $\rr$.)
By the constructions of $\DD$ and $\Reg$, this element $(\varepsilon_n)_1$ coincides with $\overline{\z} \in H^1(\Q, T_n)$.

Let $\varepsilon^0 = (\varepsilon^0_n)_{n}$ be any basis of $\SSS(\T)$ as an $\RR$-module.
Then $\varepsilon^0_n$ is a basis of $\SSS(T_n)$ as an $\RR/J_n$-module.
Similarly as $\varepsilon$, we have an element $(\varepsilon^0_n)_1 \in H^1(\Q, T_n)$, and the main theorem of the theory of Stark systems \cite[Theorem 4.6(ii)]{BSS} implies
\[
\Fitt_{\RR/J_n}(H^1_{\FF_{\Lambda}}(\Q, T_n^{\dual}(1))^{\dual}) = \Image \left(\ev^{\RR/J_n}_{(\varepsilon^0_n)_1 \in H^1(\Q, T_n)}\right).
\]
Let $u \in \RR$ be the element such that $\varepsilon = u \varepsilon^0$.
Then \eqref{eq:76} follows, which completes the proof of Theorem \ref{thm:964}.
\end{proof}

\begin{lem}\label{lem:781}
Let $X$ be a finitely generated $\RR$-module which is free over $\Lambda$.
Then the natural map 
\[
\Hom_{\RR}(X, \RR) \to \Hom_{\RR/J_n}(X/J_n, \RR/J_n)
\]
is surjective.
\end{lem}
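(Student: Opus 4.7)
The plan is to reduce the statement to an elementary surjectivity over $\Lambda$, by exploiting the fact that $\RR = \Lambda[G_0]$ is a free $\Lambda$-module of finite rank. The key observation is that for any $\RR$-module $Y$ we have a canonical isomorphism of left $\RR$-modules $\Hom_{\Lambda}(\RR, \Lambda) \simeq \RR$, induced for instance by the $\Lambda$-basis $\{\sigma\}_{\sigma \in G_0}$ of $\RR$ together with the involution $\sigma \mapsto \sigma^{-1}$. By tensor-hom adjunction (equivalently, by the explicit coefficient-extraction $f \mapsto f_1$ where $f(x) = \sum_{\sigma \in G_0} f_1(\sigma^{-1} x)\sigma$), we then obtain a natural isomorphism
\[
\Hom_{\RR}(X, \RR) \overset{\sim}{\to} \Hom_{\Lambda}(X, \Lambda)
\]
for every $\RR$-module $X$.

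Next I would apply the same reasoning after reduction modulo $J_n$. Putting $J_n^{\Lambda} = J_n \cap \Lambda = (p^n, \gamma^{p^n}-1)$ and $\Lambda_n = \Lambda/J_n^{\Lambda}$, one has $\RR/J_n = \Lambda_n[G_0]$ and $X/J_n = X \otimes_{\Lambda} \Lambda_n$ (since $J_n$ is generated as an $\RR$-ideal by $J_n^{\Lambda} \subset \Lambda$). The same adjunction yields
\[
\Hom_{\RR/J_n}(X/J_n, \RR/J_n) \overset{\sim}{\to} \Hom_{\Lambda_n}(X/J_n^{\Lambda} X, \Lambda_n),
\]
and the formation of these identifications commutes with the reduction maps, so the map in the statement is identified with
\[
\Hom_{\Lambda}(X, \Lambda) \to \Hom_{\Lambda_n}(X \otimes_{\Lambda} \Lambda_n, \Lambda_n).
\]

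Finally, the $\Lambda$-freeness of $X$ does all the work: writing $X \simeq \Lambda^r$, the displayed map becomes the coordinate-wise reduction $\Lambda^r \to \Lambda_n^r$, which is manifestly surjective. There is essentially no obstacle in this argument; the only point that deserves a moment's care is the verification that $\Hom_{\Lambda}(\RR, \Lambda) \simeq \RR$ as \emph{left} $\RR$-modules (and not merely as $\Lambda$-modules), which is a standard computation using the normal basis $G_0 \subset \RR$.
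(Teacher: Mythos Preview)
Your proof is correct and follows essentially the same approach as the paper: both reduce to the $\Lambda$-level via the $\RR$-module isomorphism $\Hom_{\Lambda}(\RR,\Lambda)\simeq\RR$ together with tensor--hom adjunction, and then invoke $\Lambda$-freeness of $X$ to conclude. The paper presents this as a four-step commutative diagram passing through $\Hom_{\RR}(X,\Hom_{\Lambda}(\RR,\Lambda))$ and $\Hom_{\Lambda}(\RR\otimes_{\RR}X,\Lambda)$, but the content is identical to your argument.
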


\begin{proof}
Put $I_n = (p^n, \gamma^{p^n}-1) \subset \Lambda$ so that $\RR/J_n \simeq \RR \otimes_{\Lambda} \Lambda/I_n$.
Since $\RR = \Lambda[G_0]$, we can construct an isomorphism $\Hom_{\Lambda}(\RR, \Lambda) \simeq \RR$ similarly as \eqref{eq:96}.
Applying $(-) \otimes_{\Lambda} \Lambda/I_n$ to this isomorphism gives an isomorphism $\Hom_{\Lambda/I_n}(\RR/J_n, \Lambda/I_n) \simeq \RR/J_n$.
Then we obtain a natural commutative diagram
\[
\xymatrix{
	\Hom_{\RR}(X, \RR) \ar[r] \ar[d]_{\vsim} &
	\Hom_{\RR/J_n}(X/J_n, \RR/J_n) \ar[d]^{\vsim} \\
	\Hom_{\RR}(X, \Hom_{\Lambda}(\RR, \Lambda)) \ar[r] \ar[d]_{\vsim} &
	\Hom_{\RR/J_n}(X/J_n, \Hom_{\Lambda/I_n}(\RR/J_n, \Lambda/I_n)) \ar[d]^{\vsim}\\
	\Hom_{\Lambda}(\RR \otimes_{\RR} X, \Lambda) \ar[r] \ar[d]_{\vsim} &
	\Hom_{\Lambda/I_n}(\RR/J_n \otimes_{\RR/J_n} X/J_n, \Lambda/I_n) \ar[d]^{\vsim}\\
	\Hom_{\Lambda}(X, \Lambda) \ar[r] &
	\Hom_{\Lambda/I_n}(X/J_n, \Lambda/I_n)\\
}
\]
Since $X$ is free over $\Lambda$, the bottom horizontal arrow is surjective.
Therefore the top horizontal arrow is also surjective.
\end{proof}

\begin{proof}[Proof of Theorem \ref{thm:674}]
We compute the both sides of \eqref{eq:76}, applying several arguments of \cite[\S\S 5, 6]{Sak} (in particular \cite[Proposition 6.11]{Sak}, where a non-equivariant situation is treated).
We continue to write $\z = \z_S$.

For the left hand side of \eqref{eq:76}, recall that $H^1_{\FF_{\Lambda}}(\Q, T_n^{\dual}(1)) \simeq H^1_{\FF_{\Lambda}}(\Q, \T^{\dual}(1))[J_n]$ (\cite[Lemma 3.14]{Sak}) and $H^1_{\FF_{\Lambda}}(\Q, \T^{\dual}(1)) = \Sel^0(E/K_{ \infty})$ by definition.
Hence we have 
\begin{equation}\label{eq:78}
\Fitt_{\RR/J_n}(H^1_{\FF_{\Lambda}}(\Q, T_n^{\dual}(1))^{\dual}) = \II (\RR/J_n),
\end{equation}
where we put $\II = \FF(\Sel^0(E/K_{ \infty})^{\dual})$.

We compute the right hand side of \eqref{eq:76}.
We use the exact sequence
\begin{equation}\label{eq:77}
0 \to H^2(\Q_{\Sigma}/\Q, \T)[J_n] \to H^1(\Q, \T)/J_n \to H^1_{\FF_{\Lambda}}(\Q, T_n)
\end{equation}
(\cite[Lemma 6.9]{Sak}).
When $n$ is enough large (which we assume in the following), the first term coincides with $H^2(\Q_{\Sigma}/\Q, \T)_{\fin}$, the maximal finite submodule of $H^2(\Q_{\Sigma}/\Q, \T)$.
In the sequence \eqref{eq:77}, the element $\z \bmod J_n \in H^1(\Q, \T)/J_n$ goes to $\overline{\z}$ in the final module.
Since $\RR/J_n$ is a zero-dimensional Gorenstein ring, we thus have
\[
\Image \left(\ev^{\RR/J_n}_{\overline{\z} \in H^1(\Q, T_n)}\right)
 = \{ h(\z \bmod J_n) \in \RR/J_n \mid h \in \Hom_{\RR/J_n}(H^1(\Q, \T)/J_n, \RR/J_n), h |_{H^2(\Q_{\Sigma}/\Q, \T)_{\fin}} = 0 \}.
 \]
Let $\ZZ \subset \RR$ be the annihilator ideal of $H^2(\Q_{\Sigma}/\Q, \T)_{\fin}$.
Then it follows that
\begin{equation}\label{eq:81}
\ZZ \Image\left(\ev^{\RR/J_n}_{\z \bmod J_n \in H^1(\Q, \T)/J_n} \right) 
\subset \Image \left(\ev^{\RR/J_n}_{\overline{\z} \in H^1(\Q, T_n)} \right) 
\subset \Image \left(\ev^{\RR/J_n}_{\z \bmod J_n \in H^1(\Q, \T)/J_n} \right).
\end{equation}
Since $H^1(\Q, \T)$ is free over $\Lambda$ (Proposition \ref{prop:106}(1)), Lemma \ref{lem:781} shows that the natural map
\[
\Hom_{\RR}(H^1(\Q, \T), \RR) \to \Hom_{\RR/J_n}(H^1(\Q, \T)/J_n, \RR/J_n)
\]
is surjective.
It follows that
\begin{equation}\label{eq:80}
\Image \left(\ev^{\RR/J_n}_{\z \bmod J_n \in H^1(\Q, \T)/J_n}\right) = \JJ (\RR/J_n),
\end{equation}
where we put $\JJ = \Image\left(\ev^{\RR}_{\z \in H^1(\Q, \T)}\right)$.

Now \eqref{eq:76}, \eqref{eq:78}, \eqref{eq:81}, and \eqref{eq:80} imply
\[
\ZZ \JJ (\RR/J_n) \subset u\II (\RR/J_n) \subset \JJ (\RR/J_n),
\]
namely
\[
\ZZ \JJ + J_n \subset u\II +J_n \subset \JJ +J_n
\]
as ideals of $\RR$.
Since $n$ is arbitrarily large, we obtain $\ZZ \JJ \subset u\II \subset \JJ$ and in particular $u\II \subset_{\fin} \JJ$.

By Theorem \ref{thm:77} and Proposition \ref{prop:101}, $\JJ$ contains a non-zero-divisor.
It follows that $u$ is a non-zero-divisor.
The trivial exact sequence $0 \to (\z)_{\RR} \to H^1(\Q, \T) \to H^1(\Q, \T)/(\z)_{\RR} \to 0$ induces an exact sequence
\[
0 \to \Hom_{\RR}(H^1(\Q, \T), \RR) \to \Hom_{\RR}((\z)_{\RR}, \RR) \to E^1(H^1(\Q, \T)/(\z)_{\RR}) \to 0.
\]
Since the evaluation map $\ev_{\z}: \Hom_{\RR}((\z)_{\RR}, \RR) \to \RR$ is an isomorphism, the definition of Fitting ideals yields 
\begin{equation}\label{eq:82}
\JJ = \FF(E^1(H^1(\Q, \T)/(\z)_{\RR})).
\end{equation}
This completes the proof of Theorem \ref{thm:674}.
\end{proof}

Now we can finish the proof of Theorem \ref{thm:95}.

\begin{proof}[Proof of Theorem \ref{thm:95}]
By (the proof of) Proposition \ref{prop:615}, we may and do suppose that $S$ is the set of prime numbers which are ramified in $K/\Q$.
Put $\II = \FF(\Sel^0(E/K_{ \infty})^{\dual})$ and $\JJ = \FF(E^1(H^1(\Q, \T)/(\z_S)_{\RR}))$.
By Theorem \ref{thm:674}, there is a non-zero-divisor $u \in \RR$ such that $u \II \subset_{\fin} \JJ$.
Then, as in the proof of Proposition \ref{prop:911}, we have
\[
u \au^{\bullet} \FF(\Sel_S^{\bullet}(E/K_{ \infty})^{\dual}) \II \subset_{\fin} \au^{\bullet} \FF(\Sel_S^{\bullet}(E/K_{ \infty})^{\dual}) \JJ \supset_{\fin} \LL_S^{\bullet}(E/K_{\infty}) \II.
\]
Thus the condition (e) and Lemma \ref{lem:109} show $u \au^{\bullet} \FF(\Sel_S^{\bullet}(E/K_{ \infty})^{\dual}) = (\LL_S^{\bullet}(E/K_{\infty}))$.
This completes the proof of Theorem \ref{thm:95}.
\end{proof}

\section{Application to Mazur-Tate Conjecture}\label{sec:07}

In this section, we prove Theorem \ref{thm:203}, which roughly states that one divisibility of the equivariant main conjecture implies Mazur-Tate conjecture.
This is a vast generalization of the work of C.-H. Kim-Kurihara \cite[Theorem 1.14]{KK}, where $m = 1$ (and the $\Delta$-invariant part) is treated.

First we give the definition of the Mazur-Tate elements \cite[(1.1)--(1.2)]{MT87}.
Let $f_E$ be the newform of weight 2 associated to $E$ by the modularity theorem.
Recall that $\Omega^{\pm}$ are the real and imaginary N{\'{e}}ron periods of $E$.
 For $r \in \Q$, define $[r]^{\pm} \in \Q$ called the modular symbols by
 \[
 2\pi \int_0^{\infty} f_E(r+iy)dy = [r]^+ \Omega^+ + [r]^- \Omega^-.
 \]
 For a positive integer $M$, the Mazur-Tate element $\theta_M$ is defined by
 \[
 \theta_M = \sum_{a \in (\Z/M\Z)^{\times}} \left( \left[ a/M \right]^+ + \left[ a/M \right]^- \right)\sigma_a^{-1} \in \Q[\Gal(\Q(\mu_M)/\Q)].
 \]

\begin{rem}
As in Remark \ref{rem:812}, the convention of the Mazur-Tate elements in this paper and that in \cite{MT87} differ by the involution $\iota$.
However, this difference does not affect Conjecture \ref{conj:202} thanks to the functional equation \cite[(1.6.2)]{MT87} (which follows from \eqref{eq:72}).
\end{rem}

\begin{rem}\label{rem:a03}
Suppose $l^2 \nmid N$ holds for any prime divisor $l$ of $M$ (for example, $(M,N) = 1$ suffices).
Suppose also that $\ttilde{E}(\F_p)[p] = 0$ holds and $E[p]$ is irreducible as a $\Gal(\overline{\Q}/\Q)$-module.
Then we have $\theta_M \in \Z_p[\Gal(\Q(\mu_M)/\Q)]$.
This follows from \cite[Theorem 3.5]{Man72}, but this fact is unnecessary in this paper.
\end{rem}

\hidden{
\begin{proof}
Let $c$ be the Manin constant of $E$, namely, let $\pi: X_0(N) \to E$ be a modular parametrization and $\pi^* \omega_E = c f(q)\frac{dq}{q}$.
We show the following claim:
\begin{quote}
Let $r \in \Q$ satisfy the property that $l^2 \nmid N$ for any prime number $l$ with $\ord_l(r) < 0$.
Then we have $[r]^{\pm} \in \frac{1}{c(q+1-a_q)} \Z$ for any prime number $q \nmid N$.
\end{quote}
The irreducibility of $E[p]$ and $p \nmid N$ show $p \nmid c$, by the work of Mazur.
Moreover, $\ttilde{E}(\F_p)[p] = 0$ is equivalent to $p \nmid (p+1-a_p)$.
Thus the above claim implies the lemma.

We prove the claim.
The relation $T_qf_E = a_qf_E$ implies
\[
a_q \int_r^{i \infty} f_E(z)dz = \sum_{b=0}^{q-1} \int_{(r+b)/q}^{i \infty} f_E(z)dz + \int_{qr}^{i \infty} f_E(z)dz.
\]
Therefore
\[
(q+1-a_q) \int_r^{i \infty} f_E(z)dz = \sum_{b=0}^{q-1} \int_r^{(r+b)/q} f_E(z)dz + \int_r^{qr} f_E(z)dz.
\]
Multiplying $-2\pi i$ yields
\[
(q+1-a_q) 2\pi \int_0^{\infty} f_E(r+iy)dy = -\sum_{b=0}^{q-1} \int_r^{(r+b)/q} f_E(q)\frac{dq}{q} - \int_r^{qr} f_E(q)\frac{dq}{q}.
\]
By our assumption on $r$, the rational numbers $r, (r+b)/q, qr$ are contained in the same $\Gamma_0(N)$-orbit (see Manin).
More precisely, this holds since $r = u/v\delta$ with $(u, v\delta) = 1, \delta \mid N, (v, N/\delta) = 1$ and $q \equiv 1 \mod (\delta, N/\delta)$.
Hence we obtain
\[
c(q+1-a_q) \left( [r]^+ \Omega^+ + [r]^- \Omega^- \right) \in \Z \Omega^+ + \Z \Omega^-.
\]
This completes the proof.
\end{proof}
}

We review the properties of Mazur-Tate elements \cite[(1.3)--(1.4)]{MT87}.
For a {\it primitive} Dirichlet character $\psi$ modulo $M$, we have
\begin{equation}\label{eq:20}
\psi(\theta_M) = \tau(\psi^{-1}) \frac{L(E, \psi, 1)}{\Omega^{\sign(\psi)}}.
\end{equation}
For a prime divisor $l$ of $M$, let $z^{M}_{M/l}$ be the projection map $\Q[\Gal(\Q(\mu_M)/\Q)] \to \Q[\Gal(\Q(\mu_{M/l})/\Q)]$.
Let $\nu^{M}_{M/l}$ be the map $\Q[\Gal(\Q(\mu_{M/l})/\Q)] \to \Q[\Gal(\Q(\mu_M)/\Q)]$ induced by the multiplication by the norm element of $\Gal(\Q(\mu_M)/\Q(\mu_{M/l}))$.
Then we have
\begin{equation}\label{eq:21}
z^{M}_{M/l}(\theta_M) =
	\begin{cases}
		(a_l - \sigma_l^{-1} - \mathbf{1}_N(l) \sigma_l) \theta_{M/l} & (l^2 \nmid M) \\
		a_l \theta_{M/l} - \mathbf{1}_N(l) \nu^{M/l}_{M/l^2}(\theta_{M/l^2}) & (l^2 \mid M).
	\end{cases}
\end{equation}

In the rest of this section, we fix a positive integer $m$ which is relatively prime to $pN$.
Also suppose Assumption \ref{ass:04} holds for $K = \Q(\mu_m)$ in the ordinary case.
Recall the notations introduced just before Lemma \ref{lem:946}: $K_{m, n} = \Q(\mu_{mp^{n+1}}), R_{m,n} = \Z_p[\Gal(K_{m,n}/\Q)]$, and $\RR_m = \Z_p[[\Gal(K_{m,\infty}/\Q)]]$.
Also put $\theta_{m,n} = \theta_{mp^{n+1}}$.

The rough idea to prove Theorem \ref{thm:203} can be illustrated by the following diagram (the dotted lines represent certain connections).
\[
\xymatrix{
	\Sel_S^{\bullet}(E/K_{m, \infty}) \ar@{.}[r]^-{\textrm{EMC}} \ar@{.}[d] &
	\LL_S^{\bullet}(E/K_{m, \infty}) \ar@{.}[d]\\
	\Sel(E/K_{m, n}) \ar@{.}[r]_-{\textrm{MTC}} &
	\theta_{m,n}
}
\]
Here, EMC and MTC stand for, respectively, the equivariant main conjecture \eqref{eq:241} and the Mazur-Tate conjecture (Conjecture \ref{conj:202}).
We will establish the left side (algebraic) connection in Subsection \ref{subsec:821} and the right side (analytic) connection in Subsection \ref{subsec:875}.
As mentioned in Section \ref{sec:01}, both sides have difficulties.
Those results will prove Theorem \ref{thm:203} in Subsection \ref{subsec:874}.

\subsection{Algebraic Side}\label{subsec:821}

We begin with an elementary lemma.

\begin{lem}\label{lem:901}
Let $G$ be a finite abelian group and $\sigma \in G$ be an element.
Then $\sigma + 1$ is a unit as an element of $\F_p[G]$ if and only if the order of $\sigma$ is odd.
\end{lem}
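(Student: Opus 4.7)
The plan is to reduce the statement to the cyclic subgroup $\langle \sigma \rangle$ and then to a very concrete polynomial computation. First I would note that the inclusion $\F_p[\langle \sigma \rangle] \hookrightarrow \F_p[G]$ is faithfully flat (in fact $\F_p[G]$ is a free module over $\F_p[\langle \sigma \rangle]$, with a basis indexed by representatives of $G/\langle \sigma \rangle$). By faithfully flat descent of units, $\sigma + 1$ is a unit in $\F_p[G]$ if and only if it is a unit in $\F_p[\langle \sigma \rangle]$. This lets me replace $G$ with the cyclic group generated by $\sigma$.

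Next, letting $d$ denote the order of $\sigma$, the obvious isomorphism $\F_p[\langle \sigma \rangle] \simeq \F_p[t]/(t^d - 1)$ sends $\sigma + 1$ to $t + 1$. In the principal ideal domain $\F_p[t]$, the image of $t+1$ in $\F_p[t]/(t^d - 1)$ is a unit if and only if $\gcd(t+1, t^d - 1) = 1$, which is to say $-1$ is not a root of $t^d - 1$, i.e.\ $(-1)^d \neq 1$ in $\F_p$. Since we are working under the standing hypothesis that $p$ is odd (so $-1 \neq 1$ in $\F_p$), this is equivalent to $d$ being odd. Combining the two reductions yields the lemma.

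The argument is essentially routine; the only conceptual step is the descent of the unit property from $\F_p[G]$ to $\F_p[\langle \sigma \rangle]$, which I would prefer to justify via faithful flatness rather than a character computation, since the latter would require passing to an algebraic closure and handling the case $p \mid |G|$ more carefully. I do not foresee any real obstacle — the hardest part is simply to record the faithful flatness reduction in a line or two so as not to invoke unnecessary machinery.
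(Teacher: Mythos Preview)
Your proof is correct and follows essentially the same route as the paper: both reduce to the cyclic subgroup $\langle\sigma\rangle$ via the freeness (hence faithful flatness) of $\F_p[G]$ over $\F_p[\langle\sigma\rangle]$, and then check in $\F_p[t]/(t^d-1)$ whether $-1$ is a root of $t^d-1$. The only cosmetic difference is that the paper writes the reduction as $\F_p[G]/(\sigma+1)\simeq\F_p[G]\otimes_{\F_p[\langle\sigma\rangle]}\F_p[\langle\sigma\rangle]/(\sigma+1)$ and computes the quotient $\F_p[t]/(t^d-1,t+1)\simeq\F_p/((-1)^d-1)$ directly rather than phrasing it via $\gcd$.
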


\begin{proof}
Let $G'$ be the cyclic subgroup of $G$ generated by $\sigma$.
By the isomorphism $\F_p[G]/(\sigma+1) \simeq \F_p[G] \otimes_{\F_p[G']} \F_p[G']/(\sigma+1)$, we may assume that $G = G'$.
If we denote by $h$ the order of $\sigma$, we have isomorphisms 
\[
\F_p[G']/(\sigma+1) \simeq \F_p[x]/(x^h-1, x+1) \simeq \F_p/((-1)^h-1)
\]
where $x$ denotes an indeterminate corresponding to $\sigma$ and the last isomorphism sends $x$ to $-1$.
The final term vanishes if and only if $h$ is odd.
\end{proof}

\begin{defn}\label{defn:996}
As a special case of Definition \ref{defn:801}, we introduce the following notation.
For a prime divisor $l$ of $m$ and an integer $n \geq -1$ or $n = \infty$, let $K_{m, (l), n}$ be the inertia field of $l$ in $K_{m,n}/\Q$.
More concretely, we have $K_{m, (l), n} = K_{m/l^{e_l}, n}$ with $e_l = \ord_l(m) \geq 1$.
Let $\nu_{m, (l)} = \nu_{\Q(\mu_m), (l)} \in \RR_m$ be the norm element of $\Gal(K_{m, \infty}/ K_{m, (l), \infty})$.
Put $\RR_{m, (l)} = \Z_p[[\Gal(K_{m, (l), \infty}/\Q)]]$, which is a quotient ring of $\RR_m$.

For a (possibly empty) subset $T$ of $\prim(m)$, put $\nu_{m,(T)} = \prod_{l \in T} \nu_{m,(l)} \in \RR_m$.
Let $m_{(T)}$ be the maximal divisor of $m$ such that $\prim(m_{(T)}) = \prim(m) \setminus T$.
Then $\Q(\mu_m)_{(T)} = \Q(\mu_{m_{(T)}})$ and $(\RR_m)_{(T)} = \RR_{m_{(T)}}$ in the notation in Definition \ref{defn:801}.
Moreover, $\nu_{m, (T)}$ is precisely the norm element of $\Gal(K_{m, \infty}/K_{m_{(T)}, \infty})$.
\end{defn}

Let $\FF$ be the (quasi-)Fitting invariant defined by $\FF(X) = \Fitt_{\RR_m}(X)$.
Then we have the shift $\SF{-1}$ by Theorem \ref{thm:a01}.
This $\SF{-1}$ is already used in \cite[\S 5.4]{Kata} in the study of ideal class groups, namely the Galois representation $\Z_p(1)$.
In the following Proposition \ref{prop:510}, we operate a similar computation on $T_pE$.
This corresponds to \cite[Lemma 5.13]{Kata}, but inevitably, the computation becomes harder.
Recall the element $P_l \in \RR_{m, (l)}$ defined in Definition \ref{defn:991}.

\begin{prop}\label{prop:510}
Let $l$ be a prime divisor of $m$.
Let $\begin{pmatrix} x_1 & x_2 \\ x_3 & x_4 \end{pmatrix} \in \GL_2(\Z_p)$ be the presentation matrix of the action of the Frobenius $\sigma_l$ on $T_pE$ with respect to a basis of $T_pE$ over $\Z_p$.

(1) We have
\[
\SF{-1}(H^0(K_{m, \infty} \otimes \Q_l, T_pE)) = \left(1, \nu_{m,(l)} P_l^{-1} (\sigma_l - x_1, x_2, x_3, \sigma_l - x_4, l-1) \right).
\]

(2) Suppose $l \equiv 1 \mod p$ holds.
We have $(\sigma_l - x_1, x_2, x_3, \sigma_l - x_4) \neq \RR_{m,(l)}$ as an ideal of $\RR_{m,(l)}$ if and only if one of the following holds.
\begin{itemize}
\item[(i)] $a_l \equiv 2 \mod p$ and $\sharp \ttilde{E}_l(\F_l)[p] = p^2$.
\item[(ii)] $a_l \equiv -2 \mod p$, the residue degree of $l$ in $\Q(\mu_m)/\Q$ is even, and $\sharp \ttilde{E}_l(\F_{l^2})[p] = p^2$.
\end{itemize}
\end{prop}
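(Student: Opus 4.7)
The plan for (1) is to bootstrap the two-term $\RR_{m,(l)}$-presentation of $H^0$ (obtained by running the proof of Lemma \ref{lem:a02}(2) over the inertia field) to a free $\RR_m$-resolution, and then invoke the formalism of $\SF{-1}$ from Section \ref{sec:108}. The first step is the canonical identification
\[
H^0(K_{m,\infty} \otimes \Q_l, T_pE) = H^0(K_{m,(l),\infty} \otimes \Q_l, T_pE)
\]
as $\RR_m$-modules, with the right-hand $\RR_m$-action factoring through $\RR_m \twoheadrightarrow \RR_{m,(l)}$. This follows from two facts: since $l \mid m$ and $(m,pN)=1$, $E$ has good reduction at $l$ and inertia acts trivially on $T_pE$; and $K_{m,\infty}/K_{m,(l),\infty}$ is totally ramified at every place above $l$ with unchanged residue field. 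The argument of Lemma \ref{lem:a02}(2), applied to $\Q(\mu_{m_{(l)}})$ where $l$ is unramified, then yields an exact sequence
\[
0 \to \RR_{m,(l)}^{\oplus 2} \overset{\times D}{\to} \RR_{m,(l)}^{\oplus 2} \to H^0(K_{m,(l),\infty} \otimes \Q_l, T_pE) \to 0
\]
with $D = \begin{pmatrix} \sigma_l - x_1 & -x_2 \\ -x_3 & \sigma_l - x_4 \end{pmatrix}$ and $\det(D) = l P_l$, so $\det(D)$ equals $P_l$ up to a unit.

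To descend to $\RR_m$, I would use the canonical splitting $\Gal(K_{m,\infty}/\Q) = \Gal(K_{m,(l),\infty}/\Q) \times H$ with $H = (\Z/l^{e_l})^{\times}$ and $e_l = \ord_l(m)$, identifying $\RR_m = \RR_{m,(l)}[H]$. Combining a presentation of $\RR_{m,(l)}$ as an $\RR_m$-module (via the augmentation ideal of $H$) with the $\RR_{m,(l)}$-presentation above yields an explicit free $\RR_m$-presentation of $H^0(K_{m,\infty} \otimes \Q_l, T_pE)$. Plugging this into Theorem \ref{thm:a01} and computing the relevant minors should produce the claimed formula: the generator $1$ reflects that the generic $\RR_m$-rank of $H^0$ is $0$; the factor $\nu_{m,(l)} P_l^{-1}$ arises from transferring $P_l^{-1}$ from $\RR_{m,(l)}$ to $\RR_m$ via the norm element $\nu_{m,(l)}$; the entries $\sigma_l - x_1, x_2, x_3, \sigma_l - x_4$ come directly from $D$ and its adjugate; and the $l-1$ term encodes the tame contribution of $H$ (the image of $H$ under the mod-$p$ cyclotomic character has exponent dividing $l-1$).

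For (2), I would work modulo the Jacobson radical. The quotient $\kk := \RR_{m,(l)}/\mathrm{Jac}(\RR_{m,(l)})$ decomposes as a finite product $\prod_\chi k_\chi$ indexed by characters $\chi$ of the prime-to-$p$ part of $\Gal(K_{m,(l),0}/\Q)$, and by Nakayama the ideal $(\sigma_l - x_1, x_2, x_3, \sigma_l - x_4) \subset \RR_{m,(l)}$ is proper iff for some $\chi$ the Frobenius matrix modulo $p$ equals the scalar $\chi(\sigma_l) I$. Because $l \equiv 1 \pmod p$, this forces $\chi(\sigma_l)^2 \equiv l \equiv 1 \pmod p$ and $2\chi(\sigma_l) \equiv a_l \pmod p$, hence $\chi(\sigma_l) \in \{\pm 1\}$ and $a_l \equiv \pm 2$. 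The case $\chi(\sigma_l) = 1$ is always available via the trivial character and is precisely the condition that $\sigma_l$ acts trivially on $E[p]$, i.e., $\sharp \ttilde{E}_l(\F_l)[p] = p^2$, yielding (i); the case $\chi(\sigma_l) = -1$ corresponds to $\sigma_l$ acting as $-I$ on $E[p]$, so that $\sigma_l^2$ acts as identity and $\sharp \ttilde{E}_l(\F_{l^2})[p] = p^2$, while the existence of such a character is equivalent to the order of $\sigma_l$ in the prime-to-$p$ quotient being even, which for $p$ odd reduces to the residue degree of $l$ in $\Q(\mu_m)/\Q$ being even.

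The main obstacle will be the explicit $\SF{-1}$ computation in (1): translating the free $\RR_m$-presentation into the compact formula with the extraneous factor $l - 1$ demands carefully compatible choices of lifts of $D$ and of generators of the augmentation ideal of $H$, and the $l-1$ factor is subtle because it reflects the tame inertia contribution rather than appearing directly in $D$.
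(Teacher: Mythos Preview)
Your treatment of (2) is correct and essentially the same as the paper's: both reduce modulo the Jacobson radical, obtain $x_1 \equiv x_4 \equiv \pm 1$ and $x_2 \equiv x_3 \equiv 0$ from the scalar condition together with $\det \equiv l \equiv 1$, and match the $+1$ and $-1$ cases to (i) and (ii). The paper phrases the parity criterion via Lemma~\ref{lem:901} (that $\sigma_l+1$ is a unit in $\F_p[G]$ iff the order of $\sigma_l$ is odd) rather than via characters, but the content is identical.

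For (1), your setup (the identification with $H^0$ over the inertia field and the $\RR_{m,(l)}$-presentation by $D$) matches the paper, but the execution has a genuine gap. You propose to build a free $\RR_m$-resolution of $H^0$ and ``plug it into Theorem~\ref{thm:a01}''. That theorem, however, requires an exact sequence $0 \to Y \to P_1 \to \cdots \to P_d \to X \to 0$ with each $P_i$ in $\PP$, i.e.\ \emph{torsion} modules of projective dimension $\leq 1$; free $\RR_m$-modules are not torsion and hence not in $\PP$. So a free resolution cannot be fed directly into the shift machinery, and there is no ``minors'' formula for $\SF{-1}$ available from the cited results.

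The paper instead produces a torsion resolution. From the $\RR_{m,(l)}$-sequence it passes, via the snake lemma, to an exact sequence of $\RR_m$-modules
\[
0 \to H^0(K_{m,\infty}\otimes\Q_l,T_pE) \to (\RR_m/\overl{P_l})^{\oplus 2} \to Y \to 0,
\]
where $\overl{P_l}\in\RR_m$ is any lift of $P_l$. Here $(\RR_m/\overl{P_l})^{\oplus 2}\in\PP$, so Theorem~\ref{thm:a01} gives $\SF{-1}(H^0)=\overl{P_l}^{-2}\FF(Y)$. One then shows (using $\ttilde{D}D=\det(D)\cdot I$) that $Y$ has the $4\times 2$ presentation matrix with $\overl{P_l}I_2$ on top and $\nu_{m,(l)}\ttilde{D}$ on the bottom, whose $2\times 2$ minors yield
\[
\FF(Y)=\bigl(\overl{P_l}^2,\ \nu_{m,(l)}\overl{P_l}(\sigma_l-x_1,x_2,x_3,\sigma_l-x_4),\ \nu_{m,(l)}^2\overl{P_l}\bigr).
\]
The factor $l-1$ then arises purely algebraically from $\nu_{m,(l)}^2=l^{e_l-1}(l-1)\,\nu_{m,(l)}$ (the norm element squared is the group order times itself), not from any cyclotomic-character consideration as you suggest. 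Your diagnosis of the $l-1$ term is therefore off target, and without the torsion resolution above you would not see this identity appear.
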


\begin{proof}
(1) 
Similarly as in the proof of Lemma \ref{lem:a02}(2), we obtain an exact sequence
\[
0 \to \RR_{m, (l)}^{\oplus 2} \overset{\times D}{\to} \RR_{m, (l)}^{\oplus 2} \to H^0(K_{m, (l), \infty} \otimes \Q_l, T_pE) \to 0,
\]
where $D = \begin{pmatrix} \sigma_l - x_1 & -x_2 \\ -x_3 & \sigma_l - x_4 \end{pmatrix}$.
By snake lemma, this sequence induces the upper exact sequence of the following diagram
\begin{equation}\label{eq:61}
 \xymatrix{
0 \ar[r] &
H^0(K_{m, (l), \infty} \otimes \Q_l, T_pE) \ar[r] \ar@{=}[d] &
(\RR_{m, (l)} / P_l)^{\oplus 2} \ar[r]^-{\times D} \ar@{^{(}-_>}[d]_{\nu_{m,(l)}} &
(\RR_{m, (l)}/P_l)^{\oplus 2} \\
0 \ar[r] &
H^0(K_{m, \infty} \otimes \Q_l, T_pE) \ar[r] &
(\RR_m / \overl{P_l})^{\oplus 2} \ar[r] &
Y \ar[r] &
0
}
\end{equation}
We shall construct the other parts of this diagram.
The left vertical equality of \eqref{eq:61} follows from the assumption that $l$ is good for $E$.
The upper row can be regarded as a sequence of $\RR_m$-modules on which $\Gal(K_{m, \infty}/K_{m, (l), \infty})$ acts trivially.
Then $\nu_{m,(l)}: \RR_{m, (l)} \to \RR_m$ is a homomorphism of $\RR_m$-modules.
Take a lift $\overl{\sigma_l} \in \RR_m$ of $\sigma_l$ and put $\overl{P_l} = 1 - a_l l^{-1} \overl{\sigma_l} + l^{-1} \overl{\sigma_l}^2 \in \RR_{m}$, which is a lift of $P_l$.
Then $\nu_{m, (l)}: \RR_{m, (l)}/P_l \to \RR_{m}/\overl{P_l}$ is well-defined and injective.
Now we can define $Y$ so that \eqref{eq:61} is a commutative diagram of $\RR_m$-modules with exact rows.

It is easy to see that $Y$ does not contain a non-trivial finite submodule.
Therefore Theorem \ref{thm:a01} and the lower sequence of \eqref{eq:61} imply
\[
\SF{-1}(H^0(K_{m, \infty} \otimes \Q_l, T_pE)) = \overl{P_l}^{-2} \FF(Y).
\]
It remains to compute $\FF(Y)$.
By $(\det(D)) = (P_l)$, a similar proof as in Lemma \ref{lem:53}(2) shows that the sequence
\[
(\RR_{m, (l)} / P_l)^{\oplus 2} \overset{\times \ttilde{D}}{\to} (\RR_{m, (l)} / P_l)^{\oplus 2} \overset{\times D}{\to} (\RR_{m, (l)} / P_l)^{\oplus 2}
\]
is exact, where $\ttilde{D}$ is the adjugate matrix of $D$.
Therefore the diagram \eqref{eq:61} shows that $Y$ fits into an exact sequence
\[
(\RR_m / \overl{P_l})^{\oplus 2} \overset{\times \nu_{m,(l)} \ttilde{D}}{\to} (\RR_m / \overl{P_l})^{\oplus 2} \to Y \to 0.
\]
Hence $Y$ admits a presentation
\[
\RR_m^{\oplus 4} \overset{\times \mathbb{D}}{\to} \RR_m^{\oplus 2} \to Y \to 0
\]
over $\RR_m$, where $\mathbb{D}$ denotes the $4 \times 2$ matrix with the scalar matrix $\overl{P_l}$ in the upper $2 \times 2$ and $\nu_{m,(l)} \ttilde{D}$ in the lower $2 \times 2$.
Consequently we have
\begin{align}
\FF(Y) &= (\overl{P_l}^2, \nu_{m,(l)} \overl{P_l}  (\sigma_l-x_1, x_2, x_3, \sigma_l-x_4), \nu_{m,(l)}^2 \overl{P_l})\\
&= (\overl{P_l}^2, \nu_{m,(l)} \overl{P_l} (\sigma_l-x_1, x_2, x_3, \sigma_l-x_4, l-1)),
\end{align}
where the last equality follows from $\nu_{m,(l)}^2 = l^{e_l-1}(l-1) \nu_{m, (l)}$ with $e_l = \ord_l(m)$ (if $l \geq 3$; in the $l = 2$ case, the verification is slightly different, but the result is unchanged).
This completes the proof of (1).

(2) First we claim that $(\sigma_l + 1) \neq \RR_{m,(l)}$ if and only if the residue degree of $l$ in $\Q(\mu_m)/\Q$ is even.
By Nakayama's lemma, $(\sigma_l + 1) \neq \RR_{m,(l)}$ is equivalent to $(\sigma_l + 1) \neq \F_p[\Gal(K_{m,(l),0}/\Q)]$.
Thus by Lemma \ref{lem:901}, this is equivalent to that the residue degree of $l$ in $K_{m,(l),0}/\Q$ is even.
Since $l$ splits completely in $K_{m,0}/K_{m,-1}$ and is totally ramified in $K_{m,0}/K_{m,(l),0}$, we obtain the claim.

In the following, the congruences are considered modulo $p$.
First suppose $(\sigma_l - x_1, x_2, x_3, \sigma_l - x_4) \neq \RR_{m,(l)}$.
Then $x_1 - x_4 \equiv x_2 \equiv x_3 \equiv 0$.
Since $x_1 x_4 - x_2 x_3 = l \equiv 1$ and $x_1 + x_4 = a_l$, we obtain $x_1 \equiv x_4 \equiv \pm 1$ and $a_l \equiv \pm 2$.
If $a_l \equiv 2$, then $\sigma_l$ acts on $E[p]$ trivially and we get (i).
Suppose $a_l \equiv -2$ holds.
Then $\sigma_l^2$ acts on $E[p]$ trivially.
Also by the assumption $(\sigma_l +1) \neq \RR_{m, (l)}$, the above claim shows (ii).

Conversely, suppose either (i) or (ii) holds.
If (i) holds, then the second condition means $x_1 \equiv x_4 \equiv 1$ and $x_2 \equiv x_3 \equiv 0$.
Thus $(\sigma_l - x_1, x_2, x_3, \sigma_l - x_4, p) = (\sigma_l-1, p) \neq \RR_{m,(l)}$.
Suppose (ii) holds.
Then $\begin{pmatrix} x_1 & x_2 \\ x_3 & x_4 \end{pmatrix}^2 \equiv \begin{pmatrix} 1 & 0 \\ 0 & 1 \end{pmatrix}$ and $x_1 + x_4 = a_l \equiv -2$ easily imply $x_1 \equiv x_4 \equiv -1$ and $x_2 \equiv x_3 \equiv 0$.
Thus $(\sigma_l - x_1, x_2, x_3, \sigma_l - x_4, p) = (\sigma_l + 1, p) \neq \RR_{m,(l)}$ by the second condition of (ii) and the above claim.
\end{proof}

\begin{cor}\label{cor:518}
Let $\bullet \in \{ \emptyset, +, -, \sharp, \flat\}$.
Suppose that $\Sel_{\prim(m)}^{\bullet}(E/K_{m, \infty})$ is $\Lambda$-cotorsion.
Under the condition $(\star)$ in Theorem \ref{thm:203}, we have
\[
\FF(\Sel^{\bullet}(E/K_{m, \infty})^{\dual}) = \left( \prod_{l \mid m} \left(1, \nu_{m,(l)} P_l^{-1} \right) \right) \FF(\Sel_{\prim(m)}^{\bullet}(E/K_{m, \infty})^{\dual}).
\]
\end{cor}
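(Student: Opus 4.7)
\medskip

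The plan is to combine the exact sequence \eqref{eq:04} relating the $S$-imprimitive and primitive Selmer groups with the local computation in Proposition \ref{prop:510}(1), simplified by condition $(\star)$ via Proposition \ref{prop:510}(2), and then to read off the Fitting ideal comparison using the multiplicativity of the shift $\SF{-1}$ in short exact sequences.

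First I would specialize Proposition \ref{prop:945} to $K = \Q(\mu_m)$, $S = \prim(m)$, obtaining
\[
0 \to \bigoplus_{l \mid m} H^1(\Q_l, \T) \to \Sel_{\prim(m)}^{\bullet}(E/K_{m,\infty})^{\dual} \to \Sel^{\bullet}(E/K_{m,\infty})^{\dual} \to 0,
\]
and use Lemma \ref{lem:a02}(1) to identify $H^1(\Q_l, \T) \simeq H^0(K_{m, \infty} \otimes \Q_l, T_pE)$ for each $l \mid m$. Next I would verify that under condition $(\star)$ the inner ideal $(\sigma_l - x_1, x_2, x_3, \sigma_l - x_4, l-1)$ appearing in Proposition \ref{prop:510}(1) equals the unit ideal of $\RR_{m, (l)}$ for every $l \mid m$: if $l \not\equiv 1 \bmod p$ this is automatic since $l - 1$ is a $p$-adic unit, while if $l \equiv 1 \bmod p$ condition $(\star)$ is precisely the negation of the two cases (i) and (ii) in Proposition \ref{prop:510}(2), so that $(\sigma_l - x_1, x_2, x_3, \sigma_l - x_4) = \RR_{m, (l)}$. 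This simplification yields
\[
\SF{-1}\bigl(H^0(K_{m, \infty} \otimes \Q_l, T_pE)\bigr) = \bigl(1, \nu_{m, (l)} P_l^{-1}\bigr)
\]
for each $l \mid m$.

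Finally I would assemble the pieces: using the multiplicativity of $\SF{-1}$ over direct sums, together with the short exact sequence above and the behavior of $\FF$ and $\SF{-1}$ under extensions (as developed in Section \ref{sec:108}, cf.\ Theorem \ref{thm:a01} and the surrounding formalism), one obtains
\[
\FF\bigl(\Sel^{\bullet}(E/K_{m, \infty})^{\dual}\bigr) = \SF{-1}\!\left(\bigoplus_{l \mid m} H^0(K_{m, \infty} \otimes \Q_l, T_pE)\right) \cdot \FF\bigl(\Sel_{\prim(m)}^{\bullet}(E/K_{m, \infty})^{\dual}\bigr),
\]
and the factor on the right splits as $\prod_{l \mid m}(1, \nu_{m, (l)} P_l^{-1})$, which is exactly the claimed identity. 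The cotorsionness assumption on $\Sel_{\prim(m)}^{\bullet}(E/K_{m, \infty})$ (and hence on $\Sel^{\bullet}(E/K_{m, \infty})$ by Proposition \ref{prop:615}) guarantees that both Fitting ideals are well-behaved principal or near-principal invariants via Theorem \ref{thm:978}, so the formal calculus of $\SF{-1}$ applies.

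The main obstacle is the bookkeeping around $\SF{-1}$: one must check that the shifted Fitting invariant is genuinely multiplicative in the short exact sequence above, even though each local term $H^0(K_{m, \infty} \otimes \Q_l, T_pE)$ only has finite projective dimension as an $\RR_{m, (l)}$-module (and a priori higher projective dimension as an $\RR_m$-module after inflation along $\nu_{m, (l)}$). Justifying this cleanly is precisely the role of the general formalism recalled in Section \ref{sec:108}; once that is in hand the rest is a direct substitution, exactly as in the analogous computation \cite[Lemma 5.13]{Kata} for the cyclotomic representation $\Z_p(1)$.
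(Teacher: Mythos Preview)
Your proposal is correct and follows essentially the same route as the paper: apply Theorem~\ref{thm:a01} to the exact sequence~\eqref{eq:04}, then simplify each $\SF{-1}(H^1(\Q_l,\T_m))$ via Lemma~\ref{lem:a02}(1) and Proposition~\ref{prop:510} under condition~$(\star)$. One clarification on your stated obstacle: the applicability of Theorem~\ref{thm:a01} does not hinge on the projective dimension over~$\RR_m$ of the local terms $H^0(K_{m,\infty}\otimes\Q_l,T_pE)$ --- those sit in the position of~$Y$ and need only lie in~$\MM$ --- but rather on the fact that the \emph{middle} term $\Sel_{\prim(m)}^{\bullet}(E/K_{m,\infty})^{\dual}$ lies in~$\PP$, which is precisely the content of Theorem~\ref{thm:978} (=~Theorem~\ref{thm:76}) that you invoke at the end.
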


\begin{proof}
By Theorem \ref{thm:76}, we can apply Theorem \ref{thm:a01} to the exact sequence \eqref{eq:04} and thus
\begin{equation}\label{eq:31}
\FF(\Sel^{\bullet}(E/K_{m, \infty})^{\dual}) = \left(\prod_{l \mid m} \SF{-1}(H^1(\Q_l, \T_m)) \right) \FF(\Sel_{\prim(m)}^{\bullet}(E/K_{m, \infty})^{\dual}),
\end{equation}
where $\T_m = T_pE \otimes \RR_m$ is a representation of $\Gal(\overline{\Q}/\Q)$ over $\RR_m$.
By Lemma \ref{lem:a02}(1) and Proposition \ref{prop:510}, the assumption $(\star)$ implies $\SF{-1}(H^1(\Q_l, \T_m)) = \left(1, \nu_{m,(l)} P_l^{-1} \right)$ for each $l \mid m$.
This completes the proof.
\end{proof}

The following is the main result of this subsection.

\begin{prop}\label{prop:511}
Under the condition $(\star)$ in Theorem \ref{thm:203}, we have the following.

(1) Suppose $p \nmid a_p$ holds.
For an element $\xi \in \RR$, if $\FF(\Sel_{\prim(m)}(E/K_{m, \infty})^{\dual}) \supset (\xi)$, then
\[
\Fitt_{R_{m,n}}(\Sel(E/K_{m,n})^{\dual}) \supset \left( \prod_{l \mid m} \left( 1, \nu_{m, (l)}P_l^{-1} \right) \right)(\xi \bmod \omega_n)
\]
holds for any $n \geq 0$.

(2) Suppose $a_p = 0$ holds.
For an element $\xi^{\pm} \in \RR$, if $\au^{\pm} \FF(\Sel^{\pm}_{\prim(m)}(E/K_{m, \infty})^{\dual}) \supset (\xi^{\pm})$, then
\[
\Fitt_{R_{m,n}}(\Sel(E/K_{m,n})^{\dual}) \supset \left( \prod_{l \mid m} \left( 1, \nu_{m, (l)} P_l^{-1} \right) \right) (\ttilde{\omega}_n^{\mp} \xi^{\pm} \bmod \omega_n)
\]
holds for any $n \geq 0$.
\end{prop}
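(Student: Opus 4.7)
The approach is to bridge the Iwasawa-theoretic hypothesis to the finite-layer conclusion in three stages: at the $K_{m,\infty}$-level, compare the $\prim(m)$-imprimitive and primitive Selmer groups via Corollary \ref{cor:518}; then descend to the finite layer $K_{m,n}$ through a control theorem; and, in the supersingular case, transfer from the $\pm$-Selmer group to the classical Selmer group at the finite layer using Lemma \ref{lem:623}.

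For case (1), Corollary \ref{cor:518} immediately upgrades the hypothesis to
\[
\FF\bigl(\Sel(E/K_{m,\infty})^{\dual}\bigr) \supset \Bigl(\prod_{l \mid m}(1,\nu_{m,(l)}P_l^{-1})\Bigr)(\xi).
\]
Since Theorem \ref{thm:978} gives $\pd_{\RR_m}(\Sel(E/K_{m,\infty})^{\dual}) \leq 1$, the Fitting ideal is well-behaved under reduction mod $\omega_n$. The standard control theorem for ordinary Selmer groups (valid here thanks to Assumption \ref{ass:04}) produces an injection $\Sel(E/K_{m,n}) \hookrightarrow \Sel(E/K_{m,\infty})^{\Gamma_n}$, whose Pontryagin dual is a surjection $\Sel(E/K_{m,\infty})^{\dual}/\omega_n \twoheadrightarrow \Sel(E/K_{m,n})^{\dual}$; taking Fitting ideals and reducing mod $\omega_n$ yields the conclusion.

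For case (2), Corollary \ref{cor:518} similarly gives
\[
\au^{\pm}\FF\bigl(\Sel^{\pm}(E/K_{m,\infty})^{\dual}\bigr) \supset \Bigl(\prod_{l \mid m}(1,\nu_{m,(l)}P_l^{-1})\Bigr)(\xi^{\pm}).
\]
The Kobayashi-style control theorem for $\pm$-Selmer groups, extended to the equivariant setting along the lines of Kitajima--Otsuki, gives $\Sel^{\pm}(E/K_{m,\infty})^{\dual}/\omega_n \simeq \Sel^{\pm}(E/K_{m,n})^{\dual}$, hence
\[
\au^{\pm}\Fitt_{R_{m,n}}\bigl(\Sel^{\pm}(E/K_{m,n})^{\dual}\bigr) \supset \Bigl(\prod_{l \mid m}(1,\nu_{m,(l)}P_l^{-1})\Bigr)(\xi^{\pm} \bmod \omega_n).
\]
To pass from $\Sel^{\pm}$ to $\Sel$ at the finite layer, I would exploit the defining inclusion $\Sel^{\pm}(E/K_{m,n}) \subset \Sel(E/K_{m,n})$, whose quotient embeds into $(E(k_{m,n})\otimes(\Q_p/\Z_p))/(E^{\pm}(k_{m,n})\otimes(\Q_p/\Z_p))$. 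Dualizing this inclusion and invoking Lemma \ref{lem:623} produces an exact sequence
\[
R_{m,n}/\au^{\pm}\ttilde{\omega}_n^{\mp} \to \Sel(E/K_{m,n})^{\dual} \to \Sel^{\pm}(E/K_{m,n})^{\dual} \to 0,
\]
whence $\Fitt_{R_{m,n}}(\Sel(E/K_{m,n})^{\dual}) \supset (\au^{\pm}\ttilde{\omega}_n^{\mp})\Fitt_{R_{m,n}}(\Sel^{\pm}(E/K_{m,n})^{\dual})$ by the standard multiplicativity of Fitting ideals. Multiplying the previous containment by $\ttilde{\omega}_n^{\mp}$ then yields exactly the desired conclusion.

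The principal obstacle I anticipate is the careful verification of the two control theorems in our equivariant setting where $K_{m,-1}$ properly contains $\Q$: namely that the restriction maps $\Sel(E/K_{m,n}) \to \Sel(E/K_{m,\infty})^{\Gamma_n}$ and $\Sel^{\pm}(E/K_{m,n}) \to \Sel^{\pm}(E/K_{m,\infty})^{\Gamma_n}$ have trivial kernel (and bijectivity in the $\pm$-case) without acquiring extra error terms at primes ramified in $K/\Q$. Condition $(\star)$ itself enters only through Corollary \ref{cor:518}; the delicate local input behind it is entirely absorbed into that corollary.
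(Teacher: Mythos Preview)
Your approach is essentially the same as the paper's, and the argument is correct, but two of your worries are unnecessary and one of your citations is technically off. First, Theorem~\ref{thm:978} is stated only for the $S$-imprimitive Selmer group, not for the primitive one, and indeed the paper warns elsewhere that $\pd_{\RR}(\Sel(E/K_{m,\infty})^{\dual}) \leq 1$ can fail; fortunately you do not need this at all, since the base-change formula $\Fitt_{R_{m,n}}(M/\omega_n M) = \Fitt_{\RR_m}(M)\cdot R_{m,n}$ holds for any finitely generated $\RR_m$-module $M$. Second, the ``principal obstacle'' you flag is not an obstacle: the paper uses only the \emph{injectivity} of the restriction maps $\Sel(E/K_{m,n}) \to \Sel(E/K_{m,\infty})[\omega_n]$ and $\Sel^{\pm}(E/K_{m,n}) \to \Sel^{\pm}(E/K_{m,\infty})[\omega_n]$, which is immediate from $H^0(K_{m,\infty}, E[p^{\infty}]) = 0$ via inflation--restriction; no full control theorem, no bijectivity in the $\pm$-case, and no analysis of error terms at ramified primes is required. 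With those simplifications your proof matches the paper's.
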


\begin{proof}
We follow the proof of \cite[Theorem 1.14]{KK}.

(1) The restriction map $H^1(K_{m,n}, E[p^{\infty}]) \to H^1(K_{m, \infty}, E[p^{\infty}])$ is injective by $H^0(K_{m, \infty}, E[p^{\infty}]) = 0$.
Hence $\Sel(E/K_{m,n}) \to \Sel(E/K_{m, \infty})[\omega_n]$ is also injective.
By the functoriality of Fitting ideals, we obtain
\[
\Fitt_{R_{m,n}}(\Sel(E/K_{m,n})^{\dual}) 
\supset \FF(\Sel(E/K_{m, \infty})^{\dual})R_{m,n}.
\]
By Corollary \ref{cor:518}, we have
\[
\FF(\Sel(E/K_{m, \infty})^{\dual}) = \left( \prod_{l \mid m} \left(1, \nu_{m, (l)} P_l^{-1} \right) \right) \FF(\Sel_{\prim(m)}(E/K_{m, \infty})^{\dual})
\supset \left( \prod_{l \mid m} \left(1, \nu_{m, (l)} P_l^{-1} \right)\right) \xi.
\]
These prove the assertion.

(2) We make use of the exact sequence
\begin{equation}
0 \to \Sel^{\pm}(E/K_{m,n}) \to \Sel(E/K_{m,n}) \to \frac{E(k_{m,n}) \otimes (\Q_p / \Z_p)}{E^{\pm}(k_{m,n}) \otimes (\Q_p / \Z_p)}
\end{equation}
by \eqref{eq:109}.
By a basic property of Fitting ideals, the Pontryagin dual of this sequence implies
\begin{equation}\label{eq:66}
\Fitt_{R_{m,n}}(\Sel(E/K_{m,n})^{\dual}) 
\supset \Fitt_{R_{m,n}}\left(\left(\frac{E(k_{m,n}) \otimes (\Q_p / \Z_p)}{E^{\pm}(k_{m,n}) \otimes (\Q_p / \Z_p)}\right)^{\dual}\right) 
\Fitt_{R_{m,n}}(\Sel^{\pm}(E/K_{m,n})^{\dual}).
\end{equation}
Moreover, the first factor in the right hand side is equal to $(\au^{\pm}\ttilde{\omega}_n^{\mp})$ by Lemma \ref{lem:623}.
On the other hand, similarly as in (1), the injectivity of the restriction map $\Sel^{\pm}(E/K_{m,n}) \to \Sel^{\pm}(E/K_{m, \infty})$ and Corollary \ref{cor:518} imply
\begin{equation}\label{eq:65}
\au^{\pm} \Fitt_{R_{m,n}}(\Sel^{\pm}(E/K_{m,n})^{\dual}) 
\supset \left( \prod_{l \mid m} \left( 1, \nu_{m, (l)} P_l^{-1} \right) \right) (\xi^{\pm} \bmod \omega_n).
\end{equation}
This completes the proof.
\end{proof}

\subsection{Analytic Side}\label{subsec:875}

Recall that $m$ denotes an integer such that $(m, pN) = 1$.
For $m' \mid m$ and $n \geq n' \geq -1$, we put $\nu^{m,n}_{m',n'} = \nu^{mp^{n+1}}_{m'p^{n'+1}}$ and $z^{m,n}_{m',n'} = z^{mp^{n+1}}_{m'p^{n'+1}}$.
For an allowable root $\alpha$ of $t^2-a_pt+p$, we define the $\alpha$-stabilized Mazur-Tate element in $R_{m,n} \otimes \Q_p(\alpha)$ by
\begin{equation}\label{eq:150}
\vartheta_{m,n}^{\alpha} = 
	\begin{cases}
		\alpha^{-(1+n)} (\theta_{m,n} - \alpha^{-1} \nu^{m,n}_{m,n-1}(\theta_{m,n-1})) & (n \geq 0) \\
		(1-\alpha^{-1} \varphi)(1-\alpha^{-1}\varphi^{-1})\theta_{m,-1} & (n = -1).
	\end{cases}
\end{equation}
By \eqref{eq:21}, these elements satisfy $z^{m,n}_{m,n-1}(\vartheta_{m,n}^{\alpha}) = \vartheta_{m,n-1}^{\alpha}$ for $n \geq 0$ and, for a prime divisor $l$ of $m$,
\begin{equation}\label{eq:905}
z^{m,n}_{m/l, n}(\vartheta_{m,n}^{\alpha}) =
	\begin{cases}
		(a_l - \sigma_l^{-1} - \sigma_l) \vartheta_{m/l,n}^{\alpha} & (l^2 \nmid m) \\
		a_l \vartheta_{m/l,n}^{\alpha} - \nu^{m/l, n}_{m/l^2, n}(\vartheta_{m/l^2, n}^{\alpha}) & (l^2 \mid m).
	\end{cases}
\end{equation}
Moreover, by \eqref{eq:20}, we have
\begin{equation}\label{eq:55}
\psi(\vartheta_{m,n}^{\alpha}) = e_p(\alpha, \psi) \tau(\psi^{-1}) \frac{L(E, \psi, 1)}{\Omega^{\sign(\psi)}}
\end{equation}
for a character $\psi$ modulo $mp^{n+1}$ with $m_{\psi} = m$ (not necessarily $n_{\psi} = n$).

Put $\LL_{m,n}^{\alpha} = \LL_{\prim(m)}(E/K_{\infty}, \alpha) \bmod \omega_n \in R_{m,n} \otimes \Q_p(\alpha)$.
By \eqref{eq:71}, this element can be characterized by
\begin{equation}\label{eq:56}
\psi(\LL_{m,n}^{\alpha}) = e_p(\alpha, \psi) \tau_{\prim(m)}(\psi^{-1}) \frac{L_{\prim(m)}(E, \psi, 1)}{\Omega^{\sign(\psi)}}
\end{equation}
for {\it any} character $\psi$ modulo $mp^{n+1}$.
This characterization (or Lemma \ref{lem:992}) implies
\begin{equation}\label{eq:906}
z^{m,n}_{m/l, n}(\LL_{m,n}^{\alpha}) =
	\begin{cases}
		(- \sigma_l^{-1} P_l) \LL_{m/l,n}^{\alpha}  & (l^2 \nmid m) \\
		\LL_{m/l,n}^{\alpha}& (l^2 \mid m).
	\end{cases}
\end{equation}

We shall compare $\vartheta_{m,n}^{\alpha}$ with $\LL_{m,n}^{\alpha}$.
The difference of the compatible properties \eqref{eq:905} and \eqref{eq:906} makes it difficult to compare these two elements (such a difficulty does not appear in \cite{KK}).
We shall show the following incomplete comparison, which suffices for us.
Let $\prim_1(m)$ denote the set of prime divisors $l$ of $m$ such that $l^2 \nmid m$.
Recall the element $\nu_{m, (T)}$ defined in Definition \ref{defn:996}.

\begin{prop}\label{prop:513}
We have equalities
\[
\prod_{l^2 \mid m} \left(1-l^{-1} \nu^{m,n}_{m/l,n} \right) 
\left[ \vartheta_{m,n}^{\alpha} - \sum_{T \subset \prim_1(m)} \nu_{m, (T)} \left( \prod_{l \in T} l^{-1}(a_l - \sigma_l) \right) \LL_{m_{(T)}, n}^{\alpha}  \right] = 0
\]
and
\[
\prod_{l^2 \mid m} \left(1-l^{-1} \nu^{m,n}_{m/l,n} \right) 
\left[ \LL_{m,n}^{\alpha} - \sum_{T \subset \prim_1(m)} \nu_{m, (T)} \left( \prod_{l \in T} l^{-1}(\sigma_l - a_l) \right) \vartheta_{m_{(T)}, n}^{\alpha}  \right] = 0.
\]
Here, in the first product, $l$ runs over the prime divisors of $m$ such that $l^2 \mid m$.
\end{prop}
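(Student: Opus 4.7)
My plan is to prove each equality by evaluation at every $\overline{\Q_p}$-valued character $\psi$ of $G_{m,n}$; since $R_{m,n}\otimes_{\Z_p}\overline{\Q_p}\simeq\prod_{\psi}\overline{\Q_p}$, this suffices. Fix such a $\psi$. For any $l$ with $l^{2}\mid m$, $\psi(\nu^{m,n}_{m/l,n})=l$ if $\psi$ factors through $G_{m/l,n}$ (equivalently $\ord_{l}(m_{\psi})<\ord_{l}(m)$) and $\psi(\nu^{m,n}_{m/l,n})=0$ otherwise. Hence the prefactor $\prod_{l^{2}\mid m}(1-l^{-1}\psi(\nu^{m,n}_{m/l,n}))$ is nonzero precisely when $\ord_{l}(m_{\psi})=\ord_{l}(m)$ for every such $l$, which forces $m_{\psi}=m_{(T_{0})}$ for a unique $T_{0}\subset\prim_{1}(m)$; in the opposite case the claim is trivial. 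So I may assume $m_{\psi}=m_{(T_{0})}$.

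For such a $\psi$, iterating \eqref{eq:905} over the primes of $T_{0}$ (the first case applies at every step, since each $l\in T_{0}\subset\prim_{1}(m)$ satisfies $l^{2}\nmid m$) gives $\psi(\vartheta_{m,n}^{\alpha})=\prod_{l\in T_{0}}(a_{l}-u_{l}-u_{l}^{-1})\cdot\psi(\vartheta_{m_{(T_{0})},n}^{\alpha})$ where $u_{l}:=\psi(\sigma_{l})$, and \eqref{eq:55} then yields $\psi(\vartheta_{m_{(T_{0})},n}^{\alpha})=e_{p}(\alpha,\psi)\tau(\psi^{-1})L(E,\psi,1)/\Omega^{\sign(\psi)}$. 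On the right-hand side of the first equality, $\psi(\nu_{m,(T)})=\prod_{l\in T}(l-1)$ if $T\subset T_{0}$ and $0$ otherwise, so only terms with $T\subset T_{0}$ contribute; for those, \eqref{eq:56} combined with \eqref{eq:91} for the Gauss sum (noting $\prim(m_{(T)})\setminus\prim(m_{\psi})=T_{0}\setminus T$) and \eqref{eq:122} for the $L$-function (noting that for $l\in T_{0}\setminus T$ one has $l\nmid N$, $\psi(l)=u_{l}$, hence the Euler factor is $\psi(P_{l})$, while for $l\in\prim(m_{\psi})$ it is $1$) gives
\[
\psi(\LL_{m_{(T)},n}^{\alpha})=\prod_{l\in T_{0}\setminus T}\bigl(-u_{l}^{-1}\psi(P_{l})\bigr)\cdot e_{p}(\alpha,\psi)\tau(\psi^{-1})L(E,\psi,1)/\Omega^{\sign(\psi)}.
\]

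After cancelling this common transcendental factor, the first equality reduces to the purely algebraic identity
\[
\prod_{l\in T_{0}}(a_{l}-u_{l}-u_{l}^{-1})=\sum_{T\subset T_{0}}\prod_{l\in T}(1-l^{-1})(a_{l}-u_{l})\prod_{l\in T_{0}\setminus T}\bigl(-u_{l}^{-1}+l^{-1}a_{l}-l^{-1}u_{l}\bigr),
\]
which by binomial expansion factors as $\prod_{l\in T_{0}}\bigl[(1-l^{-1})(a_{l}-u_{l})+(-u_{l}^{-1}+l^{-1}a_{l}-l^{-1}u_{l})\bigr]$ and equals the left-hand side by the termwise cancellation $(1-l^{-1})(a_{l}-u_{l})+(-u_{l}^{-1}+l^{-1}a_{l}-l^{-1}u_{l})=a_{l}-u_{l}-u_{l}^{-1}$. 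The second equality reduces, by the same method (using \eqref{eq:906} on the left and \eqref{eq:905} on the right), to $\prod_{l\in T_{0}}(-u_{l}^{-1}\psi(P_{l}))=\prod_{l\in T_{0}}\bigl[(1-l^{-1})(u_{l}-a_{l})+(a_{l}-u_{l}-u_{l}^{-1})\bigr]$, again verified by a one-line expansion.

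The main obstacle is the triple bookkeeping that occurs when the conductor drops from $m$ down to $m_{(T)}$ and then to $m_{(T_{0})}=m_{\psi}$: one must simultaneously track the compatibility factor in \eqref{eq:905} (or \eqref{eq:906}), the Gauss-sum correction $-\psi(\sigma_{l})^{-1}$ from \eqref{eq:91}, and the missing Euler factor $\psi(P_{l})$ from \eqref{eq:122}. It is precisely the conspiracy of these three corrections that produces the miraculous identity $A_{l}+B_{l}=a_{l}-u_{l}-u_{l}^{-1}$ on which both equalities rest; keeping every sign, inversion, and $l^{-1}$ straight during this reduction is where all the care is needed.
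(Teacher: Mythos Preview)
Your proof is correct and follows essentially the same strategy as the paper: evaluate at each character $\psi$, observe that the prefactor kills all $\psi$ except those with $m_\psi=m_{(T_0)}$ for some $T_0\subset\prim_1(m)$, and then reduce to the elementary per-prime identity $(1-l^{-1})(a_l-u_l)+(-u_l^{-1}\psi(P_l))=a_l-u_l-u_l^{-1}$. The only organizational difference is that the paper uses this identity to factor the product $\prod_{l\in T_0}(a_l-u_l-u_l^{-1})$ first and then expands (invoking \eqref{eq:906} to pass from $\LL_{m_\psi,n}^\alpha$ to $\LL_{m_{(T)},n}^\alpha$), whereas you compute both sides separately via \eqref{eq:91} and \eqref{eq:122} and verify the resulting algebraic identity afterward; the content is the same.
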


\begin{proof}
We compute in a similar manner as in Proposition \ref{prop:641}.
We show only the first equality since the second can be shown similarly.

It is enough to show that $\psi(\text{left hand side}) = 0$ for any character $\psi$ of $G_{m,n}$.
If there exists a prime divisor $l$ of $m$ such that $l^2 \mid m$ and $m_{\psi}l \mid m$, then this assertion is clear from $\psi(1 - l^{-1} \nu^{m,n}_{m/l,n}) = 0$.
Suppose such a divisor $l$ does not exist, namely, $\prim(m) \setminus \prim(m_{\psi}) \subset \prim_1(m)$.
We can compute
\begin{align}
\psi(\vartheta_{m,n}^{\alpha}) 
&= \prod_{l \mid m, l \nmid m_{\psi}} (a_l - \psi(l)^{-1} - \psi(l)) \psi(\vartheta_{m_{\psi}, n}^{\alpha})\\
&= \prod_{l \mid m, l \nmid m_{\psi}} \left[ (1-l^{-1}) (a_l - \psi(l)) + \psi(-\sigma_l^{-1} P_l) \right]\psi(\LL_{m_{\psi}, n}^{\alpha})\\
&= \sum_{T \subset \prim(m) \setminus \prim(m_{\psi})} \left( \prod_{l \in T} (1-l^{-1})(a_l - \psi(l))\right) 
\left(\prod_{l \mid m, l \nmid m_{\psi}, l \not\in T} \psi(-\sigma_l^{-1} P_l) \right) \psi(\LL_{m_{\psi}, n}^{\alpha})\\
&= \sum_{T \subset \prim(m) \setminus \prim(m_{\psi})} \left( \prod_{l \in T} l^{-1} \psi(\nu_{m, (l)})  (a_l - \psi(l))\right) 
\psi(\LL_{m_{(T)}, n}^{\alpha})\\
&= \psi \left[ \sum_{T \subset \prim(m) \setminus \prim(m_{\psi})} \nu_{m, (T)} \left( \prod_{l \in T} l^{-1} (a_l - \sigma_l)\right) \LL_{m_{(T)}, n}^{\alpha} \right].
\end{align}
Here, the first equality follows from \eqref{eq:905};
the second follows from an easy computation and \eqref{eq:55} and \eqref{eq:56};
the third is an expansion of the product;
the fourth follows from \eqref{eq:906}.
By the same reasoning as the final step in the proof of Proposition \ref{prop:641}, we can replace the range of $T$ by $T \subset \prim_1(m)$ in the final formula.
This completes the proof of $\psi(\text{left hand side}) = 0$.
\end{proof}

\begin{cor}\label{cor:514}
(1) We have 
\[
\vartheta_{m,n}^{\alpha} \in \sum_{m' \mid m} (\nu^{m,n}_{m',n} \LL_{m',n}^{\alpha})_{R_{m, n}},
\]
where $m'$ runs over all divisors of $m$.
Moreover, if $a_p = 0$, the coefficients of $\nu^{m,n}_{m',n}\LL_{m',n}^{\alpha}$ to express $\vartheta_{m,n}^{\alpha}$ can be given independent of $\alpha$.

(2) We have the equality
\[
\sum_{m' \mid m} (\nu^{m,n}_{m',n} \vartheta_{m',n}^{\alpha})_{R_{m,n}} = \sum_{m' \mid m} (\nu^{m,n}_{m',n} \LL_{m',n}^{\alpha})_{R_{m,n}}.
\]
\end{cor}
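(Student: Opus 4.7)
\textbf{Plan for Corollary \ref{cor:514}.}

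The approach is to prove (1) by induction on the total number of prime factors of $m$ counted with multiplicity, i.e.\ on $\sum_{l \mid m}\ord_l(m)$. The base case $m = 1$ reduces to $\vartheta_{1,n}^{\alpha} = \LL_{1,n}^{\alpha}$, immediate from comparing \eqref{eq:55} and \eqref{eq:56} since $\prim(1) = \emptyset$ gives $L_{\emptyset} = L$ and $\tau_{\emptyset} = \tau$. When $m$ is squarefree, the set $\prim_2(m) := \{l \mid m : l^2 \mid m\}$ is empty, so the factor $\prod_{l^2 \mid m}(1 - l^{-1}\nu^{m,n}_{m/l,n})$ in Proposition \ref{prop:513} is $1$, and its first equation directly expresses $\vartheta_{m,n}^{\alpha}$ as $\sum_{T \subset \prim(m)} \nu_{m,(T)}\prod_{l \in T}l^{-1}(a_l - \sigma_l)\LL_{m_{(T)},n}^{\alpha}$. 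Using $\nu_{m,(T)} = \nu^{m,n}_{m_{(T)},n}$ and $l^{-1} \in \Z_p$ (as $l \neq p$), each summand lies in $R_{m,n}\cdot \nu^{m,n}_{m_{(T)},n}\LL_{m_{(T)},n}^{\alpha}$, yielding the assertion in the squarefree case.

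For non-squarefree $m$, the plan is to clear denominators in Proposition \ref{prop:513} by multiplying through by $\prod_{l \in \prim_2(m)} l \in \Z_p^{\times}$ and then expand. Writing $\epsilon_U = \prod_{l \in U} l^{-1}\nu^{m,n}_{m/l,n}$ for $U \subset \prim_2(m)$, and letting $\mathrm{RHS}$ denote the linear combination of $\LL_{m_{(T)},n}^{\alpha}$'s in Proposition \ref{prop:513}, rearrangement yields
\[
\vartheta_{m,n}^{\alpha} = \mathrm{RHS} + \sum_{\emptyset \neq U \subset \prim_2(m)}(-1)^{|U|+1}\epsilon_U\bigl(\vartheta_{m,n}^{\alpha} - \mathrm{RHS}\bigr).
\]
The term $\mathrm{RHS}$ already lies in $M := \sum_{m' \mid m} R_{m,n}\cdot \nu^{m,n}_{m',n}\LL_{m',n}^{\alpha}$ as in the squarefree case. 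For each $U \neq \emptyset$, I will exploit two facts: (i) the Galois subgroups $\Gal(K_{m,n}/K_{m/l,n})$ for distinct $l \in U$ intersect trivially (their fixed fields have intersection $K_{m/U,n}$, where $m/U := m/\prod_{l \in U}l$), so $\prod_{l \in U}\nu^{m,n}_{m/l,n} = \nu^{m,n}_{m/U,n}$; and (ii) the identity $\nu^{m,n}_{m/U,n}\cdot x = \nu^{m,n}_{m/U,n}\cdot z^{m,n}_{m/U,n}(x)$ for any $x \in R_{m,n}$. Iteratively applying \eqref{eq:905} and \eqref{eq:906} will express $z^{m,n}_{m/U,n}(\vartheta_{m,n}^{\alpha})$ and $z^{m,n}_{m/U,n}(\LL_{m_{(T)},n}^{\alpha})$ as $R_{m/U,n}$-linear combinations of $\vartheta_{m'',n}^{\alpha}$, respectively $\LL_{m'',n}^{\alpha}$, pulled up via intermediate norm maps, for divisors $m'' \mid m/U$. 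Applying $\nu^{m,n}_{m/U,n}$ and invoking the composition rule $\nu^{m,n}_{m/U,n}\nu^{m/U,n}_{m'',n} = \nu^{m,n}_{m'',n}$, we express $\epsilon_U \vartheta_{m,n}^{\alpha}$ in the $R_{m,n}$-module generated by $\nu^{m,n}_{m'',n}\vartheta_{m'',n}^{\alpha}$ for $m'' \mid m/U$, and $\epsilon_U \mathrm{RHS}$ in $M$. Since every such $m''$ has $\sum_l \ord_l(m'') \le \sum_l \ord_l(m) - |U| < \sum_l \ord_l(m)$, the inductive hypothesis rewrites each $\vartheta_{m'',n}^{\alpha}$ in terms of $\LL$'s, and collecting all contributions places $\vartheta_{m,n}^{\alpha}$ in $M$.

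The ``moreover'' part of (1) with $a_p = 0$ follows at once because Proposition \ref{prop:513} and equations \eqref{eq:905}, \eqref{eq:906} involve only $a_l$, $\sigma_l$, $l^{-1}$, and norm elements, all independent of $\alpha$; hence the coefficients produced by the above procedure do not depend on the chosen root. For part (2), the second equation of Proposition \ref{prop:513} is symmetric to the first under swapping $\vartheta$ and $\LL$, so the analogous induction yields $\LL_{m,n}^{\alpha} \in \sum_{m' \mid m} R_{m,n}\cdot \nu^{m,n}_{m',n}\vartheta_{m',n}^{\alpha}$. Combining both directions, applied at every divisor $m' \mid m$ and multiplied by $\nu^{m,n}_{m',n}$ (using the composition rule once more), shows that the two $R_{m,n}$-submodules appearing in (2) contain each other, hence coincide. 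The main technical obstacle will be the iterated norm/projection bookkeeping in the inductive step: verifying that successive applications of \eqref{eq:905}--\eqref{eq:906} indeed produce an $R_{m/U,n}$-linear combination of the desired shape, so that multiplying by $\nu^{m,n}_{m/U,n}$ yields elements of the target $R_{m,n}$-module without introducing denominators outside $\Z_p^{\times}$.
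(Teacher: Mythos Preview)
Your proposal is correct and follows essentially the same strategy as the paper: induction on $m$, expansion of Proposition~\ref{prop:513}, and descent via \eqref{eq:905}. However, you overcomplicate the inductive step. The paper observes that since $M = \sum_{m' \mid m}(\nu^{m,n}_{m',n}\LL_{m',n}^{\alpha})_{R_{m,n}}$ is an $R_{m,n}$-module and every $\epsilon_U$ with $U \neq \emptyset$ is an $R_{m,n}$-multiple of $\nu^{m,n}_{m/l_0,n}$ for any single $l_0 \in U$, it suffices to show $\nu^{m,n}_{m/l_0,n}\vartheta_{m,n}^{\alpha} \in M$ for each $l_0$ with $l_0^2 \mid m$. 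One application of \eqref{eq:905} then gives
\[
\nu^{m,n}_{m/l_0,n}\vartheta_{m,n}^{\alpha} = a_{l_0}\,\nu^{m,n}_{m/l_0,n}\vartheta_{m/l_0,n}^{\alpha} - \nu^{m,n}_{m/l_0^2,n}\vartheta_{m/l_0^2,n}^{\alpha},
\]
and the inductive hypothesis for $m/l_0$ and $m/l_0^2$ (followed by composing the norm maps) finishes immediately. This eliminates entirely what you flag as ``the main technical obstacle'': no iterated projection through all of $U$ is needed, and no separate treatment of $\epsilon_U\cdot\mathrm{RHS}$ is needed either (it lies in $M$ simply because $\mathrm{RHS} \in M$ and $M$ is an $R_{m,n}$-module). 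Your argument for (2) and for the $\alpha$-independence in (1) matches the paper's.
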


Here, the subscripts $R_{m,n}$ are attached to emphasize that we consider the generated $R_{m,n}$-submodules.
But in the following, we will often omit the subscripts when no confusion occurs.

\begin{proof}
(1) The final assertion on independence of $\alpha$ can be observed from the proof below and we do not mention any more.
We use induction on $m$.
By Proposition \ref{prop:513}, 
it is enough to show that 
\[
\nu^{m,n}_{m/l_0,n} \vartheta_{m,n}^{\alpha} \in \sum_{m' \mid m} (\nu^{m,n}_{m',n} \LL_{m',n}^{\alpha})
\]
for any $l_0^2 \mid m$.
Since \eqref{eq:905} implies 
\[
\nu^{m, n}_{m/l_0, n} \vartheta_{m,n}^{\alpha} 
= a_{l_0} \nu^{m, n}_{m/l_0, n} \vartheta_{m/l_0, n}^{\alpha} - \nu^{m, n}_{m/l_0^2, n} \vartheta_{m/l_0^2, n}^{\alpha},
\]
the induction hypothesis for $m/l_0$ and $m/l_0^2$ implies the consequence.

(2) The inclusion $\subset$ follows from (1), using induction on $m$.
The other inclusion can be shown similarly.
\end{proof}

\begin{prop}\label{prop:515}
(1) Suppose $p \nmid a_p$ holds.
Then, for $n \geq 0$, we have
\[
(\vartheta_{m,n}^{\alpha}) = (\theta_{m,n}, \nu^{m,n}_{m,n-1}(\theta_{m,n-1})).
\]

(2) Suppose $a_p = 0$ holds.
Then, for $n \geq 0$, we have
\[
\frac{1}{2} p^{[(n+2)/2]} (\vartheta_{m,n}^{\alpha} + \vartheta_{m,n}^{-\alpha})
 = 	\begin{cases}
	 	(-1)^{n/2} \nu^{m,n}_{m,n-1}(\theta_{m,n-1}) & (\text{$n$ is even}) \\
		(-1)^{(n+1)/2} \theta_{m,n} & (\text{$n$ is odd})
	\end{cases}
\]
and
\[
\frac{1}{2\alpha} p^{[(n+3)/2]} (\vartheta_{m,n}^{\alpha} - \vartheta_{m,n}^{-\alpha})
 = 	\begin{cases}
	 	(-1)^{(n+1)/2} \nu^{m,n}_{m,n-1}(\theta_{m,n-1}) & (\text{$n$ is odd}) \\
		(-1)^{(n+2)/2} \theta_{m,n} & (\text{$n$ is even}).
	\end{cases}
\]
\end{prop}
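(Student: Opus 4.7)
The plan is to verify both parts of Proposition~\ref{prop:515} by direct manipulation of the defining formula \eqref{eq:150}, with the hypothesis $p\nmid a_p$ (respectively $a_p=0$) controlling the arithmetic of the root $\alpha$.

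For part~(1), I first observe that under $p\nmid a_p$ the allowable root $\alpha$ belongs to $\Z_p^\times$, so every power $\alpha^{\pm k}$ is a unit in $R_{m,n}$. Rewriting \eqref{eq:150} as
\[
\alpha^{n+1}\vartheta^{\alpha}_{m,n}=\theta_{m,n}-\alpha^{-1}\nu^{m,n}_{m,n-1}(\theta_{m,n-1})
\]
immediately displays $\vartheta^{\alpha}_{m,n}$ as an $R_{m,n}$-combination of $\theta_{m,n}$ and $\nu^{m,n}_{m,n-1}(\theta_{m,n-1})$, giving the inclusion $(\vartheta^{\alpha}_{m,n})\subset(\theta_{m,n},\nu^{m,n}_{m,n-1}(\theta_{m,n-1}))$. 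For the reverse inclusion I would multiply this identity by $\alpha$ to obtain $\alpha^{n+2}\vartheta^{\alpha}_{m,n}=\alpha\theta_{m,n}-\nu^{m,n}_{m,n-1}(\theta_{m,n-1})$, and then solve for each of $\theta_{m,n}$ and $\nu^{m,n}_{m,n-1}(\theta_{m,n-1})$ individually using that $\alpha$ is a unit, concluding the asserted equality of ideals.

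For part~(2), the computation is purely bookkeeping. Substituting the pair $(\alpha,-\alpha)$ of roots of $t^2+p$ into \eqref{eq:150} and summing gives
\[
\vartheta^{\alpha}_{m,n}+\vartheta^{-\alpha}_{m,n}
=\bigl(\alpha^{-(n+1)}+(-\alpha)^{-(n+1)}\bigr)\theta_{m,n}
-\bigl(\alpha^{-(n+2)}+(-\alpha)^{-(n+2)}\bigr)\nu^{m,n}_{m,n-1}(\theta_{m,n-1}).
\]
Exactly one of the two parenthetical coefficients vanishes, according to the parity of $n$. On the surviving term, the relation $\alpha^2=-p$ converts $\alpha^{-2\ell}$ into $(-1)^\ell p^{-\ell}$, and multiplying by $\tfrac12 p^{[(n+2)/2]}$ yields the claimed integral identity, with the sign $(-1)^{[(n+j)/2]}$ emerging from the appropriate choice of $\ell$. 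The difference $\vartheta^{\alpha}_{m,n}-\vartheta^{-\alpha}_{m,n}$ is handled analogously, the surviving term now involving an odd power of $\alpha$, whence the extra factor $\tfrac1{2\alpha}p^{[(n+3)/2]}$ and the corresponding sign from $\alpha \cdot \alpha^{-2\ell - 1} = (-1)^\ell p^{-\ell}$.

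The main obstacle is the reverse inclusion in part~(1): although easily stated, it requires exhibiting each of the two generators $\theta_{m,n}$ and $\nu^{m,n}_{m,n-1}(\theta_{m,n-1})$ as an $R_{m,n}$-multiple of the single element $\vartheta^{\alpha}_{m,n}$, and this relies on carefully exploiting both rearrangements of the defining relation together with the invertibility of $\alpha$. Part~(2), by contrast, is entirely mechanical once the parity cases are separated, and produces the stated signs automatically from $\alpha^2=-p$.
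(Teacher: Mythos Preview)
Your treatment of part~(2) is fine and matches the paper's one-line justification: it is a direct computation from the definition~\eqref{eq:150} together with $\alpha^2=-p$.

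Part~(1), however, has a genuine gap in the reverse inclusion. You write that from
\[
\alpha^{n+1}\vartheta^{\alpha}_{m,n}=\theta_{m,n}-\alpha^{-1}\nu^{m,n}_{m,n-1}(\theta_{m,n-1})
\]
you can ``multiply by $\alpha$'' to obtain a second relation and then ``solve for each of $\theta_{m,n}$ and $\nu^{m,n}_{m,n-1}(\theta_{m,n-1})$ individually.'' But multiplying by the unit $\alpha$ gives the \emph{same} equation; your two relations are linearly dependent, so you have one equation in two unknowns and cannot isolate either generator. Abstractly, nothing about the definition~\eqref{eq:150} alone forces $\theta_{m,n}\in(\vartheta^{\alpha}_{m,n})$: if it happened that $\theta_{m,n}=\alpha^{-1}\nu^{m,n}_{m,n-1}(\theta_{m,n-1})$, then $\vartheta^{\alpha}_{m,n}=0$ while the right-hand ideal could be nonzero.

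What is missing is an input beyond~\eqref{eq:150}, namely the three-term recursion~\eqref{eq:21} for the Mazur--Tate elements at the prime $p$. The paper uses this to prove by induction on $n$ that $\theta_{m,n-1}\in\bigl(z^{m,n}_{m,n-1}(\theta_{m,n})\bigr)$, the base case $n=0$ (and $n=1$) requiring the unit computation of Lemma~\ref{lem:100} (that $\alpha-\varphi$ and $\alpha-\varphi^{-1}$ are units in $R_{m,-1}$) and the decomposition $R_0=R_0^{\Delta}\times R_0^{\mathrm{nt}}$. Once this is known, $\nu^{m,n}_{m,n-1}(\theta_{m,n-1})$ lies in $(\nu^{m,n}_{m,n-1}\theta_{m,n})$, and since $\nu^{m,n}_{m,n-1}$ is in the Jacobson radical of $R_{m,n}$, the element $\theta_{m,n}-\alpha^{-1}\nu^{m,n}_{m,n-1}(\theta_{m,n-1})$ differs from $\theta_{m,n}$ by a unit, giving $(\vartheta^{\alpha}_{m,n})=(\theta_{m,n})=(\theta_{m,n},\nu^{m,n}_{m,n-1}(\theta_{m,n-1}))$. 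None of this follows from invertibility of $\alpha$ alone.
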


\begin{proof}
(1) This is asserted in \cite[\S 2.2]{KK} for $m = 1$ (and the $\Delta$-invariant part).
First we show the assertion for $n=0$, namely 
\begin{equation}\label{eq:59}
(\theta_{m,0} - \alpha^{-1} \nu^{m,0}_{m,-1}(\theta_{m, -1})) = (\theta_{m,0}, \nu^{m,0}_{m,-1}(\theta_{m,-1})).
\end{equation}
We can divide this assertion with respect to the decomposition $R_0 = R_0^{\Delta} \times \Rnt$.
The $\Rnt$-part of \eqref{eq:59} is clear, since $\nu^{m,0}_{m,-1}(\theta_{m, -1})$ does not contribute.
To verify the $R_0^{\Delta}$-part of \eqref{eq:59}, we can apply $z^{m,0}_{m,-1}$ and thus it is enough to show
\begin{equation}\label{eq:62}
((a_p-\varphi-\varphi^{-1}) \theta_{m,-1} - \alpha^{-1} (p-1) \theta_{m,-1})
= ((a_p-\varphi-\varphi^{-1}) \theta_{m,-1}, (p-1) \theta_{m,-1}).
\end{equation}
The right hand side of \eqref{eq:62} is generated by $\theta_{m,-1}$ (by the second element).
 Since $a_p \equiv \alpha$ modulo $p$, we can compute
 \[
 a_p - \varphi - \varphi^{-1} - \alpha^{-1}(p-1) \equiv \alpha^{-1}(\alpha-\varphi)(\alpha-\varphi^{-1}).
 \]
This element is a unit in $R_{m, -1}$ by Lemma \ref{lem:100}, which proves \eqref{eq:62}.
This proves the assertion for $n = 0$.

Secondly, we claim $\theta_{m, n-1} \in (z^{m,n}_{m,n-1}(\theta_{m,n}))$ for $n \geq 1$.
If $n = 1$, this claim $\theta_{m,0} \in (a_p\theta_{m,0} - \nu^{m,0}_{m,-1}(\theta_{m,-1}))$ can again be divided into $R_0^{\Delta}$ and $\Rnt$, and the assertion on $\Rnt$ is clear.
For the $R_0^{\Delta}$-part, by applying $z^{m,0}_{m,-1}$, it is enough to show
\begin{equation}\label{eq:153}
(a_p-\varphi - \varphi^{-1})\theta_{m,-1} \in (a_p(a_p-\varphi - \varphi^{-1})\theta_{m,-1} - (p-1) \theta_{m,-1}).
\end{equation}
By a similar computation as the proof of \eqref{eq:62}, we can show that the right hand side of \eqref{eq:153} is generated by $\theta_{m,-1}$.
Therefore \eqref{eq:153} holds.
If $n \geq 2$, by induction we may suppose that $\theta_{m, n-2} \in (z^{m,n-1}_{m,n-2}(\theta_{m,n-1}))$, which implies $\nu^{m,n-1}_{m,n-2}(\theta_{m, n-2}) \in (\nu^{m,n-1}_{m,n-2} \theta_{m,n-1})$.
Since $\nu^{m,n-1}_{m,n-2}$ is contained in the Jacobson radical of $R_{m, n-1}$, the element $z^{m,n}_{m,n-1}(\theta_{m,n}) = a_p\theta_{m,n-1}-\nu^{m,n-1}_{m,n-2}(\theta_{m,n-2})$ should be equal to $\theta_{m,n-1}$ up to a unit of $R_{m,n-1}$.
Thus we have $(z^{m,n}_{m,n-1}(\theta_{m,n})) = (\theta_{m,n-1})$.

Finally, for any $n \geq 1$, the second claim above implies $\nu^{m,n}_{m,n-1}(\theta_{m,n-1}) \in (\nu^{m,n}_{m,n-1}(\theta_{m,n}))$.
Then, again using the fact that $\nu^{m,n}_{m,n-1}$ is contained in the Jacobson radical of $R_{m, n}$, we see that the element $\vartheta^{\alpha}_{m,n} = \alpha^{-(1+n)} (\theta_{m,n} - \alpha^{-1} \nu^{m,n}_{m,n-1}(\theta_{m,n-1}))$ is equal to $\theta_{m,n}$ up to a unit of $R_{m,n}$.
Therefore
\[
(\vartheta_{m,n}^{\alpha}) = (\theta_{m,n}) = (\theta_{m,n}, \nu^{m,n}_{m,n-1}(\theta_{m,n-1})),
\]
where the second equality again follows from the second claim above.

(2) This is shown by a direct computation, using the definition \eqref{eq:150}.
\end{proof}

For $\bullet \in \{\emptyset, +, -\}$, put $\LL_{m,n}^{\bullet} = \LL_{\prim(m)}^{\bullet}(E/K_{m, \infty}) \bmod \omega_n \in R_{m,n} \otimes \Q_p$.

\begin{lem}\label{lem:976}
Suppose $a_p = 0$ holds.
Then we have 
\[
\frac{1}{2} p^{[(n+2)/2]} (\LL_{m,n}^{\alpha} + \LL_{m,n}^{-\alpha})
 = \ttilde{\omega}_n^+ \LL_{m, n}^-
 \]
and
\[
\frac{1}{2\alpha} p^{[(n+3)/2]} (\LL_{m,n}^{\alpha} - \LL_{m,n}^{-\alpha})
 = \ttilde{\omega}_n^- \LL_{m, n}^+
  \]
 for $n \geq 0$.
\end{lem}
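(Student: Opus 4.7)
The plan is to reduce the assertion to a purely formal computation of the $\pm$-logarithms $\log^\pm \in \HH_{1,\Q_p}(\Gamma)$ modulo $\omega_n$, and then combine it with the defining relation \eqref{eq:125} between $\LL_S(E/K_\infty,\pm\alpha)$ and $\LL_S^{\pm}(E/K_\infty)$.

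First I will exploit \eqref{eq:125}. Substituting the two allowable roots $\alpha$ and $-\alpha$ (recall $a_p = 0$) and taking the half-sum and half-difference gives the identities
\begin{align}
\tfrac{1}{2}\bigl(\LL_{\prim(m)}(E/K_\infty,\alpha) + \LL_{\prim(m)}(E/K_\infty,-\alpha)\bigr) &= \log^+ \cdot \LL_{\prim(m)}^-(E/K_\infty),\\
\tfrac{1}{2\alpha}\bigl(\LL_{\prim(m)}(E/K_\infty,\alpha) - \LL_{\prim(m)}(E/K_\infty,-\alpha)\bigr) &= \log^- \cdot \LL_{\prim(m)}^+(E/K_\infty)
\end{align}
in $\RR_m \otimes \Q_p(\alpha)$. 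Reducing modulo $\omega_n$ and multiplying by the appropriate power of $p$ then reduces the lemma to proving the congruences
\begin{equation}\label{eq:plan1}
p^{[(n+2)/2]} \log^+ \equiv \ttilde{\omega}_n^+ \pmod{\omega_n}, \qquad p^{[(n+3)/2]} \log^- \equiv \ttilde{\omega}_n^- \pmod{\omega_n}
\end{equation}
in $\Lambda \otimes \Q_p$ (or rather in its image in $R_{m,n} \otimes \Q_p$ after tensoring).

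The key lemma I need is the following: for any integer $n' \geq n+1$, we have
\[
\Phi_{n'}(1+\X) \equiv p \pmod{\omega_n}.
\]
The reason is that $\omega_n = (1+\X)^{p^n} - 1$ divides $(1+\X)^{p^{n'-1}} - 1$ when $n' - 1 \geq n$, so $(1+\X)^{p^{n'-1}} \equiv 1 \bmod \omega_n$; then the explicit expansion $\Phi_{n'}(y) = y^{p^{n'-1}(p-1)} + y^{p^{n'-1}(p-2)} + \cdots + 1$ with $y = 1+\X$ yields $\Phi_{n'}(1+\X) \equiv \Phi_{n'}(1) = p$ modulo $\omega_n$. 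Applying this term-by-term in the infinite product
\[
\log^\pm = \frac{1}{p} \prod_{n' \geq 1,\, (-1)^{n'} = \pm 1} \frac{\Phi_{n'}(1+\X)}{p},
\]
the terms with $n' > n$ each contribute a factor of $p/p = 1$, while the surviving terms with $n' \leq n$ assemble into $\ttilde{\omega}_n^{\pm}/p^{c_\pm}$, where $c_\pm$ is the number of positive integers $n' \leq n$ of the required parity. A short count gives $c_+ = [n/2]$ and $c_- = [(n+1)/2]$, so that
\[
\log^+ \equiv \frac{\ttilde{\omega}_n^+}{p^{1+[n/2]}}, \qquad \log^- \equiv \frac{\ttilde{\omega}_n^-}{p^{1+[(n+1)/2]}} \pmod{\omega_n},
\]
which is \eqref{eq:plan1} since $1+[n/2] = [(n+2)/2]$ and $1+[(n+1)/2] = [(n+3)/2]$. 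Substituting back into the displayed identities above yields the two equalities of the lemma.

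I do not anticipate a serious obstacle: the main technical content is the congruence $\Phi_{n'}(1+\X) \equiv p \bmod \omega_n$ for $n' > n$, which is elementary, together with the bookkeeping of parities that matches the floor functions in the statement. The appearance of powers of $p$ (instead of units) on the left-hand side is explained by precisely these ``tail'' factors from the infinite product defining $\log^\pm$.
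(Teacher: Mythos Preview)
Your proposal is correct and follows essentially the same approach as the paper: the paper's proof is the one-liner ``This follows from \eqref{eq:125} and the observations $\log^- \equiv p^{-[(n+3)/2]}\ttilde{\omega}_n^- \bmod \omega_n$ and $\log^+ \equiv p^{-[(n+2)/2]}\ttilde{\omega}_n^+ \bmod \omega_n$,'' which is exactly your reduction together with \eqref{eq:plan1}. You have simply spelled out the elementary justification of those congruences via $\Phi_{n'}(1+\X) \equiv p \bmod \omega_n$ for $n' > n$ and the parity count.
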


\begin{proof}
This follows from \eqref{eq:125} and the observations $\log^- \equiv p^{-[(n+3)/2]}\ttilde{\omega}_n^- \mod \omega_n$ and $\log^+ \equiv p^{-[(n+2)/2]}\ttilde{\omega}_n^+ \mod \omega_n$.
\end{proof}

\begin{lem}\label{lem:970}
For $\bullet \in \{\emptyset, +, -\}$, we have
\[
\sum_{m' \mid m} (\nu^{m,n}_{m',n} \LL_{m',n}^{\bullet})
= \sum_{T \subset \prim(m)} (\nu_{m, (T)} \LL_{m_{(T)},n}^{\bullet})
= \left(\prod_{l \mid m} (1, \nu_{m, (l)}P_l^{-1}) \right) \LL_{m,n}^{\bullet}.
\]
\end{lem}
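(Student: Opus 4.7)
The plan is to write $A$, $B$, $C$ for the three $R_{m,n}$-submodules of $R_{m,n}\otimes\Q_p$ in the statement, so that the task reduces to proving $A=B=C$. The backbone will be a generalization of \eqref{eq:906}, which I will denote (K):
\begin{equation*}
z^{m,n}_{m',n}(\LL_{m,n}^{\bullet}) = \biggl(\,\prod_{l\in\prim(m)\setminus\prim(m')} (-\sigma_l^{-1}P_l)\biggr)\LL_{m',n}^{\bullet} \qquad (m'\mid m,\ \bullet\in\{\emptyset,+,-\}).
\end{equation*}
I will establish (K) by evaluating both sides at an arbitrary character $\psi$ of $G_{m',\infty}$ of finite order, using the ratio of imprimitive Gauss sums \eqref{eq:91} together with the ratio $L_{\prim(m)}(E,\psi,1) = \prod\psi(P_l)\cdot L_{\prim(m')}(E,\psi,1)$ of imprimitive $L$-values and the interpolation of $\LL^{\alpha}$; the case $\bullet=\pm$ then descends from \eqref{eq:125} because $\log^{\pm}$ are non-zero-divisors. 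Primes with $l^2\mid m$ will contribute no Euler factor, consistently with the second case of \eqref{eq:906}.

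For $A=B$: the natural projection $\Gal(K_{m,\infty}/K_{m_{(T)},\infty})\to\Gal(K_{m,n}/K_{m_{(T)},n})$ is an isomorphism (both canonically identify with $\Gal(\Q(\mu_m)/\Q(\mu_{m_{(T)}}))$), so $\nu_{m,(T)}\in\RR_m$ maps to the Galois-norm element $\nu^{m,n}_{m_{(T)},n}\in R_{m,n}$, yielding $B\subset A$. For the reverse, given $m'\mid m$ I would set $T=\prim(m)\setminus\prim(m')$, so that $m'\mid m_{(T)}$ and $\prim(m')=\prim(m_{(T)})$; transitivity of norms gives $\nu^{m,n}_{m',n}=\nu_{m,(T)}\,\nu^{m_{(T)},n}_{m',n}$, and applying (K) to the pair $m_{(T)},m'$ (no prime removed from the support) will show that $\LL_{m_{(T)},n}^{\bullet}$ is a lift of $\LL_{m',n}^{\bullet}$. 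Hence
\begin{equation*}
\nu^{m,n}_{m',n}\LL_{m',n}^{\bullet} = \nu_{m,(T)}\,\nu^{m_{(T)},n}_{m',n}\LL_{m_{(T)},n}^{\bullet} \in \bigl(\nu_{m,(T)}\LL_{m_{(T)},n}^{\bullet}\bigr)_{R_{m,n}} \subset B.
\end{equation*}

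For $B=C$: I would expand $\prod_{l\mid m}(1,\nu_{m,(l)}P_l^{-1}) = \sum_{T\subset\prim(m)}(\nu_{m,(T)}P_T^{-1})$ with $P_T:=\prod_{l\in T}P_l$, and apply (K) to the pair $m,m_{(T)}$ to obtain $\nu_{m,(T)}\LL_{m,n}^{\bullet}=\nu_{m,(T)}\prod_{l\in T}(-\sigma_l^{-1}P_l)\LL_{m_{(T)},n}^{\bullet}$. Dividing formally by $P_T$ will yield
\begin{equation*}
\nu_{m,(T)}P_T^{-1}\LL_{m,n}^{\bullet} = \nu_{m,(T)}\prod_{l\in T}(-\sigma_l^{-1})\,\LL_{m_{(T)},n}^{\bullet},
\end{equation*}
whose right-hand side makes unambiguous sense in $R_{m,n}$ because $\nu_{m,(T)}$ annihilates the inertia at each $l\in T$, modulo which $\sigma_l$ is a well-defined unit. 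Since $\prod_{l\in T}(-\sigma_l^{-1})$ is a unit, the $T$-summand of $C$ coincides with the $T$-summand $(\nu_{m,(T)}\LL_{m_{(T)},n}^{\bullet})_{R_{m,n}}$ of $B$, so summing over $T$ will give $B=C$.

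The main subtlety, present already in the statement, will be making sense of the formal factor $\nu_{m,(l)}P_l^{-1}$: it is not an element of $R_{m,n}$, and the product $\nu_{m,(l)}P_l^{-1}\LL_{m,n}^{\bullet}$ is legitimized only by the compatibility (K). Once this interpretation is granted, the rest of the proof amounts to tracking norms and Euler factors, and I do not anticipate further obstacles.
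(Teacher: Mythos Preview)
Your proposal is correct and follows essentially the same route as the paper: the paper's proof cites Lemma~\ref{lem:992} (your identity (K), already recorded as \eqref{eq:904} and \eqref{eq:906}) for the equality $B=C$, and for $A\subset B$ uses exactly your argument of setting $T=\prim(m)\setminus\prim(m')$ and writing $\nu^{m,n}_{m',n}\LL^{\bullet}_{m',n}=\nu_{m,(T)}\nu^{m_{(T)},n}_{m',n}\LL^{\bullet}_{m_{(T)},n}$. The only difference is that you re-derive (K) from the interpolation property rather than invoking Lemma~\ref{lem:992} directly.
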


\begin{proof}
The second equality follows from Lemma \ref{lem:992} (cf. \eqref{eq:906}).
The inclusion $\supset$ of the first equality is trivial.
For the inverse inclusion, take any $m' \mid m$ and let $T = \prim(m) \setminus \prim(m')$.
Then we have 
\[
\nu^{m,n}_{m',n} \LL_{m',n}^{\bullet} 
= \nu_{m, (T)}\nu^{m_{(T)},n}_{m',n} \LL_{m', n}^{\bullet} 
= \nu_{m, (T)}\nu^{m_{(T)},n}_{m',n} \LL_{m_{(T)},n}^{\bullet}
\in (\nu_{m, (T)} \LL_{m_{(T)},n}^{\bullet}).
\]
\end{proof}

The following is the main result of this subsection.

\begin{prop}\label{prop:516}
(1) Suppose $p \nmid a_p$ holds.
Then, for $n \geq 0$, we have
\[
(\theta_{m,n}, \nu^{m,n}_{m,n-1}(\theta_{m,n-1}))
\subset \left( \prod_{l \mid m} \left( 1, \nu_{m, (l)} P_l^{-1} \right) \right) \LL_{m,n}.
\]

(2) Suppose $a_p = 0$ holds.
Then, for $n \geq 0$, we have
\[
(\theta_{m,n}, \nu^{m,n}_{m,n-1}(\theta_{m,n-1}))
\subset \left( \prod_{l \mid m} \left( 1, \nu_{m, (l)} P_l^{-1} \right) \right) (\ttilde{\omega}_n^+ \LL^-_{m,n}, \ttilde{\omega}_n^- \LL^+_{m,n}).
\]
\end{prop}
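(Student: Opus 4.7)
The proof will be a straightforward combination of the three prepared ingredients: Proposition \ref{prop:515}, Corollary \ref{cor:514}(1), and Lemma \ref{lem:970} (plus Lemma \ref{lem:976} in the supersingular case). My plan is to express the Mazur-Tate elements (or their norms) as combinations of the $\alpha$-stabilized elements $\vartheta^{\alpha}_{m,n}$, rewrite those via Corollary \ref{cor:514}(1) as $R_{m,n}$-combinations of $\nu^{m,n}_{m',n}\LL^{\alpha}_{m',n}$ for divisors $m' \mid m$, and finally collapse the sum over $m'$ using Lemma \ref{lem:970}.

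For part (1), Proposition \ref{prop:515}(1) gives the equality of ideals $(\theta_{m,n}, \nu^{m,n}_{m,n-1}(\theta_{m,n-1})) = (\vartheta_{m,n}^{\alpha})$, where $\alpha$ is the unit root. Corollary \ref{cor:514}(1) yields $\vartheta^{\alpha}_{m,n} \in \sum_{m' \mid m}(\nu^{m,n}_{m',n}\LL^{\alpha}_{m',n})$. Since $\LL_{m,n} = \LL^{\alpha}_{m,n}$ by definition in the ordinary case, applying Lemma \ref{lem:970} with $\bullet=\emptyset$ rewrites this sum as $\bigl(\prod_{l\mid m}(1,\nu_{m,(l)}P_l^{-1})\bigr)\LL_{m,n}$, yielding the claimed inclusion.

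For part (2), the two formulas in Proposition \ref{prop:515}(2) show that $\theta_{m,n}$ and $\nu^{m,n}_{m,n-1}(\theta_{m,n-1})$ are, up to signs, the two quantities $\tfrac{1}{2}p^{[(n+2)/2]}(\vartheta^{\alpha}_{m,n}+\vartheta^{-\alpha}_{m,n})$ and $\tfrac{1}{2\alpha}p^{[(n+3)/2]}(\vartheta^{\alpha}_{m,n}-\vartheta^{-\alpha}_{m,n})$. By the final assertion of Corollary \ref{cor:514}(1), we may choose coefficients $c_{m'} \in R_{m,n}$ with $\vartheta^{\pm\alpha}_{m,n} = \sum_{m'\mid m} c_{m'}\nu^{m,n}_{m',n}\LL^{\pm\alpha}_{m',n}$ \emph{using the same} $c_{m'}$ for both signs. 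Substituting and applying Lemma \ref{lem:976} at each level $m'$, the factors $\tfrac{1}{2}p^{[(n+2)/2]}$ and $\tfrac{1}{2\alpha}p^{[(n+3)/2]}$ convert the $\LL^{\pm\alpha}_{m',n}$ combinations into $\ttilde{\omega}^+_n \LL^-_{m',n}$ and $\ttilde{\omega}^-_n \LL^+_{m',n}$ respectively. Thus both $\theta_{m,n}$ and $\nu^{m,n}_{m,n-1}(\theta_{m,n-1})$ lie in $\sum_{m'\mid m}(\nu^{m,n}_{m',n}\ttilde{\omega}^+_n\LL^-_{m',n}) + \sum_{m'\mid m}(\nu^{m,n}_{m',n}\ttilde{\omega}^-_n\LL^+_{m',n})$, which collapses by Lemma \ref{lem:970} (applied with $\bullet=\pm$) to $\bigl(\prod_{l\mid m}(1,\nu_{m,(l)}P_l^{-1})\bigr)(\ttilde{\omega}^+_n\LL^-_{m,n},\,\ttilde{\omega}^-_n\LL^+_{m,n})$.

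The only subtle step is the $\alpha$-independence of the coefficients $c_{m'}$ in Corollary \ref{cor:514}(1), which allows the linear combination trick in part (2); this is precisely why that independence was recorded. Beyond that, the proof is a direct bookkeeping chain, so I anticipate no serious obstacle.
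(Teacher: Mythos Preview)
Your proposal is correct and follows essentially the same route as the paper: in both parts the chain is Proposition~\ref{prop:515} $\to$ Corollary~\ref{cor:514}(1) $\to$ Lemma~\ref{lem:970}, with Lemma~\ref{lem:976} inserted in part~(2). Your explicit remark that the $\alpha$-independence of the coefficients in Corollary~\ref{cor:514}(1) is what makes the sum/difference trick work in part~(2) is exactly the point the paper is using when it passes from the $\vartheta^{\pm\alpha}$-combinations to the $\LL^{\pm\alpha}$-combinations.
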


\begin{proof}
(1) We have
\[
(\theta_{m,n}, \nu^{m,n}_{m,n-1}(\theta_{m,n-1}))
= (\vartheta_{m,n}^{\alpha})
\subset \sum_{m' \mid m} (\nu^{m,n}_{m',n} \LL_{m',n}^{\alpha})
= \left( \prod_{l \mid m} \left( 1, \nu_{m, (l)} P_l^{-1} \right) \right) \LL_{m,n}
\]
by Proposition \ref{prop:515}(1), Corollary \ref{cor:514}(1), and Lemma \ref{lem:970}, respectively.

(2) This corresponds to \cite[Proposition 1.11]{KK}.
Similarly as in (1), we have
\begin{align}
(\theta_{m,n}, \nu^{m,n}_{m,n-1}(\theta_{m,n-1}))
&= \left(\frac{1}{2} p^{[(n+2)/2]} (\vartheta_{m,n}^{\alpha} + \vartheta_{m,n}^{-\alpha}), \frac{1}{2\alpha} p^{[(n+3)/2]} (\vartheta_{m,n}^{\alpha} - \vartheta_{m,n}^{-\alpha}) \right)\\
&\subset \sum_{m' \mid m} \nu^{m,n}_{m',n} \left(\frac{1}{2} p^{[(n+2)/2]} (\LL_{m',n}^{\alpha} + \LL_{m',n}^{-\alpha}), \frac{1}{2\alpha} p^{[(n+3)/2]} (\LL_{m',n}^{\alpha} - \LL_{m',n}^{-\alpha}) \right)\\
&= \sum_{m' \mid m} \nu^{m,n}_{m',n} \left(\ttilde{\omega}_n^-\LL^+_{m',n}, \ttilde{\omega}_n^+\LL^-_{m',n} \right)\\
&= \left( \prod_{l \mid m} \left( 1, \nu_{m, (l)} P_l^{-1} \right) \right) \left( \ttilde{\omega}_n^- \LL^+_{m,n}, \ttilde{\omega}_n^+ \LL^-_{m,n} \right)
\end{align}
by Proposition \ref{prop:515}(2), Corollary \ref{cor:514}(1), Lemma \ref{lem:976}, and Lemma \ref{lem:970}, respectively.
\end{proof}

\subsection{Conclusion of Proof}\label{subsec:874}

The following is our generalization of \cite[Theorem 1.14]{KK}.

\begin{thm}\label{thm:151}
Suppose $p \nmid a_p$ or $a_p = 0$ holds.
Let $m$ be a positive integer relatively prime to $pN$ such that Assumption \ref{ass:04} holds for $K = \Q(\mu_m)$ in the ordinary case.
Suppose the condition $(\star)$ in Theorem \ref{thm:203} holds.
Then, if 
\[
\au^{\bullet} \Fitt_{\RR}(\Sel_{\prim(m)}^{\bullet}(E/K_{m, \infty})^{\dual}) \supset (\LL_{\prim(m)}^{\bullet}(E/K_{m, \infty}))
\]
holds for any possible $\bullet \in \{\emptyset, +, -\}$, then we have 
 \[
 (\theta_{m,n}, \nu^{m,n}_{m,n-1}(\theta_{m,n-1})) \subset \Fitt_{R_{m,n}} (\Sel(E/K_{m,n})^{\dual})
 \]
for any $n \geq 0$.
\end{thm}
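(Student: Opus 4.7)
The plan is to deduce Theorem \ref{thm:151} as a direct combination of the two main technical inputs already prepared: Proposition \ref{prop:511} on the algebraic side (which, roughly, passes a divisibility statement over $K_{m,\infty}$ down to a Fitting ideal statement at finite layer $K_{m,n}$, at the cost of multiplying by the product $\prod_{l \mid m}(1, \nu_{m,(l)} P_l^{-1})$) and Proposition \ref{prop:516} on the analytic side (which bounds the Mazur--Tate ideal $(\theta_{m,n}, \nu^{m,n}_{m,n-1}(\theta_{m,n-1}))$ from above in terms of the same factor times the $p$-adic $L$-function mod $\omega_n$). So no genuinely new computation is needed; the real work has been absorbed into those two propositions.

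Concretely, in the ordinary case ($p \nmid a_p$), I would feed $\xi = \LL_{\prim(m)}(E/K_{m,\infty})$ into Proposition \ref{prop:511}(1), using the hypothesis $\FF_{\RR}(\Sel_{\prim(m)}(E/K_{m,\infty})^{\dual}) \supset (\LL_{\prim(m)}(E/K_{m,\infty}))$. This gives
\[
\Fitt_{R_{m,n}}(\Sel(E/K_{m,n})^{\dual}) \supset \Bigl(\prod_{l \mid m} (1, \nu_{m,(l)} P_l^{-1})\Bigr)(\LL_{m,n}).
\]
Then Proposition \ref{prop:516}(1) provides the reverse-direction inclusion
\[
(\theta_{m,n}, \nu^{m,n}_{m,n-1}(\theta_{m,n-1})) \subset \Bigl(\prod_{l \mid m} (1, \nu_{m,(l)} P_l^{-1})\Bigr) \LL_{m,n},
\]
and concatenating the two yields the desired containment.

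In the supersingular $a_p = 0$ case, I would apply Proposition \ref{prop:511}(2) separately for each sign $\bullet \in \{+,-\}$, taking $\xi^{\pm} = \LL^{\pm}_{\prim(m)}(E/K_{m,\infty})$ from the hypothesized divisibilities $\au^{\pm} \FF_{\RR}(\Sel^{\pm}_{\prim(m)}(E/K_{m,\infty})^{\dual}) \supset (\LL^{\pm}_{\prim(m)}(E/K_{m,\infty}))$. Adding the two resulting Fitting containments gives
\[
\Fitt_{R_{m,n}}(\Sel(E/K_{m,n})^{\dual}) \supset \Bigl(\prod_{l \mid m} (1, \nu_{m,(l)} P_l^{-1})\Bigr)(\ttilde{\omega}_n^+ \LL^-_{m,n},\ \ttilde{\omega}_n^- \LL^+_{m,n}),
\]
and Proposition \ref{prop:516}(2) gives
\[
(\theta_{m,n}, \nu^{m,n}_{m,n-1}(\theta_{m,n-1})) \subset \Bigl(\prod_{l \mid m} (1, \nu_{m,(l)} P_l^{-1})\Bigr)(\ttilde{\omega}_n^+ \LL^-_{m,n},\ \ttilde{\omega}_n^- \LL^+_{m,n}),
\]
so the same concatenation concludes the proof.

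Since the substantive content has been relocated to Propositions \ref{prop:511} and \ref{prop:516}, the only point requiring any care at this stage is purely bookkeeping: checking that the inclusions in Proposition \ref{prop:516} (phrased in $R_{m,n}$ using $\LL_{m,n}^{\bullet} = \LL_{\prim(m)}^{\bullet}(E/K_{m,\infty}) \bmod \omega_n$) can be fed directly into Proposition \ref{prop:511} (which outputs images mod $\omega_n$). This match is essentially by definition. Thus I do not anticipate a genuine obstacle in the proof of Theorem \ref{thm:151} itself; the real difficulty was already resolved earlier, namely in establishing Proposition \ref{prop:513} (which handles the mismatch between the distribution relations \eqref{eq:905} and \eqref{eq:906}) and in the Fitting-ideal shift computation of Proposition \ref{prop:510} that underlies Corollary \ref{cor:518}.
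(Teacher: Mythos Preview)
Your proposal is correct and matches the paper's approach exactly: the paper's proof is the single sentence ``This follows from Propositions \ref{prop:511} and \ref{prop:516},'' and your write-up simply spells out how those two propositions concatenate in each of the two cases.
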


\begin{proof}
This follows from Propositions \ref{prop:511} and \ref{prop:516}.
\end{proof}

Note that the appearance of $\nu^{m,n}_{m,n-1}(\theta_{m,n-1})$ is observed by Kurihara \cite[Conjecture 0.3]{Kur02}, at least when we are concerned with $m = 1$ (and the $\Delta$-invariant part).

\begin{proof}[Proof of Theorem \ref{thm:203}]
Theorem \ref{thm:151} already implies the assertion for $M = mp^{n+1}$ with $n \geq 0$.
Also Theorem \ref{thm:151} for $n = 0$ implies $\nu^{m,0}_{m,-1}(\theta_{m,-1}) \in \Fitt_{R_{m,0}} (\Sel(E/K_{m,0})^{\dual})$.
Then applying $z^{m,0}_{m,-1}$ to it yields the assertion for $M = m$.
This completes the proof of Theorem \ref{thm:203}.
\end{proof}

\subsection{A Conjecture of Kurihara}\label{subsec:974}

We continue to consider an integer $m$ with $(m, pN) = 1$.
In the ordinary case, Kurihara proposed a conjecture \cite[Conjecture 10.1]{Kur03} that
\begin{equation}\label{eq:74}
\FF(\Sel(E/K_{m, \infty})^{\dual}) =  \sum_{m' \mid m} \left( \nu^{m,\infty}_{m', \infty} \vartheta_{m', \infty}^{\alpha} \right),
\end{equation}
where $\alpha$ is the unit root, $\vartheta_{m', \infty}^{\alpha} = (\vartheta_{m', n}^{\alpha})_n \in \RR_{m'}$, and $\nu^{m,\infty}_{m', \infty}$ has the obvious meaning.
This is a kind of equivariant conjecture.
Under certain hypotheses, \cite[Theorem 10.2]{Kur03} proves the inclusion $\supset$ of this conjecture.
Our method also reproves that assertion under our running hypotheses, as follows.

\begin{thm}\label{thm:903}
Suppose the hypotheses in Theorem \ref{thm:203} hold. 
For $\bullet \in \{\emptyset, +, -\}$, we have 
\[
 \sum_{T \subset \prim(m)} \left( \nu_{m, (T)} \LL_{\prim(m_{(T)})}^{\bullet}(E/K_{m_{(T)}, \infty}) \right) 
 \subset \FF(\Sel^{\bullet}(E/K_{m, \infty})^{\dual}).
\]
This is an equality if the equality \eqref{eq:241} holds.
\end{thm}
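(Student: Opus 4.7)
The plan is to reduce both sides of the claimed inclusion, via the same distribution formula, to a single ideal attached to $K=\Q(\mu_m)$, and then invoke Theorem \ref{thm:95}.

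First, I would record the infinite-level analogue of Lemma \ref{lem:970}:
\[
\sum_{T\subset\prim(m)} \bigl(\nu_{m,(T)}\,\LL_{\prim(m_{(T)})}^{\bullet}(E/K_{m_{(T)},\infty})\bigr) = \Bigl(\prod_{l\mid m}(1,\nu_{m,(l)}P_l^{-1})\Bigr)\LL_{\prim(m)}^{\bullet}(E/K_{m,\infty})
\]
as ideals of $\RR_m\otimes\Q_p$. The argument is formally identical to that of Lemma \ref{lem:970}, since its only input is the norm compatibility of the $p$-adic $L$-functions from Lemma \ref{lem:992}, which is already stated at the infinite level and involves no reduction modulo $\omega_n$.

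Next, Corollary \ref{cor:518} factors the right-hand side of the theorem as
\[
\FF(\Sel^{\bullet}(E/K_{m,\infty})^{\dual}) = \Bigl(\prod_{l\mid m}(1,\nu_{m,(l)}P_l^{-1})\Bigr)\FF(\Sel_{\prim(m)}^{\bullet}(E/K_{m,\infty})^{\dual}).
\]
With the same common factor $\prod_l(1,\nu_{m,(l)}P_l^{-1})$ appearing on both sides, the asserted inclusion reduces to
\[
\LL_{\prim(m)}^{\bullet}(E/K_{m,\infty}) \in \FF(\Sel_{\prim(m)}^{\bullet}(E/K_{m,\infty})^{\dual}),
\]
which is a weakening of Theorem \ref{thm:95} (using $\au^{\bullet}\subset\RR$). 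Under the equality \eqref{eq:241}, the same chain of identifications yields $\sum_T \nu_{m,(T)}\LL_{\prim(m_{(T)})}^{\bullet} = \au^{\bullet}\FF(\Sel^{\bullet}(E/K_{m,\infty})^{\dual})$, which recovers the theorem's equality directly whenever $\au^{\bullet}=\RR$, and otherwise requires tracking the explicit shape of $\au^{\bullet}$ against the corresponding structure of $\LL^{\bullet}$.

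The main obstacle I anticipate is not in the inclusion itself, which drops out of the three ingredients above (Theorem \ref{thm:95}, Corollary \ref{cor:518}, and the infinite-level distribution identity), but in handling the auxiliary ideal $\au^{\bullet}$ in the equality assertion uniformly in $\bullet$, and in confirming that the ideal cancellation of $\prod_l(1,\nu_{m,(l)}P_l^{-1})$ is legitimate as an equality of ideals in $\RR_m\otimes\Q_p$. Both concerns are essentially bookkeeping, and no genuinely new Euler-system or Coleman-map input is required beyond what has already been developed.
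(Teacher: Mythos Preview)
Your approach is essentially the paper's: pull out the common factor $\prod_{l\mid m}(1,\nu_{m,(l)}P_l^{-1})$ via Corollary \ref{cor:518} and the infinite-level analogue of Lemma \ref{lem:970}, then use the one-divisibility together with $\au^{\bullet}\subset\RR$. Two corrections. First, the one divisibility is not supplied here by Theorem \ref{thm:95} (that theorem requires the extra hypotheses (a)--(f)); it is itself one of the hypotheses of Theorem \ref{thm:203} that you are assuming. Second, for the inclusion no cancellation of $\prod_l(1,\nu_{m,(l)}P_l^{-1})$ is needed: from $(\LL_{\prim(m)}^{\bullet})\subset\au^{\bullet}\FF(\Sel_{\prim(m)}^{\bullet}(E/K_{m,\infty})^{\dual})\subset\FF(\Sel_{\prim(m)}^{\bullet}(E/K_{m,\infty})^{\dual})$ you simply multiply through and invoke Corollary \ref{cor:518}. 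Your caution about the $\au^{\bullet}$ factor in the equality case is well placed; the paper's own proof passes over this point, and taken literally the chain only yields $\sum_T=\au^{\bullet}\FF(\Sel^{\bullet}(E/K_{m,\infty})^{\dual})$ from \eqref{eq:241}, which coincides with the stated equality precisely when $\au^{\bullet}=\RR$.
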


\begin{proof}
By Corollary \ref{cor:518}, we have
\begin{align}
\FF(\Sel^{\bullet}(E/K_{m, \infty})^{\dual}) 
&\supset  \left( \prod_{l \mid m} \left(1, \nu_{m,(l)} P_l^{-1} \right) \right) \LL_{\prim(m)}^{\bullet}(E/K_{m, \infty})\\
&= \sum_{T \subset \prim(m)} \left( \nu_{m, (T)} \LL_{\prim(m_{(T)})}^{\bullet}(E/K_{m_{(T)}, \infty}) \right),
\end{align}
which is an equality if \eqref{eq:241} holds.
\end{proof}

If $p \nmid a_p$, then Corollary \ref{cor:514}(2) and Lemma \ref{lem:970} imply
\[
 \sum_{m' \mid m} \left( \nu^{m,\infty}_{m', \infty} \vartheta_{m', \infty}^{\alpha} \right) 
 =  \sum_{T \subset \prim(m)} \left( \nu_{m, (T)} \LL_{\prim(m_{(T)})}(E/K_{m_{(T)}, \infty}) \right).
 \]
Therefore, in the ordinary case, Theorem \ref{thm:903} proves the one inclusion $\supset$ of \eqref{eq:74} in the situation of Theorem \ref{thm:203} (moreover, the equality \eqref{eq:74} is implied by the equality \eqref{eq:241}).
In particular, in the situation of Corollary \ref{cor:204}, the one inclusion $\supset$ of \eqref{eq:74} is confirmed.

Note that the hypotheses of \cite[Theorem 10.2]{Kur03} are slightly different from ours in Corollary \ref{cor:204}.
On the one hand, the conditions (b) and (c) are weakened in \cite{Kur03}.
On the other hand, \cite{Kur03} assumes that the $\mu$-invariant of $\Sel(E/K_{\infty})^{\dual}$ is zero, which is stronger than condition (e) as in Remark \ref{rem:a11}.

\renewcommand{\thesection}{\Alph{section}}
\setcounter{section}{0}

\section{Appendix: Properties of Fitting Ideals}\label{sec:108}

In this appendix, we collect auxiliary propositions about Fitting ideals, using the results in \cite{Kata}.

Let $G$ be a finite abelian group and put $\Lambda = \Z_p[[\X]], \RR = \Z_p[G][[\X]]$.
We denote by $\MM$ the category of finitely generated torsion $\RR$-modules, where torsionness means being torsion as a $\Lambda$-module.
Let $\PP$ be the subcategory of $\MM$ consisting of module with $\pd_{\RR} \leq 1$.
Let $\CC$ be the subcategory of $\MM$ consisting of modules which does not have nonzero finite submodules.
It is well-known (e.g. \cite[Proposition (5.3.19)(i)]{NSW}) that a finitely generated $\RR$-module $X$ does not contain a non-trivial finite submodule if and only if $\pd_{\Lambda}(X) \leq 1$.
Therefore $\PP \subset \CC$, meaning that any object of $\PP$ is an object of $\CC$.

\begin{defn}
Let $\Omega$ be a commutative monoid.

(1) A map $\FF: \MM \to \Omega$ is called a {\it Fitting invariant} if the following conditions hold:
\begin{itemize}
\item If $P \in \PP$, then $\FF(P) \in \Omega$ is invertible.
\item If $P \in \PP$ and $X, X' \in \MM$ fit into an exact sequence $0 \to X' \to X \to P \to 0$,
then $\FF(X) = \FF(P)\FF(X')$.
\end{itemize}

(2) A map $\FF: \CC \to \Omega$ is called a {\it quasi-Fitting invariant} if the following conditions hold:
\begin{itemize}
\item If $P \in \PP$, then $\FF(P) \in \Omega$ is invertible.
\item If $P \in \PP$ and $X, X' \in \CC$ fit into an exact sequence $0 \to X' \to X \to P \to 0$,
then $\FF(X) = \FF(P)\FF(X')$.
\item If $P \in \PP$ and $X, X' \in \CC$ fit into an exact sequence $0 \to P \to X \to X' \to 0$,
then $\FF(X) = \FF(P)\FF(X')$.
\end{itemize}
\end{defn}

This definition of Fitting invariants (resp. quasi-Fitting invariants) is given in \cite[Definition 2.4]{Kata} (resp. \cite[Definition 3.16]{Kata}).
A fundamental example is the following.

\begin{prop}\label{prop:613}
Let $\Omega$ be the monoid of fractional ideals of $\RR$ and we put $\FF(X) = \Fitt_{\RR}(X)$, the Fitting ideal of $X$.
Then $\FF: \MM \to \Omega$ is a Fitting invariant.
Moreover, the restriction $\FF|_{\CC}: \CC \to \Omega$ is a quasi-Fitting invariant.
\end{prop}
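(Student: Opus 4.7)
The plan is to check the axioms defining a (quasi-)Fitting invariant, by exhibiting for each $P\in\PP$ a square free resolution and then running a horseshoe argument.

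First I would verify that $\Fitt_\RR(P)$ is an invertible fractional ideal for $P\in\PP$. Since $\pd_\RR(P)\le 1$, a resolution $0\to Q_1\to Q_0\to P\to 0$ by finitely generated projective $\RR$-modules exists; standard arguments with Schanuel's lemma and the (stable) freeness of finitely generated projective modules over the semilocal ring $\RR$ allow one to replace this by a resolution $0\to\RR^m\overset{\times A}{\to}\RR^n\to P\to 0$. Decomposing $\RR\otimes\Q_p$ into its finitely many character components (for characters of $G$ of order prime to $p$) and using that $P$ is $\Lambda$-torsion in every component forces $m=n$ and shows $\det A$ to be a non-zero-divisor of $\RR$. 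Hence $\Fitt_\RR(P)=(\det A)$ is principal and invertible.

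Next, given an exact sequence $0\to X'\to X\to P\to 0$ with $P\in\PP$, I would pick a free presentation $\RR^a\overset{\times C}{\to}\RR^b\twoheadrightarrow X'$ and use the horseshoe lemma together with the square resolution of $P$ to obtain a presentation of $X$ by a block-triangular matrix
\[
\RR^{a+n}\xrightarrow{\begin{pmatrix} C & 0\\ \ast & A\end{pmatrix}}\RR^{b+n}\twoheadrightarrow X.
\]
Expanding $(b+n)\times(b+n)$ minors, one is forced to select all $n$ columns from the $A$-block, so each nonzero minor factors as $\det A$ times a $b\times b$ minor of $C$. This yields $\Fitt_\RR(X)=(\det A)\cdot\Fitt_\RR(X')=\Fitt_\RR(P)\cdot\Fitt_\RR(X')$, verifying the second axiom of a Fitting invariant.

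For the quasi-Fitting axiom, the remaining case is an exact sequence $0\to P\to X\to X'\to 0$ with $X,X'\in\CC$. A symmetric horseshoe argument, lifting a presentation of $X'$ through the surjection $X\twoheadrightarrow X'$, produces a presentation of $X$ by a matrix of the shape $\begin{pmatrix} A & \ast\\ 0 & C\end{pmatrix}$, and the same minor computation gives the multiplicativity. The role of the assumption $X,X'\in\CC$ is to exclude finite submodules; this is precisely the hypothesis under which the construction produces a well-behaved presentation matrix, and the main obstacle is checking that no extraneous torsion spoils the minor calculation. Rather than carry this out in detail, one can directly invoke the analogous verification given in \cite[\S 3]{Kata}, which treats exactly this setting. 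The content to flag as the hard part is therefore this third axiom, since multiplicativity of Fitting ideals along a short exact sequence $0\to P\to X\to X'\to 0$ fails in general without the $\CC$-hypothesis.
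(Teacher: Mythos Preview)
Your verification of the first two axioms is correct and more explicit than the paper, which simply cites \cite[Proposition~2.7]{Kata}. The horseshoe for $0\to X'\to X\to P\to 0$ indeed produces a block-triangular presentation in which every nonvanishing $(b+n)\times(b+n)$ minor is forced to use all $n$ rows carrying $A$, yielding $\Fitt_\RR(X)=(\det A)\,\Fitt_\RR(X')$.

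The sketch for the third axiom, however, contains a real gap. When $P$ is the \emph{subobject} in $0\to P\to X\to X'\to 0$, the horseshoe places the resolution of $P$ as the \emph{subcomplex}, so in your convention the presentation matrix of $X$ has the shape
\[
\begin{pmatrix} A & 0\\ \ast & C\end{pmatrix},
\]
not $\begin{pmatrix} A & \ast\\ 0 & C\end{pmatrix}$ as you wrote. Now the zero block isolates the $b$ columns over $C$, not the $n$ columns over $A$: a nonvanishing $(n+b)\times(n+b)$ minor must take at least $b$ of the last $a$ rows, but is free to take more than $b$ of them and correspondingly fewer than $n$ of the first rows. Such a minor mixes the $\ast$-entries with a proper submatrix of $A$ and need not lie in $(\det A)\,\Fitt_\RR(X')$. (Already with $n=b=1$ and $a=2$, the minor using only the bottom two rows is a $2\times 2$ determinant built from $\ast$ and $C$ alone.) So the direct minor computation gives only the inclusion $\Fitt_\RR(X)\supset\Fitt_\RR(P)\,\Fitt_\RR(X')$, and the $\CC$-hypothesis must enter in a more substantial way than ``no extraneous torsion spoils the minor calculation''.

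The paper handles this exactly as your fallback does, by invoking \cite[Proposition~3.17]{Kata}. That argument is not a minor computation: it exploits the functor $E^1(-)=\Ext^1_\RR(-,\RR)$, which on $\CC$ is a duality interchanging sub and quotient, to reduce the third axiom to the already-established second one. Your instinct that this is the hard step is right, but the symmetric horseshoe sketch you offer for it should be withdrawn.
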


\begin{proof}
The first assertion is proved in \cite[Proposition 2.7]{Kata}.
Then the second follows, since \cite[Proposition 3.17]{Kata} proves that, in our present situation, any Fitting invariant gives rise to a quasi-Fitting invariant by restriction to $\CC$.
\end{proof}

Next we state the definition of {\it shifts} of (quasi-)Fitting invariants, introduced in \cite{Kata}.

\begin{thm}[{\cite[Theorem 2.6]{Kata}}]\label{thm:614}
Let $\FF: \MM \to \Omega$ be a Fitting invariant.
For $X \in \MM$ and $n \geq 0$, the following $\varSF{n}(X) \in \Omega$ is well-defined.
Take an exact sequence $0 \to Y \to P_1 \to \dots \to P_n \to X \to 0$ in $\MM$ with $P_i \in \PP$ for $1 \leq i \leq n$.
Then define
\[
\varSF{n}(X) = \left( \prod_{i=1}^n \FF(P_i)^{(-1)^i} \right) \FF(Y).
\] 
\end{thm}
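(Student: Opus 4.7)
The plan is to proceed by induction on $n$, with the trivial base case $n = 0$ (where $\varSF{0}(X) = \FF(X)$) and the case $n = 1$ handled by a Schanuel-type fiber product argument. Existence of length-$n$ resolutions for any $X \in \MM$ is routine: pick a non-zero-divisor $f \in \Lambda$ annihilating $X$, take a surjection $(\RR/f)^k \twoheadrightarrow X$ (where $(\RR/f)^k \in \PP$ since $\pd_\RR(\RR/f) \leq 1$) and iterate on the kernel, which remains in $\MM$. For the base case $n=1$, given two exact sequences $0 \to Y \to P \to X \to 0$ and $0 \to Y' \to P' \to X \to 0$ with $P, P' \in \PP$ and $Y, Y' \in \MM$, form the fiber product $Z := P \times_X P' \in \MM$. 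The projections yield short exact sequences $0 \to Y \to Z \to P' \to 0$ and $0 \to Y' \to Z \to P \to 0$ with $P, P' \in \PP$ as quotients, so the Fitting-invariant multiplicativity gives
\[
\FF(Z) = \FF(P')\FF(Y) = \FF(P)\FF(Y'),
\]
hence $\FF(P)^{-1}\FF(Y) = \FF(P')^{-1}\FF(Y')$, as required.

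For $n \geq 2$, assume $\varSF{n-1}$ is well-defined. The crucial auxiliary lemma is: for any exact sequence $0 \to A \to B \to C \to 0$ in $\MM$ with $C \in \PP$, one has $\varSF{n-1}(B) = \FF(C)^{(-1)^{n-1}} \varSF{n-1}(A)$. To prove it, take a length-$(n-1)$ resolution of $A$ and a projective length-$1$ resolution $0 \to K \to F \to C \to 0$ of $C$ (with $K$ projective because $\pd_{\RR}(C) \leq 1$), padded to length $n-1$ by prepending zero modules. A horseshoe-type construction, valid because each padded term of $C$'s resolution is either $0$ or projective, produces a length-$(n-1)$ resolution of $B$ whose $i$-th term $Q^B_i$ fits into a split short exact sequence $0 \to Q^A_i \to Q^B_i \to Q^C_i \to 0$ (with $Q^B_i \in \PP$), so $\FF(Q^B_i) = \FF(Q^A_i)\FF(Q^C_i)$. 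Taking the alternating product and using $\FF(0) = 1$, together with the direct computation $\varSF{n-1}(C) = \FF(C)^{(-1)^{n-1}}$ obtained from the padded resolution of $C$, yields the claimed identity.

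The inductive step then proceeds as follows. Given two length-$n$ resolutions of $X$, peel off the last term to obtain exact sequences $0 \to Y_{n-1} \to P_n \to X \to 0$ and $0 \to Y'_{n-1} \to P'_n \to X \to 0$; the factorization $\varSF{n}(X) = \FF(P_n)^{(-1)^n} \varSF{n-1}(Y_{n-1})$ is immediate from the definition (and similarly for the primed version), so it suffices to show the two right-hand sides agree. Form $Z_n := P_n \times_X P'_n \in \MM$, producing $0 \to Y'_{n-1} \to Z_n \to P_n \to 0$ and $0 \to Y_{n-1} \to Z_n \to P'_n \to 0$ with $P_n, P'_n \in \PP$. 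Applying the auxiliary lemma to both sequences yields
\[
\FF(P_n)^{(-1)^{n-1}} \varSF{n-1}(Y'_{n-1}) = \varSF{n-1}(Z_n) = \FF(P'_n)^{(-1)^{n-1}} \varSF{n-1}(Y_{n-1}),
\]
and since $(-1)^n = -(-1)^{n-1}$, rearranging gives $\FF(P_n)^{(-1)^n} \varSF{n-1}(Y_{n-1}) = \FF(P'_n)^{(-1)^n} \varSF{n-1}(Y'_{n-1})$, completing the induction.

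The principal obstacle I expect is the careful execution of the horseshoe-type construction in the auxiliary lemma. In particular, one must verify that padding the length-$1$ resolution of $C$ by prepending zeros genuinely yields an exact length-$(n-1)$ complex with $\varSF{n-1}$-value $\FF(C)^{(-1)^{n-1}}$, and that the resulting combined resolution of $B$ has all terms in $\PP$ (using that the $i$-th horseshoe extension splits because the $C$-component is either $0$ or projective, and that $\PP$ is closed under such extensions).
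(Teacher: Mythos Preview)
The paper does not supply its own proof of this statement (it is quoted from \cite{Kata}), so I comment only on the correctness of your argument. Your overall strategy---induction on $n$, the fiber-product comparison for $n=1$, and reducing the inductive step to an auxiliary multiplicativity lemma for $\varSF{n-1}$---is sound. However, your proof of the auxiliary lemma contains a genuine error. The resolution $0 \to K \to F \to C \to 0$ you invoke has $F$ (and $K$, when nonzero) projective over $\RR$, hence \emph{not} $\Lambda$-torsion: any nonzero projective $\RR$-module is a direct summand of some $\RR^r$, which is $\Lambda$-free, and is therefore itself $\Lambda$-free of positive rank. Thus $F, K \notin \MM$, a fortiori $F, K \notin \PP$, and the horseshoe terms $Q^B_i = Q^A_i \oplus F$ or $Q^A_i \oplus K$ lie outside $\PP$ as well. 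The map $\FF$ is not even defined on such modules, so neither the identity $\FF(Q^B_i) = \FF(Q^A_i)\FF(Q^C_i)$ nor the claimed computation of $\varSF{n-1}(C)$ from the padded projective complex makes sense. (If one naively extends $\Fitt_{\RR}$ to projectives by declaring it to be the unit ideal, the padded complex would give $1$, not $\FF(C)^{(-1)^{n-1}}$.) This is precisely the obstacle you flagged at the end, and it is fatal as written.

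A correct argument for the auxiliary lemma runs in the opposite direction: start from a length-$(n-1)$ resolution $0 \to Y_B \to R_1 \to \dots \to R_{n-1} \to B \to 0$ of $B$ in $\PP$, and put $A' := \ker(R_{n-1} \twoheadrightarrow B \twoheadrightarrow C)$. From $0 \to A' \to R_{n-1} \to C \to 0$ with $R_{n-1}, C \in \PP$, the long exact $\Ext$-sequence gives $\Ext^i_{\RR}(A',-) = 0$ for $i \geq 2$, so $A' \in \PP$; moreover $\FF(R_{n-1}) = \FF(C)\FF(A')$ by the Fitting-invariant axiom. Since $\ker(A' \to A) = \ker(R_{n-1} \to B)$, splicing yields a length-$(n-1)$ resolution $0 \to Y_B \to R_1 \to \dots \to R_{n-2} \to A' \to A \to 0$ of $A$ in $\PP$ (when $n-1 = 1$ this reads $0 \to Y_B \to A' \to A \to 0$). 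Comparing the two alternating products gives $\varSF{n-1}(B) = \FF(C)^{(-1)^{n-1}}\varSF{n-1}(A)$. With this repair, the rest of your inductive scheme goes through unchanged.
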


\begin{thm}[{\cite[Corollary 3.21]{Kata}}]\label{thm:a01}
Let $\FF: \CC \to \Omega$ be a quasi-Fitting invariant.
Then there exists a unique family $\{\SF{n}: \MM \to \Omega\}_{n \in \Z}$ of maps satisfying the following.
Firstly, $\SF{0}: \MM \to \Omega$ is an extension of $\FF: \CC \to \Omega$.
Secondly, if $0 \to Y \to P_1 \to \dots \to P_d \to X \to 0$ is an exact sequence in $\MM$ with $P_i \in \PP$ for $1 \leq i \leq d$, then 
\[
\SF{n}(X) = \left( \prod_{i=1}^{d} \FF(P_i)^{(-1)^{n-d+i}} \right) \SF{n-d}(Y).
\]
\end{thm}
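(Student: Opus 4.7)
The plan is to construct the family $\{\SF{n}\}_{n \in \Z}$ by a distinguished class of resolutions, verify that the resulting formula is independent of the resolution chosen, and then read off the recursive identity in the statement.

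I would start by establishing the \emph{resolution lemma}: every $X \in \MM$ admits a surjection $P \twoheadrightarrow X$ with $P \in \PP \cap \MM$. Since $X$ is $\Lambda$-torsion, one can pick a non-zero-divisor $f \in \Lambda$ annihilating $X$; the short exact sequence $0 \to \RR \xrightarrow{f} \RR \to \RR/(f) \to 0$ shows $\RR/(f) \in \PP \cap \MM$, and a surjection $(\RR/(f))^r \twoheadrightarrow X$ exists for some $r$ because $X$ is finitely generated over $\RR/(f)$. Iterating yields, for each $X$ and any $d \geq 0$, a resolution $0 \to Y \to P_d \to \cdots \to P_1 \to X \to 0$ with each $P_i \in \PP \cap \MM$. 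Moreover, once $d$ is sufficiently large, $Y \in \CC$; this follows locally on each factor of $\RR$ via the Auslander--Buchsbaum formula, since the relevant components are two-dimensional regular local rings and a sufficiently high syzygy has depth $\geq 2$, precluding nontrivial finite submodules. A parallel construction using injections into modules of $\PP$ (for instance via $\Ext$-duals) handles the negative direction needed for $n < 0$.

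I would then \emph{define} $\SF{n}(X)$ by picking such a resolution, long enough that both endpoints are accessible, and setting
\[
\SF{n}(X) := \Bigl( \prod_{i=1}^{d} \FF(P_i)^{(-1)^{n-d+i}} \Bigr) \FF(Y),
\]
with base case $\SF{0}|_{\CC} = \FF$. The technical heart is independence of the resolution, which I would split into two steps. First, \emph{stability under lengthening}: splicing any further short exact sequence $0 \to Y' \to P_{d+1} \to Y \to 0$ with $P_{d+1} \in \PP$ and $Y, Y' \in \CC$ leaves the formula unchanged, because the quasi-Fitting identity $\FF(Y) = \FF(P_{d+1}) \FF(Y')$ exactly absorbs the new term with the correct sign. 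Second, \emph{comparison of two resolutions of equal length}: given $P_\bullet \to X$ and $Q_\bullet \to X$, build a common refinement by the pullback $P_1 \times_X Q_1 \twoheadrightarrow X$ and iterate termwise; the two directions of the quasi-Fitting axiom then yield equalities of the formula applied to $P_\bullet$, to the refinement, and to $Q_\bullet$.

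Once well-definedness is secured, the recursive formula in the statement follows by truncating a resolution at level $d$ and viewing the tail as a shorter resolution of $Y$, whereupon the alternating-sign product splits correctly. Uniqueness is automatic from the $d=1$ case $\SF{n}(X) = \FF(P_1)^{(-1)^n} \SF{n-1}(Y)$, which determines $\SF{n}$ recursively from $\FF|_{\CC}$, and the relation $\SF{0}|_{\CC} = \FF$ is the trivial $d=0$ resolution. The main obstacle will be the comparison of two distinct resolutions: modules in $\PP$ are not projective, so one cannot simply lift the identity on $X$ through two different resolutions, and the usual map-of-resolutions argument is unavailable. The workaround must rely entirely on the \emph{symmetric} form of the quasi-Fitting axioms—multiplicativity for short exact sequences with the $\PP$-term in either position—applied carefully along a pullback-based common refinement, while keeping every intermediate syzygy in $\CC$ (or further resolvable) so that no spurious contributions appear in the alternating product.
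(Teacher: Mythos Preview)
The paper does not give its own proof of this theorem; it is imported from \cite[Corollary~3.21]{Kata} and stated without argument. There is therefore nothing in the present paper to compare your proposal against.

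That said, two remarks on your sketch. First, the positive direction is simpler than you indicate: a single surjection $P_1 \twoheadrightarrow X$ with $P_1 \in \PP$ already has kernel in $\CC$, since any submodule of a module in $\CC$ lies in $\CC$; no Auslander--Buchsbaum argument is needed. Second, the negative direction is more delicate than your one-line ``parallel construction via Ext-duals'' suggests. A module $Y \in \MM \setminus \CC$ admits no embedding into any $P \in \PP$ whatsoever (again because $\PP \subset \CC$ and $\CC$ is closed under submodules), and even for $Y \in \CC$ the cokernel of an embedding $Y \hookrightarrow P$ with $P \in \PP$ need not lie in $\CC$, so a naive iterated coresolution does not stay inside the domain where $\FF$ is defined. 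The construction in \cite{Kata} handles this through a more careful use of the $E^1$-duality on $\CC$; your sketch points at the right ingredient but does not confront this obstruction.
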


The rest of this section is devoted to algebraic propositions which we use in Section \ref{sec:84}.

\begin{defn}\label{defn:995}
Let $\II, \JJ$ be fractional ideals of $\RR$.
If $\II \subset \JJ$ and moreover the quotient $\JJ/\II$ is finite, then we write $\II \subset_{\fin} \JJ$ and $\JJ \supset_{\fin} \II$.
\end{defn}

\begin{lem}\label{lem:111}
Put $\FF = \Fitt_{\RR}$.
For $X \in \MM$ and a finite submodule $X'$ of $X$, we have $\FF(X) \subset_{\fin} \FF(X/X')$.
\end{lem}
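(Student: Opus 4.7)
The plan is to establish the double inclusion
$$\FF(X') \FF(X/X') \subset \FF(X) \subset \FF(X/X')$$
and then to exploit the fact that $\FF(X')$ has finite index in $\RR$.

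The right-hand inclusion is immediate from the functoriality of Fitting ideals under surjections: the quotient map $X \twoheadrightarrow X/X'$ forces $\FF(X) \subset \FF(X/X')$. The left-hand inclusion is the standard product formula for Fitting ideals applied to the short exact sequence $0 \to X' \to X \to X/X' \to 0$.

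The one substantive input is that $\FF(X')$ has finite index in $\RR$. Since $X'$ is finite as a set, the faithful action of $\RR/\Ann_{\RR}(X')$ on $X'$ embeds this ring into $\operatorname{End}_{\Z}(X')$, which is finite; hence $\Ann_{\RR}(X')$ has finite index in $\RR$. If $r$ denotes the number of generators of $X'$ over $\RR$, the standard inclusion $\Ann_{\RR}(X')^r \subset \FF(X')$ then shows that $\FF(X')$ itself has finite index in $\RR$.

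With these three facts assembled, $\FF(X/X')/\FF(X)$ is a quotient of $\FF(X/X')/\FF(X')\FF(X/X')$, which is a finitely generated module over the finite ring $\RR/\FF(X')$ and is therefore finite. This yields $\FF(X) \subset_{\fin} \FF(X/X')$ as required. There is essentially no obstacle: the whole argument is formal, and the only real content is the translation of the set-theoretic finiteness of $X'$ into the ring-theoretic finiteness of $\RR/\FF(X')$.
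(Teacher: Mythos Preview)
Your proof is correct and follows essentially the same approach as the paper: the paper invokes the same double inclusion $\FF(X/X')\FF(X') \subset \FF(X) \subset \FF(X/X')$ (stated as ``well-known'') together with $\FF(X') \subset_{\fin} \RR$, and you have simply filled in the standard justifications for these facts.
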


\begin{proof}
It is well-known that
\[
\FF(X/X') \FF(X') \subset \FF(X) \subset \FF(X/X')
\]
in this case.
Since $\FF(X') \subset_{\fin} \RR$, the lemma follows.
\end{proof}

\begin{defn}\label{defn:977}
For an $\RR$-module $X$, we define $E^i(X) = \Ext_{\RR}^i(X, \RR)$ for $i \geq 0$.
Since $\Hom_{\RR}(X, \RR)$ is an $\RR$-module by $(af)(x) = a f(x)$ for $a \in \RR, f \in \Hom_{\RR}(X, \RR)$, and $x \in \RR$, the derived functor $E^i(X)$ also admits an $\RR$-module structure.
\end{defn}

\begin{prop}\label{prop:112}
Put $\FF = \Fitt_{\RR}$.
Let $0 \to X \to P_1 \to P_2 \to Y \to 0$ be an exact sequence in $\MM$.
Suppose that $P_i \in \PP$ holds for $i = 1,2$.
Then 
\[
\FF(P_1)\FF(Y) \subset_{\fin} \FF(P_2)\FF(E^1(X)).
\]
\end{prop}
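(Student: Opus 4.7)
The plan is to split the four-term sequence into two short exact sequences and reduce, via the shift operator for quasi-Fitting invariants (Theorem \ref{thm:a01}), to comparing the shift $\SF{-2}(X)$ with the Ext-Fitting ideal $\FF(E^1(X))$.

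First I would set $Z = \Image(P_1 \to P_2)$, giving short exact sequences $(\mathrm{i})\colon 0 \to X \to P_1 \to Z \to 0$ and $(\mathrm{ii})\colon 0 \to Z \to P_2 \to Y \to 0$. Since $X$ and $Z$ are submodules of $P_1, P_2 \in \PP \subset \CC$, both lie in $\CC$, so Proposition \ref{prop:613} and Theorem \ref{thm:a01} supply the shifts $\SF{n}$ of $\FF|_{\CC} = \Fitt_{\RR}|_{\CC}$.

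Applying Theorem \ref{thm:a01} with $d = 1$ to $(\mathrm{ii})$ at $n = 0$ gives $\SF{0}(Y) = \FF(P_2)\,\SF{-1}(Z)$, and to $(\mathrm{i})$ at $n = -1$ gives $\SF{-1}(Z) = \FF(P_1)^{-1}\,\SF{-2}(X)$. Multiplying yields the clean identity
\[
\FF(P_1)\,\SF{0}(Y) = \FF(P_2)\,\SF{-2}(X).
\]
Letting $Y_{\fin}$ denote the maximal finite submodule of $Y$, Lemma \ref{lem:111} gives $\FF(Y) \subset_{\fin} \FF(Y/Y_{\fin})$; since $Y/Y_{\fin} \in \CC$ we have $\FF(Y/Y_{\fin}) = \SF{0}(Y/Y_{\fin})$, and a short argument (comparing a $\PP$-surjection onto $Y$ with the induced surjection onto $Y/Y_{\fin}$, whose kernels both lie in $\CC$ and differ by $Y_{\fin}$, together with the $\SF$-recurrence at $n = -1$) shows $\SF{0}(Y/Y_{\fin}) \subset_{\fin} \SF{0}(Y)$. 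Therefore $\FF(P_1)\FF(Y) \subset_{\fin} \FF(P_2)\SF{-2}(X)$.

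It remains to prove $\SF{-2}(X) \subset_{\fin} \FF(E^1(X))$. For this I would apply $\Hom_{\RR}(-,\RR)$ to the original four-term exact sequence; since $\pd_{\RR}(P_i) \leq 1$, we have $E^i(P_j) = 0$ for $i \geq 2$, so the resulting splice of Ext long exact sequences (equivalently, the hypercohomology spectral sequence of the two-term complex $P_1 \to P_2$) relates $E^1(X)$ to the modules entering a two-step $\PP$-resolution of $X$ built from $(\mathrm{i})$ and $(\mathrm{ii})$. The $\PP$-modules $E^1(P_1), E^1(P_2)$ that appear match the alternating Fitting-ideal contributions in the definition of $\SF{-2}(X)$, and the residual discrepancy is concentrated in finite submodules, yielding the inclusion up to finite index.

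The hard part will be precisely this last step. Matching the abstract shift invariant $\SF{-2}(X)$ with the concrete Ext-Fitting ideal $\FF(E^1(X))$ requires carefully tracking how the Ext bookkeeping interacts with the Fitting-ideal calculus (in particular verifying that the $d_2$-differentials in the spectral sequence contribute only a finite distortion) and cleanly absorbing the discrepancies into the $\subset_{\fin}$ tolerance; this is precisely why the proposition is stated with $\subset_{\fin}$ rather than equality.
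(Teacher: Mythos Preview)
Your overall shape is right, but you have misidentified where the difficulty lies, and your sketch for the step you call easy actually has a gap.

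First, your step~(b) is not hard at all: for $X \in \CC$ one has in fact an \emph{equality} $\SF{-2}(X) = \FF(E^1(X))$, no spectral sequence needed. The paper sees this directly by working not with $\FF$ but with the dual quasi-Fitting invariant $\FF^*(X) = \FF(E^1(X))$. Since $\FF^*(P) = \FF(P)$ for $P \in \PP$, the shift recurrence for $\FF^*$ gives $(\FF^*)^{\langle 2 \rangle}(Y) = \FF(P_2)\FF(P_1)^{-1}\FF(E^1(X))$ immediately. Comparing with your identity $\SF{0}(Y) = \FF(P_2)\FF(P_1)^{-1}\SF{-2}(X)$, and using that the proposition is already known to be an equality when $Y \in \CC$ (\cite[Lemma~5]{BG03}, \cite[Proposition~4.7]{Kata}), one sees that $(\FF^*)^{\langle 2 \rangle}$ and $\SF{0}$ agree on $\CC$; uniqueness in Theorem~\ref{thm:a01} then forces $(\FF^*)^{\langle 2 \rangle} = \SF{0}$ on all of $\MM$, and hence $\SF{-2}(X) = \FF(E^1(X))$ for $X \in \CC$.

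Second, the actual crux is your step~(a), $\FF(Y) \subset_{\fin} \SF{0}(Y)$, and your ``short argument'' for it is circular. Taking $0 \to K \to P \to Y \to 0$ and $0 \to K' \to P \to Y/Y_{\fin} \to 0$ with $K'/K \simeq Y_{\fin}$, you reduce to $\SF{-1}(K') \subset_{\fin} \SF{-1}(K)$ for $K \subset K'$ in $\CC$ with finite quotient. But this is the same kind of statement one shift down; iterating never bottoms out. The paper proves the needed inclusion (in the equivalent form $\FF(Y) \subset_{\fin} (\FF^*)^{\langle 2 \rangle}(Y)$) by a concrete construction: choose $f \in \Lambda$ annihilating $Y$ and a presentation $\RR^b \overset{h}{\to} \RR^a \to Y \to 0$, pass to $(\RR/f)^b \overset{\overline h}{\to} (\RR/f)^a$ to get a four-term sequence $0 \to X' \to (\RR/f)^b \to (\RR/f)^a \to Y \to 0$, and then a diagram chase with the transposed map $\overline h^T$ shows that $\Cok(\overline h^T)$ surjects onto $E^1(X')$ with kernel exactly $E^2(Y)$, which is finite. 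Lemma~\ref{lem:111} then gives $\FF(Y) = f^{a-b}\FF(\Cok(\overline h^T)) \subset_{\fin} f^{a-b}\FF(E^1(X')) = (\FF^*)^{\langle 2 \rangle}(Y)$. This explicit appearance of $E^2(Y)$ as the finite obstruction is the content you are missing.
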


\begin{proof}
Note that this is an equality if $Y \in \CC$ (see \cite[Lemma 5]{BG03} or \cite[Proposition 4.7]{Kata}).
To treat the case where $Y \not\in \CC$, we modify the argument of \cite{Kata}.

Define $\FF^*: \CC \to \Omega$ by $\FF^*(X) = \FF(E^1(X))$ for $X \in \CC$.
Then $\FF^*$ is again a quasi-Fitting invariant by the duality properties of $E^1$ on $\CC$ (see \cite[Proposition 3.11]{Kata}).
Hence Theorem \ref{thm:a01} yields maps $(\FF^*)^{ \langle n \rangle}: \MM \to \Omega$, which satisfies
\[
(\FF^*)^{ \langle 2 \rangle}(Y) = \FF^*(P_2) \FF^*(P_1)^{-1} (\FF^*)^{\langle 0 \rangle}(X).
\]
Observe that $\FF^*(P_i) = \FF(E^1(P_i)) = \FF(P_i)$ by \cite[Lemma 4.6]{Kata} and $(\FF^*)^{\langle 0 \rangle}(X) = \FF^*(X) = \FF(E^1(X))$ by $X \in \CC$.
Therefore we have 
\begin{equation}\label{eq:231}
(\FF^*)^{ \langle 2 \rangle}(Y) = \FF(P_2)\FF(P_1)^{-1}\FF(E^1(X)).
\end{equation}
Thus our goal is to show $\FF(Y) \subset_{\fin} (\FF^*)^{ \langle 2 \rangle}(Y)$ for $Y \in \MM$.

Take an element $f \in \Lambda \setminus \{0\}$ which annihilates $Y$.
By a presentation $\RR^b \overset{h}{\to} \RR^a \to Y \to 0$ of $Y$, construct an exact sequence
\begin{equation}\label{eq:86}
0 \to X' \to (\RR/f)^b \overset{\overline{h}}{\to} (\RR/f)^a \to Y \to 0.
\end{equation}
Then by \eqref{eq:231}, we have $(\FF^*)^{ \langle 2 \rangle}(Y) = f^{a-b}\FF(E^1(X'))$.
Let $Z$ be the image of $\overline{h}$ in the sequence \eqref{eq:86}, which is contained in $\CC$.
Then the two short exact sequences obtained by splitting \eqref{eq:86} induce the following commutative diagram with exact row and column.
\[
\xymatrix{
0 \ar[r] &
E^1(Y) \ar[r] &
E^1((R/f)^a) \ar[r] \ar[rd]_{\overline{h}^T} &
E^1(Z) \ar[r] \ar@{^{(}->}[d] &
E^2(Y) \ar[r] &
0\\
&&& E^1((\RR/f)^b) \ar@{->>}[d] & & \\
&&& E^1(X') &&
}
\]
Here, we identify $E^1(R/f) \simeq R/f$ naturally and the superscript $T$ denotes the transpose.
This diagram induces an exact sequence
\[
0 \to E^2(Y) \to \Cok(\overline{h}^T) \to E^1(X') \to 0.
\]
Since $E^2(Y)$ is a finite module, Lemma \ref{lem:111} implies that $\FF(\Cok(\overline{h}^T)) \subset_{\fin} \FF(E^1(X'))$.
Therefore
\[
(\FF^*)^{ \langle 2 \rangle}(Y) = f^{a-b} \FF(E^1(X')) \supset_{\fin} f^{a-b} \FF(\Cok(\overline{h}^T)) = \FF(\Cok(\overline{h})) = \FF(Y),
\]
which completes the proof.
\end{proof}

For a prime ideal $\qu$ of $\Lambda$ of height one, let $\Lambda_{\qu}$ be the localization of $\Lambda$ at $\qu$ and put $\RR_{\qu} = \Lambda_{\qu} \otimes_{\Lambda} \RR$.

\begin{defn}\label{defn:110}
For fractional ideals $\II, \JJ$ of $\RR$, we say $\II$ and $\JJ$ are commensurable if both $\II\cap \JJ \subset_{\fin} \II$ and $\II\cap \JJ \subset_{\fin} \JJ$ hold.
In that case we write $\II \sim_{\fin} \JJ$.
Equivalently, $\II \sim_{\fin} \JJ$ if and only if $\II \RR_{\qu} = \JJ \RR_{\qu}$ for any prime ideal $\qu$ of $\Lambda$ of height $1$.
It follows that $\sim_{\fin}$ is an equivalence relation.
\end{defn}

\begin{lem}\label{lem:109}
Let $f, g \in \RR$ be non-zero-divisors and $\II \subset \RR$ be an ideal containing a non-zero-divisor.
Suppose that either $\II \RR_{p\Lambda} = \RR_{p\Lambda}$ or the order of $G$ is relatively prime to $p$ holds.
Then $f\II \sim_{\fin} g\II$ implies $f\RR = g\RR$.
\end{lem}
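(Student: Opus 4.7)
The plan is to reduce the equality $f\RR = g\RR$ to a family of local equalities $f\RR_{\qu} = g\RR_{\qu}$ at every height-one prime $\qu$ of $\Lambda$, and then to patch these back together. The patching step should be clean: since $\RR \simeq \Lambda^{|G|}$ as a $\Lambda$-module and $\Lambda$ is a two-dimensional regular (hence Krull) domain, one has $\RR = \bigcap_{\qu} \RR_{\qu}$ inside $\Frac(\Lambda) \otimes_{\Lambda} \RR = \Frac(\Lambda)^{|G|}$, the intersection being over height-one primes $\qu$ of $\Lambda$. Once the local equalities are in hand, the element $u = f/g$ (which is well-defined in $\Frac(\Lambda) \otimes_{\Lambda} \RR$, since that ring is Artinian over $\Frac(\Lambda)$ and $g$ is a non-zero-divisor) will lie in $\RR_{\qu}^{\times}$ for every such $\qu$, and the same holds for $u^{-1}$; the intersection formula will then place both $u$ and $u^{-1}$ in $\RR$, giving $u \in \RR^{\times}$ and hence $f\RR = g\RR$.

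The substantive work is to establish $f\RR_{\qu} = g\RR_{\qu}$ at each height-one prime $\qu$ of $\Lambda$. First I would dispose of all $\qu \neq (p)$. Here $p$ is a unit in $\Lambda_{\qu}$, so
\[
\RR_{\qu} = \Z_p[G] \otimes_{\Z_p} \Lambda_{\qu} = \Q_p[G] \otimes_{\Q_p} \Lambda_{\qu},
\]
and since $\Q_p[G]$ is finite étale over $\Q_p$ (because $\Q_p$ has characteristic zero and $G$ is finite abelian), $\RR_{\qu}$ is finite étale over the DVR $\Lambda_{\qu}$, hence a finite product of DVRs. On each DVR component $R$, the image of a non-zero-divisor of $\II$ remains a non-zero-divisor (non-zero-divisors survive the flat map $\RR \to \RR_{\qu}$ and remain nonzero under projection to a domain factor), so $\II R$ is a nonzero principal ideal, say $(\pi^k)$. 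The given relation $f\II R = g\II R$ then reads $(f\pi^k) = (g\pi^k)$, and cancelling the non-zero-divisor $\pi^k$ yields $(f) = (g)$ in $R$; taking the product over components gives $f\RR_{\qu} = g\RR_{\qu}$.

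At $\qu = (p)$ the two alternative hypotheses split the argument. Under $\II \RR_{p\Lambda} = \RR_{p\Lambda}$ we have $\II \RR_{(p)} = \RR_{(p)}$, so the hypothesis $f\II\RR_{(p)} = g\II\RR_{(p)}$ collapses tautologically to $f\RR_{(p)} = g\RR_{(p)}$, with no cancellation needed. Under the alternative hypothesis that $|G|$ is coprime to $p$, the algebra $\Z_p[G]$ is étale over $\Z_p$, so $\RR$ is étale over $\Lambda$ and $\RR_{(p)}$ is again a finite product of DVRs; the cancellation argument used for $\qu \neq (p)$ then applies verbatim.

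The main obstacle I anticipate is exactly the prime $\qu = (p)$ when $p \mid |G|$, which is precisely why the statement requires one of the two alternative hypotheses. In that regime $\RR_{(p)}$ is no longer étale over $\Lambda_{(p)}$ and need not be a product of DVRs; a single local factor of $\RR$ can fail even to be a domain (as in $\Z_p[\Z/p]$, which is local with zero divisors), so the clean step of cancelling a power of a uniformiser loses its meaning. The two hypotheses are tailored precisely to sidestep this obstruction: either $\II$ is forced to become the unit ideal at $(p)$, or $\RR$ is made étale over $\Lambda$ so that every height-one prime of $\Lambda$ behaves equally well.
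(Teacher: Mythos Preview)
Your proof is correct and shares the paper's core strategy: at every height-one prime $\qu$ of $\Lambda$, either $\RR_{\qu}$ is a product of DVRs (so $\II\RR_{\qu}$ is invertible and can be cancelled) or the hypothesis forces $\II\RR_{\qu}=\RR_{\qu}$; in either case $f\RR_{\qu}=g\RR_{\qu}$.

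The only genuine difference is in the final patching step. You use the Krull-type intersection $\RR=\bigcap_{\qu}\RR_{\qu}$ inside $\Frac(\Lambda)\otimes_{\Lambda}\RR$ to conclude directly that $u=f/g\in\RR^{\times}$. The paper instead first isolates the special case $\II=\RR$ and proves it by a finite-submodule argument: the injection $f\RR/(f\RR\cap g\RR)\hookrightarrow\RR/g\RR$ has finite source (by $f\RR\sim_{\fin}g\RR$) but $\RR/g\RR\in\PP\subset\CC$ has no nontrivial finite submodule, so $f\RR\subset g\RR$, and symmetry gives equality; the general case then reduces to this via the same localisation analysis you carry out. Your route is a bit more ring-theoretic and arguably more direct; the paper's route has the mild advantage of recording the $\II=\RR$ case as a standalone fact and of staying within the $\PP\subset\CC$ framework already set up in the appendix. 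Both are short and standard.
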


\begin{proof}
At first we suppose $\II = \RR$ holds.
Consider the natural injective map $f\RR/(f\RR \cap g\RR) \hookrightarrow \RR/g\RR$.
On the one hand, the assumption $f\RR \sim_{\fin} g\RR$ implies that $f\RR/(f\RR \cap g\RR)$ is finite.
On the other hand, $\RR/g\RR$ does not contain non-trivial finite submodules since $\RR/g\RR \in \PP \subset \CC$.
Therefore we obtain $f\RR \subset g\RR$.
By symmetry we conclude $f\RR = g\RR$, which proves the lemma when $\II = \RR$.

In general, for any prime ideal $\qu$ of $\Lambda$ of height $1$, we claim that the ideal $\II \RR_{\qu}$ is invertible.
If $\qu \neq p\Lambda$ or the order of $G$ is relatively prime to $p$, then this is clear since $\RR_{\qu}$ is a product of principal ideal domains (and $\II$ contains a non-zero-divisor).
Otherwise, our assumption $\II\RR_{p\Lambda} = \RR_{p\Lambda}$ implies the claim.
Now suppose $f\II \sim_{\fin} g\II$ holds.
Then, for any $\qu$, we have $f\II \RR_{\qu} = g\II \RR_{\qu}$, which implies $f \RR_{\qu} = g \RR_{\qu}$ by the above claim.
Therefore $f\RR \sim_{\fin} g\RR$ and we conclude $f\RR = g\RR$ by the case $\II = \RR$, which is already established.
This completes the proof.
\end{proof}

\begin{rem}\label{rem:110}
The assumption $\II \RR_{p\Lambda} = \RR_{p\Lambda}$ is necessary when the order of $G$ is divisible by $p$.
For example, suppose $G$ is a cyclic group of order $p$ and consider $\II = (p, N_G)$, where $N_G$ is the norm element.
Then we can easily verify $(p+N_G)\II = p\II$ and $(p+N_G)\RR \neq p\RR$.
\end{rem}

\section*{Acknowledgments}

I would like to express my deepest gratitude to Takeshi Tsuji for his support during the research.
I also thank Chan-Ho Kim, Takahiro Kitajima, Masato Kurihara, Rei Otsuki, and Ryotaro Sakamoto.
They answered my questions and also gave suggestions.
This research was supported by JSPS KAKENHI Grant Number 17J04650, and by the Program for Leading Graduate Schools (FMSP) at the University of Tokyo.

{
\bibliographystyle{abbrv}
\bibliography{ellip}
}

\end{document}